\newcommand{\bracenom}{\genfrac{\lbrace}{\rbrace}{0pt}{}}
\newcommand{\plucker}{\genfrac{\{}{\}}{0pt}{}}
\newcommand{\isom}{\cong}
\newcommand{\set}[1]{\left\{#1\right\}} 
\newcolumntype{H}{>{\setbox0=\hbox\bgroup}c<{\egroup}@{}}
\DeclareMathOperator{\Spec}{Spec}
\DeclareMathOperator{\Proj}{Proj}
\DeclareMathOperator{\lcm}{lcm}
\DeclareMathOperator{\cl}{cl}
\DeclareMathOperator{\Cl}{cl}
\DeclareMathOperator{\Sp}{Sp}
\newcommand{\fb}{{\mathfrak b}}
\newcommand{\fg}{{\mathfrak g}}
\newcommand{\fh}{{\mathfrak h}}
\newcommand{\fl}{{\mathfrak l}}
\newcommand{\fn}{{\mathfrak n}}
\newcommand{\fu}{{\mathfrak u}}
\newcommand{\ga}{\alpha}
\newcommand{\gb}{\beta}
\newcommand{\gd}{\delta}
\renewcommand{\ge}{\varepsilon}
\renewcommand{\gg}{\gamma}
\newcommand{\gl}{\lambda}
\newcommand{\cF}{{\mathcal F}}
\newcommand{\cL}{{\mathcal L}}
\newcommand{\cS}{\mathcal S}
\newcommand{\bC}{\mathrm{\bf C}}           
\newcommand{\C}{\bC}
\newcommand{\bD}{\mathrm{\bf D}}
\newcommand{\bZ}{\mathrm{\bf Z}}           
\newcommand{\Z}{\bZ}
\newcommand{\bF}{\mathrm{\bf F}}           
\newcommand{\F}{\bF}
\newcommand{\bQ}{\mathrm{\bf Q}}           
\newcommand{\bH}{\mathrm{\bf H}}
\newcommand{\bP}{\mathrm{\bf P}}
\renewcommand{\H}{\mathbb{H}}
\newcommand{\T}{\mathbb{T}}
\numberwithin{equation}{section}
\theoremstyle{plain} 
\newtheorem{theorem}{Theorem}[section]
\newtheorem{corollary}[theorem]{Corollary}
\newtheorem{lemma}[theorem]{Lemma}
\newtheorem{proposition}[theorem]{Proposition}
\newtheorem*{proposition*}{Proposition}
\theoremstyle{definition}
\theoremstyle{remark}
\newtheorem{remark}[theorem]{Remark}
\newtheorem{example}[theorem]{Example}
\def\l@subsection{\@tocline{2}{0pt}{2.5pc}{5pc}{}}
\def\l@subsubsection{\@tocline{3}{
0pt}{5pc}{5pc}{}}
\begin{document}
\parskip=4pt
\baselineskip=14pt

\title[Positivity in Weighted Flag Varieties]{Positivity in Weighted Flag Varieties}
\author{William Graham}
\address{
Department of Mathematics,
University of Georgia,
Athens, GA 30602
}
\email{wag@uga.edu}

\author{Scott Joseph Larson}
\address{
Department of Mathematics,
University of Georgia,
Athens, GA 30602
}
\email{Scott.Larson@uga.edu}
\maketitle

\begin{abstract}
We study the torus-equivariant cohomology of weighted flag varieties, and prove a positivity property in the equivariant cohomology 
and Chow groups of weighted flag varieties, analogous to the non-weighted positivity proved in \cite{Graham2001}.
Our result strengthens and generalizes the positivity proved for weighted Grassmannians by 
Abe-Matsumura \cite{AbeMatsumura2015}.  The positivity property is expressed in terms of weighted roots, which are used to describe weights
of torus equivariant curves in weighted flag varieties.  This provides a geometric interpretation of the parameters used in \cite{AbeMatsumura2015}.
We approach weighted flag varieties from a uniform Lie-theoretic point of view, providing a more general definition than has appeared previously,
and prove other general results about weighted flag varieties in this setting, including
a Borel presentation of the equivariant cohomology.  In addition, we generalize some results obtained for weighted Grassmannians or more
generally type $A$ (\cite{AbeMatsumura2015}, \cite{AzamNazirQureshi2020}); in particular, we obtain a weighted Chevalley formula, descriptions
of restrictions to fixed points, the GKM description of the cohomology, and
a weighted Chevalley formula.
\end{abstract}

\thispagestyle{empty}
\setcounter{tocdepth}{1}
\tableofcontents

\section{Introduction}\label{section: introduction}
Weighted flag varieties are a generalization of flag varieties and weighted projective spaces, introduced by Grojnowski, Corti and Reid
\cite{CortiReid2002}.
Although they are not generally homogeneous varieties, they admit an action of a torus with isolated fixed points, and as with flag varieties,
their equivariant cohomology rings admit a Schubert basis.  
Abe and Matsumura  
\cite{AbeMatsumura2015} proved that the equivariant cohomology
of weighted Grassmannians
has a positivity property analogous to the positivity for flag varieties proved in \cite{Graham2001}.  This positivity is expressed using certain parameters, and the
authors write that, in contrast to the situation, for non-weighted flag varieties,  ``we do not have the geometric or representation-theoretic interpretation of those
parameters".
The main result of this paper extends the positivity statement
to all weighted flag varieties.  The parameters in the statement 
are weighted roots, which describe the weights
of torus-invariant curves through a torus fixed point in a weighted
flag variety.  This formulation gives a geometric meaning to the
parameters in the positivity statement of \cite{AbeMatsumura2015}: in our
language, their parameters are negative roots at the longest element of
the Weyl group.  

The approach taken in this paper is to study weighted flag varieties from a uniform Lie-theoretic point of view.
Besides
our main theorem, we prove some other results which we hope will be useful
in the future study of weighted flag varieties, including Chevalley-type formulas, and descriptions of restrictions to fixed points,
extending work of \cite{AbeMatsumura2015} for weighted Grassmannians,
and \cite{AzamNazirQureshi2020} for more general type $A$ flag varieties.  We also provide a Borel
presentation of the equivariant cohomology of weighted flag varieties, which appears to be new.
We define the
weighted Schubert classes as Poincar\'e dual classes to closures of weighted
Schubert cells.  This provides a direct geometric interpretation
of the weighted Schubert classes.  Our classes are positive integer multiples of the classes
considered in \cite{AbeMatsumura2015} and \cite{AzamNazirQureshi2020}, and
we determine these integers, which depend on the orders of certain stabilizer
groups.  Our results are valid in the setting of equivariant Chow groups as well as equivariant
cohomology.  

To state our main result we need some notation.  
Let \(G\) be a connected reductive complex algebraic group.
In the definition of weighted flag varieties given
in \cite{CortiReid2002}, $G$
is taken to be of the form $G' \times \bC^\times$, but it
is convenient to work more generally (the relation of the definition
in \cite{CortiReid2002} to the definition in this paper is discussed
in Remark \ref{r:corti-reid}).
Let \(H\) be a Cartan subgroup of $G$, and \(B=HN\) a Borel subgroup containing \(H\)
with unipotent radical $N$.   Let $\fg$, $\fb$, $\fh$ and $\fn$ denote the Lie algebras of $G, B, H$ and $N$, respectively.
We denote by \(\Lambda \cong \Z^{m+1} \) and  \(\Lambda^\vee \cong \Z^{m+1}\)
the character and cocharacter groups of \(H\), respectively; let $\Lambda^+ \subset \Lambda$
denote the set of dominant weights.  Given a nonzero element
$\chi = (n_1, \ldots, n_{m+1}) \in \Lambda^\vee$, we write $\gcd(\chi)$ for the greatest
common divisor of the $n_i$.  Equivalently, $\gcd(\chi)$ is the largest integer $k$
such that $\frac{1}{k} \chi$ is in $\Lambda^\vee$.
Fix a field
\(\F\) of characteristic zero (this will serve as the coefficient field for homology and cohomology).
Let \(\mathbb{H} = \Lambda^\vee \otimes_{\Z} \F \), and let \(\mathbb{H}^\ast\) be the dual \(\F\)-module; if
$\F = \C$, then $\mathbb{H} = \fh$ and $\mathbb{H}^\ast = \fh^\ast$.
We use similar notation for other tori.
We write the natural
pairing between \(\mathbb{H}\) and \(\mathbb{H}^\ast\) as a dot product.  If $\lambda \in \Lambda$,
let $\H_{\lambda} \subset \H$ denote the kernel of $\lambda$, with dual space
$\H_{\lambda}^\ast$.
If $V$ is a (complex) representation of $H$,
let $\Phi(V) \subset \Lambda$ denote the weights of $V$.

Let
$\Phi = \Phi(\fg)$ denote the set of roots of $\fh$ in \(\fg \), with
positive system $\Phi^+ = \Phi(\fn)$, and let
\(\Delta\) denote the corresponding set of simple roots.
Let $W$ denote the Weyl group of $G$, equipped with the Bruhat order.
Fix a dominant weight
\(\lambda\in\Lambda\), and let \(P=LU\) be the parabolic subgroup containing \(B\), such
that the Levi factor $L$ contains $H$, and such that the coroots of the Lie algebra $\fl$ of $L$
are orthogonal to $\lambda$.  The Weyl group of $L$ is denoted $W_P$.
Let $W^P $ denote the set
of maximal length coset representatives of $W/W_P$.
Let $\chi: \bC^\times \to H$ be a cocharacter
of $H$, and let $S$ denote the subgroup $\chi(\bC^\times)$ of $H$.  Let
$\mathbb{T}^\ast$ denote the orthogonal complement of \(\chi\) in \(\mathbb{H}^\ast\).

For each $w \in W^P$, define $a_w:=\gcd(\chi)^{-1}w\lambda\cdot\chi$.
We assume that each $a_w>0$ (cf.\ \cite[Remark 3.1]{CortiReid2002}).  This assumption implies that
$G$ is not semisimple (see Proposition \ref{prop:not semisimple}). 

The character $\lambda$ of $H$ extends to a character of $P$ which is trivial
on $U$; let
$\bC_\lambda$ be the corresponding $1$-dimensional representation.
Let $L_\lambda$ be the kernel in $L$ of $\lambda$; then $Q = L_\lambda U
\subset P$ is the kernel in $P$ of $\lambda$. Let $Z = G/Q$.  Since $P/Q \cong \C^\times$, we can identify
$Z$ with
the mixed space $G \times^P \bC_\lambda^\times$.

We define the weighted flag variety as $X = S\backslash Z$.  The relation with the definition in
\cite{CortiReid2002} 
is explained in Remark \ref{r:corti-reid}.  Our assumptions
on \(\lambda\) and $a_w$ imply that $X$ is a projective algebraic variety with at worst
orbifold singularities, and with
an action of $T = S \backslash H$ (see Section \ref{section: geometry}).  For every $w \in W^P$, there
is a weighted Schubert variety $X_w \subset X$.  The fundamental
classes $[X_w]$ form a basis over $H^*_T$ of the equivariant
Borel-Moore homology $H^T_*(X)$.  There is a corresponding Poincar\'e dual basis
$\{ \delta_{X_w}^T \}$  of $H_T^\ast(X)$ (q.v.\ \eqref{equation: PD}).  The elements of this basis
are integral multiples of the weighted Schubert classes considered in \cite{AbeMatsumura2015} and
\cite{AzamNazirQureshi2020}.  We use these classes because of their
direct geometric connection to $X$.

For each $w \in W^P$, we define a set $\Phi(w)$ of characters
of $T$ which we call weighted roots at $w$.
Each $w \in W^P$ corresponds
to a $T$-fixed point $p_w \in X$, and the $T$-weights of the $T$-invariant
curves in $X$ through $p_w$ form a subset of $\Phi(w)$. 
An explicit formula for the weighted roots at $w$ is given in Proposition \ref{proposition: weighted}.  

Given $u,v, w \in W^P$, let
$\cS(u,v;w) = [w, u] \cap [w,v]$, where the notation denotes Bruhat intervals.
In other words, 
$$
\cS(u,v;w) = \{ x \in W^P \mid w \leq x \leq u \mbox{ and } w \leq x \leq v \}.
$$
Our main result about positivity is the following theorem.

\begin{theorem}\label{theorem: main theorem}
Suppose that  \(\lambda\) is dominant and \(\chi\) is antidominant.
For every \(u,v\in W^P\), write
\begin{equation} \label{e.main}
\delta_{X_u}^T\delta_{X_v}^T=\sum_{w\in W^P}c_{uv}^w\delta_{X_w}^T,
\end{equation}
where \(c_{uv}^w\in H_T^\ast=S(\mathbb{T}^\ast)\).
Then each \(c_{uv}^w\) is a nonnegative linear combination of monomials of the form
$\nu_1(x_1) \cdots \nu_k(x_k)$, where the $\nu_1, \ldots, \nu_k$ are
distinct negative roots, and each $x_i \in \cS(u,v;w)$.
\end{theorem}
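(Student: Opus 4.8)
The plan is to follow the localization strategy of Graham \cite{Graham2001}, carrying the ordinary roots over to the weighted roots of Proposition \ref{proposition: weighted} at every step. The starting point is the GKM/localization description of $H_T^\ast(X)$: a class is determined by its restrictions to the fixed points $p_x$, $x\in W^P$, and the product is computed restriction by restriction. I would first record the two structural facts that drive the induction, namely that $\delta_{X_w}^T|_{p_x}$ vanishes unless $x\le w$ and that $\delta_{X_w}^T|_{p_w}$ is a nonzero product of weighted roots normal to $X_w$ at $p_w$ (here the hypotheses that $\lambda$ is dominant and $\chi$ is antidominant are what fix the signs so that negative roots appear). Restricting \eqref{e.main} at $p_x$ gives
\[
\delta_{X_u}^T|_{p_x}\,\delta_{X_v}^T|_{p_x}=\sum_{w\ge x}c_{uv}^w\,\delta_{X_w}^T|_{p_x},
\]
and since $\delta_{X_w}^T|_{p_w}$ is a unit times a product of weighted roots, the $c_{uv}^w$ are pinned down by downward induction on $w$ in the Bruhat order. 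This also yields $c_{uv}^w\neq 0\Rightarrow w\le u$ and $w\le v$, so that $\cS(u,v;w)$ is nonempty and is exactly the set of fixed points relevant to the recursion.

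The conceptual engine is that weighted roots should substitute for ordinary roots throughout. I would make this precise by establishing a weighted Billey/AJS-type formula: the restriction $\delta_{X_w}^T|_{p_x}$ is the \emph{same} combinatorial expression as the non-weighted restriction, with each ordinary root $\nu$ occurring at $p_x$ replaced by the weighted root $\nu(x)$. Granting this, the extraction recursion above becomes formally identical to its non-weighted counterpart under the point-dependent replacement (ordinary root $\nu$ at $x$) $\mapsto$ (weighted root $\nu(x)$). The weighted Chevalley formula established earlier in the paper supplies the base case of positivity: multiplication by a Schubert divisor class expands with coefficients that are single weighted negative roots $\nu(x)$ attached to Bruhat covers, together with nonnegative integers coming from the relevant stabilizer orders, and each such elementary step introduces exactly one weighted negative root evaluated at an element lying in the interval between the two classes being compared.

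It then remains to import Graham's refined positivity for the non-weighted case: each ordinary structure constant is a nonnegative combination of products of \emph{distinct} negative roots, with the factors indexed by elements of the corresponding Bruhat intersection interval. Applying the replacement above factor by factor transports this to the statement that $c_{uv}^w$ is a nonnegative combination of monomials $\nu_1(x_1)\cdots\nu_k(x_k)$ with the $\nu_i$ distinct negative roots and each $x_i\in\cS(u,v;w)$, which is exactly the assertion of the theorem. The distinctness of the $\nu_i$ and the confinement of the $x_i$ to $[w,u]\cap[w,v]$ are inherited from the non-weighted refined statement rather than produced by the weighted machinery itself.

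The main obstacle lies in three places, the last being the most serious. First, the replacement (ordinary root at $x$) $\mapsto$ (weighted root $\nu(x)$) is genuinely point-dependent, so it is not a ring homomorphism on the base $S(\mathbb{T}^\ast)$; I must verify that it is nonetheless compatible with the localized product and with the GKM edge relations, so that the formal identity of the two recursions is legitimate. Second, because $X$ carries orbifold singularities, our classes differ from those of \cite{AbeMatsumura2015} by integer multiples governed by stabilizer orders, and I must check these integers are positive and do not disturb positivity. Third, and centrally, the recursion extracting $c_{uv}^w$ from restrictions involves the subtraction $c_{uv}^w\,\delta_{X_w}^T|_{p_w}=\delta_{X_u}^T|_{p_w}\,\delta_{X_v}^T|_{p_w}-\sum_{w<w'}c_{uv}^{w'}\,\delta_{X_{w'}}^T|_{p_w}$, so positivity is \emph{not} preserved termwise and must survive a cancellation between a product of positive-root expansions and the correction terms. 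This is precisely the point Graham resolved by a geometric/degeneration argument showing that the relevant positive cone is closed under the transition maps, and I expect adapting that argument to the weighted orbifold setting, with weighted roots in place of roots and with the stabilizer-order integers tracked, to be the principal work of the proof.
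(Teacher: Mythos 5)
Your proposal and the paper's proof share the same inputs (the non-weighted positivity of \cite{Graham2001}, the weighted Chevalley/product formulas, the relation between weighted and non-weighted restrictions), but the transfer mechanism you propose does not work as stated, and the step you defer to ``adapting Graham's degeneration argument to the weighted orbifold setting'' is precisely the step the paper avoids having to do. The core problem is your claim that the extraction recursion becomes ``formally identical'' to the non-weighted one under the point-dependent substitution $\nu \mapsto \nu(x)$. Both $H^*_{T_0}$ and $H^*_T$ sit inside $H^*_H$, and the two expansions of the same element $\delta_{Z_u}^H\delta_{Z_v}^H$ in the same basis $\{\delta_{Z_w}^H\}$ differ genuinely because that basis satisfies nontrivial relations over $H^*_H$ --- specifically $(v\lambda)\,\delta_{Z_v}^H=\sum_{w\lessdot_P v}(\lambda\cdot\gamma^\vee)\,\delta_{Z_w}^H$ (Lemma \ref{lemma: lambdamultiply}). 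Consequently the weighted coefficient $d_{uv}^w$ is \emph{not} obtained from the non-weighted $b_{uv}^w$ by any single substitution $\varphi_x^{-1}\circ\psi_x$; converting one into the other pushes correction terms down to lower Schubert classes. Relatedly, your assertion that the distinctness of the $\nu_i$ and the constraint $x_i\in\cS(u,v;w)$ are ``inherited from the non-weighted refined statement'' cannot be right: the non-weighted statement attaches no Weyl group elements to its roots, so the indexing by the $x_i$ has to be \emph{produced} by the weighted argument, not imported.

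The paper's actual proof sidesteps the localization recursion (and hence the cancellation problem you correctly worry about) entirely. It takes as a black box the square-free non-weighted expansion $\delta_{Z_u}^H\delta_{Z_v}^H=\sum_{w}\sum_I b(u,v,w;I)\,\beta_I\,\delta_{Z_w}^H$ with $b(u,v,w;I)\geq 0$, and then re-expands each $\beta_I\,\delta_{Z_w}^H$ in the Schubert basis with $H^*_T$-coefficients by induction on $|I|$: writing a negative root as $\beta=\overline{\beta}(y)+\frac{a_\beta}{a_y}\,y\lambda$ and applying Lemma \ref{lemma: lambdamultiply} gives
\begin{equation*}
\beta\,\delta_{Z_y}^H=\overline{\beta}(y)\,\delta_{Z_y}^H+\frac{a_\beta}{a_y}\sum_{z\lessdot_P y}(\lambda\cdot\gamma^\vee)\,\delta_{Z_z}^H,
\end{equation*}
in which every term is manifestly nonnegative (using $a_y>0$, $a_\beta\geq 0$ for $\chi$ antidominant, and $\lambda$ dominant), the new factor $\overline{\beta}(y)$ is attached to a $y$ lying in $\cS(u,v;y)\supseteq\cS(u,v;z)$, and distinctness holds because $\beta$ is removed from $I$ at each stage. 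No subtraction ever occurs, so no closed-cone or degeneration argument is needed. To repair your proposal you would either have to carry out the substantial new geometry you flag in your third obstacle, or replace the GKM recursion by this direct basis-change argument; the latter is where the real content of the theorem lies, and it is the one ingredient your outline does not identify.
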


The method of proof of Theorem \ref{theorem: main theorem} also yields a positivity result about classes
of subvarieties of weighted flag varieties (Theorem \ref{t:expansion}), which extends the corresponding result in the non-weighted case
(see \cite[Theorem 19.3.1]{AndersonFulton2024}, \cite[Theorem 3.2]{Graham2001}).

Theorem \ref{theorem: main theorem} implies that the coefficients \(c_{uv}^w\) are square-free
in a certain sense.  But because weighted roots at different $x$ may occur in the same monomial, this
is not the same as the usual meaning of square-free, which means with respect to a fixed set of generators.  
Since a weighted root at $x \in W$ can be expressed 
as linear combinations of weighted roots
at any other $y \in W$, it is possible to expand $c_{uv}^w$ as a sum of monomials in negative roots
at a single fixed $y$.  However, such an expansion need not be square-free (see Remark \ref{r:non-squarefree}).

We say that $c_{uv}^w$ is nonnegative at $y \in W^P$ if $c_{uv}^w$ is a nonnegative linear combination
of monomials in negative roots
at $y$.  If $y \geq x$, then by  Lemma \ref{lemma: positive},
a negative simple root at $x$ is a nonnegative linear combination
of negative simple roots at $y$. 
Therefore, as a consequence of Theorem \ref{theorem: main theorem}, we obtain:

\begin{corollary} \label{corollary: w0}
With the hypotheses of Theorem \ref{theorem: main theorem}, each \(c_{uv}^w\)
is nonnegative at any $y$ such that $y \geq x$ for all $x \in \cS(u,v;w)$.  In particular, this holds
if $y \geq u$ and $y \geq v$.
\end{corollary}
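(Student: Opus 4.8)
The plan is to read off the corollary from the expansion provided by Theorem~\ref{theorem: main theorem} by transporting every weighted root that appears to the single element $y$. By Theorem~\ref{theorem: main theorem} we may write $c_{uv}^w$ as a nonnegative linear combination of monomials $\nu_1(x_1)\cdots\nu_k(x_k)$, where the $\nu_i$ are distinct negative roots and each $x_i\in\cS(u,v;w)$. Under the hypothesis that $y\geq x$ for all $x\in\cS(u,v;w)$, every index satisfies $x_i\leq y$, so it suffices to show that each factor $\nu_i(x_i)$ is already a nonnegative linear combination of negative roots at $y$. Granting this, multiplying out the factors turns each monomial into a nonnegative combination of monomials in negative roots at $y$, and re-summing against the original nonnegative coefficients exhibits $c_{uv}^w$ as nonnegative at $y$.

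To treat a single factor $\nu(x)$ with $x\leq y$, I would first pass to simple roots. Since $\nu$ is an ordinary negative root, $\nu=\sum_{\alpha\in\Delta}c_\alpha(-\alpha)$ with all $c_\alpha\in\Z_{\geq 0}$; reading the explicit formula of Proposition~\ref{proposition: weighted}, the assignment $\nu\mapsto\nu(x)$ is additive in its root argument, so $\nu(x)=\sum_{\alpha}c_\alpha\,(-\alpha)(x)$ is a nonnegative combination of the negative simple roots at $x$. Because $x\leq y$, Lemma~\ref{lemma: positive} rewrites each $(-\alpha)(x)$ as a nonnegative linear combination of negative simple roots at $y$. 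Substituting and collecting coefficients presents $\nu(x)$ as a nonnegative combination of negative roots at $y$, as needed.

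The final ``in particular'' assertion is immediate from the definition of $\cS(u,v;w)$: if $y\geq u$ and $y\geq v$, then every $x\in\cS(u,v;w)=[w,u]\cap[w,v]$ satisfies $x\leq u\leq y$, so $y\geq x$ for all such $x$ and the main hypothesis of the corollary is met.

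The only genuine content beyond bookkeeping lies in the single-factor step, and the point I would verify most carefully is the additivity of $\nu\mapsto\nu(x)$ needed to reduce a general negative root to negative simple roots while preserving nonnegativity; this should follow directly from Proposition~\ref{proposition: weighted}. Lemma~\ref{lemma: positive} supplies the remaining, order-theoretic ingredient, and the operations of taking products of nonnegative combinations and re-summing preserve nonnegativity automatically, so no further estimates are required. Note that distinctness of the negative roots is lost after transport to $y$, but this is harmless, since nonnegativity at $y$ does not ask for a square-free expression (cf.\ the preceding remark).
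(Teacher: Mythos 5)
Your overall route is the same as the paper's: take the expansion from Theorem~\ref{theorem: main theorem} and transport each factor $\nu_i(x_i)$ to $y$ using Lemma~\ref{lemma: positive}, noting that products and sums of nonnegative combinations stay nonnegative and that distinctness is not required for nonnegativity at $y$. The one step that does not survive scrutiny is exactly the one you flagged: the map $\nu\mapsto\nu(x)$ is \emph{not} additive in $\nu$, because of the normalizing factor $\gcd(a_x,a_\nu)^{-1}$ in \eqref{e.weighted2} (the paper states explicitly, just after that equation, that $\beta\mapsto\beta(w)$ is not linear). So the identity $\nu(x)=\sum_\alpha c_\alpha\,(-\alpha)(x)$ is false as written. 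The repair is immediate, though: the map $\mu\mapsto\overline{\mu}(x)$ \emph{is} linear (Lemma~\ref{lemma: linear}), and $\nu(x)$ is a positive scalar multiple of $\overline{\nu}(x)$, so $\nu(x)$ is still a \emph{nonnegative combination} of the $(-\alpha)(x)$ even though it is not literally their sum with coefficients $c_\alpha$. Since nonnegativity at $y$ only asks for nonnegative coefficients, the conclusion is unaffected. Alternatively, you can skip the reduction to simple roots entirely: the final sentence of Lemma~\ref{lemma: positive} already asserts that for an arbitrary negative root $\beta$ and $x\le y$, $\overline{\beta}(x)$ is a nonnegative combination of the $\overline{\beta}_i(y)$, which is the single-factor statement you need after rescaling. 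With that substitution your argument coincides with the paper's.
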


In the case of the weighted Grassmannian, this implies
the positivity result of Abe and Matsumura, since their parameters
are positive scalar multiples of the negative simple roots at the maximal element
$w_0$ of $W$.

It is natural to ask whether Theorem \ref{theorem: main theorem} can be strengthened to say that
$c_{uv}^w$ is nonnegative at $w$.
Example \ref{e:nonexpress} shows that this is false for arbitrary antidominant $\chi$, but can hold for
certain antidominant $\chi$. In Example \ref{e:nonexpress},
this is true provided the entries of $\chi$ satisfy certain inequalities.  We refer to this example for further discussion.

The proof of Theorem \ref{theorem: main theorem} relies on a striking product formula
(Lemma \ref{lemma: lambdamultiply})
proved using the 
the non-weighted
Chevalley formula, together with the non-weighted positivity
result of \cite{Graham2001}.   The assumption
that \(\lambda\) is dominant (or antidominant) is used to show that the weighted
flag variety $X$ exists as a separated scheme, and for positivity.
However, many of the results of the paper do not depend on $X$;
they can be formulated and proved in terms
of the space $Z$, without assuming either that $\lambda$
is dominant or $\chi$ is antidominant.  
As observed in \cite{AbeMatsumura2015}, the assumption $\chi$ is antidominant in the positivity statement can be removed by
using a Schubert basis with respect to a Borel subgroup with respect to which $\chi$ is antidominant
(see Remark \ref{r:antidominant}).

\medskip

{\em Acknowledgments:} We would like to thank Arik Wilbert for contributing to the discussions in the early stages of this project,
and Allen Knutson for asking if the weighted structure constants are square-free.
This material is based upon work supported by a grant from the Institute for Advanced Study School of Mathematics.

\section{Equivariant cohomology and Chow groups of partial quotients}\label{section: Background}
We work with schemes over $\bC$ which can be equivariantly embedded in a smooth scheme.  Throughout this paper, we work with (equivariant)
cohomology and Borel-Moore homology groups with coefficients in a field \(\F\) of characteristic zero; the notation $H_*(X)$ and $H_*^G(X)$
denotes Borel-Moore homology groups (rather than singular homology
groups) with coefficients in $\F$.  As discussed in Section \ref{ss:Chow}, our results are also valid in the
setting of equivariant Chow groups (again with coefficients in $\F$).  Background on equivariant cohomology and Borel-Moore
homology can be found in \cite{AndersonFulton2024};
some basic properties of these theories are
also summarized in \cite{Graham2001}.  References
for the non-equivariant versions of these
theories are 
\cite[Appendix B]{Fulton1997} and
the appendix to Chapter 13 of \cite{CoxLittleSchenck2011}.
Basic definitions and results for equivariant Chow groups can be found in \cite{EdidinGraham1998}.
In this section, the notations $G$, $H$, $Z$, etc. are specific to this section; in particular we do not restrict to the case of weighted
flag varieties.
In subsequent sections, the notation has the meaning given in the introduction.

In this section, $Z$ will denote a variety with an action of an algebraic torus $H$ such that the restriction
of this action to a subtorus
$S$ is proper.   Let $T= H/S$, and let
$X = S \backslash Z$.  We refer to $X$ as a partial quotient, since it is a quotient by a subgroup; it has an action
of the quotient group $T$.  Abe and Matsumura showed that there is a pullback isomorphism between
the equivariant cohomology groups $H^*_T(X)$ and $H^*_H(Z)$ (see \cite[Lemma 3.3]{AbeMatsumura2015}).
In this paper we work with classes which are Poincar\'e dual to classes of fundamental classes of subvarieties.
For weighted flag varieties, these are scalar multiples of the classes
studied in \cite{AbeMatsumura2015}.  

Section \ref{ss:Poincare} contains some general results we use to identify these classes.
Proposition \ref{prop:Poincare} relates pullbacks of Poincar\'e dual classes to orders of stabilizer groups;
the first part of this proposition gives an alternative proof of \cite[Lemma 3.3]{AbeMatsumura2015}.  In addition,
we relate these results to equivariant Borel-Moore homology, and show that the Poincar\'e duals of the closures of cells
of an orbifold paving form a basis for equivariant cohomology (Proposition \ref{p:free-coh} and Corollary \ref{corollary: paving}).

Section \ref{ss:Chow} proves versions of these results for equivariant Chow groups.  Proposition \ref{prop:Chowpullback} 
extends \cite[Theorem 3]{EdidinGraham1998} to partial quotients of torus actions.  The proof of this proposition does not rely on 
\cite[Theorem 3]{EdidinGraham1998}, and in fact provides an alternative proof of that theorem in the special case of
torus actions.   Proposition \ref{p:Chowcommute} extends \cite[Theorem 4]{EdidinGraham1998} to partial quotients
in the case of torus actions, and in addition gives a more precise relation between
the pullbacks in equivariant Chow cohomology and homology.  The results of this section imply
that our results about weighted flag varieties hold in the setting of equivariant Chow groups.

\subsection{Equivariant cohomology, partial quotients, and Poincar\'e duality} \label{ss:Poincare}
We recall the definition of the Poincar\'e dual class of
a subvariety.
If $X$ is irreducible and rationally smooth of dimension $n$,
there is a cap product isomorphism
$\cap [X]: H^i(X) \to H_{2n -i}(X)$ (\cite[Prop.~13.A.4]{CoxLittleSchenck2011}); for completeness,
a proof
is given in the appendix to this paper.
If in addition $X$ has an action of a linear algebraic
group $G$, 
then the map
$\cap [X]: H^i_G(X) \to H_{2n -i}^G(X)$ is an isomorphism as well.
Given a closed subvariety $Y \subset X$, the Poincar\'e dual
class to $Y$ is the class $\delta_Y \in H^*(X)$ mapping
to $[Y] \in H_*(X)$ under the cap product isomorphism.  If $Y$ is $G$-invariant, $\delta_Y^G$ is the class
in $H^*_G(X)$ mapping to $[Y] \in H_*^G(X)$. More generally, given any 
$C \in H_*(X)$ (resp.~$H^G_*(X)$), we can define $\delta_C \in H^*(X)$ (resp.~$\delta^G_C$) in the same way.

\begin{lemma} \label{lem.finitepullback}
Suppose that \(\eta\,\colon Z \to Z'\) is a finite 
morphism of rationally smooth irreducible algebraic varieties.
Suppose further that \(\eta_\ast\,\colon H_\ast(Z)\to H_\ast(Z')\) is an isomorphism of vector spaces.
Then \(\eta^\ast\,\colon H^\ast(Z')\to H^\ast(Z)\) satisfies
\begin{equation}
\eta^\ast(\delta_C)=\dfrac{d_Z}{d_C}\delta_Y,
\end{equation}
where \(Y'\subset Z'\) is a subvariety, $C \in H_\ast(Z)$ satisfies $\eta_*C = d_C [Y']$, and $\eta_*[Z] = d_Z [Z']$.

\begin{proof}
We have
\begin{equation}
\eta_\ast(\eta^\ast(\delta_{Y'})\cap[Z]) = \delta_{Y'}\cap\eta_\ast[Z] = 
\delta_{Y'}\cap d_Z[Z'] = d_Z[Y'] = \eta_* (\frac{d_Z}{d_C} C).
\end{equation}
Therefore, since $\eta_*$ is an isomorphism,
\begin{equation}
 \eta^\ast(\delta_{Y'} )\cap [Z]  = \dfrac{d_Z}{d_C} C = \dfrac{d_Z}{d_C}\delta_C \cap [Z],
\end{equation}
so $\eta^\ast (\delta_{Y'}) = \frac{d_Z}{d_C}\delta_C$, as claimed.
\end{proof}
\end{lemma}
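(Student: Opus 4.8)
The plan is to reduce the asserted cohomological identity to an identity in Borel--Moore homology, where the hypotheses apply most directly, and then invert the cap-product isomorphism. Since $Z$ and $Z'$ are rationally smooth and irreducible, the cap products $\cap\,[Z]$ and $\cap\,[Z']$ are isomorphisms, so the Poincar\'e duals are characterized by $\delta_{Y'}\cap[Z']=[Y']$ and $\delta_C\cap[Z]=C$. Thus it suffices to prove $\eta^\ast(\delta_{Y'})\cap[Z]=\frac{d_Z}{d_C}\,\delta_C\cap[Z]$, after which injectivity of $\cap\,[Z]$ yields the claim.

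To establish this, I would apply $\eta_\ast$ to the left-hand side and invoke the projection formula for the finite (hence proper) morphism $\eta$, giving $\eta_\ast\bigl(\eta^\ast(\delta_{Y'})\cap[Z]\bigr)=\delta_{Y'}\cap\eta_\ast[Z]$. Substituting $\eta_\ast[Z]=d_Z[Z']$ and using $\delta_{Y'}\cap[Z']=[Y']$ turns this into $d_Z[Y']$. On the other side, the normalization $\eta_\ast C=d_C[Y']$ gives $\eta_\ast\bigl(\frac{d_Z}{d_C}C\bigr)=d_Z[Y']$ as well. Hence $\eta_\ast$ carries both $\eta^\ast(\delta_{Y'})\cap[Z]$ and $\frac{d_Z}{d_C}C$ to the same class in $H_\ast(Z')$.

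At this point the hypothesis that $\eta_\ast$ is an isomorphism does the essential work: it lets me cancel $\eta_\ast$ to conclude $\eta^\ast(\delta_{Y'})\cap[Z]=\frac{d_Z}{d_C}C=\frac{d_Z}{d_C}\,\delta_C\cap[Z]$, and applying the inverse of the cap-product isomorphism then gives $\eta^\ast(\delta_{Y'})=\frac{d_Z}{d_C}\,\delta_C$. The only routine point requiring care is the applicability of the projection formula, which needs $\eta$ proper; this is automatic since $\eta$ is finite. I expect the key conceptual step to be precisely this cancellation of $\eta_\ast$: it is where the isomorphism hypothesis enters, and without it the homological identity would not propagate back to an identity in cohomology.
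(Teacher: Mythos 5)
Your proposal is correct and follows essentially the same route as the paper: apply $\eta_\ast$, use the projection formula for the proper (finite) map $\eta$ together with $\eta_\ast[Z]=d_Z[Z']$ and $\eta_\ast C=d_C[Y']$, cancel $\eta_\ast$ using the isomorphism hypothesis, and invert the cap-product isomorphism. The only difference is that you explicitly flag the properness needed for the projection formula, which the paper leaves implicit.
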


\begin{remark} \label{rem.finitepullback}
In the setting of Lemma \ref{lem.finitepullback}, suppose that $C = [Y]$ is the fundamental cycle of the scheme-theoretic inverse image \(Y=\eta^{-1}(Y')\).
Then \(Y\) is equidimensional, by \cite[Theorem 5.11]{AtiyahMacdonald1969}.  Each component of $Y$ maps surjectively onto $Y'$, so \(\eta_\ast[Y]=d_Y[Y']\) for some integer $d_Y$.
In this case, we will write $d_Y$ for $d_C$.  
\end{remark}

\begin{lemma} \label{lem.quotientdegree}
Let $Z$ be a variety with an action of a finite abelian group $F$, and
let $F^Z$ denote the stabilizer
in $F$ of a general point of $Z$.  The degree of the map
$Z \to Z' = F \backslash Z$ is $|F/F^Z|$.
\end{lemma}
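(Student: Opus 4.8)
The plan is to compute $\deg\pi$, where $\pi\colon Z \to Z' = F\backslash Z$, as the number of points in a general fibre and to identify that fibre with an $F$-orbit. Since we work over $\bC$, the function field extension $\bC(Z)/\bC(Z')$ is separable, so the finite surjective map $\pi$ is generically \'etale: there is a dense open $U' \subseteq Z'$ over which $\pi$ restricts to a finite \'etale morphism, and each fibre over a point of $U'$ is a reduced scheme of exactly $\deg\pi$ points.

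Because $Z'$ is a geometric quotient of $Z$ by the finite group $F$, the set-theoretic fibre of $\pi$ over any point is a single $F$-orbit. Hence for any $z \in \pi^{-1}(U')$ the orbit $F\cdot z$ equals the fibre $\pi^{-1}(\pi(z))$ and so has exactly $\deg\pi$ elements; that is, $|F/F_z| = \deg\pi$, where $F_z$ denotes the stabiliser of $z$ in $F$.

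It remains to evaluate $F_z$ for a suitably general $z$. For each $f \in F$ the fixed locus $Z^f = \{z \in Z : f\cdot z = z\}$ is closed, and it equals $Z$ precisely when $f$ lies in the kernel $K$ of the action; otherwise it is a proper closed subset. As $F$ is finite, $\bigcup_{f \notin K} Z^f$ is a proper closed subset of the irreducible variety $Z$. Choosing $z$ in the dense open complement of this set that also lies in $\pi^{-1}(U')$, we get $f\cdot z = z$ if and only if $f \in K$, so that $F_z = K$. This both shows that the generic stabiliser $F^Z$ is well defined and equal to $K$, and gives $\deg\pi = |F/F_z| = |F/F^Z|$.

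The substantive step is the fixed-locus computation of the last paragraph; the reduction of $\deg\pi$ to an orbit count rests on the standard facts that, in characteristic zero, a finite quotient map is generically \'etale and that its fibres are the $F$-orbits. Abelianness of $F$ is not actually required for the statement, although it ensures that $K$, and hence the stabiliser of every point, is normal in $F$.
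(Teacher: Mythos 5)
Your proof is correct and takes essentially the same route as the paper: both arguments come down to showing that the stabilizer of a general point is the kernel of the action (equivalently, that $F^Z$ acts trivially on all of $Z$, via the closedness of the fixed loci $Z^f$), and that the quotient map is therefore generically $|F/F^Z|$-to-one. Your version just makes explicit the characteristic-zero generic-\'etaleness step that the paper leaves implicit when it asserts that a generically free quotient by $F/F^Z$ has degree $|F/F^Z|$.
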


\begin{proof}
The group $F^Z$ acts trivially on $Z$, and $Z' = (F/F^Z) \backslash Z$.
Since there is an open subset of
$Z$ where the action of the quotient group $F/F^Z$ is free, the
quotient map has degree $|F/F^Z|$.
\end{proof}

Suppose that $H$ is an algebraic torus with character group $\Lambda$.
The equivariant cohomology of a point is denoted by $H^*_H$, and is isomorphic
to the symmetric algebra $S(\H^*)$, where $\H = \Lambda \otimes \F$.
Suppose that $H$ is a torus and $S$ is a 
subtorus of $H$.  The projection $\phi: H \to T = H/S$ induces a pullback map
$H^*_T \to H^*_H$.

As above, $Z$ denotes a variety with an action of an algebraic torus $H$.  Suppose that the restriction
of this action to a subtorus
$S$ is proper, that is, the map $S \times Z \to Z \times Z$ taking
$(s,z)$ to $(z, sz)$ is proper.  This implies that a quotient $X = S \backslash Z$ exists in general
as a separated algebraic space (\cite{Kollar1997}, \cite{KeelMori1997}); under some additional hypotheses
(for example if $Z$ is quasiprojective
with a linearized $S$-action), $X$ is a scheme.
Since $S$ acts properly, the stabilizer group $S^z$ of any
point in $Z$ is finite.  Given a subvariety $Y$ of $Z$, there is an open subset of
$Y$ such that the stabilizer group $S^y$ is the same for any point in this subset.
Let $S^Y = S^y$ for such a point $y$; in other words, $S^Y$ is the stabilizer
group of a general point in $Y$.  Let $e_Y = |S^Y|$ denote the number of elements in $S^Y$.

Let $\pi: Z \to X$ and $\pi: H \to T$ denote the quotient maps (although we have used the
same notation $\pi$ for both maps, the context will indicate which is meant).
There is a map $\pi^*: H^*_T(X) \to H^*_H(Z)$,
as observed in \cite{AbeMatsumura2015}.  Here we use the definition
of this map given in \cite{AndersonFulton2024}, which uses
the fact that $\pi$ is $\pi$-equivariant
(that is, $\pi(h x) = \pi(h) \pi(x)$).  
We recall briefly
the definition of this map.
Let $E_H$ and $E_T$ be models for the universal spaces for
$H$ and $T$ such that there is a $\pi$-equivariant map $E_H \to E_T$.
We obtain a map
\begin{equation} \label{equation: pimap}
 E_H \times^H Z \longrightarrow E_T \times^T X.   
\end{equation}
The spaces $ E_H \times^H Z $ and $E_T \times^T X$ are examples of ``mixed spaces";
we often denote them by $Z_H$ and $X_T$.
By definition, $H^*_H(Z) = H^*(Z_H)$ and $H^*_T(X) = H^*(X_T)$, and
the map $\pi^*: H^i_T(X) \to H^i_H(Z)$ is by definition
the pullback $H^i(X_T) \to H^i(Z_H)$.  This
map is independent of the choices of $E_H$ and $E_T$ and the
map $E_H \to E_T$ (see \cite[Exercise 2.1]{AndersonFulton2024}).
The map $\pi^*$ is $H^*_T$-linear.  Thus, $H^*_H(Z)$ has an $H^*_T$-module structure obtained
from the $H^*_H$-module structure
via the map $H^*_T \to H^*_H$.

\begin{remark} \label{r:torusproduct}
We can find a subtorus $T_1$ of $H$ such that the composition $T_1 \to H \stackrel{\pi}{\rightarrow} T = H/S$ is an isomorphism.
Then $H \cong S \times T_1$.  By identifying $T_1$ with $T$ we may assume $H = S \times T$.  Then
$E_H = E_S \times E_T$ and the map $E_H \to E_T$ is projection on the second factor.  If $H$ acts on $Z$, the
$T$-action on $Z$ obtained by the identification of $T$ with $T_1$ depends on the choice of $T_1$, but the
$T$-action on $X = S \backslash Z$ does not.
\end{remark}

\begin{remark} \label{r: changegroups}
The $T = H/S$-action on $X = S \backslash Z$ induces an $H$-action via the map $H \to T$.  The map $Z \to X$ is
$H$-equivariant, so we obtain a pullback $H^*_H(X) \to H^*_H(Z)$.  To avoid confusion with the map $\pi^*$ defined above, we denote this
map by $\pi^*_H$.  The maps $\pi^*$ and $\pi^*_H$ are closely related.  The map \eqref{equation: pimap} factors as
\begin{equation} \label{equation: pimap2}
 E_H \times^H Z \longrightarrow  E_H \times^H X \stackrel{p}{\longrightarrow} E_T \times^T X. 
\end{equation}
Therefore, $\pi^* = \pi_H^* \circ p^*$.  We can identify $H$ with $S \times T$, and 
then $H^*_H(X) = H^*_S \otimes H^*_T(X)$.  Under this identification, 
$p^*(c) = 1 \otimes c$, so $\pi^*(c) = \pi^*_H(1 \otimes c)$.
\end{remark}

\begin{lemma} \label{l.quotient}
With notation as above, let $F$ be the subgroup of $S$
generated by the stabilizer groups of all $z \in Z$.  Then $F$ is finite.  Let $S' = S/F$ and $Z' = F \backslash Z$.  
Then $S$ acts on $Z'$ with constant stabilizer group $F$.  The $S'$-action on $Z'$ is free, and 
$$
S' \backslash Z'  \cong S \backslash Z' \cong S \backslash Z = X.
$$
\end{lemma}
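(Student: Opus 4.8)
The plan is to split the statement into the finiteness of $F$, which is where properness does the real work, and a sequence of formal verifications about the descended action and the iterated quotients.

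For the finiteness of $F$, I would first bound the orders $|S^z|$ uniformly in $z$. Form the inertia $I=\{(s,z)\in S\times Z : sz=z\}$, which is the preimage of the diagonal $\Delta_Z\subseteq Z\times Z$ under the action map $(s,z)\mapsto(z,sz)$. By hypothesis this map is proper, hence closed, and $\Delta_Z$ is closed since $Z$ is separated, so $I$ is closed in $S\times Z$. The projection $p\colon I\to Z$ is then proper with fibers $p^{-1}(z)\cong S^z$, which are finite because the action is proper; a proper quasi-finite morphism of Noetherian schemes is finite, so $p$ is finite. The fibers of a finite morphism have uniformly bounded cardinality (by upper semicontinuity of fiber length, $p_*\mathcal O_I$ being coherent), say $|S^z|\le N$ for all $z$. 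Every element of $S^z$ then has order dividing $N!$, so $S^z$ lies in the $N!$-torsion subgroup $S[N!]$, which is finite because $S$ is a torus. Hence $F=\langle S^z : z\in Z\rangle\subseteq S[N!]$ is finite. I expect this boundedness to be the only genuine obstacle; everything after is formal.

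Next, since $S$ is abelian, $F$ is normal, so $S$ acts on $Z'=F\backslash Z$ via $s\cdot Fz=F(sz)$. To compute the stabilizer of $Fz$, note that $s$ fixes $Fz$ iff $sz=fz$ for some $f\in F$, i.e. $f^{-1}s\in S^z$, i.e. $s\in F\cdot S^z$; since $S^z\subseteq F$ by construction, this equals $F$. Thus the stabilizer of every point of $Z'$ is exactly $F$, so $F$ is the kernel of the $S$-action on $Z'$, the action factors through $S'=S/F$, and the $S'$-stabilizer of any point is $F/F$, the trivial group, so $S'$ acts freely.

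Finally, for the three identifications: $S'\backslash Z'\cong S\backslash Z'$ because $F$ acts trivially on $Z'$, so the $S$-orbits and the $S'=S/F$-orbits on $Z'$ coincide; and $S\backslash Z'=S\backslash(F\backslash Z)\cong S\backslash Z$ is the standard iterated-quotient isomorphism, which I would check by tracing points: $z_1,z_2\in Z$ have the same image iff some $s\in S$ sends $Fz_1$ to $Fz_2$, i.e. $sz_1\in Fz_2\subseteq Sz_2$ (using $F\subseteq S$), equivalently $Sz_1=Sz_2$. I would phrase these at the level of the separated-algebraic-space quotients guaranteed by the running hypotheses, matching universal properties so that the set-theoretic bijections upgrade to isomorphisms.
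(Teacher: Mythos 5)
Your proposal is correct and follows the same outline as the paper's proof: finiteness of $F$, the computation showing every point of $Z'$ has $S$-stabilizer exactly $F$, and the formal identifications of the iterated quotients. The one place where you genuinely diverge is the finiteness of $F$: the paper dispatches this in one line by asserting that only finitely many subgroups of $S$ occur as stabilizers of points of $Z$ (each of them finite), whereas you prove a uniform bound on $|S^z|$ by forming the inertia scheme $I\subseteq S\times Z$, observing that $I\to Z$ is proper and quasi-finite, hence finite with bounded fiber cardinality $N$, and concluding $F\subseteq S[N!]$. Your version is more self-contained and makes explicit exactly where properness is used; the paper's version is shorter but leans on a standard fact it does not justify. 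One small caveat on the last assertion: in this paper ``free'' is used in the sense of \cite[Lemma 8]{EdidinGraham1998}, which is deduced from the conjunction of \emph{properness} of the $S'$-action on $Z'$ and triviality of stabilizers (this stronger freeness is what is later needed to treat $Z'\to X$ as a principal bundle). You establish trivial stabilizers but do not remark that the $S'$-action on $Z'$ inherits properness from the $S$-action on $Z$; this is easy (quotients by the finite group $F$ preserve properness) but should be said if ``free'' is to carry the meaning the paper requires.
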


\begin{proof}
There are only finitely
many subgroups of $S$ which occur as the stabilizer group of 
a point in $Z$.  Each is a finite group, so the subgroup $F$ they generate is finite.

We claim that $S$ acts on $Z'$ with constant stabilizer group $F$.
Indeed, suppose $z' =F z$ is an element of $Z' = F \backslash Z$.  The inclusion
$F \subset S^{z'}$ is immediate, so we must prove the reverse inclusion.
Suppose $s \in S^{z'}$.
Then $sz = f z$ for some $f \in F$.  Hence $f^{-1} s \in S^z \subset F$,
so $s \in F$.  We conclude that $S^{z'} \subset F$, proving the claim.
This implies that the $S$-action on $Z'$ yields an action of $S/F = S'$ on $Z'$, and
then $S' \backslash Z' \cong S \backslash Z' \cong X$.  The $S'$-action on $Z'$ is 
proper (since the $S$-action on $Z$ is proper) and has trivial stabilizers, so it is free
(\cite[Lemma 8]{EdidinGraham1998}).
\end{proof}

We remark that if $X$ is a scheme, then so is $Z'$; this can be deduced from the fact that the quotient map $\pi: Z \to X$ is affine.

The first part of the following proposition is an analogue of \cite[Lemma 3.3]{AbeMatsumura2015}, with a different proof.
The technique used in the proof of taking the quotient by a finite subgroup $F$ to reduce to a free action of a quotient group  is not new; it appears for example in \cite{DuistermaatHeckman1982}.

\begin{proposition} \label{prop:Poincare}
With notation as above, we have:

(1) $\pi^*: H^i_T(X) \to H^i_H(Z)$ is an isomorphism.

(2) Suppose that $Z$ is rationally smooth and that $Y$ is an
$S$-invariant subvariety of $Z$.  Let $\overline{Y} = S \backslash Y
\subset X$.  Then 
\begin{equation}\label{equation: pullback}
\pi^\ast(\delta_{\overline Y}^{ T})=\frac{e_Y}{e_Z}\delta_Y^H\in H^\ast_H(Z).
\end{equation}
\end{proposition}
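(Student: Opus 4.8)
The plan is to use the finite group $F$ of Lemma \ref{l.quotient} to factor the quotient map $\pi\colon Z \to X$ as $Z \to Z' \to X$, where $q\colon Z \to Z' = F\backslash Z$ is the quotient by $F$ and $\pi'\colon Z' \to X = S'\backslash Z'$ is the quotient by $S' = S/F$, which acts \emph{freely} on $Z'$. Passing to the induced actions of $H' = H/F = S' \times T$ on $Z'$ and of $H$ on $Z$, this factorization yields $\pi^* = q^* \circ \pi'^*$, with $\pi'^*\colon H^*_T(X) \to H^*_{H'}(Z')$ and $q^*\colon H^*_{H'}(Z') \to H^*_H(Z)$. For part $(1)$ I would treat the two factors separately. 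Since $S'$ acts freely, the mixed-space map $E_{H'} \times^{H'} Z' \to E_T \times^T X$ underlying $\pi'^*$ is a fibration with contractible fiber $E_{S'}$, hence a homotopy equivalence, so $\pi'^*$ is an isomorphism. For $q^*$, the point is that $F$ is finite and $H$ is connected, so $H$ (and in particular $F$) acts trivially on $H^*(Z;\F)$; thus $H^*(Z';\F) = H^*(Z;\F)^F = H^*(Z;\F)$, and comparing the Leray--Serre spectral sequences of $Z \to Z_H \to BH$ and $Z' \to Z'_{H'} \to BH'$ (using that $BH \to BH'$ is a rational equivalence, as its fiber $BF$ is rationally acyclic) shows $q^*$ is an isomorphism on $E_2$, hence an isomorphism. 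Therefore $\pi^*$ is an isomorphism.

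For part $(2)$ I would follow the same factorization and compute the effect on the Poincar\'e dual class. Set $Y' = q(Y) = F\backslash Y \subset Z'$, so that $\overline{Y} = S'\backslash Y'$. First, because $\pi'$ is a free quotient, the homotopy equivalence $E_{H'} \times^{H'} Z' \to E_T \times^T X$ carries $[Y'_{H'}]$ to $[\overline{Y}_T]$ and matches rationally smooth structures, so it matches the corresponding Poincar\'e dual classes with no degree factor; concretely, $\pi'^*(\delta_{\overline{Y}}^{T}) = \delta_{Y'}^{H'}$. (Here one checks that the codimension of $\overline{Y}$ in $X$ equals that of $Y'$ in $Z'$, so the degrees match.)

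The remaining step is to compute $q^*(\delta_{Y'}^{H'})$, and this is exactly where the stabilizer orders enter. Applying the (equivariant form of the) pullback formula of Lemma \ref{lem.finitepullback} to the finite quotient $q$, noting that $Y = q^{-1}(Y')$ so that Remark \ref{rem.finitepullback} applies, gives
\[
q^*(\delta_{Y'}^{H'}) = \frac{d_Z}{d_Y}\,\delta_Y^{H},
\]
where $d_Z$ is the degree of $q\colon Z \to Z'$ and $d_Y$ is the degree of $q|_Y\colon Y \to Y'$. By Lemma \ref{lem.quotientdegree} applied to the finite abelian group $F$, these degrees are $d_Z = |F/F^Z|$ and $d_Y = |F/F^Y|$. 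Since $F$ contains every stabilizer, the stabilizer in $F$ of a general point equals the stabilizer in $S$, so $F^Z = S^Z$ and $F^Y = S^Y$, giving $d_Z = |F|/e_Z$ and $d_Y = |F|/e_Y$. Hence $d_Z/d_Y = e_Y/e_Z$, and combining with the previous step yields $\pi^*(\delta_{\overline{Y}}^{T}) = q^*(\pi'^*(\delta_{\overline{Y}}^{T})) = \frac{e_Y}{e_Z}\,\delta_Y^{H}$, as claimed.

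I expect the main obstacle to be making the equivariant version of Lemma \ref{lem.finitepullback} rigorous, since the mixed spaces $Z_H$ and $Z'_{H'}$ are infinite-dimensional. I would handle this by passing to finite-dimensional approximations $E_H^N \times^H Z$ and the corresponding approximation for $Z'_{H'}$, on which the induced map $\tilde q$ is a finite morphism; one must then verify that these approximations are rationally smooth (a finite quotient of a rationally smooth variety is rationally smooth with $\F$-coefficients) and that $\tilde q_*$ is an isomorphism on Borel--Moore homology. The latter follows from part $(1)$ together with Poincar\'e duality, or directly from the triviality of the $F$-action on $H_*(Z;\F)$. The other points requiring care are the identifications $F^Z = S^Z$ and $F^Y = S^Y$ and the equality $Y = q^{-1}(Y')$ of scheme-theoretic inverse image, which are what allow Lemma \ref{lem.quotientdegree} and Remark \ref{rem.finitepullback} to be applied to produce precisely the ratio $e_Y/e_Z$.
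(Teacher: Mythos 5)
Your proposal is correct and follows essentially the same route as the paper: both reduce to the finite subgroup $F$ of Lemma \ref{l.quotient}, split $\pi$ into a free quotient by $S'$ and a finite quotient involving $F$, and extract the factor $e_Y/e_Z$ from Lemma \ref{lem.finitepullback} via the generic-stabilizer degrees $|F/F^Z|$ and $|F/F^Y|$ of Lemma \ref{lem.quotientdegree}. The only organizational difference is that the paper splits your single map $q^*$ into two steps ($f^*$, the quotient by $F$ acting on the $Z$-factor, and $g^*$, the change of group from $H$ to $H'$ over $Z'$, handled by a factorization of universal spaces), whereas you treat it as one finite quotient of mixed spaces and argue part (1) by a Leray--Serre comparison.
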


\begin{proof}
Let $H' = H/F$, and as in the previous lemma, let $Z' = F \backslash Z$, $S' = S/F$.  Factor the map \eqref{equation: pimap} as
\begin{equation} \label{e: pimap-factor}
E_H \times^H Z \stackrel{f}{\longrightarrow} E_H \times^H Z'  \stackrel{g}{\longrightarrow} E_{H'} \times^{H'} Z'   \stackrel{h}{\longrightarrow} E_T \times^T X.   
\end{equation}
These maps induce pullbacks
\begin{equation} \label{e: pimap-pullbacks}
H^*_T(X) \stackrel{h^*}{\longrightarrow} H^*_{H'}(Z') \stackrel{g^*}{\longrightarrow} H^*_H(Z')  \stackrel{f^*}{\longrightarrow} H^*_H(Z).
\end{equation}
We will prove that each of $f^*$, $g^*$, and $h^*$ are isomorphisms, so their composition
$\pi^* = f^*  \circ g^* \circ h^* $ is as well.

{\em Step 1.}  The map $f$ is the quotient map
$$
E_H \times^H Z \longrightarrow E_H \times^H (F \backslash Z) \cong F \backslash (E_H \times^H Z),
$$
where the $F$-action on $E_H \times^H Z$ is defined as the action on the second factor $Z$.
Therefore, the pullback map gives an isomorphism $f^*: H^*_{H'}(Z') \to H^*_H(Z)^F$.
The induced action of $F$ on $H^*(E_H \times^H Z) = H^*_H(Z)$
is trivial.  Indeed, the $F$-action is the
restriction of an action of the connected group $S$ and the action
of a connected group induces a trivial action on cohomology
(as the map induced by any element of the group is homotopic to the identity map). 
We conclude that $f^*: H^*_{H'}(Z') \to H^*_H(Z)$ is an isomorphism.

{\em Step 2.}  We can factor the map $g$ as follows.  Let $E$ be a model
for $E_H$, and $E'$ a model for $E_{H'}$,  Then $H$ acts on $E'$ via the map
$H \to H'$, and $E \times E'$, with the product action of $H$, is also a model for $E_H$.
Then $g$ can be written as the composition
\begin{equation} \label{e: g-factor}
(E \times E') \times^H Z' \stackrel{p}{\longrightarrow}  (E/F \times E') \times^H Z' = (E/F \times E') \times^{H'} Z' \stackrel{q }{\longrightarrow} E' \times^{H'} Z',
\end{equation}
where the middle equality is because the subgroup $F$ acts trivially on $(E/F \times E')$ and on $Z'$.  The map $q$ is a fiber bundle
with fibers $E/F$.  Since the cohomology of $E/F$ vanishes in positive degrees, $q^*$ is an isomorphism in cohomology.  The map $p$ is a quotient
map by $F$, and arguing as in Step 1 shows that $p^*$ is an isomorphism.  Hence $g^*: H^*_{H'}(Z') \to H^*_H(Z)$ is an isomorphism.

{\em Step 3.}  As in Remark \ref{r:torusproduct}, we may assume $H' = S' \times T$ and $E_{H'} = E_{S'} \times E_T$.  Then
$$
E_{H'} \times^{H'} Z' = (E_{S'} \times E_T) \times^{S' \times T} Z
\cong E_{S'} \times^{S'} (E_T \times^T Z').
$$
Also,
$$
E_T \times^T X \cong E_T \times^T (Z'/S') = (E_T \times^T Z')/S'.
$$
Therefore, $h$ can be identified with the map
$$
E_{S'} \times^{S'} (E_T \times^T Z') \longrightarrow (E_T \times^T Z')/S'.
$$
So $h^*$ can be identified with the pullback $h^*: H^*((E_T \times^T Z')/S') \to H^*_{S'}(E_T \times^T Z')$.
Since $S'$ acts freely on $Z'$, it acts freely on $E_T \times^T Z'$, and therefore the pullback is an isomorphism.

Combining the results of Steps 1-3, we see that $\pi^* = f^*  \circ g^* \circ h^* $ is an isomorphism.  This proves (1).

We now prove (2).  Let $Y' = F\backslash Y$, so $\overline{Y} = S \backslash Y = S' \backslash Y'$.  We have
$\pi^*(\delta_{\overline Y}^{ T}) = f^*  \circ g^* \circ h^* (\delta_{\overline Y}^{ T}) $.  
Since $S'$ acts freely on $E_T \times^T Z'$, the map $h$ is an $S'$-principal bundle map, so it is smooth.
Under a smooth
map, the pullback of the Poincar\'e dual of a variety is the
Poincar\'e dual of the inverse image.  Hence,
$$
h^* \gd_{E_T \times^T {\overline Y}} = \gd_{E_{S'} \times^{S'} (E_T \times^T Y') } = \gd_{E_{H'} \times^{H'} Y'}.
$$
In the language of equivariant cohomology, this states that $h^* \gd^T_{\overline{Y}} = \gd^{H'}_{Y'}$.

Next, using the factorization \eqref{e: g-factor} of the map $g$, we see that
$$
g^* \gd_{E_{H'} \times^{H'} Y'} = p^* q^* \gd_{E' \times^{H'} Y'} = p^* \gd_{(E/F \times E' ) \times^H Y'} = \gd_{(E \times E') \times^H Y'}.
$$
Here, the second equality is because $q$ is a smooth map.  The third equality follows from Lemma \ref{lem.finitepullback}, because $p$ is a quotient map where the group
$F$ acts with trivial stabilizers.  In equivariant language, we see that $g^*  \gd^{H'}_{Y'} =  \gd^H_{Y'}$.

Finally, the degree of the map $f: E_H \times^H Z \to E_H \times^H (F \backslash Z)$ is the same as the degree of the map $Z \to F \backslash Z$, which
by Lemma \ref{lem.quotientdegree} is $|F/F^Z|$.  Similarly, the degree of $E_H \times^H Y \to E_H \times^H (F \backslash Y)$ is $|F/F^Y|$.  Hence, by Lemma
\ref{lem.finitepullback},
$$
f^* \gd_{E_H \times^H Y'} = \frac{|F|/|F^Z|}{|F|/|F^Y|} \delta_{E_H \times^H Y}
= \frac{|F^Y|}{|F^Z|} \delta_{E_H \times^H Y} = \frac{e_Y}{e_Z} \delta_{E_H \times^H Y},
$$
where the last equality is because $|F^Y| = |S^Y| = e_Y$ and $|F^Z| = |S^Z| = e_Z$.  In equivariant language, this says that
$f^*  \gd^H_{Y'} =   \frac{e_Y}{e_Z} \gd^H_Y$.

Combining what we have proved, we see that 
$$
\pi^* (\delta_{\overline Y}^{ T})= f^*g^*h^* (\delta_{\overline Y}^{ T}) =  f^*g^*( \gd^{H'}_{Y'} )=  f^*(\gd^H_{Y'} )= \frac{e_Y}{e_Z} \gd^H_Y,
$$
proving (2).
\end{proof}

With notation as above, assume that $Z$ (and therefore $X$) are rationally
smooth.  Then there are isomorphisms $\cap [X]_T: H^*_T(X) \to H^T_*(X)$
and $\cap [Z]_H: H^*_H(Z) \to H^H_*(Z)$. Define the homology
pullback $\pi^*: H^T_*(X) \to H^H_*(Z)$ so that the following diagram commutes:
\begin{equation} \label{e:homology-commute}
\begin{CD}
H^*_H(Z) @>{\cap [Z]_H}>> H_*^H(Z) \\
@A{\pi^*}AA     @AA{\pi^*}A \\
H^*_T(X) @>{\cap \frac{1}{|S_Z|} [X]_T}>> H_*^T(X).
\end{CD}
\end{equation}
The factor of $\frac{1}{|S_Z|}$ appears for compatibility with equivariant Chow groups (see Section \ref{ss:Chow}).
It means that
\begin{equation} \label{e.BMpullback}
(\pi^* \alpha) \cap [Z]_H =  \frac{1}{e_Z} \pi^*(\alpha \cap [X]_T) .
\end{equation}
Taking $\ga = \gd^T_{\overline{Y}}$, the left side equals $\frac{e_Y}{e_Z} [Y]_H$, while the right side is $\frac{1}{e_Z} \pi^* [\overline{Y}]_T$.  Hence 
\begin{equation} \label{e.BMpullback2}
\pi^*[\overline{Y}]_T = e_Y [Y]_H,
\end{equation}
which is consistent with the pullback \eqref{e.Chowpullback} for equivariant Chow groups.

A variety $X$ is said to be paved by subvarieties $U_{pq}$ if $X$
has a filtration $X_0 \subset X_1 \subset \cdots \subset X_r = X$ by
closed subvarieties $X_p$ such that $X_p \setminus X_{p-1}$ is a disjoint
union of the subvarieties $U_{pq}$.  We call this an orbifold paving
if each $U_{pq}$ is a quotient of the affine
space $\mathbb{A}^{k_{pq}}$ by a finite group $F_{pq}$.  If in addition $T$ acts on
$X$ preserving the paving, and the $T$-action on $U_{pq}$
is induced by a linear action of $T$ on $\mathbb{A}^{k_{pq}}$ which commutes with the $F_{pq}$ action,
we call this a $T$-invariant orbifold paving.  The $U_{pq}$ are called cells of the paving.

\begin{remark} \label{remark:orbifold}
In our applications to weighted flag varieties, the sets $U_{pq}$ will arise as $S \backslash (C \times \C^*_{\mu})$.  Here $C$ is isomorphic to affine space
with a linear $H$-action and $\mu: H \to \C^*$ is a character of $H$ which is nontrivial on $S$.  Then $H$ acts on $C \times \C^*_{\mu}$, so $T$ acts on the quotient $S \backslash (C \times \C^*_{\mu})$.
Observe that $\C^*_{\mu} \cong H/H_1 = S/F$, where $H_1$ is the kernel of $\mu$, and $F = S \cap H_1$ is finite.
Therefore 
$$
S \backslash (C \times \C^*_{\mu}) \cong S \backslash (C \times S/F) \cong F \backslash C,
$$
where the second isomorphism is given by the map $S(c, sF) \mapsto F s^{-1} c$.  This exhibits $S \backslash (C \times \C^*_{\mu})$ as a
quotient of affine space by a finite group.
\end{remark}

If \(X\) is a \(G\)-variety such that \(H_*^G(X)\) is the free $H^*_G$-module generated by $[X]_G$,  then we call \(X\) a \emph{\(G\)-acyclic} variety.  

\begin{proposition}\label{p:free-coh}
Suppose \(X\) is a \(T\)-variety admitting a $T$-invariant paving by \(T\)-acyclic subvarieties \(U_{pq}\).  Let $V_{pq}$ denote the closure of $U_{pq}$ in $X$.
Then \(H_*^T(X)\) is a free \(H_T^*\)-module with basis given by the fundamental classes \([V_{pq}]\).  
If \(X\) is rationally smooth, then \(H^*_T(X)\) is a free \(H_T^*\)-module with basis given by \(\delta_{V_{pq}}^T\).
\end{proposition}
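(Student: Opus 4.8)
The plan is to prove the homology statement by induction on the paving filtration $X_0 \subset X_1 \subset \cdots \subset X_r = X$, using the long exact sequence in equivariant Borel--Moore homology attached to a closed subvariety and its open complement, and then to deduce the cohomology statement from the cap product isomorphism. Write $Y_p = X_p \setminus X_{p-1}$ for the $p$-th layer (with $X_{-1} = \emptyset$). First I would record two structural observations. Since $U_{pq} \subseteq X_p$ and $X_p$ is closed, $\overline{U_{pq}} \subseteq X_p$; moreover the paving structure gives $\overline{U_{pq}} \setminus U_{pq} \subseteq X_{p-1}$, so the cells in a fixed layer are closed, hence clopen, in $Y_p$. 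Consequently Borel--Moore homology is additive over this decomposition, $H_*^T(Y_p) = \bigoplus_q H_*^T(U_{pq})$, and by $T$-acyclicity each summand is the free $H_T^*$-module generated by the fundamental class $[U_{pq}]_T \in H_{2k_{pq}}^T(U_{pq})$, where $k_{pq} = \dim U_{pq}$. (Equivalently one may refine the filtration so that a single cell is adjoined at each step, which is harmless and trivializes the additivity.)

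For the inductive step I would consider the long exact sequence
\[
\cdots \to H_i^T(X_{p-1}) \xrightarrow{\iota_*} H_i^T(X_p) \xrightarrow{j^*} H_i^T(Y_p) \xrightarrow{\partial} H_{i-1}^T(X_{p-1}) \to \cdots,
\]
where $\iota \colon X_{p-1} \hookrightarrow X_p$ is the proper closed inclusion (pushforward) and $j \colon Y_p \hookrightarrow X_p$ is the open inclusion (restriction). The crux of the argument is the vanishing $\partial = 0$, and I expect this to be the main obstacle; everything else is formal once it is in place. It follows from a parity count: $H_T^*$ is a polynomial ring concentrated in even degrees, so the free module $H_*^T(Y_p)$ is concentrated in even homological degrees, and by the inductive hypothesis $H_*^T(X_{p-1})$ is as well. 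Since $\partial$ lowers homological degree by one, its source and target can never both be nonzero, forcing $\partial = 0$. Thus the sequence breaks into short exact sequences $0 \to H_i^T(X_{p-1}) \to H_i^T(X_p) \to H_i^T(Y_p) \to 0$, which split because $H_i^T(Y_p)$ is free.

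It then remains to identify lifts of the generators $[U_{pq}]_T$. Because $V_{pq} = \overline{U_{pq}} \subseteq X_p$ and $V_{pq} \setminus U_{pq} \subseteq X_{p-1}$, restriction to the open set $Y_p$ sends the fundamental class $[V_{pq}] \in H_*^T(X_p)$ to $[U_{pq}]_T$; hence the classes $[V_{pq}]$ with $U_{pq} \subseteq Y_p$ map to a basis of the quotient $H_*^T(Y_p)$. Together with the inductive basis of $H_*^T(X_{p-1})$, whose images under $\iota_*$ are the fundamental classes $[V_{p'q'}]$ with $p' < p$, the splitting shows that $H_*^T(X_p)$ is free on $\{[V_{p'q'}] : p' \le p\}$. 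The base case $p = 0$ is immediate, since $X_0 = Y_0 = \coprod_q U_{0q}$ with $V_{0q} = U_{0q}$. Taking $p = r$ proves that $H_*^T(X)$ is a free $H_T^*$-module with basis $\{[V_{pq}]\}$, which is the first assertion.

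Finally, suppose $X$ is rationally smooth of dimension $n$. Then the cap product $\cap [X]_T \colon H^*_T(X) \to H_*^T(X)$ is an isomorphism of $H_T^*$-modules. By definition of the Poincar\'e dual classes, $\delta_{V_{pq}}^T \cap [X]_T = [V_{pq}]$, so this isomorphism carries $\{\delta_{V_{pq}}^T\}$ bijectively onto the basis $\{[V_{pq}]\}$ of $H_*^T(X)$. Therefore $\{\delta_{V_{pq}}^T\}$ is a basis of the free $H_T^*$-module $H^*_T(X)$, which is the second assertion. The only nontrivial input beyond standard functoriality is the even-degree concentration forcing $\partial = 0$, and this is exactly what the $T$-acyclicity hypothesis supplies at each stage of the induction.
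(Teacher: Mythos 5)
Your proof is correct and follows essentially the same route as the paper, which simply cites the standard long exact sequence argument in equivariant Borel--Moore homology (cf.\ \cite[Prop.~2.1(a)]{Graham2001}) for the first statement and applies equivariant Poincar\'e duality for the second. You have merely spelled out the details (induction on the filtration, vanishing of the boundary map by parity, identification of the lifts as fundamental classes of closures) that the paper leaves to the reference.
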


\begin{proof}
The first statement follows from the long exact sequence in equivariant Borel-Moore homology (cf.~\cite[Prop.~2.1(a)]{Graham2001}).
If in addition \(X\) is rationally smooth, applying equivariant Poincar\'e duality to the homology result yields the cohomology result.
\end{proof}

The next proposition proves acyclicity of the varieties discussed in Remark \ref{remark:orbifold}.

\begin{proposition}\label{p:acyclic}
Let \(S\subset H\) be a subtorus acting transitively and properly on \(H/H_1\), where \(H_1\subset H\) is a closed subgroup.
Let \(C\) be a smooth \(H_1\)-acyclic \(H\)-variety.
Then \(S\backslash(C\times H/H_1)\) is \(T\)-acyclic, where \(T=H/S\).
\end{proposition}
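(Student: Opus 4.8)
The plan is to reduce everything to the cohomology of the smooth variety $C$ and then dualize. First I would record the structural consequences of the hypotheses on $S$. Transitivity of the $S$-action on $H/H_1$ gives $H = S H_1$, and properness forces the stabilizer $F := S \cap H_1$ to be finite; hence the orbit map is an isomorphism $H/H_1 \cong S/F$ of $S$-varieties. Two consequences are relevant. As in Remark \ref{remark:orbifold}, $X := S\backslash(C \times H/H_1) \cong F\backslash C$, which is rationally smooth because $C$ is a smooth variety and $F$ is finite; this is what permits equivariant Poincar\'e duality on $X$ at the last step. Moreover, the composite $H_1 \hookrightarrow H \xrightarrow{\pi} T = H/S$ is surjective (as $H = S H_1$) with finite kernel $F$, hence an isogeny; because $\F$ has characteristic zero the induced pullback on equivariant cohomology of a point is a ring isomorphism $H^*_T \xrightarrow{\sim} H^*_{H_1}$ (it arises from an isomorphism of the underlying $\F$-vector spaces of (co)characters). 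I will use this to identify the two module structures.

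Next I would compute $H^*_H(Z)$ for $Z = C \times H/H_1$ by the untwisting isomorphism. The map $H \times^{H_1} C \to C \times H/H_1$, $[h,c] \mapsto (hc, hH_1)$, is an isomorphism of $H$-varieties (left translation on the second factor of the target), so passing to mixed spaces gives $E_H \times^H Z \cong E_H \times^{H_1} C$; since $E_H$ is also a model for $E_{H_1}$, I obtain $H^*_H(Z) \cong H^*_{H_1}(C)$. This is an isomorphism of $H^*_H$-modules, where $H^*_{H_1}(C)$ carries its $H^*_H$-structure through the restriction $H^*_H \to H^*_{H_1}$. Because $C$ is smooth and $H_1$-acyclic, equivariant Poincar\'e duality on $C$ converts the freeness of $H^{H_1}_*(C)$ on $[C]_{H_1}$ into freeness of $H^*_{H_1}(C)$ on $\delta^{H_1}_C = 1$; that is, $H^*_{H_1}(C) \cong H^*_{H_1}$ as $H^*_{H_1}$-modules.

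Finally I would assemble the module statement and dualize. The $H^*_T$-action on $H^*_H(Z) \cong H^*_{H_1}(C)$ factors through $H^*_T \to H^*_H \to H^*_{H_1}$, which is exactly the isomorphism coming from the isogeny $H_1 \to T$; hence $H^*_{H_1}(C)$ is free of rank one over $H^*_T$ on the generator $1$. The $S$-action on $Z$ is proper (the projection $Z \to H/H_1$ is $S$-equivariant and the action downstairs is proper), so Proposition \ref{prop:Poincare}(1) applies, and the $H^*_T$-linear isomorphism $\pi^*\colon H^*_T(X) \to H^*_H(Z)$ sends $1$ to $1$; transporting back, $H^*_T(X)$ is free of rank one over $H^*_T$ on $\delta^T_X = 1$. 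Applying equivariant Poincar\'e duality on the irreducible, rationally smooth $X$ then shows $H^T_*(X)$ is free over $H^*_T$ on $1 \cap [X]_T = [X]_T$, i.e.\ $X$ is $T$-acyclic.

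The step I expect to be the main obstacle is the bookkeeping of the module structures: one must verify that the $H^*_T$-structure surviving through $\pi^*$ and the untwisting isomorphism coincides with the intrinsic $H^*_{H_1}$-structure on $H^*_{H_1}(C)$, and in particular that $H_1 \to T$ is an isogeny so that $H^*_T \to H^*_{H_1}$ is an isomorphism; this is precisely where transitivity, properness, and characteristic zero all enter. Once these identifications are in place, $T$-acyclicity is immediate from the $H_1$-acyclicity of $C$.
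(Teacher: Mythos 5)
Your proposal is correct and follows essentially the same route as the paper's proof: reduce via the partial-quotient isomorphism of Proposition \ref{prop:Poincare} and the change-of-groups (untwisting) isomorphism to $H^*_{H_1}(C)$, observe that $H_1\to T$ is finite surjective so $H^*_T\to H^*_{H_1}$ is an isomorphism, and pass between homology and cohomology by Poincar\'e duality on $C$ and on the rationally smooth quotient $F\backslash C$. The extra details you supply (properness of the $S$-action on $C\times H/H_1$ by base change from $H/H_1$, and tracking the generator as the ring identity $1=\delta^{H_1}_C=\delta^T_X$) are consistent with, and slightly more explicit than, the paper's argument.
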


\begin{proof}
By Poincar\'e duality, it suffices to show that $H^*_T(S\backslash(C\times H/H_1)$ is a free $H^*_T$-module generated by
$\gd^T_M$, where $M = S\backslash(C\times H/H_1)$.

Since $S$ acts transitively on $H/H_1$, the map
$S \times H_1 \to H$ is surjective.  It is also finite, since the properness of the action implies that 
the intersection $F = S \cap H_1$ is finite.  Therefore, the composition
$H_1 \hookrightarrow H \rightarrow T = H/S$ is a finite surjective map.  This composition induces maps
$$
H^*_T \stackrel{\pi^*}{\longrightarrow} H^*_H \longrightarrow H^*_{H_1};
$$
the composition is an isomorphism. 

Since $C$ is smooth, via Poincar\'e duality, we can identify $H^*_{H_1}(C)$ with $H_*^{H_1}(C)$.  Since
$C$ is $H_1$-acyclic, $H^*_{H_1}(C)$ is the free $H^*_{H_1}$-module generated by $\gd^{H_1}_C$.
Since $H/H_1 \cong S/F$, we see that $M = S\backslash(C\times H/H_1) \cong F \backslash C$ is rationally 
smooth, so we can identify $H^*_T(M)$ with $H^T_*(M)$.
It suffices to show that $H^*_T(M)$ is the free $H^*_T$-module generated
by $\gd^T_{M}$.

We have
\begin{equation} \label{e:acyclic}
H^*_T(M) = H^*_T(S\backslash(C\times H/H_1)) \cong H^*_H(C\times H/H_1) \cong H^*_{H_1}(C),
\end{equation}
where the first isomorphism is by Proposition \ref{prop:Poincare}, and the second is a change of groups isomorphism.
The composition of these isomorphisms takes $\gd^T_M$ to $\gd^{H_1}_C$, and moreover, is
compatible with the $H^*_T$-module structures (the $H^*_T$-module structure
on $H^*_{H_1}(C)$ is obtained from the 
isomorphism $H^*_T \to H^*_{H_1}$).  
The result follows.
\end{proof}

The following is a direct consequence of Propositions \ref{p:free-coh} and \ref{p:acyclic}.

\begin{corollary} \label{corollary: paving}
Suppose \(Z\) is an \(H\)-variety admitting a paving by subvarieties of the form \(C_i\times H/H_i\), where \(C_i\) is a smooth \(H_i\)-acyclic \(H\)-variety.  Suppose
$S$ is a subtorus of $H$ acting properly on \(Z\) and transitively on each \(H/H_i\).
Let \(X=S\backslash Z\) and \(T=H/S\).
Then $H^T_*(X)$ is a free \(H^*_T\)-module with basis given by the fundamental classes of \(X_i = \overline{S\backslash(C_i\times H/H_i)}\).
 If in addition $X$ is rationally smooth, then $H^*_T(X)$ is free with basis given by the classes $\gd^T_{X_i}$.
\end{corollary}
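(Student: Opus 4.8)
The plan is to push the given paving of $Z$ down to a $T$-invariant paving of $X = S\backslash Z$, recognize each resulting cell as one of the quotients treated in Proposition \ref{p:acyclic}, and then conclude by Proposition \ref{p:free-coh}. In other words, I would reduce the statement to verifying, cell by cell, the hypotheses of the two propositions it is said to follow from.

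First I would descend the filtration. The strata $C_i\times H/H_i$ are $H$-invariant (here $C_i$ carries its $H$-action and $H/H_i$ its left $H$-action), so the filtration $Z_0\subset\cdots\subset Z_r=Z$ witnessing the paving may be taken by closed $H$-invariant, hence $S$-invariant, subvarieties $Z_p$. Applying the quotient $\pi\colon Z\to X$, each $S\backslash Z_p$ is closed in $X$, and since $\pi$ is surjective with $S$-orbits confined to single strata, the successive differences satisfy $(S\backslash Z_p)\setminus(S\backslash Z_{p-1})\cong S\backslash(Z_p\setminus Z_{p-1})$, a disjoint union of the cells $U_i:=S\backslash(C_i\times H/H_i)$. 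Because $H$ preserves the paving of $Z$, the residual $T=H/S$-action preserves the induced paving of $X$. Thus $X$ acquires a $T$-invariant paving with cells $U_i$ and closures $X_i=\overline{U_i}$.

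Second I would check each $U_i$ is $T$-acyclic via Proposition \ref{p:acyclic}, taking $H_1=H_i$ and $C=C_i$. The hypotheses that $S$ acts transitively on $H/H_i$ and that $C_i$ is a smooth $H_i$-acyclic $H$-variety are given. For the remaining hypothesis, properness of $S$ on $H/H_i$, I would first note that the proper $S$-action on $Z$ restricts to a proper action on the invariant stratum $W_i=C_i\times H/H_i$: the action map $S\times W_i\to W_i\times W_i$ is a base change of the proper map $S\times Z\to Z\times Z$ along $W_i\times W_i\hookrightarrow Z\times Z$, since $W_i$ is $S$-invariant. One then passes from properness on $W_i$ to properness on the factor $H/H_i$, equivalently to finiteness of $F_i=S\cap H_i$. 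Granting this, Proposition \ref{p:acyclic} shows each $U_i=S\backslash W_i$ is $T$-acyclic. Finally, with $X$ carrying a $T$-invariant paving by the $T$-acyclic cells $U_i$, Proposition \ref{p:free-coh} yields at once that $H^T_*(X)$ is a free $H^*_T$-module with basis $\{[X_i]\}$, and, when $X$ is rationally smooth, that $H^*_T(X)$ is free with basis $\{\delta^T_{X_i}\}$.

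I expect the second step to be the main obstacle. Restriction of a proper action to an invariant subvariety readily gives properness of $S$ on the product $W_i=C_i\times H/H_i$, but descending this to properness on the homogeneous factor $H/H_i$ alone does not follow from properness along an equivariant projection in general; it is equivalent to $F_i=S\cap H_i$ being finite, which is precisely what makes the cell $U_i\cong F_i\backslash C_i$ rationally smooth (compare Remark \ref{remark:orbifold}, where this identification is carried out). The delicate point is therefore to confirm that the acyclicity hypothesis on $C_i$, together with properness on $Z$ and transitivity, forces $F_i$ to be finite, excluding the degenerate case of a positive-dimensional $F_i$; once this is settled the invocation of Propositions \ref{p:acyclic} and \ref{p:free-coh} is formal.
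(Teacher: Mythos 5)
Your proposal follows the same route the paper takes: the paper offers no argument beyond declaring the corollary ``a direct consequence of Propositions \ref{p:free-coh} and \ref{p:acyclic},'' and your two steps (descend the $H$-invariant filtration to a $T$-invariant paving of $X$, then verify $T$-acyclicity of each cell via Proposition \ref{p:acyclic}) are exactly the intended unwinding of that sentence. Your first step is fine as written.

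The one point you explicitly leave open --- that properness of $S$ on $Z$ together with transitivity and the acyclicity of $C_i$ forces $F_i = S\cap H_i$ to be finite, i.e.\ that $S$ acts properly on $H/H_i$ as Proposition \ref{p:acyclic} requires --- is a legitimate thing to flag (the corollary's hypotheses do not literally include it), but it does close, so there is no genuine gap in the approach. Since $C_i$ is $H_i$-acyclic, $H_*^{H_i}(C_i)$ is a free rank-one $H^*_{H_i}$-module, hence not a torsion module; by the localization theorem this forces $C_i^{H_i}\neq\emptyset$. Picking $c\in C_i^{H_i}$ and any $hH_i$, the stabilizer in $S$ of $(c,hH_i)\in C_i\times H/H_i$ contains $F_i$ (it fixes $hH_i$ because $H$ is abelian, and it fixes $c$ because $F_i\subset H_i$), and this stabilizer is finite by properness of the $S$-action on $Z$ restricted to the invariant stratum. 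Hence $F_i$ is finite, $H/H_i\cong S/F_i$ with $S$ acting properly, and Proposition \ref{p:acyclic} applies. (In the paper's actual application, Remark \ref{remark:orbifold}, $F_i=\ker(\mu)|_S$ for a character $\mu$ nontrivial on $S$, so finiteness is visible without this argument.) With that supplied, your proof is complete and agrees with the paper's.
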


We will see below that weighted flag varieties admit pavings of the form described in
Corollary \ref{corollary: paving}; as discussed in Remark \ref{remark:orbifold}, these are orbifold affine pavings.

\subsection{Equivariant Chow groups} \label{ss:Chow}
In this section, we work with Chow groups with coefficients in $\F$, and
write $A_*(X)$ to mean the Chow groups of $X$ tensored with $\F$.
If $G$ is an algebraic
group acting on $M$, the equivariant Chow groups
are defined by $A_i^G(M) = A_{i + r}(E_G \times^G M)$.  Here,
$E_G$ denotes a finite-dimensional model of the universal
space for $G$, chosen depending on the degree $i$, and $r = \dim E_G - \dim G$.
The space $E_G \times^G M$ is referred to as a mixed space and the action of $G$
on $E_G \times M$ as the mixing action.
Similarly, there are operational equivariant Chow groups $A^i_G(M)$, defined
in analogy to the operational Chow groups of \cite{Fulton1997}. 
There
is a cycle map $\Cl:A_i^G(M) \to H_{2i}^G(M)$.  We refer
to \cite{EdidinGraham1998} for details.  To simplify the exposition,
we will simply write $M_G = E_G \times^G M$ and $A^G_*(M) = A_*(M_G)$, although 
to be
precise, we would need to choose a particular model of $E_G$, depending
on the degree of the Chow groups.  We define $B_G = E_G/G$, and 
then $A^*_G = A^*(B_G)$.  Since $B_G$ is nonsingular, there is a cap product
isomorphism $\cap [B_G]: A^*(B_G) \to A_*(B_G)$.  Since we are taking coefficients
in a field $\F$ of characteristic $0$, there is a natural isomorphism $A^*_G \cong H^*_G$.
The Chow groups $A^G_*(M)$ are $A^*_G$-modules.

If $\alpha$ is a character of $G$, then $\alpha$ defines a line bundle $\cL^G_{\alpha} = E_G \times^G \C_{\alpha}$.
This line bundle pulls back to a line bundle $\cL_{\alpha,M}$ on the mixed space $M_G$, and the
action of $\alpha$ on $A^G_*(M)$ is by cap product with the Chern class $c_1(\cL^G_{\alpha,M})$.

\begin{lemma} \label{lemma: Chowaction}
Suppose $\beta \in A^*_G = A^*(B_G)$ satisfies $\beta \cap [B_G] = \sum a_i [Q_i/G]$,
where $a_i \in \F$, and $Q_i$ are closed $G$-invariant subvarieties of $E_G$.
Let $M$ be a $G$-scheme and $N \subset M$ a closed $G$-invariant subscheme.
Then 
$$
\beta \cap [N]_G = \sum a_i [Q_i \times^G N] \in A_*(E_G \times^G M)
= A^G_*(M).
$$
\end{lemma}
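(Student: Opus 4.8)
The plan is to reduce the equivariant statement to an identity of ordinary Chow cycles on the mixed space $N_G = E_G \times^G N$, exploiting the fact that $\beta$, being an \emph{operational} (bivariant) class, commutes with flat pullback. Write $p \colon M_G = E_G \times^G M \to B_G = E_G/G$ for the structure projection; by definition the $A^*_G = A^*(B_G)$-module structure on $A^G_*(M) = A_*(M_G)$ is given by $\beta \cap c = (p^*\beta) \cap c$, where $p^*\beta \in A^*(M_G)$ is the operational pullback. Let $i \colon N_G \hookrightarrow M_G$ be the closed immersion induced by $N \subset M$, so that $[N]_G = i_*[N_G]$, and let $p_N = p \circ i \colon N_G \to B_G$ be the restricted projection.

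First I would apply the projection formula for the proper morphism $i$: since $\beta \cap [N]_G = (p^*\beta) \cap i_*[N_G]$, the formula gives $(p^*\beta)\cap i_*[N_G] = i_*\big((i^*p^*\beta)\cap[N_G]\big) = i_*\big((p_N^*\beta)\cap[N_G]\big)$, using $i^*p^* = (p\circ i)^* = p_N^*$. Thus it remains to compute $(p_N^*\beta)\cap[N_G]$ in $A_*(N_G)$ and then push forward. Next I would observe that $p_N$ is flat: since $E_G$ may be taken to be an open subset of a representation on which $G$ acts freely, $E_G \to B_G$ is a principal $G$-bundle, hence $N_G = E_G \times^G N \to B_G$ is a fiber bundle with fiber $N$, so $p_N$ is flat of relative dimension $\dim N$. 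Consequently $[N_G] = p_N^*[B_G]$ as the flat pullback of the fundamental cycle, and because $\beta$ commutes with flat pullback,
\[
(p_N^*\beta)\cap[N_G] = (p_N^*\beta)\cap p_N^*[B_G] = p_N^*\big(\beta\cap[B_G]\big) = p_N^*\Big(\sum_i a_i\,[Q_i/G]\Big) = \sum_i a_i\, p_N^*[Q_i/G].
\]
Finally, flat pullback of cycles gives $p_N^*[Q_i/G] = [p_N^{-1}(Q_i/G)] = [Q_i\times^G N]$, since the preimage of $Q_i/G \subset B_G$ under $[e,n]\mapsto Ge$ is exactly $Q_i\times^G N$. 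Pushing forward along the closed immersion $i$ (which merely reinterprets a cycle supported in $N_G$ as a cycle on $M_G$) yields $\beta\cap[N]_G = \sum_i a_i\,[Q_i\times^G N]$ in $A_*(M_G)$.

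The only genuinely substantive step is the flat-pullback compatibility $(p_N^*\beta)\cap p_N^*\alpha = p_N^*(\beta\cap\alpha)$, and this is precisely why the hypothesis must be phrased using an operational class rather than an arbitrary homology class: a general element of $A_*(B_G)$ cannot be pulled back along $p_N$, whereas a bivariant/operational $\beta$ can be, and by construction its cap product commutes with flat pullback (see \cite{Fulton1997}). The remaining ingredients — flatness of $p_N$ from the principal-bundle structure, the projection formula for $i$, and the scheme-theoretic identification $p_N^{-1}(Q_i/G) = Q_i\times^G N$ — are routine. The one bookkeeping caveat is that, since $N$ is only a scheme, one works throughout with fundamental cycles $[N] = \sum_j m_j[N^{(j)}]$ and extends the argument by linearity, noting that $[N]_G = [N_G] = \sum_j m_j[(N^{(j)})_G]$ is compatible with the flat pullback computation.
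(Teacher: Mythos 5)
Your proof is correct and is essentially the paper's argument: the paper reduces to $N=M$ by the functorial properties you spell out via the projection formula for the closed immersion, and then performs the identical computation $(p^*\beta)\cap p^*[B_G]=p^*(\beta\cap[B_G])=\sum a_i[Q_i\times^G M]$ using flatness of the projection to $B_G$ and the compatibility of operational classes with flat pullback. The extra care you take with the scheme-theoretic fundamental cycle of $N$ is a harmless elaboration of the same reduction.
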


\begin{proof}
By the functorial properties of Chow groups, we may assume $N = M$.
The projection $p: E_G \times^G M \to B_G$ 
is flat.  By definition,
\begin{align*}
\beta \cap [M]_G & = (p^* \beta) \cap [E_G \times^G M] 
= (p^* \beta) \cap p^*(B_G) \\
& = p^*(\beta \cap [B_G]) =  p^*( \sum a_i  [Q_i/G]) = \sum a_i [Q_i \times^G M];
\end{align*}
here we have used the compatibility between flat pullback and pullbacks of operational Chow groups.
\end{proof}

As in the previous subsection, $H$ denotes a torus acting on a variety $Z$, $S$ is a 
subtorus of $H$ such
that the action of $S$ on $Z$ is proper, and $X = S \backslash Z$.  The map $H \to T \cong S \backslash H$ yields
an injective map $A^*_T \to A^*_H$.  Using this injection, we view $A^*_T$ as a subset
of $A^*_H$, so we obtain an $A^*_T$-module structure on $H$-equivariant Chow groups.  

The next proposition
is an equivariant Chow homology analogue of Proposition \ref{prop:Poincare}.  If $S = H$ is trivial, it reduces
to the case of \cite[Theorem 3]{EdidinGraham1998} where the group $S$ is a torus.  The proposition
could be proved by 
applying \cite[Theorem 3]{EdidinGraham1998} to appropriate mixed spaces.
However, we instead give an argument along the lines of the proof of Proposition \ref{prop:Poincare}.   

\begin{proposition} \label{prop:Chowpullback}
There is an $A^*_T$-linear isomorphism
$\pi^*:A^T_*(X) \to A^H_*(Z)$ such that if $Y$ is an $H$-invariant subvariety of $Z$, and 
$\overline{Y} = S \backslash Y$, then
\begin{equation} \label{e.Chowpullback}
\pi^*[\overline{Y}]_T = e_Y [Y]_H.
\end{equation}
If $Z$ is rationally smooth, then this pullback is compatible under the cycle
map with the pullback $\pi^*: H^T_*(X) \to H^H_*(Z)$ defined in \eqref{e.BMpullback}.
\end{proposition}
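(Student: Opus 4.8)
The plan is to imitate the proof of Proposition \ref{prop:Poincare}, replacing equivariant cohomology pullbacks by pullbacks on equivariant Chow homology and Poincar\'e-dual classes by fundamental classes of subvarieties. As in Lemma \ref{l.quotient}, let $F\subset S$ be the (finite) subgroup generated by all stabilizers, and set $H' = H/F$, $S' = S/F$, $Z' = F\backslash Z$, so that $S'$ acts freely on $Z'$ and $S'\backslash Z'\cong X$. I would factor the map on mixed spaces exactly as in \eqref{e: pimap-factor},
\begin{equation*}
E_H\times^H Z \xrightarrow{f} E_H\times^H Z' \xrightarrow{g} E_{H'}\times^{H'} Z' \xrightarrow{h} E_T\times^T X,
\end{equation*}
and obtain induced pullbacks on Chow homology
\begin{equation*}
A^T_*(X)\xrightarrow{h^*} A^{H'}_*(Z')\xrightarrow{g^*} A^H_*(Z')\xrightarrow{f^*} A^H_*(Z),
\end{equation*}
whose composite is the desired $\pi^*$. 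Working on the finite-dimensional approximations to the $E$'s used in the definition of equivariant Chow groups in Section \ref{ss:Chow}, everything reduces to statements about ordinary Chow groups of schemes.

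Next I would carry out three steps mirroring Steps 1--3 of Proposition \ref{prop:Poincare}. For $h$: using the identification of Remark \ref{r:torusproduct} (so $H' = S'\times T$, $E_{H'}=E_{S'}\times E_T$), the map $h$ is the projection $E_{S'}\times^{S'}(E_T\times^T Z')\to (E_T\times^T Z')/S'$ of a free principal $S'$-bundle; its fibers are the acyclic approximations to $E_{S'}$, so flat pullback $h^*$ is an isomorphism (this is the free-quotient statement of \cite[Theorem 3]{EdidinGraham1998} on the approximation spaces). For $g$, I would factor it as $p$ then $q$ as in \eqref{e: g-factor}: $q$ is a bundle with $\F$-acyclic fiber $E/F$, so $q^*$ is an isomorphism, and $p$ is a quotient by $F$ for which $p^*$ is an isomorphism by the argument of Step 1 of Proposition \ref{prop:Poincare} adapted to Chow groups, using that a connected group acts trivially on Chow groups so $F\subset S$ does. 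Since $h$ and $p$ are \'etale or smooth and $q$ has acyclic fiber, flat pullback sends a fundamental class to the fundamental class of its (irreducible) preimage; tracking $[\overline Y]_T$ through $h^*$ and $g^*$ therefore yields $[Y']_H$ with coefficient $1$, where $Y'=F\backslash Y$.

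The remaining and main step is $f$, the quotient of $E_H\times^H Z$ by $F$ acting through the factor $Z$. Unlike $p$, this $F$-action has stabilizers, so $f$ is not flat and $f^*$ cannot be a flat pullback; instead I would use the characteristic-zero descent isomorphism $A_*(Z_H/F)_\F\cong A_*(Z_H)^F_\F$, which (again because $F$ acts trivially on Chow groups) gives an isomorphism $A^H_*(Z')\to A^H_*(Z)$. The crux is to compute its effect on $[Y']_H$ and extract exactly the factor $e_Y=|S^Y|=|F^Y|$. By Lemma \ref{lem.quotientdegree} the restriction $E_H\times^H Y\to E_H\times^H Y'$ has degree $|F/F^Y|$, so proper pushforward gives $f_*[Y]_H=|F/F^Y|\,[Y']_H$; combining this with the normalization of the descent map (the Chow analogue of Lemma \ref{lem.finitepullback}, with the degree of $Z\to Z'$ accounted for as in Remark \ref{rem.finitepullback}) should yield $f^*[Y']_H=e_Y\,[Y]_H$, hence $\pi^*[\overline Y]_T=e_Y\,[Y]_H$, which is \eqref{e.Chowpullback}. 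I expect pinning down this normalization, so that precisely $e_Y$ (and not, say, a spurious factor of $|F|$ or a factor of $e_Z$) appears, to require the most care, since the finite-quotient pullback in Chow groups must be set up compatibly with the degree bookkeeping of Lemmas \ref{lem.quotientdegree} and \ref{lem.finitepullback}.

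Finally, two formal points. For $A^*_T$-linearity I would use that the module structures are given by capping with Chern classes of the line bundles $\cL_\alpha$ (Lemma \ref{lemma: Chowaction}), that these classes pull back compatibly under $f,g,h$ together with the ring maps induced by $H\to H'\to T$, and that capping commutes with flat pullback \cite{Fulton1997} and with the descent isomorphism; hence $\pi^*=f^*g^*h^*$ is $A^*_T$-linear. For compatibility with the cycle map, I would use that $\Cl$ commutes with flat pullback and proper pushforward \cite{EdidinGraham1998}, hence with each of $h^*,g^*,f^*$; since the Borel--Moore pullback of \eqref{e.BMpullback} is built from the same geometric maps \eqref{equation: pimap}, the cycle map intertwines the Chow $\pi^*$ with the Borel--Moore $\pi^*$. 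The normalization $\tfrac{1}{e_Z}[X]_T$ in \eqref{e:homology-commute} matches the fundamental-class computation above, so $\Cl\circ\pi^*=\pi^*\circ\Cl$ when $Z$ is rationally smooth, as asserted; this also re-confirms \eqref{e.Chowpullback} via \eqref{e.BMpullback2}.
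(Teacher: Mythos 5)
Your overall architecture is the same as the paper's: the same factorization of the mixed-space map into $f$, $g$, $h$ as in \eqref{e: pimap-factor}, the same use of \cite[Ex.~1.7.6]{Fulton1997} for the finite quotient $f$, flat pullback along the principal $S'$-bundle for $h$, and the same degree bookkeeping producing $f^*[Y']_H = |F^Y|\,[Y]_H = e_Y[Y]_H$ (your worry about a spurious factor resolves exactly as you suspect: $p_*p^* = |F|$ together with $p_*[Y] = |F/F^Y|[Y']$ pins the coefficient to $|F^Y|$, and no $e_Z$ enters because you are pushing fundamental classes rather than Poincar\'e duals). The $A^*_T$-linearity and cycle-map arguments are also essentially those of the paper.

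The genuine gap is in your treatment of $g$. You propose to reuse the cohomological factorization \eqref{e: g-factor} and to conclude that $q^*$ is an isomorphism because the fiber $E/F$ is acyclic. That inference is a cohomological one (it rests on the Leray spectral sequence, or on $E/F$ having the rational cohomology of a point); there is no analogous ``acyclic fiber implies pullback isomorphism'' principle for Chow groups. The standard independence-of-model argument for equivariant Chow groups (the double-fibration trick in \cite{EdidinGraham1998}) requires the auxiliary factor to be an open subset of a \emph{representation} with high-codimension complement, so that the bundle in question is an open piece of a vector bundle and homotopy invariance applies; $E/F$ is a finite quotient of such a space, not such a space itself, so the argument does not go through as stated. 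This is precisely the point the paper works around: rather than factoring $g$, it invokes Brion's presentation of torus-equivariant Chow groups (\cite[Theorem 2.1]{Brion1997}) --- generators $[Y]$ for $H$-invariant subvarieties $Y$, relations $\mathrm{div}_Y(r) - \gamma\cdot[Y]$ --- to observe that $A^{H'}_*(Z')$ and $A^{H}_*(Z')$ have literally the same presentation (after identifying $A^*_{H'}$ with $A^*_H$ in characteristic zero), and \emph{defines} $g^*$ as the resulting identification sending $[Y_i]_{H'}$ to $[Y_i]_H$. You would either need to adopt that device, or supply a separate argument that your $q^*$ is an isomorphism of Chow groups; as written, that step fails.
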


\begin{proof}
We adopt the notation of Proposition \ref{prop:Poincare}, and 
again use the following sequence of maps of mixed spaces:
\begin{equation} \label{e: pimap-factor2}
E_H \times^H Z \stackrel{f}{\longrightarrow} E_H \times^H Z'  \stackrel{g}{\longrightarrow} E_{H'} \times^{H'} Z'   \stackrel{h}{\longrightarrow} E_T \times^T X.   
\end{equation}
We will define maps $f^*$, $g^*$, and $h^*$, fitting into a sequence 
\begin{equation} \label{e: pimap-pullbacks-Chow}
A^T_*(X) \stackrel{h^*}{\longrightarrow} A^{H'}_*(Z') \stackrel{g^*}{\longrightarrow} A^H_*(Z')  \stackrel{f^*}{\longrightarrow} A^H_*(Z),
\end{equation}
and define $\pi^* = h^* \circ g^* \circ f^*$.
In contrast to cohomology, 
arbitrary maps of schemes do
not induce pullback maps on Chow groups, so we need to define the pullback maps.

{\em Step 1.}  Since $f$ is a quotient map by a finite group, \cite[Ex.~1.7.6]{Fulton1997} provides an isomorphism
$$
f^*: A^*(E_H \times^H Z') \to A^*(E_H \times^H Z)^F = A^*(E_H \times^H Z).
$$
The last equality is because the action of $F$ on $A^*(E_H \times^H Z)$ is trivial.  The reason is that
the action of $F$ on $E_H \times^H Z$ is the restriction of an action of the torus $S$, and the action of any $s \in S$ on 
the Chow groups is trivial, since $s$ can be connected by a sequence of rational curves to the identity element of $S$.  

{\em Step 2.}  Since we are working with coefficients in $\F$, the map $H \to H' = H/F$ induces a pullback
identification of $A^*_{H'}$ with $A^*_H$.  Brion (\cite[Theorem 2.1]{Brion1997} gave a presentation of $A^*_H(Z')$ as
an $A^*_H$-module: it is generated by the fundamental classes $[Y]_H$, where $Y$ is an $H$-invariant subvariety of $Z'$,
subject to the relations $\mbox{div}_Y(r) - \gamma \cdot [Y]_{H'}$, where $r$ is a rational function on $Y$ which is an $H$-eigenvector
of weight $\gamma$.  Note that $\gamma$ is an element of $A^*_H = A^*_{H'}$.  The presentation of $A^*_{H'}(Z')$ is almost the same, with $[Y]_{H'}$ in place of $[Y]_H$.
Since the action of $H$ on $Z'$ is induced from the $H'$-action by the projection $H \to H'$, the invariant subvarieties and eigenvectors
for $H$ and $H'$ are the same.  We define $g^*: A^{H'}_*(Z') \to A^H_*(Z')$ by $g^*(\sum \gamma_i [Y_i]_{H'}) = \sum \gamma_i [Y_i]_{H})$.
It follows from Brion's result that $g^*$ is an isomorphism.

{\em Step 3.}  As noted in the proof of Proposition \ref{prop:Poincare}, the map $h$ can be identified with the map
$$
E_{S'} \times^{S'} (E_T \times^T Z') \longrightarrow (E_T \times^T Z')/S'.
$$
This map is an $S'$-principal bundle map, 
so flat pullback induces an isomorphism $h^*: A_*(E_T \times^T Z')/S) \to A_*^S(E_T \times^T Z')$ (see \cite[Prop.8]{EdidinGraham1998}).
This map takes $[(E_T \times^T Y)/S] = [E_T \times^T \overline{Y}] $ to $[E_T \times^T Y]_S$.  In other words, we obtain an isomorphism
 $h^*: A^T_*(X) \stackrel{h^*}{\longrightarrow} A^{H'}_*(Z')$ satisfying $h^*([\overline{Y}]_T) = [Y]_{H'}$.

The equation \eqref{e.Chowpullback} holds because
\begin{equation} \label{e.Chowpullback-proof}
\pi^* [Y]_T = f^* g^* h^* ([\overline{Y}]_T)  = f^* g^*  ([Y]_{H'}) = f^* ( [Y]_H) =  e_Y [Y]_H.
\end{equation}
Here, the first equality is by definition, the second was proved in the preceding paragraph, and the third follows
from the definition of $g^*$.  It remains to prove the last equality.  By assumption, $Y$ is an $S$-invariant subvariety of
$Z$, and $Y' = F \backslash Y \subset Z'$.  Under the map $Z \to Z'$, the inverse image of $Y'$ is $Y$.  The reason is that
$F$ transitively permutes the irreducible components of the inverse image, but $Y$ is stable under $S$, so it is stable under $F$;
thus $Y$ is the entire inverse image.  The subgroup of $F$ acting trivially on $E_H \times Y$ is the subgroup $F^Y$, and similarly with
$Z$ in place of $Y$.  Therefore, applying the definition of $f^*$ from \cite[Ex.~1.7.6]{Fulton1997}, we have
$$
f^* [E_H \times^H Y'] = | F^Y |  [E_H \times^H Y'].
$$
As noted in the proof of Proposition \ref{prop:Poincare}, $ | F^Y |=  | S^Y | = e_Y$.  Using this equality, and translating the displayed
equation into equivariant language, we see that $f^* [Y'_H] = e_Y [Y]_H$.  This proves the last equality of \eqref{e.Chowpullback-proof}.

We now check that $\pi^*$ is $A^*_T$-linear.  As above, we view $A^*_T$ as a subset of $A^*_H \cong A^*_{H'} $. 
We will show that the map $h^*$ is $A^*_T$-linear, and that $g^*$ and $f^*$ are $A^*_H$-linear, hence (as $A^*_T \subseteq A^*_H$)
$A^*_T$-linear.  This suffices, since $\pi^* = h^* \circ g^* \circ f^*$.

It suffices to check that each of the maps $f^*$, $g^*$, and $h^*$ is $A^*_T$-linear.  
The map $h^*$ is $A^*_T$-linear because $h^*$ is defined as flat pullback, which is compatible with the action of
Chern classes on Chow groups, and $h^* \cL^T_{\alpha,X} = \cL^{H'}_{\alpha, Z'}$.  Thus,
$h^*$ is compatible with the action of a character $\ga$ of $T$; since these characters generate
$A^*_T$, we see that $h^*$ is compatible with the $A^*_T$-action.  The $A^*_H$-linearity of $g^*$ is immediate
from the definition of this map.  Finally, we check $A^*_H$-linearity of $f$.  Since $A^*_H(Z')$ is generated as an $A^*_H$-module by the fundamental
classes of $H$-invariant subvarieties of $Z'$,
Suppose that $\beta \in A^*_H$ and 
$W'$ is a closed $H$-invariant subvariety of $Z'$.  Arguing as above shows that the inverse image of $W'$ in $Z$ is an irreducible
variety $W$.  It suffices to show that 
\begin{equation} \label{e:f-pullback}
f^*(\beta \cap [W']_H) = \beta \cap f^*([W']_H) = e_W \beta \cap [W]_H.
\end{equation}
By Lemma \ref{lemma: Chowaction}, if $\beta \cap [B_H]$ satisfies $\sum a_i [Q_i/H]$, then
$\beta \cap  [W']_H = \sum a_i [Q_i \times^H W']$.  Let $e$ be the order of the subgroup of $F$ fixing a general
point of $Q_i \times^H W$.  By definition, $f^*[Q_i \times^H W'] = e [Q_i \times^H W] $.
Since the action of $F$ on $Q_i \times^H W$ is on the second factor, $e = |F^W| =e_W$.  Therefore
$$
f^*(\beta \cap [W']_H) = \sum a_i f^*( [Q_i \times^H W']) = e_W \sum a_i [Q_i \times^H W] = e_W \beta \cap [W]_H,
$$
where the last equality is by Lemma  \ref{lemma: Chowaction}.  This proves \eqref{e:f-pullback}.  We conclude that $\pi^*$ is $A^*_T$-linear.

 Finally, $\pi^*$ is compatible with the cycle map
because equations \eqref{e.BMpullback} and \eqref{e.Chowpullback} are the same,
except that the first equation is interpreted as an equality in equivariant Borel-Moore
homology and the second as an equality in equivariant Chow groups.
\end{proof}

\begin{remark}
The map $\pi^*$ could be defined by starting with the formula \eqref{e.Chowpullback} and extending
by $A^*_T$-linearity.  Brion's presentation of torus-equivariant Chow groups implies that this is a well-defined 
map of equivariant Chow groups.  However, it would remain to show that $\pi^*$ is an isomorphism, which is why
we have proceeded differently.
\end{remark}

In the setting of cohomology and Borel-Moore homology, we first deduced the isomorphism on cohomology and constructed
the map on Borel-Moore homology from that (under the assumption that $Z$ is smooth).
In the Chow setting, we reverse the order: having shown the isomorphism on the Chow homology, we now show that there is a compatible
isomorphism of equivariant operational Chow groups, which correspond to equivariant cohomology.  

The projection $\pi: Z_H \to X_T$ induces a map
$\pi^*: A^*(X_T) \to A^*(Z_H)$.  Since $A^*(X_T) \cong A^*_T(X)$ and $A^*(Z_H) \cong A^*_H(Z)$ (see \cite[Theorem 2]{EdidinGraham1998}),
we obtain a map $\pi^*: A^*_T(X) \to A^*_H(Z)$.

The following proposition is a Chow version of \eqref{e:homology-commute}.

\begin{proposition} \label{p:Chowcommute}
The following diagram commutes.
\begin{equation} \label{e:Chowcommute}
\begin{CD}
A^*_H(Z) @>{\cap [Z]_H}>> A^H_*(Z) \\
@A{\pi^*}AA  @AA{\pi^*}A \\
A^*_T(X) @>{\cap \frac{1}{e_Z} [X]_T}>> A^T_*(X).
\end{CD}
\end{equation}
The vertical maps are isomorphisms, and if $Z$ is smooth, the horizontal maps are isomorphisms as well.
\end{proposition}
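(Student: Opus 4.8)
The plan is to prove the three assertions in turn: that both vertical maps are isomorphisms, that the square commutes, and that the horizontal cap‑product maps are isomorphisms when $Z$ is smooth. The right‑hand vertical map $\pi^*\colon A^T_*(X)\to A^H_*(Z)$ is already an isomorphism by Proposition \ref{prop:Chowpullback}. For the left‑hand vertical map, the operational pullback $\pi^*\colon A^*_T(X)\to A^*_H(Z)$ induced by $\pi\colon Z_H\to X_T$, I would mirror the factorization \eqref{e: pimap-factor2} through $f$, $g$, $h$ and show each induces an isomorphism on operational Chow groups: $h^*$ is the flat pullback along an $S'$‑principal bundle, hence an isomorphism by the bundle‑pullback isomorphism (\cite[Prop.~8]{EdidinGraham1998}); $g^*$ is the identification coming from $A^*_{H'}\cong A^*_H$ together with the affine bundle $E/F$, exactly as in Step 2 of Proposition \ref{prop:Chowpullback}; and $f^*$ is the isomorphism onto the $F$‑invariants of $A^*(E_H\times^H Z)$, which is all of $A^*(E_H\times^H Z)$ since the finite group $F\subset S$ lies in the connected group $S$ and therefore acts trivially on Chow groups. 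None of these three steps uses smoothness of $Z$, so the left vertical map is an isomorphism in general.

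Next I would treat the horizontal maps, assuming $Z$ smooth. Then the mixed space $Z_H=E_H\times^H Z$ is smooth, so cap product with its fundamental class $\cap[Z]_H\colon A^*_H(Z)\to A^H_*(Z)$ is an isomorphism by Poincar\'e duality for smooth varieties (\cite{Fulton1997}). On the bottom, $Z$ smooth together with $S$ acting with finite stabilizers makes $X=S\backslash Z$ rationally smooth, hence $X_T=E_T\times^T X$ is rationally smooth; since the coefficient field $\F$ has characteristic zero, cap product with the fundamental class is again an isomorphism. The scalar $\tfrac{1}{e_Z}$ is a unit in $\F$, so it does not affect invertibility; its role is purely to make the square commute, as the computation on the unit class shows:
$$
1\cap\tfrac{1}{e_Z}[X]_T=\tfrac{1}{e_Z}[X]_T\ \xmapsto{\ \pi^*\ }\ \tfrac{1}{e_Z}\,\pi^*[X]_T=[Z]_H
$$
by \eqref{e.Chowpullback} with $Y=Z$, which matches $\pi^*(1)\cap[Z]_H=[Z]_H$.

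The main content is commutativity, namely the identity
$$
\pi^*(\alpha)\cap[Z]_H=\pi^*\!\bigl(\alpha\cap\tfrac{1}{e_Z}[X]_T\bigr),\qquad \alpha\in A^*_T(X),
$$
where on the left $\pi^*$ is the operational pullback and on the right the homology pullback of Proposition \ref{prop:Chowpullback}. Since both pullbacks decompose as $f^*\circ g^*\circ h^*$ through the \emph{same} three maps of \eqref{e: pimap-factor2}, I would verify the identity factor by factor, reducing to three elementary squares and tracking how the normalizing scalar propagates. For the flat map $h$ one has $h^*[X]_T=[Z']_{H'}$, and operational pullback is compatible with cap products under flat pullback (the same compatibility used in the proof of Lemma \ref{lemma: Chowaction}), so the scalar $\tfrac{1}{e_Z}$ passes through unchanged. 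For $g$, both pullbacks are the Brion‑type identification, which preserves fundamental classes and is manifestly compatible with cap product. The substantive step is the finite quotient $f$: here $f^*[Z']_H=e_Z[Z]_H$, and the required compatibility is precisely equation \eqref{e:f-pullback}, which gives $f^*(\beta\cap[W']_H)=e_W\,\beta\cap[W]_H$. Chaining these, the factor $\tfrac{1}{e_Z}$ carried down from $X_T$ cancels exactly against the $e_Z$ produced by $f$, landing the cycle on $[Z]_H$ with coefficient $1$, which is the desired commutativity.

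I expect the finite‑quotient map $f$ to be the main obstacle: it is the only place where the operational and homological fundamental classes differ, and where the stabilizer‑order bookkeeping (matching $|F^Y|=e_Y$ and $|F^Z|=e_Z$, as in Proposition \ref{prop:Poincare}) must be done carefully so that the constant $\tfrac{1}{e_Z}$ lands in the correct place. As a cross‑check in the smooth case, one may instead verify the identity on a spanning set: both sides are $\F$‑linear in $\alpha$, and by the bottom isomorphism every $\alpha$ may be chosen so that $\alpha\cap\tfrac{1}{e_Z}[X]_T=[\overline Y]_T$ for an invariant subvariety $\overline Y=S\backslash Y$; the right side is then $e_Y[Y]_H$ by \eqref{e.Chowpullback}, and evaluating the left side through $f$, $g$, $h$ reproduces the same factor $e_Y$. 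Finally, compatibility of $\pi^*$ with the cycle map follows because \eqref{e.BMpullback} and \eqref{e.Chowpullback} are the identical relation interpreted in Borel–Moore homology and in Chow groups, respectively.
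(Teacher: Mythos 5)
Your overall strategy is sound and you correctly locate where the scalar $e_Z$ must arise, but there is a genuine gap at exactly the step you flag as the main obstacle. To run the commutativity check factor-by-factor through $f$, $g$, $h$, you need the projection formula $f^*(\alpha\cap x)=f^*\alpha\cap f^*x$ (and its analogue for $g$) for \emph{arbitrary} operational classes $\alpha$ on the mixed space $E_H\times^H Z'$. Equation \eqref{e:f-pullback} does not give you this: it is proved only for $\beta\in A^*_H$, i.e.\ classes pulled back from the classifying space $B_H$, via Lemma \ref{lemma: Chowaction}, and the Brion-presentation argument behind $g^*$ is likewise a statement about the $A^*_H$-module structure on Chow homology, not about general operational classes. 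Since neither $f$ nor $g$ is flat, the compatibility you invoke is not automatic from the bivariant formalism, and your cross-check via a spanning set of $\alpha$'s only applies when $Z$ is smooth (so that the bottom cap product is surjective), whereas the commutativity is asserted unconditionally. A similar issue affects your claim that the left vertical map is an isomorphism in general: the three steps you cite are homology statements, and their operational analogues (e.g.\ $A^*(M/F)\cong A^*(M)^F$ for a finite quotient, triviality of higher operational Chow cohomology of $E/F$) are not established in the paper.

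The paper avoids all of this by a different reduction: writing $H=S\times T$, it identifies $Z_H$ with $(Z_T)_S$ and $X_T$ with $(Z_T)/S$, so the whole diagram becomes the single quotient-by-$S$ diagram for the mixed space $Z_T$. Then the vertical isomorphisms are the theorems of \cite{EdidinGraham1998} applied to that one proper $S$-action, and commutativity follows from \cite[Lemma 6]{EdidinGraham1998}, which supplies precisely the compatibility $\pi^*(c\cap [(Z_T)/S])=\pi^*c\cap\pi^*[(Z_T)/S]$ for arbitrary operational $c$ in one stroke; the definition of the homology $\pi^*$ then gives $\pi^*[(Z_T)/S]=e_Z[Z_T]_S$ and the factor $\tfrac1{e_Z}$ cancels. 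If you want to keep your three-step route, you would need to prove the missing projection formulas for $f$ and $g$ separately; citing the Edidin--Graham lemma for the collapsed quotient map is the shorter path.
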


\begin{proof}
We deduce this result by applying the results of \cite{EdidinGraham1998} to appropriate mixed spaces.
We may assume $H = S \times T$,
so $E_H = E_S \times E_T$.  Then we can identify $Z_H$ with $(Z_T)_S$, and $X_T$ with $(Z_T)/S$.  With these identifications,
the diagram \eqref{e:Chowcommute} becomes
\begin{equation} \label{e:Chowcommute2}
\begin{CD}
A^*_S(Z_T) @>{\cap [Z_T]_S}>> A^S_*(Z_T) \\
@A{\pi^*}AA  @AA{\pi^*}A \\
A^*((Z_T)/S) @>{\cap \frac{1}{e_Z} [(Z_T)/S]}>> A_*((Z_T)/S).
\end{CD}
\end{equation}
By \cite{EdidinGraham1998}, the vertical maps are isomorphisms, and if $Z$ is smooth, the horizontal maps are isomorphisms as
well.  We now verify that the diagram commutes.  If $c \in A^*((Z_T)/S)$, the class $\pi^*c \cap [Z_T]_S$ is represented by the
class $\pi^* c \cap [(Z_T)_S]$.  We need to show that 
\begin{equation} \label{e:Chowcommute3}
\frac{1}{e_Z}  \pi^*(c \cap [(Z_T)/S]) = \pi^* c \cap [(Z_T)_S].
\end{equation}
We have
$$
\pi^* (c \cap [(Z_T)/S]) =   \pi^* c \cap \pi^*[(Z_T)/S] = e_{Z_T}|  \pi^* c \cap [Z_T]_S =  e_Z  \pi^* c \cap [Z_T]_S,
$$
Here, the first equality holds by \cite[Lemma 6]{EdidinGraham1998}, and the second holds by the definition of $\pi^*$.
We conclude that \eqref{e:Chowcommute3} holds, as desired.
\end{proof}

If $Z$ is smooth, the cycle
map may be interpreted as a map $\cl: A^*_H(Z) \to H^*_H(Z)$ by using the Poincar\'e duality identifications of cohomology with homology.
With this interpretation, \cite[Corollary 19.2]{Fulton1998},
applied to the mixed space $Z_H$, implies that $\cl$ is a ring homomorphism.  The pullback homomorphisms
$\pi^*: H^*_T(X) \to H^*_H(Z)$ and $\pi^*: A^*_T(X) \to A^*_H(Z)$ are  pullbacks on appropriate mixed spaces,
so standard properties of cohomology imply that they are ring homomorphisms as well.  

For weighted flag varieties,
the next proposition implies that the cycle map is an isomorphism.  This proposition
is a Chow homology analogue of Proposition \ref{p:free-coh}.  We use the notation of that proposition.

\begin{proposition} \label{p:free-Chow}
Assume that the hypotheses of Proposition  \ref{p:free-coh} hold, and assume in addition that $A_*^T(U_{pq})$ is the free $A^*_T$-module
generated by $[U_{pq}]_T$.  Then \(A_*^T(X)\) is a free \(A_T^*\)-module with basis given by the fundamental classes \([V_{pq}]\),
and the cycle map \(\cl\colon A^T_*(X)\to H_*^T(X)\) is an isomorphism.  This holds if the $U_{pq}$ are the cells of a $T$-invariant orbifold paving.
\end{proposition}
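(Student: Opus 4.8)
The plan is to mirror the structure of Proposition~\ref{p:free-coh}, passing from the local acyclicity hypothesis on each cell to a global freeness statement via the long exact sequence in equivariant Chow homology, and then to invoke the compatibility results already established (Propositions~\ref{prop:Chowpullback} and~\ref{p:Chowcommute}) to conclude that the cycle map is an isomorphism. The two things to prove are: (i) $A^T_*(X)$ is free over $A^*_T$ with basis the $[V_{pq}]$, and (ii) $\cl\colon A^T_*(X)\to H^T_*(X)$ is an isomorphism.

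First I would establish (i). The $T$-invariant paving $X_0\subset X_1\subset\cdots\subset X_r=X$ gives, for each $p$, a long exact sequence in equivariant Chow homology
\begin{equation*}
\cdots \to A^T_*(X_{p-1}) \to A^T_*(X_p) \to A^T_*(X_p\setminus X_{p-1}) \to \cdots,
\end{equation*}
where the last term is $A^T_*$ of the disjoint union $\bigsqcup_q U_{pq}$, which by hypothesis is the free $A^*_T$-module on the classes $[U_{pq}]_T$. Here I am using the localization sequence for equivariant Chow groups of a closed subvariety and its open complement (\cite{EdidinGraham1998}), the equivariant analogue of \cite[Prop.~1.8]{Fulton1997}. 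Because each $A^T_*(U_{pq})$ is free, the boundary maps vanish and the sequences split, so by induction on $p$ the module $A^T_*(X_p)$ is free with basis the fundamental classes of the closures $V_{p'q}$ for $p'\leq p$; taking $p=r$ gives (i). This is exactly the Chow-homology transcription of the cohomology argument used for Proposition~\ref{p:free-coh}, and the freeness of the cells is what forces the connecting maps to vanish.

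Next I would deduce (ii). The point is that $\cl$ is $A^*_T$-linear and sends the $A^*_T$-basis $\{[V_{pq}]\}$ of $A^T_*(X)$ to the corresponding classes in $H^T_*(X)$, which by Proposition~\ref{p:free-coh} form an $H^*_T$-basis of $H^T_*(X)$. Since the coefficient isomorphism $A^*_T\cong H^*_T$ identifies the two module structures, $\cl$ carries a basis to a basis and is therefore an isomorphism. Concretely, it suffices to check $\cl[V_{pq}]=[V_{pq}]$ in $H^T_*(X)$, which is the defining compatibility of the cycle map with fundamental classes.

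The final sentence, that the hypotheses hold when the $U_{pq}$ are cells of a $T$-invariant orbifold paving, is where the only real work lies, so I expect \emph{that} to be the main obstacle. By Remark~\ref{remark:orbifold} each such cell has the form $U_{pq}\cong S\backslash(C\times \C^*_\mu)\cong F\backslash C$ with $C$ an affine space carrying a linear $H$-action. I would argue that $A^T_*(U_{pq})$ is free of rank one on $[U_{pq}]_T$ by combining Proposition~\ref{prop:Chowpullback}, which gives $A^T_*(S\backslash(C\times H/H_1))\cong A^H_*(C\times H/H_1)\cong A^{H_1}_*(C)$ compatibly with $A^*_T$-module structures (the proof of Proposition~\ref{p:acyclic} already performs exactly this reduction on the cohomology side), together with the fact that $A^{H_1}_*(C)$ of an $H_1$-acyclic affine space is the free $A^*_{H_1}$-module on $[C]$. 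The delicate point is verifying that these identifications respect the module structures and carry $[U_{pq}]_T$ to the generator, but this is precisely the Chow analogue of what was checked in the proof of Proposition~\ref{p:acyclic}, so the argument transfers with only bookkeeping changes.
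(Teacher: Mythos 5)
Your step (i) has a genuine gap. The localization sequence for (equivariant) Chow groups of a closed subvariety and its open complement is only right exact: $A^T_*(X_{p-1}) \to A^T_*(X_p) \to A^T_*(X_p\setminus X_{p-1}) \to 0$ (this is \cite[Prop.~1.8]{Fulton1997} and its equivariant analogue). There is no long exact sequence and there are no boundary maps, so the step ``because each $A^T_*(U_{pq})$ is free, the boundary maps vanish and the sequences split'' is not available. Right-exactness gives you that the classes $[V_{pq}]$ \emph{generate} $A^T_*(X)$ over $A^*_T$, but it gives no injectivity of $A^T_*(X_{p-1})\to A^T_*(X_p)$ and no linear independence. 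The standard repair --- and what the paper does by citing \cite[Example 19.1.11]{Fulton1998} --- is to prove freeness and the cycle-map isomorphism \emph{simultaneously}: generation comes from the Chow localization sequence, and independence comes from pushing the classes through $\cl$ into $H^T_*(X)$, where Proposition \ref{p:free-coh} already shows the $[V_{pq}]$ form an $H^*_T$-basis. Your step (ii) then becomes circular as written, since it presupposes the basis statement of (i); the two must be interleaved.

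Your treatment of the orbifold-cell hypothesis takes a different route from the paper: you propose to redo the reduction of Proposition \ref{p:acyclic} in Chow homology via Proposition \ref{prop:Chowpullback}, identifying $A^T_*(U_{pq})$ with $A^{H_1}_*(C)$. That can be made to work (for an orbifold paving $C$ is an affine space, so $A^{H_1}_*(C)$ is indeed free on $[C]$), but the paper's argument is shorter and avoids re-verifying module compatibilities: it identifies $A^T_*(U_{pq})\cong A^T_*(\mathbb{A}^{k_{pq}})^{F_{pq}}$ directly using mixed spaces and \cite[Ex.~1.7.6]{Fulton1997}, and then observes that $F_{pq}$ acts trivially because its action commutes with $T$ and fixes the fundamental class, which generates the free rank-one module $A^T_*(\mathbb{A}^{k_{pq}})$.
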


\begin{proof}
The first statement follows from the arguments of \cite[Example 19.1.11]{Fulton1998}.  Now suppose
that the $U_{pq}$ are the cells of a $T$-invariant orbifold paving.   Using mixed spaces and
\cite[Ex.~1.7.6]{Fulton1997}, one can see $A_*^T(U_{pq}) \cong A^T_*({\mathbb A}^{k_{pq}})^{F_{pq}}$.  The equivariant
Chow groups $A^T_*({\mathbb A}^{k_{pq}})$ are a free $A^*_T$-module generated by $[{\mathbb A}^{k_{pq}}]_T$.
The group $F_{pq}$ acts trivially on 
$A^T_*({\mathbb A}^{k_{pq}})$, since the action of $F_{pq}$ commutes with the action of $T$ and preserves the fundamental class $[{\mathbb A}^{k_{pq}}]_T$.
Hence $ A^T_*({\mathbb A}^{k_{pq}})^{F_{pq}} = A^T_*({\mathbb A}^{k_{pq}})$.
\end{proof}

In light of the results of this section, our results about weighted flag varieties are valid in the setting
of equivariant Chow groups.

\subsection{Appendix: Poincar\'e duality}\label{subsection: poincare duality}
With respect to rational cohomology, rationally smooth varieties behave much like smooth varieties, and in particular
they satisfy Poincar\'e duality (see for example, \cite[Section 12.4]{CoxLittleSchenck2011}).
For reference, we provide a brief justification of Poincar\'e duality in this setting.
We recall facts about sheaves from \cite{GoreskyMacPherson1983} in the following discussion.
Given a complex of sheaves \(\cF\), let \(\bD(\cF)\) denote the Borel-Moore dual complex of \(\cF\).
For brevity, for any algebraic variety \(Q\), we set \(\bD_Q=\bD(\bF_Q)\) the dual complex of the constant sheaf.
Hypercohomology gives \(\bH^k(\bF_Q)=H^k(Q)\) singular cohomology and \(\bH^{-k}(\bD_Q)= H_k(Q)\) Borel-Moore homology.
A \(\bF\)-orientation for \(Q\) is a chosen quasi-isomorphism 
\begin{equation}\label{equation: orientation}
\bD_U\to {\bF}_U[2n] 
\end{equation}
where \(U\subset Q\) is the smooth locus, and \(n\) is the complex dimension of \(Q\).
By \cite[\S5.1]{GoreskyMacPherson1983} there are unique morphisms 
\begin{equation}\label{equation: factor}
\bF_Q[2n]\to IC_Q\to \bD_Q
\end{equation}
such that restrictions to \(U\) are isomorphisms -- the composition giving an inverse of \eqref{equation: orientation}.
The second morphism in \eqref{equation: factor} is (up to shift) \(\bD\) of the first morphism, where Verdier duality gives \(IC_Q=\bD(IC_Q)[2n]\) by \cite[\S6.1]{GoreskyMacPherson1983}.
Hypercohomology of \eqref{equation: factor} gives rise to homomorphisms
\begin{equation}\label{equation: cap product}
H^k(Q)\to IH_{2n-k}(Q)\to  H_{2n-k}(Q),
\end{equation} 
for every \(0\leq 2n-k\leq 2n\).
The cap product with fundamental class of Borel-Moore homology factors through \eqref{equation: cap product}.

Assume that \(Q\) is rationally smooth; i.e., \({\bF}_Q[2n]=IC_Q\) (cf.\ \cite[\S1.4]{BorhoMacPherson1983}).
It follows that \(IC_Q=\bD_Q\), hence \eqref{equation: factor} and \eqref{equation: cap product} are isomorphisms.
Given any rationally smooth variety \(Q\) and subvariety \(Y\subset Q\), let \(\delta_Y\in H^\ast(Q)\) be the unique class (possibly zero) such that
\begin{equation}\label{equation: PD}
\delta_Y\cap[Q]=[Y],
\end{equation}
where \(\cap\) is the action of \(H^\ast(Q)\) on Borel-Moore homology \(H_\ast(Q)\).

\section{Weighted flag varieties}\label{section: geometry}
The purpose of this section is to collect some basic results about the
geometry of weighted flag varieties.  Most of the results of this section are not essentially
new; the basic geometry of weighted flag varieties was presented in \cite{CortiReid2002}, and, 
with some additional results, in \cite{AbeMatsumura2015} and  \cite{AzamNazirQureshi2020}.
Unlike this previous work, we do not assume that the group we are using is the product of
$\C^\times$ with a reductive group.  This leads to a slightly larger class of varieties (see Remark
\ref{r:corti-reid}).  Also,
it allows for a more uniform Lie-theoretic presentation, which turns out to be helpful in understanding
positivity and related results.  The new results in this section are mainly contained in Section \ref{ss:weighted}.  In particular,
the definition of weighted roots and the identification of the weights of $T$-fixed curves in weighted flag varieties
seem to be new, although they are closely related to the GKM descriptions of cohomology given
in \cite{AbeMatsumura2015} and  \cite{AzamNazirQureshi2020} (see Remark \ref{r:curve-GKM}).

\subsection{Definitions and notation} \label{ss: definitions}
We refer to the introduction
for the definition of weighted flag varieties and basic notation.
We now make some additional definitions which will be used in the remainder
of the paper.  

Recall that $\Phi$, $ \Phi^+=\Phi(\fn)$, and
$\Delta$
denote (respectively)
the set of roots of $H$ in $G$, a choice of positive system,
and the corresponding set of simple roots.  As in the introduction, $m+1 = \dim H$.  Let
$n$ denote the semisimple rank of $G$, and let \(\set{\alpha_1,\ldots,\alpha_n}\) be an ordering of the simple roots.  
We will see below that $G$ cannot be semisimple, so $m \geq n$.
Let $\Phi^\vee$ denote the
set of coroots.  If $\alpha \in \Phi$, we denote by $r_{\alpha}$ the corresponding reflection in $W$.  We will abuse
notation and use the same letter to denote both an element of $W = N_G(H)/H$ and a representative in $N_G(H)$.
The longest element of $W$ is denoted $w_0$.

Given \(\mu\in\Lambda\), set \(\Phi^\mu=\set{\alpha\in\Phi\mid \mu\cdot\alpha^\vee>0}\).
Recall that we have fixed a dominant character \(\lambda\in\Lambda^+\),  and a cocharacter \(\chi\in\Lambda^\vee\).  By assumption, $L$ is a Levi subgroup such that the coroots of $L$ are orthogonal to $\lambda$.  Let $P = LU$ denote the corresponding standard parabolic subgroup; then
$\Phi^{\lambda} \subseteq \Phi(\fu)$, 
with equality if the coroots of $\fl$
are exactly the coroots orthogonal to $\lambda$.  By definition, $W^P$ consists of maximal length representatives of left $W_P$ cosets.  If $w \in W^P$, then
the minimal length element in $w W_P$ is $w_{min} = w w_0^P$, where $w_0^P$ is the longest element in $W_P$.
Given $w \in W^P$, let \(\Phi_w^P= \Phi^+ \cap w_{min} \Phi^- = \set{ \alpha \in \Phi \mid \alpha>0, w_{min}^{-1}\alpha<0}\).
Note that \(\Phi_w^P= \Phi^+ \cap w \Phi(\fu^-) \). 

We will use the following notation for the covering relations in the Bruhat order.  Given $v, w \in W$, 
write $w \lessdot v$ if $\ell(v) = \ell(w) + 1$ and $v = w r_{\gg} = r_{\gb} w$ for some positive root $\gg$; in this case $\gb = w(\gg)$ is also positive.
If $v$ and $w$ are both assumed to be in $W^P$, we write $w {\lessdot}_P v = w r_{\gg}$. 
The poset $W^P$ is connected by covering relations (this is an immediate consequence of the analogous result of Deodhar for minimal coset
representatives (\cite[Cor.~3.8]{Deodhar1977};  see \cite[Prop.~2.6]{BoeGraham2003}).

\subsection{Projectivity of weighted flag varieties} \label{ss: projective}
In \cite{CortiReid2002}, weighted flag varieties are constructed as closed subvarieties of weighted projective space,
so they are projective.  Because our definition is slightly more general than the one used in \cite{CortiReid2002},
we give the proof of this result in our setting.

For any weight $\gamma$, define $a_{\gamma} = \gcd(\chi)^{-1}\gamma \cdot \chi$.
For $w \in W$, we set  \(a_w=a_{w\lambda}\).  Observe that if \(\lambda\) is dominant and \(\chi\) is antidominant, then \(w \mapsto a_w\) is an increasing function on the Weyl group.
As in the introduction, we assume that for every \(w\in W\), we have 
\begin{equation}\label{equation: a_w}
a_w>0.
\end{equation}
This assumption implies that $G$ is not semisimple.  More generally,
we have:

\begin{proposition} \label{prop:not semisimple}
If there exists $\mu \in \Lambda$ such that $a_{w \mu} >0$ for each \(w\in W\), then 
 \(G\) is not semisimple (that is, \((m > n)\)).  Moreover, $\mu$ is not in the
 span of the roots.
\end{proposition}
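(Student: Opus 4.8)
The plan is to extract a single Weyl-invariant cocharacter from the hypothesis and use it to detect the central torus of $G$. First I would rewrite the hypothesis: since $\gcd(\chi)>0$, the condition $a_{w\mu}>0$ for all $w\in W$ says exactly that $(w\mu)\cdot\chi>0$ for every $w\in W$. Set $\sigma=\sum_{w\in W}w\chi\in\H$. This element is manifestly $W$-invariant, since left multiplication by any $u\in W$ permutes the summands. Moreover, using the $W$-invariance of the pairing and reindexing $w\mapsto w^{-1}$,
\[
\mu\cdot\sigma=\sum_{w\in W}\mu\cdot(w\chi)=\sum_{w\in W}(w^{-1}\mu)\cdot\chi=\sum_{w\in W}(w\mu)\cdot\chi=\gcd(\chi)\sum_{w\in W}a_{w\mu}>0,
\]
so in particular $\sigma\neq0$. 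Thus the weight $\mu$ serves precisely to certify that the Weyl-average $\sigma$ of $\chi$ is nonzero.

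Next I would identify the $W$-fixed subspace $\H^{W}$. Since each reflection acts by $r_\alpha(v)=v-(\alpha\cdot v)\alpha^\vee$, a cocharacter $v$ is $W$-fixed if and only if $\alpha\cdot v=0$ for every $\alpha\in\Phi$; that is, $\H^{W}=\Phi^{\perp}$, the annihilator of the roots in $\H$. If $G$ were semisimple, the roots would span $\H^{\ast}$, forcing $\Phi^{\perp}=0$ and hence $\H^{W}=0$. But $\sigma\in\H^{W}$ and $\sigma\neq0$, a contradiction. Therefore $G$ is not semisimple; equivalently, its connected center is positive-dimensional, so the rank $\dim_{\F}\H=m+1$ of $G$ strictly exceeds the semisimple rank $n$, in agreement with the earlier remark that $m\ge n$.

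For the second assertion I would use the nondegeneracy of the pairing between $\H^{\ast}$ and $\H$. Since $\H^{W}=\Phi^{\perp}$, double annihilation gives $(\H^{W})^{\perp}=(\Phi^{\perp})^{\perp}=\mathrm{span}_{\F}(\Phi)$. Hence $\mu$ lies in the span of the roots precisely when $\mu\cdot v=0$ for all $v\in\H^{W}$. Taking $v=\sigma\in\H^{W}$ and recalling $\mu\cdot\sigma>0$ shows $\mu\notin\mathrm{span}_{\F}(\Phi)$; because $\Q\subset\F$ and char $\F=0$, this also rules out membership of $\mu$ in the rational or integral span of the roots.

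The argument is short, and I do not anticipate a serious obstacle. The only points requiring care are the bookkeeping that identifies $\H^{W}$ with $\Phi^{\perp}$, the invocation of nondegeneracy to pass from $\Phi^{\perp}$ back to $\mathrm{span}_{\F}(\Phi)$, and the recognition that the Weyl-averaged cocharacter $\sigma$ is exactly the device that pins down a central direction of $G$. Everything else is standard root-system linear algebra.
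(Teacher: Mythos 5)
Your proof is correct, but it takes a genuinely different route from the paper's. You average $\chi$ over $W$ to produce the nonzero $W$-invariant cocharacter $\sigma=\sum_{w\in W}w\chi$, observe that $\mu\cdot\sigma=\gcd(\chi)\sum_w a_{w\mu}>0$, and then read off both conclusions from the identifications $\H^{W}=\Phi^{\perp}$ and $(\Phi^{\perp})^{\perp}=\mathrm{span}_{\F}(\Phi)$; in particular your second assertion ($\mu\notin\mathrm{span}_\F(\Phi)$) already implies the first, since it forces the roots not to span $\H^\ast$. The paper instead fixes a nondegenerate $W$-invariant form to identify $\fh$ with $\fh^\ast$, moves $\mu$ to a dominant position and $\chi$ to an antidominant one, and uses the fact that pairings of fundamental dominant weights are nonnegative (citing Humphreys, Ex.~13.8) to exhibit a single $w$ with $a_{w\mu}\le 0$; the second assertion is then handled separately via the $W$-isotypic decomposition of $\fh^\ast$. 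Your argument is more elementary in that it uses only the natural pairing between $\Lambda$ and $\Lambda^\vee$ and its $W$-invariance, with no inner product or facts about the inverse Cartan matrix, and it unifies the two conclusions; the paper's argument has the mild advantage of producing an explicit Weyl group element witnessing the failure of positivity. One cosmetic point: non-semisimplicity is the statement $m+1>n$, i.e.\ $m\ge n$ (as stated earlier in Section 3 of the paper), which is what your argument actually yields; the parenthetical ``$m>n$'' in the proposition statement is the paper's slip, not yours.
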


\begin{proof}
We argue by contradiction.  Suppose \(G\) is semisimple, and identify $\mathfrak h$ with
$\mathfrak h^*$ by a nondegenerate $W$-invariant symmetric form.  
Choose $w_1$ and $w_2$ so that
$w_1 \mu$ is dominant and $w_2 \chi$ is antidominant.  If 
$\lambda_1, \ldots, \lambda_n$ are the fundamental
dominant weights, then $w_1 \mu = \sum c_i \lambda_i$ and $w_2 \chi = \sum d_i \lambda_i$,
where each $c_i \geq 0$ and $d_i \leq 0$.
If $w = w_2^{-1} w_1$, then
$\gcd(\chi) a_{w \mu} = w_2^{-1} w_1 \mu \cdot \chi = w_1 \mu \cdot w_2 \chi \leq 0$, since each $\lambda_i \cdot \lambda_j \geq 0$ (as follows from
\cite[Ex.~ 13.8]{Humphreys1978}).  This contradicts our assumption
that $a_{w \mu}>0$.  We conclude that \(G\) is not semisimple.

Let $\fh^* = \fh_0^* \oplus \fh_1^*$ be the $W$-isotypic decomposition of $\fh^*$,
where $\fh_1^*$ is the span of the roots, and let $\fh = \fh_0 \oplus \fh_1$
be the corresponding decomposition of $\fh$.  Write $\chi = \chi_0 + \chi_1$,
where $\chi_i \in \fh_i$.  Observe that $\fh_1$ is a Cartan subalgebra
of the semisimple Lie algebra $[\fg, \fg]$.  If $\mu \in \fh_1$, then
we would have $w \mu \cdot \chi = w \mu \cdot \chi_1 >0$ for all $w \in W$,
but this is impossible by the argument of the preceding paragraph.  Therefore
$\mu \not\in \fh^*_1$.
\end{proof}

As in the introduction, we write $L_{\lambda}$ for the kernel of $\lambda$ on $L$, $Q = L_{\lambda} U$,
and $Z = Z=G\times^P\bC_\lambda^\times \cong G/Q$.  The weighted flag variety is
\(X=S\backslash Z\).  Write $\pi: Z \to X$ for the quotient map. 

\begin{proposition} \label{prop:projective}
\(X=S\backslash Z\) is a geometric quotient in the category of schemes.
The quotient is a projective algebraic variety.
\end{proposition}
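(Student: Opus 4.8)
The plan is to establish two things: first, that the quotient $X = S \backslash Z$ exists as a geometric quotient of schemes, and second, that this quotient is projective. For the existence, I would rely on the structure already set up in Section \ref{section: Background}: the action of $S$ on $Z$ is proper (which needs verification), so the general machinery of Koll\'ar and Keel--Mori guarantees a geometric quotient as an algebraic space, and under a projectivity/quasiprojectivity hypothesis with a linearized action, this quotient is in fact a scheme. The properness of the $S$-action should follow from the assumption \eqref{equation: a_w} that each $a_w > 0$: this positivity means $\chi$ pairs positively with every Weyl translate of $\lambda$, which controls the weights of the $S$-action on $Z = G \times^P \bC_\lambda^\times$ and forces the stabilizers to be finite with the action proper.

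The main work is projectivity, and here I would follow the strategy of \cite{CortiReid2002} adapted to our slightly more general setting. The idea is to realize $X$ as a closed subvariety of a weighted projective space. Concretely, I would choose a suitable ample line bundle, or equivalently a dominant weight, giving a $G$-equivariant (hence $S$-equivariant) embedding of $Z$ into the total space of a line bundle minus its zero section over an ordinary flag variety $G/P$. The cocharacter $\chi$ determines a $\bC^\times$-action (via $S = \chi(\bC^\times)$), and the weights $a_w = \gcd(\chi)^{-1} w\lambda \cdot \chi$ being positive is exactly the condition that ensures the resulting grading on sections is positive, so that the $\Proj$ construction produces a weighted projective space into which $X$ embeds as a closed subscheme. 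Thus $X$ inherits projectivity from the weighted projective space.

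The cleanest route is probably to exhibit $Z$ explicitly as a quasi-affine cone. Since $Z \cong G \times^P \bC_\lambda^\times$ is the complement of the zero section in the line bundle $G \times^P \bC_\lambda$ over $G/P$, and since $\lambda$ is dominant this line bundle is the pullback of $\cO(1)$ under the embedding $G/P \hookrightarrow \bP(V_\lambda)$ for the irreducible representation $V_\lambda$, I would identify $Z$ with the $G$-orbit of a highest weight vector in $V_\lambda \setminus \{0\}$ (the affine cone over $G/P$ minus the origin). The subgroup $S = \chi(\bC^\times)$ then acts on $V_\lambda$ with weights of the form $w\lambda \cdot \chi$; the hypothesis $a_w > 0$ guarantees all these weights are strictly positive, so $S$ scales the cone with positive weights and the quotient $S \backslash Z$ is precisely a weighted projective quotient, hence projective.

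The hard part will be verifying the properness of the $S$-action carefully enough to both invoke the Keel--Mori/Koll\'ar existence theorem and to conclude the quotient is a scheme rather than merely an algebraic space, as well as making the identification of $X$ with a closed subscheme of weighted projective space rigorous in the generality where $G$ is not assumed to split as $G' \times \bC^\times$. I expect the positivity assumption $a_w > 0$ to do all the real work in both halves: it simultaneously ensures finite stabilizers (properness) and positive grading (projectivity via $\Proj$). A subtle point to watch is that $\gcd(\chi)$ may be nontrivial, so $S$ may act with generic stabilizer a finite cyclic group, meaning $X$ is an orbifold; this is consistent with the ``at worst orbifold singularities" claim in the introduction and does not obstruct projectivity, but it must be tracked when asserting that the geometric quotient is a genuine scheme.
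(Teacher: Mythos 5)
Your third paragraph is essentially the paper's argument, but only for a special case, and the gap is that your whole strategy of embedding $X$ into a weighted projective space breaks down in the generality the proposition actually covers. The parabolic $P=LU$ is only assumed to have the coroots of $L$ orthogonal to $\lambda$; it need not contain \emph{all} such coroots. When it does (call this the maximal case), $Q$ is exactly the stabilizer of a highest weight vector $v_\lambda\in V_\lambda$, the orbit $G\cdot v_\lambda$ is $G/Q=Z$, and your identification of $Z$ with the punctured cone works: $S$ acts on $V_\lambda$ with strictly positive weights (the weights lie in the convex hull of the $w\lambda$ and each $a_w>0$), so $S\backslash(V_\lambda\smallsetminus\{0\})$ is a weighted projective space containing $S\backslash Z$ as a closed subvariety. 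But if $P\subsetneq P'$, where $P'$ is the full stabilizer parabolic of the line $\bC v_\lambda$, then the map $G/P\to\bP(V_\lambda)$ factors through $G/P'$ and is not an embedding, $Z=G/Q$ is not quasi-affine (see the remark following Remark \ref{remark: proj}), and $X$ is not a closed subvariety of any weighted projective space --- it fibers over one with positive-dimensional fibers. Your proposed ``cleanest route'' therefore cannot be carried out as stated, and the difficulty you flag (that $G$ need not split as $G'\times\bC^\times$) is not the one that actually arises.

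The paper's proof supplies the missing reduction. It first proves the maximal case exactly as you describe, then treats general $P$ by considering the projective $S$-equivariant morphism $Z=G\times^P\bC_\lambda\to Z'=G\times^{P'}\bC_\lambda$ with fibers $P'/P$. Since $X'=S\backslash Z'$ is projective by the special case, \cite[Prop.~2.18]{FogartyKirwanMumford1994} shows $Z$ is the stable locus of an $S$-linearized ample line bundle, so a quasi-projective geometric quotient $X=S\backslash Z$ exists in the category of schemes (this is also how the paper gets a scheme rather than merely a Keel--Mori algebraic space, a point you leave unresolved). Projectivity of $X$ then follows by showing $\rho\colon X\to X'$ is proper: one covers $Z'$ by $S$-invariant opens $C\times\bC^\times_{w\lambda}\isom C\times S/F$ with $F$ finite, over which $\rho$ becomes $F\backslash(C\times wP'/P)\to F\backslash C$, which is proper because $P'/P$ is projective. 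To complete your proof you would need to add this reduction (or restrict the proposition to the maximal case, which is not what is claimed).
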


\begin{proof}
First assume that the coroots of $L$ are exactly the coroots of $G$ which are orthogonal to $\lambda$.
Let $V_\lambda$ denote the irreducible representation of
$G$ with highest weight $\lambda$.  The subgroup $S=\chi(\bC^\times)$ of $H$
acts on $V_\lambda$.  Since the weights of $V_\lambda$ are in the convex hull of the weights
$w \lambda$ for $w \in W$, and $a_w = w \lambda \cdot \chi >0$, the weights of $S$ on
$V_{\lambda}$ are strictly 
positive.  Therefore, the quotient $\bP = S\backslash (V_\lambda\smallsetminus\set{0})$
is a weighted projective space.  This is a geometric quotient of
$V_\lambda\smallsetminus\set{0}$ by $S$.  Our assumption on $L$ implies that $Q$ is the stabilizer of a
a highest weight vector $v_{\lambda}$ in $V_{\lambda}$.  
The orbit $G \cdot v_{\lambda} \cong G/Q = Z$ is an $S$-invariant closed subvariety of $V_\lambda\smallsetminus\set{0}$.  Therefore $S \backslash Z$ is a closed
subvariety of the weighted projective space $\bP$, and \(X=S\backslash Z\) is a geometric quotient of $Z$ by $S$.

More generally, $P$ is contained in a parabolic subgroup $P'$ satisfying the hypothesis of the
first sentence of the proof.  The map $\pi: Z = G \times^P \C_{\lambda} \to Z' = G \times^{P'} \C_{\lambda}$ is projective (with fibers isomorphic
to $P'/P$) and $S$-equivariant.  Because the quotient $X' = S \backslash Z'$ is projective,
it follows from \cite[Prop.~2.18]{FogartyKirwanMumford1994} that $Z$ is the stable locus of some $S$-equivariant ample line bundle,
and therefore a quasi-projective geometric quotient $X = S \backslash Z$ exists in the category of schemes (\cite[Ch.~1.4]{FogartyKirwanMumford1994}).
We claim that the map $\rho: X \to X'$ is proper.  This suffices, for then $X$ is complete (as $X'$ is), 
so $X$ is projective.  We now prove the claim.  By the results of Section \ref{subsection: stabilizers},
we can cover $Z'$ by $S$-invariant open sets of the form $C \times \C^*_{w \lambda} \isom C \times S/F$, where $F = \ker(w \lambda)|_S$.
The inverse image in $Z$ is isomorphic to $C \times w P'/P \times S/F$.  The map 
$C \times w P'/P \times S/F \to C \times S/F$ induces a map on the quotients by $S$,
which is isomorphic to the map $F \backslash (C \times w P'/P) \to F \backslash C$.  Since $P'/P$ is projective and $F$ is finite,
this map is proper.  Thus, we can cover $X$ by open sets of the form $F \backslash C$ such that
the map $\rho^{-1}(F \backslash C) \to F \backslash C$ is proper.  Hence, $\rho$ is proper.
\end{proof}

\begin{remark}\label{remark: proj}
If the coroots of $L$ are exactly the coroots of $G$ which are orthogonal
to $\lambda$, then the weighted flag variety \(X\) could be defined by the Proj construction as follows.
For every \(\mu\in \Lambda^+\), let \(V_\mu\) be the irreducible representation of \(G\) of highest weight \(\mu\), and let \(\mu^\ast\) be the highest weight of \(V_\mu^\ast\).
Recall that
\begin{equation}\label{equation: functions on G}
\bC[G]=\bigoplus_{\mu\geq0}V_\mu\otimes V_{\mu^\ast}
\end{equation}
by, e.g., \cite{Jantzen2003}.
The \(Q\)-invariant functions on \(G\) are
\begin{equation}\label{equation: functions on Z}
A = \bC[G]^Q =\bigoplus_{k\geq0} V_{k\lambda^\ast}\otimes\bC_{k\lambda}
\end{equation}
Observe that \(A\) is a \(\bZ_{\geq0}\)-graded \(\bC\)-algebra via the action of \(S\) on \(Z\).
We have \(G\times^P\bC_\lambda\to\overline{Z}\), where \(\overline{Z}=Z\cup\set{0}\subset V_\lambda\).
By \cite[\S8.13]{Jantzen2004}, $A = \C[G\times^P\bC_\lambda]
= \C[\overline{Z}]$ as left \(G\)-modules.
Write \(0\) for the point of \(\Spec(A)\) corresponding to the irrelevant maximal ideal.
Then \(\Proj(A)\) is the geometric quotient of \(\Spec(A)\smallsetminus\set{0}\) by \(S\), since \(A_0=\bC\) (cf.\ \cite{Mukai2003}).
We have \(Z=\Spec(A)\smallsetminus\set{0}\), so \(X\) is a projective algebraic variety defined as a geometric quotient of \(Z\) by \(S\).
\end{remark}

\begin{remark}
The argument in Remark~\ref{remark: proj} shows that if $G \neq Q$, then \(Z=G/Q\) is quasiaffine if and only if \(\lambda\) is dominant or antidominant and the coroots of \(L\) are exactly the coroots of \(G\) which are orthogonal to \(\lambda\).  
Indeed, if $L$ does not satisfy this condition, then let $P' = L' U'$ be as in the proof of Proposition \ref{prop:projective} and let
$Q' = L'_{\lambda} U'$.  Then \(Q\subsetneq Q'\), and \(G/Q\to G/Q'\to \Spec(\bC[G]^{Q'})\) shows that \(G/Q\to\Spec(\bC[G]^Q)\) is not an immersion, 
since \(\bC[G]^Q=\bC[G]^{Q'}\) by \eqref{equation: functions on Z} (or more directly since \(G/Q\to G/Q'\) is proper).
If \(\lambda\) is not dominant or antidominant, then \(\bC[G]^Q=\bC\) is one dimensional by the calculation in \eqref{equation: functions on Z}; since the only term remaining is when \(k=0\).  In this case, if $G/Q \subset \Spec(\bC[G]^Q)$ then $G = Q$.
\end{remark}

\begin{remark}
Every \(G\)-equivariant line bundle on a flag variety has the same embedded principal \(\bC^\times\)-bundle
(defined as the complement of the $0$-section) as the dual line bundle, since \(G\times^P\bC_\lambda^\times=G\times^P\bC_{-\lambda}^\times\) are both equal to \(G/Q\). 
Hence weighted flag varieties are projective varieties when \(\lambda\) is dominant or antidominant.
\end{remark}

Generalized flag varieties are a special case of weighted flag varieties.
Indeed, by Proposition \ref{prop:not semisimple}, $\lambda$
is not in the span of the roots, so there exists a rank one subtorus \(S_0\) of the 
center of \(G\) such that \(\lambda\) restricts nontrivially to \(S_0\).
Define \(\chi_0\) to be an isomorphism of \(\bC^\times\) with \(S_0\) such that \(\chi_0\) pairs positively with the restriction of \(\lambda\) to \(S_0\).
If we consider \(\chi_0\) as a cocharacter of \(H\), then \(\chi_0\) satisfies \eqref{equation: a_w}, and \(S_0\backslash G/Q=G/P\) since \(P=S_0Q\).  Therefore the weighted flag variety $S_0\backslash G/Q$ is the generalized flag variety
$G/P$.
We refer to this as the non-weighted case.  We fix \(\chi_0\) from now on; we 
write $Y = S_0\backslash G/Q=G/P$ for the non-weighted generalized flag variety,
and $\pi_0: Z \to Y$ for the quotient map.
Set \(G_0=G/S_0\), \(P_0=P/S_0\), and \(T_0=H/S_0\).  Note that \(T_0\) is a maximal torus of \(G_0\), and
\(Y = G/P=G_0/P_0\). 

The maps $\pi: Z \to X$ and $\pi_0: Z \to Y$ induce isomorphisms
$\pi^*: H^*_T(X) \to H^*_H(Z)$ and $\pi_0^*: H^*_{T_0}(Y) \to H^*_H(Z)$, as follows
from Proposition
 \ref{prop:Poincare}.  This isomorphism is described more precisely in Corollary
 \ref{corollary: pullback}.

The weighted flag variety is covered by quotients of affine space by finite groups (see Section \ref{subsection: stabilizers}), so \(X\) is normal with rational singularities, \(\bQ\)-factorial \cite[\S5.1]{KollarMori1998}, rationally smooth, and has orbifold singularities \cite{BartoloMartinMoralesOrtigasGalindo2014}.
Although $X$ need not be homogeneous, 
it does admit an action of any subgroup of $G$ centralizing $S$.
In particular, $X$ admits an $H$-action; since the subgroup
$S$ acts trivially, this action induces an action of $T = H/S$.

\begin{remark} \label{r:corti-reid}
The construction of weighted flag varieties in \cite{CortiReid2002} can be described in our notation as follows.
Let $G = G' \times \C^*$, where $G'$ is reductive.  Then $\lambda$ is of the form $(\lambda', u)$, where
$\lambda'$ is dominant for $G'$, and $u \in \mathbb{N}$.  The cocharacter $\chi$ is of the form $(\chi', 1)$, where
$\chi'$ is a cocharacter of $G'$, and $1$ corresponds to the identity cocharacter of $\C^*$.  Let $V_{\lambda}$ denote the
representation of highest weight $\lambda$ for $G$.  The weights of $S$ on $V_{\lambda}$ are given by
$\mu' \cdot \chi' + u$.  These weights are assumed to be strictly positive, which can be ensured by taking $u$ large enough.
This corresponds to our assumption that $a_w > 0$ for all $w \in W$.
The $G$-orbit of a highest weight vector $v_{\lambda}$ is $G \cdot v_{\lambda} = G/Q$.
The weighted flag variety is $S \backslash G /Q$, which is a closed subvariety of the weighted projective space 
$\bP = S \backslash (V_{\lambda} \setminus \{0\})$.  The construction in this paper is slightly more general in that
$G$ is not assumed to be of the form $G' \times \C^{\lambda}$, and $Q$ is only assumed to be contained in
the stabilizer of $v_{\lambda}$.  
\end{remark}

The following example shows that without the positivity of the $a_w$,
the quotient $X = S \backslash Z$ need not be separated (hence not projective).

\begin{example}\label{example: not projective}
Let \(G=SL_2\), with $B$ and $H$ the subgroups of upper triangular and diagonal matrices, respectively.
Let \(\lambda=1=\chi\), where we identify $\Lambda$ and $\Lambda^{\vee}$ with $\Z$ so that the dominant weights correspond to nonnegative integers.
Write \(W=\set{1,s}\).
Then \(a_1=1\) and \(a_s=-1\); in particular, \eqref{equation: a_w} is not satisfied.
The action of \(S \cong \C^{\times}\) on \(Z \cong \C^2 \smallsetminus \{0\} \) is
with weights $1$ and $-1$, so the action is closed (that is, the $S$-orbits are closed).  By \cite[Proposition 1.9]{FogartyKirwanMumford1994}, a universal geometric quotient exists.
The quotient \(X=S\backslash Z\) is not separated since the orbits through the axes form a doubled origin in the quotient.  In this example, $Z \cong \C^2 \setminus \{ 0 \}$ is 
covered by the $S$-invariant open sets $U_i$ where the $i$-th coordinate is nonzero.  Although
$S$ acts properly on each $U_i$, the action of $S$ on $Z$ is not proper.
This example shows that locally proper actions need not glue to a globally proper action.  Note that if we fix \(\lambda=1\), then there is no choice of \(\chi\) satisfying \eqref{equation: a_w}.
\end{example}

\begin{example}\label{example: quotientproj}
Let \(G=SL_2\times\bC^\times\) and \(\lambda=(1,1)\).
Then \(\chi=(0,1)\) gives \(a_0=1=a_1\) and hence \eqref{equation: a_w} is satisfied.
We have \(X\isom \bP^1\), the flag variety for $SL_2$.  
\end{example}

\subsection{An open covering and stabilizers}\label{subsection: stabilizers} 
In this section we describe a covering of a weighted flag variety by open sets
isomorphic to affine space modulo a finite group, as is done in \cite{CortiReid2002}.  We show that all stabilizers of \(Z\) in \(S\) are contained in a finite subgroup \(\bZ_{a_w}\) of $S$, depending on the neighborhood \(U_w\subset Z\).  In the next section we describe generic stabilizers for each weighted Schubert cell.  

Recall that \(P=LU\) is the Levi decomposition of our parabolic subgroup such that \(H\subset L\). Let \(U^-\) be the unipotent radical of the opposite parabolic \(P^-\) with respect to \(H\).
Given \(w\in W\), let \({}^wU^-=w U^-w^{-1}\), and
let \(h^w=w^{-1}hw\).  The map
\begin{equation}\label{equation: charts}
\varphi\,\colon {}^wU^-\to G/P,\quad  wu w^{-1}\mapsto wuP/P,
\end{equation}
takes \({}^wU^-\) isomorphically onto its image, which we denote by $C_w$. 
Let \(\zeta\,\colon G\to G/P\) denote the projection.
Then \(\zeta^{-1}(C_w) = {}^wU^-\times wP \subset G\), where
\({}^wU^-\times wP\) is identified
with its image in $G$ under the multiplication map.  This image is invariant under left multiplication
by $H$, which corresponds to the action of $H$ on
\({}^wU^-\times wP\) given by 
\begin{equation} \label{e.action}
h( wu w^{-1}, wp)=( wh^wu(h^w)^{-1} w^{-1}, wh^wp).
\end{equation}

Let $U_w$ denote $\pi_0^{-1}({}^wU^-) \subset Z$.

\begin{lemma} \label{lemma: uw}
Suppose $w \in W^{P}$.  As \(H\)-varieties, we have
\begin{equation}
U_w\isom {}^wU^-\times\bC^\times_{w\lambda}.
\end{equation}
\end{lemma}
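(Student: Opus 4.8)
The goal is to identify $U_w = \pi_0^{-1}({}^wU^-) \subset Z$ as an $H$-variety with ${}^wU^- \times \bC^\times_{w\lambda}$. Recall $Z = G \times^P \bC_\lambda^\times$, and $\pi_0 \colon Z \to Y = G/P$ is the bundle projection. The plan is to trivialize this $\bC^\times$-bundle over the affine chart $C_w = \varphi({}^wU^-) \subset G/P$ and track the resulting $H$-action.

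**The main computation.** First I would recall from \eqref{equation: charts} that $\varphi$ identifies ${}^wU^-$ with the open cell $C_w \subset G/P$, and that $\zeta^{-1}(C_w) = {}^wU^- \times wP$ inside $G$. Since $Z = G \times^P \bC_\lambda^\times$ is associated to the principal $P$-bundle $G \to G/P$, restricting to $C_w$ gives
\[
U_w = \pi_0^{-1}(C_w) \cong ({}^wU^- \times wP) \times^P \bC_\lambda^\times.
\]
The section $wu w^{-1} \mapsto (wuw^{-1}, wP)$ of $\zeta$ over $C_w$ trivializes the bundle, so a point of $U_w$ is written uniquely as a pair $(wuw^{-1}, t)$ with $wuw^{-1} \in {}^wU^-$ and $t \in \bC^\times$ (the fiber coordinate in $\bC_\lambda^\times$), giving the underlying isomorphism of varieties $U_w \cong {}^wU^- \times \bC^\times$.

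**Tracking the $H$-action — the key point.** The real content is computing how $H$ acts in these coordinates, and verifying the fiber $\bC^\times$ carries the character $w\lambda$. Using the left $H$-action on $G$ as in \eqref{e.action}, $h \cdot (wuw^{-1}, wp) = (wh^wu(h^w)^{-1}w^{-1}, wh^wp)$. To return to the trivializing section I would factor $wh^wp = (w h^w u' (h^w)^{-1} w^{-1}) \cdot wq$ where the first factor lies in ${}^wU^-$ and $wq \in wP$; then the fiber coordinate gets multiplied by $\lambda$ evaluated on the $P$-component. The essential check is that the character by which $P$ scales $\bC_\lambda^\times$, pulled through conjugation by $w$, is precisely $w\lambda$: concretely, $\lambda(h^w) = \lambda(w^{-1}hw) = (w\lambda)(h)$ since $\lambda$ is a character of $H$ and $w$ acts on $\Lambda$ by the Weyl group action. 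This is what upgrades the bare variety isomorphism to an $H$-equivariant one, matching the claimed $\bC^\times_{w\lambda}$. I expect this bookkeeping — correctly splitting $wh^w p$ relative to the section and confirming the residual scalar is $(w\lambda)(h)$ rather than $\lambda(h)$ or $(w\lambda)^{-1}(h)$ — to be the main (and only) obstacle; everything else is the standard local triviality of an associated bundle over an affine cell.

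**Remark on $W^P$.** The hypothesis $w \in W^P$ ensures $wP$ is well-defined on the cell and that ${}^wU^-$ maps isomorphically onto $C_w$ under \eqref{equation: charts}, so the section above is genuinely defined; I would invoke this to justify that $\varphi$ is an isomorphism onto its image and that the trivialization is valid on all of $C_w$.
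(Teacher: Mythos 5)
Your proposal is correct and follows essentially the same route as the paper: identify $U_w \isom ({}^wU^-\times wP)\times^P\bC_\lambda^\times$ via the chart \eqref{equation: charts}, trivialize using the section over $C_w$, and check that the residual action on the fiber is by $\lambda(h^w)=(w\lambda)(h)$, which is exactly the paper's explicit $H$-equivariant map $[(wuw^{-1},wp),v]\mapsto(wuw^{-1},\lambda(p)v)$. The only cosmetic difference is that your re-factoring of $wh^wp$ is unnecessary, since \eqref{e.action} already lands the second component in $wP$.
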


\begin{proof}
We have 
\begin{equation}\label{equation: U_w}
U_w=\pi_0^{-1}({}^wU^-)\isom ({}^wU^-\times wP)\times^P\bC_\lambda^\times
\end{equation}
The action of \(H\subset G\) on \(U_w\) corresponds by \eqref{e.action}
to the action of $H$ on $({}^wU^-\times wP)\times^P\bC_\lambda^\times$ given by
\begin{equation}
h[( wu w^{-1}, wp),v] = [( wh^wu(h^w)^{-1} w^{-1},w h^w p),v].
\end{equation}
The isomorphism $({}^wU^-\times wP)\times^P\bC_\lambda^\times \to {}^wU^-\times\bC^\times_{w\lambda}$ given by $[(wuw^{-1}, wp), v] \mapsto (wuw^{-1}, \lambda(p) v)$
is $H$-equivariant.  The lemma follows.
\end{proof}

Recall that
$a_\mu=\gcd(\chi)^{-1}\mu\cdot\chi$ for any weight \(\mu\in\Lambda\), and 
and \(a_w=a_{w\lambda}\)  for $w \in W$.  For $s \in S$, write $s^{a_{\mu}} = \mu(s)$, and
$s^{a_w} = s^{a_{w\lambda}}$.  Under the identification of  \(S\) with \(\bC^\times\) given by
\(\bC^\times \cong \bC^\times / \ker \chi \to S\), the map \(s \mapsto s^{a_\mu}\) is just the map raising \(s\) to the power \(a_\mu\).
Let \(\bZ_{a_w}\) denote the finite group \(\ker (w \lambda \vert_S)\).   Because the action of 
$S$ on $Z$ is proper, $S$ acts on $Z$ with finite stabilizers; this is also a consequence
of the following result, which gives more precise information.

\begin{corollary} \label{corollary: stab}
The stabilizer in $S$ of any $z \in U_w$ is contained in \(\bZ_{a_w}\).  Hence
$S$ acts on $Z$ with finite stabilizers.  Moreover, 
for \(w\in W^P\), the map \({}^wU^-\to S\backslash U_w\) defined by
\begin{equation}\label{equation: chart isoms}
u \mapsto S(u,1)
\end{equation}
induces an isomorphism
\begin{equation}\label{equation: chartsquotient}
\bZ_{a_w}\backslash {}^wU^-\isom S\backslash U_w.
\end{equation}
The \(S\backslash U_w\) form an open cover of \(X\) by quotients of affine space by finite groups.
Hence $X$ is rationally smooth.
\end{corollary}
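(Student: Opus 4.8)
The plan is to reduce the entire statement to the explicit description of $U_w$ in Lemma \ref{lemma: uw} together with the quotient computation of Remark \ref{remark:orbifold}. First I would make the $S$-action on $U_w\isom {}^wU^-\times\bC^\times_{w\lambda}$ explicit. Unwinding the isomorphism of Lemma \ref{lemma: uw}, one finds that for $h\in H$ the second factor transforms by the character $w\lambda$, since $\lambda(h^w)=(w\lambda)(h)$; thus for $s\in S$ the action is $(u,y)\mapsto (s\cdot u,\,(w\lambda)(s)\,y)$, where $s\cdot u$ is the conjugation action on ${}^wU^-$. Because $a_w>0$, the character $w\lambda|_S$ is nontrivial, so $w\lambda|_S\colon S\to\bC^\times$ is surjective with finite kernel $\bZ_{a_w}=\ker(w\lambda|_S)$; under $S\isom\bC^\times$ this kernel is the group of $a_w$-th roots of unity.

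For the stabilizer bound, if $s\in S$ fixes a point $(u,y)$ of $U_w$ (so $y\in\bC^\times$), then comparing second coordinates forces $(w\lambda)(s)\,y=y$, hence $(w\lambda)(s)=1$ and $s\in\bZ_{a_w}$. This gives the first assertion; since each $\bZ_{a_w}$ is finite and the $U_w$ cover $Z$, the group $S$ acts on $Z$ with finite stabilizers.

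For the isomorphism \eqref{equation: chartsquotient} I would apply Remark \ref{remark:orbifold} with $C={}^wU^-$ and $\mu=w\lambda$. Here ${}^wU^-$ is an affine space on which $H$ acts linearly---under the $H$-equivariant identification of ${}^wU^-$ with its Lie algebra ${}^w\fu^-$ the conjugation action becomes $\Ad$---and $\mu=w\lambda$ is nontrivial on $S$ since $a_w\neq 0$, so the finite group $F=S\cap\ker(w\lambda)$ is exactly $\bZ_{a_w}$. Remark \ref{remark:orbifold} then yields $S\backslash U_w=S\backslash({}^wU^-\times\bC^\times_{w\lambda})\isom\bZ_{a_w}\backslash{}^wU^-$. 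To match the stated map \eqref{equation: chart isoms}, note that a point of $\bZ_{a_w}$ fixes the second coordinate $1$, so $u\mapsto S(u,1)$ is constant on $\bZ_{a_w}$-orbits, and the resulting map is exactly the inverse of the isomorphism furnished by Remark \ref{remark:orbifold}.

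Finally, the charts $C_w$ for $w\in W^P$ are the standard affine cells of $G/P$ centered at the base points $wP$, and these cover $Y=G/P$; pulling back along $\pi_0$ and passing to the $S$-quotient shows that the open sets $S\backslash U_w\isom\bZ_{a_w}\backslash{}^wU^-$ cover $X$, each a quotient of affine space by a finite group. Rational smoothness of $X$ is then a local question, and a finite quotient of a smooth variety is rationally smooth over a field of characteristic zero: by the transfer (averaging) argument the local cohomology of $\bZ_{a_w}\backslash{}^wU^-$ equals the $\bZ_{a_w}$-invariants of that of affine space, which are concentrated in the correct degree because $\bZ_{a_w}$ acts complex-linearly and hence preserves orientation. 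I expect the main points requiring care to be the bookkeeping that identifies $F$ with $\bZ_{a_w}$ and matches \eqref{equation: chart isoms} with Remark \ref{remark:orbifold}; the only substantive external input is the rational smoothness of finite quotients, everything else being a direct application of Lemma \ref{lemma: uw} and Remark \ref{remark:orbifold}.
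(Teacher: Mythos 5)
Your proposal is correct and follows essentially the same route as the paper: both deduce the stabilizer containment and the isomorphism \eqref{equation: chartsquotient} from the product description $U_w\isom {}^wU^-\times\bC^\times_{w\lambda}$ of Lemma \ref{lemma: uw} (your use of Remark \ref{remark:orbifold} is exactly the quotient computation the paper is invoking), and both obtain the open cover of $X$ by pulling back the standard affine charts of $G/P$. The only difference is that you spell out the transfer argument for rational smoothness of finite quotients, which the paper takes as known.
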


\begin{proof}
Lemma \ref{lemma: uw} implies that  the stabilizer group 
$S^z$ (for $z \in U_w$) is contained in $\bZ_{a_w}$.  This lemma also
implies that \eqref{equation: chart isoms} induces the isomorphism \eqref{equation: chartsquotient}.
Since the $C_w$ form an open cover of $G/P$, their inverse images $U_w$ in $Z$ form an open cover of $Z$.   Each ${}^wU^-$ is isomorphic to affine space, and since $Z \to X = S \backslash Z$
is a geometric quotient, the $S\backslash U_w \isom \bZ_{a_w}\backslash {}^wU^-$ form
an open cover of \(X\) by quotients of affine space by finite groups. 
\end{proof}

\subsection{Weighted Schubert varieties} \label{subsection: Schubert}
 Recall that $\pi: Z \to X$ and $\pi_0: Z \to Y = G/P$ denote the natural projections.  Suppose
\(w\in W^P\).
By definition, the Schubert cell $Y^0_w$ corresponding to $w$ in $Y$ is the $B$-orbit 
$B \cdot wP$.  The map $u \mapsto uP$ gives an isomorphism
of ${}^wU^- \cap U$ onto this cell.  Define
$Z_w^0 = \pi_0^{-1}(Y^0_w) = ({}^wU^- \cap U) \cdot w P \times^P \bC^\times_{\lambda}$.
Lemma \ref{lemma: uw} implies that $Z_w^0 \isom ({}^wU^- \cap U) \times \bC^\times_{w\lambda}$
as $H$-varieties.
We define the weighted Schubert cell $X_w^0 \subset X$ by
$$
X_w^0 = S \backslash Z_w^0 \cong S \backslash (({}^wU^- \cap U) \times \bC^\times_{w\lambda}).
$$
The weighted Schubert variety $X_w$ is the closure of
$X_w^0$ in $X$.  

\begin{proposition}
    The weighted Schubert cells $X_w^0$ form a paving of $X$ by affine spaces modulo finite groups.  The equivariant cohomology $H^*_T(X)$ is a free $H^*_T$-module with basis
    $\delta_{X_w}^T$.
\end{proposition}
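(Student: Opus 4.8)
The plan is to reduce the entire statement to Corollary \ref{corollary: paving}, which the machinery of Section \ref{section: Background} was built precisely to handle. First I would produce the paving of $X$ from the Bruhat decomposition of the underlying flag variety $Y = G/P$. The Schubert cells $Y_w^0 = B \cdot wP$ (for $w \in W^P$) stratify $Y$, with the closed Schubert varieties $Y_w = \overline{Y_w^0}$ giving a filtration by closed subvarieties indexed by dimension. Pulling this filtration back along $\pi_0 \colon Z \to Y$ yields a filtration of $Z$ by closed subvarieties whose successive differences are the disjoint cells $Z_w^0 = \pi_0^{-1}(Y_w^0)$; since each $Z_w^0$ is $S$-invariant and $\pi \colon Z \to X$ is a geometric quotient, applying $S \backslash (-)$ descends this to a filtration of $X$ whose successive differences are the weighted Schubert cells $X_w^0 = S \backslash Z_w^0$. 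By Lemma \ref{lemma: uw} we have $Z_w^0 \isom ({}^wU^- \cap U) \times \bC^\times_{w\lambda}$ as $H$-varieties, and Remark \ref{remark:orbifold}, applied with $C = {}^wU^- \cap U$ affine space and $\mu = w\lambda$, identifies $X_w^0 \isom \bZ_{a_w} \backslash ({}^wU^- \cap U)$, a quotient of affine space by a finite group. This establishes the orbifold paving.

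Next I would check the hypotheses of Corollary \ref{corollary: paving} for the cells $Z_w^0$. Writing $H_w = \ker(w\lambda) \subset H$, so that $\bC^\times_{w\lambda} \isom H/H_w$, the factor $C_w = {}^wU^- \cap U$ is affine space carrying a linear $H$-action, hence is smooth and $H_w$-acyclic: a linear torus action on affine space is equivariantly contractible, so $H^{H_w}_*(C_w)$ is the free $H^*_{H_w}$-module on $[C_w]_{H_w}$. The subtorus $S$ acts properly on $Z$ by hypothesis, and it acts transitively on each $H/H_w \isom \bC^\times_{w\lambda}$ precisely because $a_w > 0$ forces $w\lambda|_S$ to be a nontrivial character, making $S \to \bC^\times_{w\lambda}$ surjective. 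Finally, $X$ is rationally smooth by Corollary \ref{corollary: stab}.

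With these verified, Corollary \ref{corollary: paving} gives directly that $X_w = \overline{S \backslash (C_w \times H/H_w)} = \overline{X_w^0}$ is the weighted Schubert variety, that $H^T_*(X)$ is free over $H^*_T$ on the classes $[X_w]$, and—invoking rational smoothness—that $H^*_T(X)$ is free over $H^*_T$ with basis $\{\, \delta_{X_w}^T \mid w \in W^P \,\}$. The only genuine content beyond citing this machinery is the bookkeeping that the Bruhat filtration of $Y$ pulls back along $\pi_0$ and descends along $\pi$ to a bona fide paving of $X$ by closed subvarieties; the acyclicity and transitivity conditions are immediate consequences of $a_w > 0$ together with the affineness of the Schubert cells, so I do not anticipate a substantive obstacle there.
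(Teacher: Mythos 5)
Your proposal is correct and follows essentially the same route as the paper: pull the Bruhat paving of $Y=G/P$ back to $Z$ along $\pi_0$, descend it along the geometric quotient $\pi$ to a paving of $X$ by $X_w^0 \isom \bZ_{a_w}\backslash({}^wU^-\cap U)$, verify $T$-acyclicity of the cells (the paper cites Proposition \ref{p:acyclic} together with Remark \ref{remark:orbifold}, whose hypotheses are exactly the transitivity and acyclicity conditions you check), and conclude from rational smoothness and Corollary \ref{corollary: paving}. The extra detail you supply in verifying those hypotheses is consistent with the paper's argument.
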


\begin{proof}
The Schubert cells form a paving of $Y$, so their inverse images $Z_w^0$ form a paving of $Z$.
As $\pi: Z \to X$ is a geometric quotient, the quotient varieties $S \backslash Z_w^0 = X_w^0$
form a paving of $X$.  Since $X_w^0 = S \backslash Z_w^0 \isom \bZ_{a_w}\backslash ({}^wU^- \cap U)$,
each $X_w^0$ is isomorphic to affine space modulo a finite group.  Each
$X^w_0$ is $T$-acyclic, by Proposition \ref{p:acyclic} (cf.~Remark \ref{remark:orbifold}).  Since $X$ is rationally
smooth, the statement about cohomology follows from Corollary~\ref{corollary: paving}.
\end{proof}

We now determine the generic stabilizers of $S$ on $Z_w^0$.
The \emph{support} of the \(S\)-action on \(Z\), denoted by $\sigma(S)$,
is the set of all $s \in S$ such that $s z = z$ for some $z \in Z$.
We have
\begin{equation}
\sigma(S)=\bigcup_{w\in W}\bZ_{a_w}.
\end{equation}
Indeed, $\sigma(S) \subset \bigcup_{w\in W}\bZ_{a_w}$ by Corollary \ref{corollary: stab};
the reverse inclusion holds since 
  \(S^{[w,1]}=\bZ_{a_w}\) for all $w \in W$.

Let \(w\in W^P\), and choose an ordering $\beta_1, \ldots, \beta_r$ of the roots in $\Phi_w^P =  \Phi^+ \cap w \Phi(\fu^-)$.
For each root $\beta$, the corresponding root subgroup $U_{\beta}$ is isomorphic
to $\C$.  We have isomorphisms of algebraic varieties
$\C^r \isom \prod_i U_{\beta_i} \isom {}^wU^- \cap U$, where the second isomorphism
is given by the product map.  If $u = \prod u_{\beta_i}$, we can view
the $u_{\beta_i}$ as complex numbers giving the coordinates
of $u \in {}^wU^- \cap U$ under the isomorphism
$\C^r \isom {}^wU^- \cap U$.  Let $({}^wU^- \cap U)^{gen}$ be the Zariski open
subset of ${}^wU^- \cap U$ consisting of the elements $u$ whose coordinates $u_{\beta_i}$ are
all nonzero.  Define the Zariski open subset $Z^{gen}_w$ of $Z^0_w$ by
$$ 
Z^{gen}_w = ({}^wU^- \cap U)^{gen{}} \cdot w P \times^P \bC^\times_{\lambda} \isom ({}^wU^- \cap U)^{gen} \times \bC^\times_{w\lambda}.
$$

For any subset \(\cS\subset\Lambda\), write \(a_\cS=\set{a_\mu\mid \mu\in \cS}\).
Write $\gcd(a_w,a_{\Phi_{w}^P})$ for the greatest common divisor of the elements
of  $\{a_w\} \cup a_{\Phi_{w}^P}$.  

\begin{proposition}\label{prop:stabs}
Let \(w\in W^P\).  For all $z$ in $Z^{gen}_w$, we have 
$S^z=\bZ_d$, where $d = \gcd(a_w,a_{\Phi_{w}^P})$.
\end{proposition}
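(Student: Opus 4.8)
The plan is to reduce the computation to the explicit product description $Z^{gen}_w \isom ({}^wU^-\cap U)^{gen}\times\bC^\times_{w\lambda}$ stated just before the proposition, on which the $S$-action is diagonal with respect to the root-coordinate decomposition. First I would fix the ordering $\beta_1,\ldots,\beta_r$ of $\Phi_w^P$ and the corresponding coordinates $u_{\beta_i}$ on ${}^wU^-\cap U$, together with the coordinate $v$ on $\bC^\times_{w\lambda}$. The key observation is that, by \eqref{e.action}, the $H$-action on ${}^wU^-$ is simply conjugation: writing $g=wuw^{-1}$ and using $wh^w=hw$, one checks that $g\mapsto hgh^{-1}$. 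Hence $H$ scales the root coordinate $u_{\beta_i}$ by the character $\beta_i$ and scales $v$ by the character $w\lambda$. Restricting to $s\in S$ and using the identification $\bC^\times\cong\bC^\times/\ker\chi\to S$, under which $\mu(s)=s^{a_\mu}$ with $a_\mu=\gcd(\chi)^{-1}\mu\cdot\chi$, these scalars become $s^{a_{\beta_i}}$ and $s^{a_w}$, respectively.

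With the weights identified, for $z=(u,v)\in Z^{gen}_w$ all of the coordinates $u_{\beta_i}$ and the coordinate $v$ are nonzero by the definition of $Z^{gen}_w$, so the stabilizer condition $sz=z$ becomes the simultaneous system $s^{a_{\beta_i}}=1$ for every $i$ together with $s^{a_w}=1$. I would then identify $S\isom\bC^\times$ and recall that $\{s\mid s^n=1\}=\bZ_n$ is the group of $n$-th roots of unity in $S$, and that $\bigcap_n\bZ_n$ over any finite set of integers $n$ equals $\bZ_d$, where $d$ is their greatest common divisor. Applying this to the set $\{a_w\}\cup a_{\Phi_w^P}$ yields $S^z=\bZ_d$ with $d=\gcd(a_w,a_{\Phi_w^P})$, as claimed. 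This is compatible with Corollary \ref{corollary: stab}, which already gives the inclusion $S^z\subseteq\bZ_{a_w}$.

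I expect the only real subtlety — and not a serious obstacle — to be the bookkeeping underlying the first paragraph: verifying that the $H$-action on the chart ${}^wU^-$ induced by left translation on $G/P$ is genuinely conjugation, so that $u_{\beta_i}$ transforms by $\beta_i$ rather than by some $w$-twisted character, and then correctly converting $\beta_i$ and $w\lambda$ into the exponents $a_{\beta_i}$ and $a_w$ under the possibly non-primitive parametrization of $S$ by $\chi$. Once the weights are pinned down, the remainder is the elementary fact about roots of unity.
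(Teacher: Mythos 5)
Your proof is correct and follows essentially the same route as the paper: the paper likewise reads off the $S$-weights $a_{\beta_i}$ and $a_w$ from the $H$-equivariant product decomposition $Z^{gen}_w \isom ({}^wU^-\cap U)^{gen}\times\bC^\times_{w\lambda}$ (already established in Lemma \ref{lemma: uw} and the surrounding discussion) and reduces to the elementary fact that the common solutions of $s^{a_{\beta_i}}=1$ and $s^{a_w}=1$ form $\bZ_d$ with $d=\gcd(a_w,a_{\Phi_w^P})$. The only difference is that you spell out the conjugation/weight bookkeeping that the paper treats as already done.
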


\begin{proof}
    The point $z = (u,z) \in Z^{gen}_w$ is fixed by $s \in S$ if and only if
    $s^{a_{\beta_i}} = 1$ for all $i$, and $s^{a_w} = 1$.  This is equivalent to the
    statement that $s^d = 1$, where $d = \gcd(a_w,a_{\Phi_{w}^P})$, which is equivalent
    to the statement that $s \in \bZ_d$.
\end{proof}

The weight $\lambda$ is said to be \emph{minuscule}
if every \(\alpha\in\Phi\) satisfies \(|\lambda\cdot\alpha^\vee|\leq1\).
Let \(\leq_P\) denote the restriction of the Bruhat order on \(W\) to \(W^P\).  The next proposition provides 
an alternative description of the stabilizer groups if $\lambda$ is minuscule.

\begin{proposition}\label{proposition: equivalent stabs}
For every \(w\in W^P\), we have
\begin{equation}\label{equation: equivalent stabs}
\gcd(a_w,a_{\Phi_w^P})\ |\ \gcd(a_x\mid x\leq_P w).
\end{equation}
If \(\lambda\) is minuscule, then \eqref{equation: equivalent stabs} is an equality.
\end{proposition}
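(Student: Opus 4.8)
The proof splits into two halves: the divisibility \eqref{equation: equivalent stabs}, which I would establish geometrically for arbitrary $\lambda$, and the reverse divisibility in the minuscule case, which I would obtain from a weight computation. Throughout write $d_w=\gcd(a_w,a_{\Phi_w^P})$ and $g_w=\gcd(a_x\mid x\leq_P w)$, so that the two assertions read $d_w\mid g_w$ and, when $\lambda$ is minuscule, $d_w=g_w$.

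For the divisibility I would exploit the upper semicontinuity of stabilizers on the irreducible variety $Z_w:=\overline{Z_w^0}$. By Proposition \ref{prop:stabs} the stabilizer $S^z$ equals $\bZ_{d_w}$ for \emph{every} $z$ in the dense open subset $Z_w^{gen}\subset Z_w^0$, so $\bZ_{d_w}$ fixes the dense set $Z_w^{gen}$. As $\bZ_{d_w}$ is finite, its fixed locus in $Z_w$ is closed, and since it contains $Z_w^{gen}$ it is all of $Z_w$. Now fix $x\leq_P w$. Because $Z_w^0=\pi_0^{-1}(Y_w^0)$ and $\pi_0$ is a $\bC^\times$-bundle (Lemma \ref{lemma: uw}), one has $Z_w=\pi_0^{-1}(\overline{Y_w^0})$, so the Schubert closure relation $Y_x^0\subset\overline{Y_w^0}$ gives $Z_x^0\subset Z_w$. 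The vertex $z_x=[x,1]\in Z_x^0$ has $S^{z_x}=\bZ_{a_x}$ (the relation $S^{[x,1]}=\bZ_{a_x}$ recorded above). Since $\bZ_{d_w}$ fixes $z_x$, we get $\bZ_{d_w}\subseteq\bZ_{a_x}$, i.e. $d_w\mid a_x$. As $x$ was arbitrary, $d_w\mid g_w$, proving \eqref{equation: equivalent stabs}.

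For the minuscule equality it remains to show $g_w\mid d_w$. Since $w\leq_P w$ we have $g_w\mid a_w$, so it suffices to prove $g_w\mid a_\beta$ for each $\beta\in\Phi_w^P$. Fix such a $\beta$ and set $\delta=w^{-1}\beta\in\Phi(\fu^-)$; as $-\delta\in\Phi(\fu)$ is positive and $\lambda$ is dominant and minuscule, $\langle\lambda,\delta^\vee\rangle\in\{0,-1\}$. Suppose first $\langle\lambda,\delta^\vee\rangle=-1$ (automatic when $L$ is the full centralizer of $\lambda$). Then $\langle w\lambda,\beta^\vee\rangle=\langle\lambda,\delta^\vee\rangle=-1$, so $(r_\beta w)\lambda=w\lambda+\beta$. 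Let $x$ be the maximal-length representative of the coset $(r_\beta w)W_P$; since $W_P$ fixes $\lambda$, we have $x\lambda=w\lambda+\beta$, and as $\mu\mapsto a_\mu$ is linear this gives $a_x=a_w+a_\beta$, i.e. $a_\beta=a_x-a_w$. Finally $w^{-1}\beta=\delta<0$ forces $\ell(r_\beta w)<\ell(w)$, hence $r_\beta w\leq w$ in the Bruhat order; since the projection $W\to W^P$ onto maximal-length coset representatives is order preserving and fixes $w\in W^P$, we conclude $x\leq_P w$. Thus $g_w$ divides both $a_x$ and $a_w$, hence $a_\beta$, and so $g_w\mid d_w$, giving equality.

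The main obstacle is precisely this reverse divisibility: for each $\beta\in\Phi_w^P$ one must produce an $x\leq_P w$ with $a_x-a_w=a_\beta$. The identity $a_\beta=a_x-a_w$ is exactly what forces minusculeness (so that a single reflection moves $w\lambda$ by $\beta$), and the verification $x\leq_P w$ rests on the length criterion for $r_\beta w$ together with order-preservation of the parabolic projection. The delicate point is the remaining case $\langle\lambda,\delta^\vee\rangle=0$, which can occur only when $L$ is strictly smaller than the centralizer of $\lambda$: there $r_\beta$ fixes $w\lambda$, the identity degenerates to $a_x=a_w$, and it yields no control of $a_\beta$. Handling (or excluding) such $\beta$ is the one place the argument relies on $L$ being the full centralizer of $\lambda$, which is the standard parabolic in the minuscule setting; I would state the equality in that generality, or otherwise supply a separate argument bounding $a_\beta$ for the roots $\delta\in\Phi(\fu^-)$ orthogonal to $\lambda$.
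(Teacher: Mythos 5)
Your proof is correct, and both halves take routes that differ from the paper's. For the divisibility, the paper works combinatorially: it reduces by induction along covering relations $x \lessdot_P w$ to the stronger claim $\gcd(a_w,a_{\Phi_w^P}) \mid \gcd(a_x,a_{\Phi_x^P})$, using $a_x = a_w - (w\lambda\cdot\alpha^\vee)a_\alpha$ together with Carrell's identification of the $H$-weights on $T_xY_w$ to obtain $\Phi_x^P \subset r_\alpha\Phi_w^P$. You replace all of this with the geometric observation that the fixed locus of the finite group $\bZ_{\gcd(a_w,a_{\Phi_w^P})}$ is closed, hence contains $Z_w = \overline{Z_w^{gen}}$, and you then read off stabilizers at the points $[x,1]$ for $x\leq_P w$; this is shorter, needs no tangent-weight input, and evaluating on $Z_x^{gen}$ instead of at $[x,1]$ would even recover the paper's stronger nested-stabilizer statement. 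For the minuscule equality the two arguments share the same engine --- $\langle w\lambda,\beta^\vee\rangle=-1$ turns $a_\beta$ into a difference $a_x-a_w$ with $x\leq_P w$ --- but yours is direct, producing one auxiliary $x$ for each $\beta\in\Phi_w^P$, whereas the paper applies this only to the covering root $\alpha$ and reaches the remaining elements of $\Phi_w^P$ by induction through $\Phi_w^P=r_\alpha\Phi_x^P\cup\{\alpha\}$. Finally, the degenerate case $\langle\lambda,(w^{-1}\beta)^\vee\rangle=0$ that you flag is a real issue but not one you introduce: the paper's proof asserts $\lambda\cdot w^{-1}\alpha^\vee=-1$ for $w^{-1}\alpha\in\Phi(\fu^-)$ without comment, which presupposes $\Phi(\fu)=\Phi^\lambda$, and the stated equality can indeed fail without that hypothesis (e.g.\ $G=\bC^\times\times PGL_2$, $P=B$, $\lambda=(2,0)$, $\chi=(1,-1)$ gives $\gcd(a_{w_0},a_\alpha)=1$ while $\gcd(a_e,a_{w_0})=2$). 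Your suggestion to add the hypothesis that $L$ is the full centralizer of $\lambda$ is therefore an improvement rather than a defect.
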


\begin{proof}
The proposition is true for the minimal element $w_0^P$ of $W^P$, since both sides
of \eqref{equation: equivalent stabs} are equal to $a_{w_0^P}$.
Since $W^P$ is connected by covering relations (see Section \ref{ss: definitions}),
it suffices to show that for every \(x\leq_P w\) such that \(\ell(x)=\ell(w)-1\), we have
\begin{equation}\label{equation: generic stabilizer in X_w}
\gcd(a_w,a_{\Phi_w^P})\ |\ \gcd(a_x,a_{\Phi_x^P}).
\end{equation}
Indeed, by induction on \(\ell(w)\), \(\gcd(a_x,a_{\Phi_x^P})\) will divide \(\gcd(a_y\mid y\leq_P x)\), so it will follow that \(\gcd(a_w,a_{\Phi_w^P})\) divides
\begin{equation*}
\gcd(\gcd(a_y\mid y\leq_P x_1),\ldots,\gcd(a_y\mid y\leq_P x_d),a_w)=\gcd(a_x\mid x\leq_P w),
\end{equation*}
where \(x_1,\ldots,x_d\) index all of the divisors in \(Y_w\).

Let \(\alpha>0\) 
such that \(w=r_\alpha x\), where $x,w \in W^P$.  We claim that \(\alpha\in \Phi_w^P\).  It suffices to show
that $w^{-1}(\alpha) \in \Phi(\fu^-)$.  Since 
$r_\alpha w < w$, we have $w^{-1} \alpha <0$.  Moreover, since $x \in W^P$,
if $w^{-1} \alpha \in \Phi(\fl)$, then $x w^{-1} \alpha$ and $w^{-1}\alpha$ would
have opposite signs, so $x w^{-1} \alpha = -\alpha$ would be positive, which
is a contradiction.  Therefore $w^{-1}(\alpha) \in \Phi(\fu^-)$, proving the claim.

We now prove \eqref{equation: generic stabilizer in X_w}.  
Suppose that $d$ divides $a_w$ and $a_{\gb}$ for each $\gb \in \Phi_w^P$.
It suffices to show that $d$ divides $a_x$ and $a_{\gg}$ for each $\gg \in \Phi_x^P$.
Since \(x=r_\alpha w\), we have \(a_x=a_w-(w\lambda\cdot\alpha^\vee)a_\alpha\).
Since $\alpha \in \Phi_w^P$, $d$ divides $a_x$.  The set of weights of 
\(H\) on \(T_xY_x\) is equal to \(\Phi_x^P\), and
by \cite[(10.4)]{Carrell1995}, the set of weights of 
\(H\) on \(T_xY_w\) is equal to \(r_\alpha\Phi_w^P\).
It follows that \(\Phi_x^P\subset r_\alpha\Phi_w^P\), since \(Y_x\subset Y_w\).
Therefore, if $\gg \in \Phi_x^P$, $a_{\gg}$ is a linear combination of $a_{\alpha}$ and $a_{\gb}$ for some $\gb \in \Phi_w^P$, so $d$ divides $a_{\gg}$.
This proves \eqref{equation: generic stabilizer in X_w}.  

Now suppose that $\lambda$ is minuscule.  To show that \eqref{equation: equivalent stabs} is an equality, we must show that if $d | a_x$ for all $x \leq_P w$, then
$d|a_{\gb}$ for all $\gb \in \Phi_w^P$.  As above, let $\ga>0$ be such that \(w=r_\alpha x\), where $x,w \in W^P$, and $\ell(x) = \ell(w) -1$.  We have $a_x = a_w - (w \lambda \cdot \alpha^{\vee}) a_{\alpha}$.  Now, $w \lambda \cdot \alpha^{\vee} = \lambda \cdot w^{-1} \alpha^{\vee} =-1$ since $w^{-1} \alpha \in \Phi(\fu^-)$.  Therefore, since
$d$ divides $a_w$ and $a_x$, it divides $a_{\alpha}$.  
$\Phi_w^P = r_{\alpha} \Phi_x^P \cup \{ \alpha \}$ (since the left side of this equation contains the right side,
and $|\Phi_w^P| = | \Phi_x^P | + 1)$.  By induction, 
if $\nu \in \Phi_x^P$ then $d | a_{\nu}$; since $d| a_{\alpha}$, we see that $d | a_{r_{\alpha} \nu}$.  We conclude that $d|a_{\gb}$ for all $\gb \in \Phi_w^P$, as desired.
\end{proof}

\begin{remark}
Weighted Grassmannians are a case where the weight $\lambda$ can be taken to be minuscule, and then the previous proposition implies that \eqref{equation: equivalent stabs} is an equality.
\end{remark}

\begin{example}
Let \(G=GL_2\).  Identifying $\Lambda$ with $\Z^2$ as usual, let \(\lambda=(2,0)\), \(\chi=(2,3)\).  The positive root is \(\alpha=(1,-1)\), and the nontrivial Weyl
group element is $w=r_{\alpha}$.  We have
\(\Phi_w=\set{\alpha}\) and \(w\lambda=(0,2)\).  Also,
$a_{\alpha}=-1$, $a_w=6$, and $a_e=4$.
We have \(\gcd(a_w,a_{\Phi_w})=1\) and \(\gcd(a_w,a_e)=2\), so \eqref{equation: equivalent stabs} need not be an equality.
\end{example}

Let \(w_0\in W\) be the longest element of \(W\), which is also the longest element of \(W^P\).
Define positive integers $q_w$ for $w \in W^P$ by the formula
\begin{equation} \label{e.qwdef}
    q_w = \dfrac{\gcd(a_w,a_{\Phi_{w}^P})}{\gcd(a_{w_0},a_{\Phi_{w_0}^P})}.
\end{equation}

\begin{corollary}\label{corollary: pullback}
The isomorphisms $\pi^*: H^*_T(X) \to H^*_H(Z)$ and $\pi_0^*\,\colon H_{T_0}^\ast(Y) \to H^*_H(Z)$ satisfy
\begin{equation}\label{equation: q_w}
\pi^\ast(\delta_{X_w}^{ T})= q_w \delta_{Z_w}^H.
\end{equation}
and
\begin{equation}\label{equation: triv}
  \pi_0^\ast(\delta_{Y_w}^{ T_0})= \delta_{Z_w}^H.  
\end{equation}

\begin{proof}
The equality \eqref{equation: q_w} follows by
combining Proposition \ref{prop:stabs} and \eqref{equation: pullback}.
The equality \eqref{equation: triv} follows from \eqref{equation: pullback} since
$S_0$ acts with constant stabilizers on $Z$.
\end{proof}
\end{corollary}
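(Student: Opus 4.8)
The plan is to obtain both formulas directly from the pullback identity \eqref{equation: pullback} of Proposition \ref{prop:Poincare}(2), which expresses the pullback of a Poincar\'e dual class as a ratio of stabilizer orders. Since $Z \cong G/Q$ is smooth, hence rationally smooth, and $Z_w$ is an $S$-invariant subvariety with $S\backslash Z_w = X_w$, that proposition gives
\begin{equation*}
\pi^\ast(\delta_{X_w}^{T}) = \frac{e_{Z_w}}{e_Z}\,\delta_{Z_w}^H,
\end{equation*}
so the task reduces to computing the two stabilizer orders $e_{Z_w}$ and $e_Z$.

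First I would compute $e_{Z_w}$, the order of the stabilizer of a general point of $Z_w$. The open set $Z_w^{gen}$ is dense in $Z_w^0$, hence in $Z_w$, so Proposition \ref{prop:stabs} applies and gives $e_{Z_w} = \gcd(a_w, a_{\Phi_w^P})$. Next I would identify $e_Z$ by noting that the top cell $Z_{w_0}^0$---the inverse image of the open dense Schubert cell $Y_{w_0}^0$, since $w_0$ is the longest element of $W^P$---is open and dense in $Z$. Consequently a general point of $Z$ is generic in $Z_{w_0}$, and the same proposition yields $e_Z = \gcd(a_{w_0}, a_{\Phi_{w_0}^P})$. Substituting these two values, the ratio $e_{Z_w}/e_Z$ is precisely $q_w$ by its definition \eqref{e.qwdef}, which proves \eqref{equation: q_w}.

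For \eqref{equation: triv} I would repeat the argument with $S_0$ in place of $S$, so that $Y = S_0\backslash Z$ and $T_0 = H/S_0$, and \eqref{equation: pullback} becomes $\pi_0^\ast(\delta_{Y_w}^{T_0}) = (e_{Z_w}/e_Z)\,\delta_{Z_w}^H$ with all stabilizer orders now taken relative to $S_0$. Because $S_0$ is a central subtorus, it acts on $Z$ with a single stabilizer group independent of the point; thus $e_{Z_w} = e_Z$ for every $w$, the ratio collapses to $1$, and \eqref{equation: triv} follows.

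The whole argument is bookkeeping once Propositions \ref{prop:Poincare} and \ref{prop:stabs} are in hand; the one step deserving care is the computation of $e_Z$, namely recognizing that a general point of $Z$ lies in the generic locus of the top cell $Z_{w_0}$ so that $e_Z = \gcd(a_{w_0}, a_{\Phi_{w_0}^P})$, matching the denominator in the definition of $q_w$. The constant-stabilizer property of the central torus $S_0$ is the only additional input needed for the second formula.
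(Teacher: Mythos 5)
Your proof is correct and follows essentially the same route as the paper: both deduce \eqref{equation: q_w} by combining the pullback formula \eqref{equation: pullback} of Proposition \ref{prop:Poincare}(2) with the generic-stabilizer computation of Proposition \ref{prop:stabs} (identifying $e_Z$ via the dense top cell), and both obtain \eqref{equation: triv} from the fact that the central subtorus $S_0$ acts with constant stabilizers. Your write-up simply makes explicit the bookkeeping that the paper's one-line proof leaves to the reader.
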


\subsection{Fixed points, invariant curves and weighted roots} \label{ss:weighted}
In this section we describe the $T$-fixed points and $T$-invariant
curves on $X$.   To describe the weights of these curves, we introduce the notion of weighted roots
at an element $w$ of $W^P$.

For $w \in W^P$, let $p_w = S \backslash (wP \times^P \bC_{\lambda}^\times)$.
Note that as an $H$-variety, $wP \times^P\bC_\lambda^\times$
is isomorphic to $\bC_{w \lambda}^\times$.

\begin{proposition} \label{prop:fixedpoints}
    We have $X^T = \{ p_w \mid w \in W^P \}$.
\end{proposition}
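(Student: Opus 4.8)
The plan is to identify the $T$-fixed points of $X = S \backslash Z$ by pulling back to the cover of $Z$ by the $H$-varieties $U_w \cong {}^wU^- \times \bC^\times_{w\lambda}$ from Lemma \ref{lemma: uw}, and analyzing where the residual $T = H/S$ action has fixed points. Recall that the $T$-action on $X$ is induced from the $H$-action on $Z$, since $S$ acts trivially on the quotient. So a point $Sz \in X$ is $T$-fixed precisely when its $H$-orbit is contained in a single $S$-orbit, i.e. $H \cdot z \subseteq S \cdot z$. First I would note that since $Z = \bigcup_{w \in W^P} U_w$ is an $H$-stable open cover and each $U_w$ is $H$-stable, any $T$-fixed point of $X$ lies in some $S \backslash U_w$, so it suffices to analyze each chart.

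Next, working inside a fixed chart $U_w \cong {}^wU^- \times \bC^\times_{w\lambda}$, I would examine the explicit $H$-action given in Lemma \ref{lemma: uw}: $H$ acts on the ${}^wU^-$ factor by the conjugation/adjoint action $u \mapsto w h^w u (h^w)^{-1} w^{-1}$ and on the $\bC^\times_{w\lambda}$ factor by the character $w\lambda$. For a point $(u, v)$ to have $H \cdot (u,v) \subseteq S \cdot (u,v)$, the $H$-action modulo $S$ must fix it. The key observation is that the weights of $H$ on the affine factor ${}^wU^-$ are the roots in $w\Phi(\fu^-)$ (i.e. the tangent weights at the Schubert point), and for $H$ to act on $(u,v)$ through $S$ alone, every coordinate $u_\beta$ that is nonzero would force $H/S$ to act nontrivially unless the corresponding weight becomes trivial on $T$. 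The cleanest route is to show that the condition forces $u = 1$ (the identity of ${}^wU^-$): any nonzero coordinate $u_\beta$ generates a genuine $T$-direction, so a $T$-fixed point must sit at the ``origin'' $u=1$ of the chart. This reduces the fixed locus in $S \backslash U_w$ to $S \backslash (\{1\} \times \bC^\times_{w\lambda})$, which is a single point because $\bC^\times_{w\lambda} \cong S/\bZ_{a_w}$ (as noted before Corollary \ref{corollary: stab}), so $S$ acts transitively on it.

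I would then identify this single point in $S \backslash U_w$ with $p_w = S \backslash (wP \times^P \bC_\lambda^\times)$, using that $wP \times^P \bC_\lambda^\times \cong \bC^\times_{w\lambda}$ sits inside $U_w$ as the $u=1$ slice; and I would check $p_w$ is genuinely $T$-fixed, which is immediate since $H$ acts on $\bC^\times_{w\lambda}$ only by scaling through the character $w\lambda$, hence through $S$. To finish, I must ensure the $p_w$ for distinct $w \in W^P$ are distinct and that no fixed point is missed; distinctness follows because $p_w$ maps to the $T_0$-fixed point $wP$ of $Y = G/P$ under $\pi_0$, and the $T_0$-fixed points of $G/P$ are exactly $\{wP \mid w \in W^P\}$, all distinct. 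This also confirms completeness: any $T$-fixed point of $X$ pushes forward to a $T_0$-fixed point of $Y$, which must be some $wP$, placing our point in the $w$-chart.

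The main obstacle I anticipate is making rigorous the claim that a nonzero unipotent coordinate $u_\beta$ prevents $T$-fixedness — that is, showing the $H$-action modulo $S$ genuinely moves such a point. Since $S \subset H$ is only a rank-one subtorus, I cannot simply say ``$H/S$ acts with no fixed weights''; instead I would argue that the tangent weights $\{w\beta : \beta \in \Phi(\fu^-)\}$ at the slice, restricted to $T = H/S$, are all nonzero (equivalently, none of these weights is a multiple of $\chi$ — which uses the assumption that $\lambda$ is regular dominant for $L$, so that these root directions are not proportional to the central cocharacter $\chi$). This is the one place where the Lie-theoretic hypotheses on $\lambda$ and $\chi$ enter, and I would handle it by comparing with the non-weighted fixed-point computation on $Y = G/P$ via $\pi_0$, which lets me transport the standard Bruhat-cell fixed-point analysis rather than redo it from scratch.
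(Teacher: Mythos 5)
Your overall strategy is the same as the paper's: reduce to the $H$-stable charts $U_w \cong {}^wU^-\times\bC^\times_{w\lambda}$, show that a $T$-fixed point must lie on the slice $u=1$, and identify $S\backslash(\{1\}\times\bC^\times_{w\lambda})$ with the single point $p_w$ using transitivity of $S$ on $\bC^\times_{w\lambda}$ (which needs $a_w\neq 0$). The structure is right, and your observation that $T$-fixedness of $Sz$ means $H\cdot z = S\cdot z$ is the correct starting point.

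The gap is in the one step you yourself flag as the main obstacle, and your proposed resolution of it is not correct. You claim that what is needed is that the tangent weights $w\beta$ ($\beta\in\Phi(\fu^-)$) ``restricted to $T$'' are nonzero, ``equivalently, none of these weights is a multiple of $\chi$,'' and you attribute this to $\lambda$ being regular dominant for $L$. First, this conflates characters with cocharacters: $w\beta\in\Lambda$ cannot be a multiple of $\chi\in\Lambda^\vee$, and $w\beta$ does not restrict to $T=H/S$ at all unless it kills $S$, i.e.\ unless $a_{w\beta}=0$. Second, the condition $a_{w\beta}\neq 0$ is not the relevant one: the weight of $T$ on the coordinate $u_\beta$ in the quotient chart is (up to a finite cover) the weighted root $\overline{w\beta}(w)=w\beta-\tfrac{a_{w\beta}}{a_w}\,w\lambda$, and its nonvanishing is equivalent to $w\beta$ and $w\lambda$ being linearly independent. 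Third, the hypothesis that makes this work is not regularity of $\lambda$ for $L$; it is Proposition \ref{prop:not semisimple}, which says that the positivity assumption $a_w>0$ forces $\lambda$ to lie outside the span of the roots, so $w\lambda$ is never proportional to a root $w\beta$. The paper packages this as a dimension count: the $H$-stabilizer of $(u,z)$ with $u\neq 1$ is contained in $\ker(w\lambda)\cap\ker(w\beta)$ for some root $\beta$, hence has codimension at least $2$, so $\dim H\cdot(u,z)\geq 2 > \dim S\cdot(u,z)$ and the point cannot be $T$-fixed. As written, your argument would, for instance, give no obstruction at a coordinate $u_\beta\neq 0$ with $a_{w\beta}=0$, where the induced $T$-weight is $w\beta$ itself and is certainly nonzero.

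Your fallback — comparing with the $T_0$-fixed points of $Y=G/P$ — can be made rigorous, but not via a ``pushforward'': there is no map $X\to Y$, since $S_0\not\subseteq S$ in general. The correct mediating object is the set of one-dimensional $H$-orbits in $Z$ (as in the paper's proof and in Lemma \ref{lemma: fixedpointcorr}): a $T$-fixed point of $X$ has preimage a one-dimensional $H$-orbit, such an orbit is automatically a single $S_0$-orbit, hence maps to a $T_0$-fixed point $wP$ of $Y$, and is therefore equal to $wP\times^P\bC^\times_\lambda$. If you take that route you can bypass the chart computation entirely; if you keep the chart computation, you must replace your justification of the key step by the linear-independence argument above.
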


\begin{proof}
    The inverse image in $Z$ of a $T$-fixed point on $X$ is a $1$-dimensional $H$-orbit, so it suffices to show that the 
    $1$-dimensional $H$-orbits on $Z$ are the $wP \times^P\bC_\lambda^\times$.
    Any $H$-orbit is contained in an open subset $U_w$ of $Z$ for some 
    $w \in W^P$.  The subset $U_w$ is $H$-equivariantly isomorphic to  ${}^wU^-\times\bC^\times_{w\lambda}$, and under this isomorphism,
    $wP \times^P\bC_\lambda^\times$ corresponds to the $1$-dimensional $H$-orbit $\{ 1 \}\times \bC_\lambda^\times$.
    Since $\lambda$ is not in the span of the roots, the
    stabilizer in $H$ of $(u,z) \in {}^wU^-\times\bC^\times_{w\lambda}$
    will have codimension at least $2$ if $u \neq 1$, so
    $\dim H \cdot (u,z) \geq 2$.  Hence $\{ 1 \}\times \bC_\lambda^\times$ is the only $1$-dimensional
    $H$-orbit in ${}^wU^-\times\bC^\times_{w\lambda}$.  The result follows. 
\end{proof}

\begin{lemma} \label{lemma: fixedpointcorr}
  Suppose $M$ is a $H$-invariant subvariety of $Z$.  Let
  $N_0 = S_0 \backslash M \subset Y$, and $N = S \backslash M \subset X$.  Then $wP \in N_0$ if and only if $p_w \in N$.
\end{lemma}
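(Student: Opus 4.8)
The plan is to unwind the definitions of the points $p_w$ and the $T_0$-fixed points $wP$ of $Y$, and show that membership of a fixed point in the quotient $N$ (resp.\ $N_0$) is equivalent to the $H$-orbit $wP \times^P \bC_\lambda^\times$ (equivalently, the line $\{1\}\times \bC^\times_{w\lambda}$ inside $U_w$) being contained in $M$. Since both $N$ and $N_0$ are obtained from the same $H$-invariant subvariety $M$ by quotienting by $S$ and $S_0$ respectively, the key observation is that membership in a quotient $S\backslash M$ is detected by whether the preimage orbit lies in $M$.

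First I would observe that by Proposition~\ref{prop:fixedpoints}, the point $p_w = S\backslash(wP\times^P\bC_\lambda^\times)$ is the image in $X$ of the $1$-dimensional $H$-orbit $O_w := wP\times^P\bC_\lambda^\times \subset Z$, and under the isomorphism $U_w \cong {}^wU^- \times \bC^\times_{w\lambda}$ of Lemma~\ref{lemma: uw}, this orbit corresponds to $\{1\}\times\bC^\times_{w\lambda}$. Similarly, $wP$ is the image in $Y = G/P$ of the same orbit $O_w$ under $\pi_0$, since $\pi_0^{-1}(wP) = O_w$ by the definition of $Z_w^0$ and the fact that $\pi_0$ has fibers $\bC^\times_\lambda$. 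Thus both fixed points have the same $H$-orbit $O_w$ as their full preimage.

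The heart of the argument is then the following equivalence, which I would state for a general quotient: if $K$ is a subtorus of $H$ acting with quotient map $q\colon Z \to K\backslash Z$, and $O_w$ is the single $H$-orbit with $q^{-1}(q(O_w)) = O_w$, then $q(O_w) \in K\backslash M$ if and only if $O_w \subseteq M$. The forward direction is immediate: if the image point lies in $K\backslash M$, its full preimage $O_w$ lies in $M$ because $M$ is $K$-invariant (indeed $H$-invariant), so $q^{-1}(K\backslash M) = M$. The reverse direction is equally direct: if $O_w \subseteq M$ then $q(O_w) \subseteq q(M) = K\backslash M$. Applying this with $K = S_0$ gives $wP \in N_0 \iff O_w \subseteq M$, and with $K = S$ gives $p_w \in N \iff O_w \subseteq M$; chaining the two equivalences yields the lemma.

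The step I expect to require the most care is verifying that $O_w$ is exactly the full preimage under both quotient maps of the respective fixed point, i.e.\ that $q^{-1}(q(O_w)) = O_w$ for $q = \pi$ and $q = \pi_0$, and correspondingly that $\pi^{-1}(N) = M = \pi_0^{-1}(N_0)$. This uses that $M$ is $H$-invariant (hence both $S$- and $S_0$-invariant) together with the description of $\pi$ and $\pi_0$ as geometric quotients, so that taking preimages of quotient-sets recovers the original invariant subvariety. Everything else is a formal manipulation of images and preimages of invariant sets, so no substantive obstacle remains once this compatibility is in place.
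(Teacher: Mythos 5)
Your proposal is correct and follows essentially the same route as the paper: both identify $wP$ and $p_w$ as the images of the single $H$-orbit $O_w = wP\times^P\bC^\times_\lambda$ under the two quotient maps, and then reduce both membership conditions to the statement $O_w\subseteq M$, using that $M$ is $H$-invariant so the preimage of the quotient set recovers $M$. The paper's proof is just a compressed version of your chain of equivalences, citing the argument of Proposition~\ref{prop:fixedpoints} to identify the $T_0$-fixed points of $Y$.
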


\begin{proof}
    Arguing as in Proposition \ref{prop:fixedpoints} shows that 
    $(G/P)^{T_0}$ consists of the points $S_0 \backslash (wP \times^P \bC_{\lambda}^\times)$.  Thus,  $wP \in N_0$ $\Leftrightarrow$ $wP \times^P \bC_{\lambda}^\times$
    is contained in $M$ 
    $\Leftrightarrow$ $p_w \in N$.
\end{proof}

Let $w \in W$.  Define a map $\H^* \to \T^*$, $\mu \mapsto \overline{\mu}(w)$, by the formula 
\begin{equation} \label{e.weighted1}
\overline{\mu}(w) = \mu - \dfrac{a_\mu}{a_w} w \lambda.
\end{equation}
(note that $\overline{\mu}(w)$ is in $\T^*$ since it is orthogonal to $\chi$).
Since $W_P$ fixes $\lambda$,
$\overline{\mu}(w)$ only depends on the coset $wW_P$.
Given $\beta \in \Phi$, we define 
the weighted
root corresponding to $\beta$ at $w$ by the formula
\begin{equation} \label{e.weighted2}
\beta(w) = \dfrac{a_w}{\gcd(a_w,a_\beta)} \overline{\beta}(w) = \frac{1}{\gcd(a_w,a_\beta)} (a_w \beta - a_{\beta} w \lambda).
\end{equation}
Since $a_w$ is positive by hypothesis, $\dfrac{a_w}{\gcd(a_w,a_\beta)}>0$, so $\beta(w)$ and $\overline{\beta}(w)$ are positive scalar
multiples of each other.  Although the weighted roots $\beta(w)$ are geometrically natural, we will frequently use the $\overline{\beta}(w)$
for convenience.
Although the map $\beta \mapsto \beta(w)$ is not linear, if $\ga = - \beta$, then $\ga(w) = - \beta(w)$.  If $\beta$ is a negative root, then
$c = \frac{a_{\beta}}{a_w}$ is nonnegative, since by hypothesis, $a_w >0$, and
because $\chi$ is antidominant, $a_{\beta} \geq 0$.  Hence, 
$\beta = \overline{\beta}(w) + c \cdot w \lambda$ where $c \geq 0$

We record the following simple lemma for reference.

\begin{lemma} \label{lemma: linear}
The maps $\mu \mapsto a_{\mu}$ and $\mu \mapsto \overline{\mu}(w)$ (for fixed $w \in W$) are linear.
\end{lemma}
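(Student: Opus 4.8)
The plan is to verify linearity of the two maps directly from their defining formulas \eqref{e.weighted1} and the definition $a_\mu = \gcd(\chi)^{-1}\mu\cdot\chi$. Both maps send $\mu \in \H^*$ to an element built from the pairing $\mu \cdot \chi$, and since the pairing between $\H$ and $\H^*$ is bilinear, each constituent operation is linear; the only mild subtlety is that $w$ (and hence $w\lambda$) is held fixed throughout, so the term $\frac{a_\mu}{a_w}w\lambda$ is genuinely linear in $\mu$ rather than affine.

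First I would treat $\mu \mapsto a_\mu$. By definition $a_\mu = \gcd(\chi)^{-1}\,\mu\cdot\chi$, and the constant $\gcd(\chi)^{-1}$ does not depend on $\mu$. Since the natural pairing $\mu \mapsto \mu\cdot\chi$ is $\F$-linear in $\mu$ for the fixed element $\chi$, it follows immediately that $a_{\mu_1 + c\mu_2} = a_{\mu_1} + c\,a_{\mu_2}$ for all scalars $c$ and weights $\mu_1,\mu_2$. This establishes linearity of the first map.

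Next I would treat $\mu \mapsto \overline{\mu}(w)$, using the formula $\overline{\mu}(w) = \mu - \frac{a_\mu}{a_w}\,w\lambda$. Here $w$ is fixed, so $a_w$ is a fixed nonzero scalar (positive by the standing hypothesis \eqref{equation: a_w}) and $w\lambda$ is a fixed element of $\H^*$. The map decomposes as the difference of the identity map $\mu \mapsto \mu$, which is linear, and the map $\mu \mapsto \frac{a_\mu}{a_w}\,w\lambda$. This latter map is the composition of the linear map $\mu \mapsto a_\mu$ (just shown) with scaling by the fixed scalar $a_w^{-1}$ and then tensoring against the fixed vector $w\lambda$; hence it too is linear. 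A difference of linear maps is linear, so $\overline{\mu}(w)$ is linear in $\mu$.

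I do not anticipate any real obstacle here, as the statement is purely formal once one observes that everything reduces to the bilinearity of the pairing with $\chi$ and the fact that $w$ is fixed. The one point worth flagging explicitly in the write-up is that this linearity is stated only for the maps $\mu \mapsto a_\mu$ and $\mu \mapsto \overline{\mu}(w)$, and emphatically \emph{not} for the weighted root map $\beta \mapsto \beta(w)$ of \eqref{e.weighted2}, whose normalizing factor $\frac{a_w}{\gcd(a_w,a_\beta)}$ depends on $\beta$ through the gcd and so destroys linearity; this is already noted in the text preceding the lemma, so I would not belabor it.
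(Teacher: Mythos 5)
Your proof is correct and follows essentially the same route as the paper's (one-line) proof: linearity of $\mu \mapsto a_\mu$ from bilinearity of the pairing with the fixed $\chi$, and linearity of $\mu \mapsto \overline{\mu}(w)$ from the defining formula with $w$ fixed. Your added remark that $\beta \mapsto \beta(w)$ is \emph{not} linear matches the paper's own comment preceding the lemma.
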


\begin{proof}
The first statement holds because $a_{\mu} = \mu \cdot \chi$, and the second follows from this and the formula for $\overline{\mu}(w)$.
\end{proof}

\begin{lemma} \label{lemma: weightedrefl}
Let $\ga \in \Phi$.  Then $\ga(r_{\alpha}w) = \ga(w)$. 
Hence if $\beta = -\alpha$, then
    $\beta(r_{\alpha}w) = - \alpha(w) $.
\end{lemma}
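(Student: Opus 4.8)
The plan is to prove the lemma by direct computation from the defining formula \eqref{e.weighted2}, reducing everything to the linearity of $\mu \mapsto a_\mu$ and to how $w\lambda$ transforms. First I would record the key reflection identity: since $r_\alpha$ acts on weights by $r_\alpha(\mu) = \mu - (\mu\cdot\alpha^\vee)\alpha$, applying this to $\lambda$ gives $(r_\alpha w)\lambda = w\lambda - (w\lambda \cdot \alpha^\vee)\alpha$. Taking the pairing with $\chi$ and using $a_\mu = \mu\cdot\chi$ (Lemma \ref{lemma: linear}) yields the scalar relation
\begin{equation}
a_{r_\alpha w} = a_w - (w\lambda\cdot\alpha^\vee)\,a_\alpha,
\end{equation}
which I note already appears in the proof of Proposition \ref{proposition: equivalent stabs}. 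Write $k = w\lambda\cdot\alpha^\vee$ for brevity, so $w'\lambda = w\lambda - k\alpha$ and $a_{w'} = a_w - k a_\alpha$ where $w' = r_\alpha w$.

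Next I would compute the numerator of $\alpha(w')$ from the second expression in \eqref{e.weighted2}, namely $a_{w'}\alpha - a_\alpha w'\lambda$. Substituting the two relations above,
\begin{equation}
a_{w'}\alpha - a_\alpha\, w'\lambda = (a_w - k a_\alpha)\alpha - a_\alpha(w\lambda - k\alpha) = a_w\alpha - a_\alpha\, w\lambda,
\end{equation}
since the two $k a_\alpha \alpha$ terms cancel. Thus the numerators of $\alpha(w')$ and $\alpha(w)$ are identical. It then remains to check the denominators agree, i.e.\ $\gcd(a_{w'},a_\alpha) = \gcd(a_w,a_\alpha)$; this follows immediately from $a_{w'} = a_w - k a_\alpha$ and the elementary fact that $\gcd(a - kb, b) = \gcd(a,b)$. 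Combining the equal numerators and equal denominators gives $\alpha(r_\alpha w) = \alpha(w)$, which is the first assertion.

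For the second assertion, I would invoke the already-established fact (stated just after \eqref{e.weighted2}) that if $\beta = -\alpha$ then $\beta(w) = -\alpha(w)$ for every $w$. Applying this with $w$ replaced by $r_\alpha w$ gives $\beta(r_\alpha w) = -\alpha(r_\alpha w)$, and then the first assertion $\alpha(r_\alpha w) = \alpha(w)$ yields $\beta(r_\alpha w) = -\alpha(w)$, as claimed. I do not anticipate a genuine obstacle here: the whole lemma is a short algebraic manipulation, and the only point requiring any care is the denominator/$\gcd$ bookkeeping, since $\gcd$ is not linear. The mild subtlety is ensuring the cancellation in the numerator is stated cleanly before dividing by the (equal) gcd factors, so that one is comparing genuinely equal rational-vector expressions rather than merely positive scalar multiples.
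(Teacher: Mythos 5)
Your proposal is correct and follows essentially the same route as the paper: both compute $(r_\alpha w)\lambda = w\lambda - (w\lambda\cdot\alpha^\vee)\alpha$ and $a_{r_\alpha w} = a_w - (w\lambda\cdot\alpha^\vee)a_\alpha$, observe the cancellation in the numerator $a_{r_\alpha w}\alpha - a_\alpha (r_\alpha w)\lambda = a_w\alpha - a_\alpha w\lambda$, and note that the gcd in the denominator is unchanged. Your explicit justification of $\gcd(a_{r_\alpha w},a_\alpha)=\gcd(a_w,a_\alpha)$ is a small point the paper asserts without comment, but the argument is the same.
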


\begin{proof}
Let $c = w \lambda \cdot \alpha^{\vee}$, so $r_{\alpha} w \lambda = w \lambda - c \alpha$.  Let
$d = \gcd(a_w, a_{\alpha}) = \gcd(a_{r_{\ga}w}, a_{\alpha})$.
Then $\ga(w) = \frac{1}{d}(a_w \ga - a_{\alpha} w \lambda$), and
$$
\ga(s_{\ga} w)  = \frac{1}{d}(a_{r_{\ga} w} \ga - a_{\ga}  r_{\ga} w \lambda)  = \frac{1}{d}((a_w - c a_{\ga}) \ga - a_{\ga} (w \lambda - c \ga) )= \ga(w),
$$
proving the first statement.  The second statement follows immediately.
\end{proof}

The following proposition will be used to show
that the weighted roots are the weights of $T$-invariant curves
in $X$.  In fact, this proposition was the motivation for the 
definition of weighted roots.

\begin{proposition} \label{proposition: weighted}
Let $\alpha,\beta \in \Lambda$ be such that $ a_{\alpha} $ 
and $a_{\beta}$ are nonzero.  Let
\begin{equation}\label{equation: gamma}
\gamma=\dfrac{a_{\beta} \alpha- a_{\alpha} \beta}{\gcd(a_{\alpha},a_{\beta})}.
\end{equation}
Then as $H $-spaces, 
$$
S\backslash(\bC_\alpha\times\bC^\times_\beta) \isom 
\begin{cases} \bC_{\gamma} & \text{if  }a_{\gb}>0 \\
               \bC_{-\gamma} & \text{if  }a_{\gb}<0. 
\end{cases}
$$
In particular, if $\beta = w \lambda$, then $\gamma = \alpha(w)$.
\end{proposition}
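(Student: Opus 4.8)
The plan is to exhibit an explicit $S$-invariant, $H$-semiinvariant function on $\bC_\alpha\times\bC^\times_\beta$ whose weight is $\pm\gamma$, and to show that it realizes the geometric quotient. First I would reduce to the primitive cocharacter: writing $\chi=\gcd(\chi)\chi'$ with $\chi'$ indivisible, one has $S=\chi(\bC^\times)=\chi'(\bC^\times)$, and since $\chi'$ is a closed immersion the map $\chi'\colon\bC^\times\to S$ is an isomorphism identifying the $S$-action with $u\cdot(x,y)=(u^{a_\alpha}x,u^{a_\beta}y)$, because $\alpha\cdot\chi'=a_\alpha$ and $\beta\cdot\chi'=a_\beta$. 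A quick computation using $a_\mu=\gcd(\chi)^{-1}\mu\cdot\chi$ shows $\gamma\cdot\chi=0$, so $\gamma\in\T^\ast$ and $\bC_\gamma$ is a genuine $T$-representation.

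Assume first $a_\beta>0$, and set $d=\gcd(a_\alpha,a_\beta)$. I would define $f\colon\bC_\alpha\times\bC^\times_\beta\to\bC$ by $f(x,y)=x^{a_\beta/d}\,y^{-a_\alpha/d}$; this is a morphism because $a_\beta/d>0$ (so the power of $x$ is a nonnegative integer) and $y$ is invertible. A direct check gives $f(u\cdot(x,y))=u^{(a_\alpha a_\beta-a_\beta a_\alpha)/d}f(x,y)=f(x,y)$, so $f$ is $S$-invariant, while $f(h\cdot(x,y))=\gamma(h)f(x,y)$, so $f$ is $H$-equivariant of weight $\gamma=\tfrac1d(a_\beta\alpha-a_\alpha\beta)$. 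Hence $f$ descends to a $T$-equivariant morphism $\bar f\colon S\backslash(\bC_\alpha\times\bC^\times_\beta)\to\bC_\gamma$.

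To see $\bar f$ is an isomorphism, I would compute the ring of invariants directly. Decomposing $\bC[x,y,y^{-1}]$ into $S$-weight spaces, a monomial $x^iy^j$ (with $i\ge0$, $j\in\bZ$) is $S$-invariant iff $ia_\alpha+ja_\beta=0$; writing $a_\alpha=da$, $a_\beta=db$ with $\gcd(a,b)=1$ and $b>0$, the solutions are exactly $(i,j)=t\cdot(a_\beta/d,-a_\alpha/d)$ for $t\ge0$, so the invariants form the polynomial ring $\bigoplus_{t\ge0}\bC\,f^{\,t}=\bC[f]$, and the categorical quotient is $\Spec\bC[f]\cong\bC_\gamma$. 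It remains to check this is a geometric quotient, i.e. that the $S$-orbits are closed and separated by $f$; since each orbit lies in a fiber of $f$, it suffices to show each fiber is a single orbit. This is where the only real computation lies: given two points in a nonzero fiber, the relation $f(x,y)=f(x',y')$ forces $X^b=Y^a$ for $X=x'/x$, $Y=y'/y$, and I would then produce $v=X^mY^n$ (with $ma+nb=1$) satisfying $v^a=X$, $v^b=Y$, and take a $d$-th root $u$ of $v$, so that $u^{a_\alpha}=u^{da}=X$ and $u^{a_\beta}=u^{db}=Y$; the fiber over $0$ is the single orbit $\{0\}\times\bC^\times_\beta$ because $a_\beta\neq0$. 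As fibers of a morphism of affine varieties are closed, all orbits are closed and $\bar f$ is an isomorphism. Alternatively, Remark \ref{remark:orbifold} already identifies $S\backslash(\bC_\alpha\times\bC^\times_\beta)$ with the affine variety $F\backslash\bC_\alpha$, reducing matters to recognizing $\bar f$ as a quotient map of affine curves.

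Finally, the case $a_\beta<0$ is symmetric: replacing the exponents by their negatives gives the morphism $x^{-a_\beta/d}y^{a_\alpha/d}$ of weight $-\gamma$, yielding $S\backslash(\bC_\alpha\times\bC^\times_\beta)\cong\bC_{-\gamma}$. For the last assertion, taking $\beta=w\lambda$ gives $a_\beta=a_w>0$ by the standing hypothesis \eqref{equation: a_w}, so we are in the first case, and comparing $\gamma=\tfrac1{\gcd(a_\alpha,a_w)}(a_w\alpha-a_\alpha\,w\lambda)$ with the definition \eqref{e.weighted2} of the weighted root shows $\gamma=\alpha(w)$. The main obstacle is the orbit-separation step of the geometric-quotient argument, but it becomes elementary once $\gcd(a,b)=1$ is exploited to extract the required $u$.
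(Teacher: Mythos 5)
Your proposal is correct and follows essentially the same route as the paper: both identify $\bC[\bC_\alpha\times\bC_\beta^\times]^S$ inside $\bC[x,y,y^{-1}]$ as the polynomial ring on the single monomial $x^{a_\beta/d}y^{-a_\alpha/d}$ and read off its $H$-weight to get $\bC_{\pm\gamma}$. The only difference is cosmetic: where the paper cites the GIT fact that the affine quotient by a properly acting reductive group is already geometric, you verify by hand that each fiber of $f$ is a single orbit, which is a fine (if slightly longer) substitute.
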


\begin{proof}
We prove the result for the case $a_{\gb}>0$; the case $a_{\gb}<0$ is similar.
The group \( S \isom \bC^\times\) acts on \(\bC_{\alpha}\times\bC_{\beta}^\times\) with weights 
$a = a_{\alpha} $ and $b = a_{\beta}$.
Since \(\bC^\times\) is a reductive group acting properly on an affine variety, the universal geometric quotient exists and is an affine variety given by Spec of the ring of invariant functions (see\ \cite[Ch.~1, \S 2]{FogartyKirwanMumford1994}).  We identify $\bC[\bC_{\alpha}\times\bC_{\beta}^\times]$
with $\bC[x,y,y^{-1}]$, where $S$ acts on $\bC[x,y,y^{-1}]$ by
\begin{equation}\label{equation: action on functions}
(sf)(x,y)=f(s^{-a}x,s^{-b}y).
\end{equation}
Suppose the monomial $f(x,y)=x^ky^{-\ell}$ is $S$-invariant; then $ak = b \ell$.  First suppose
$a >0$.  Since $k \geq 0$, both 
both $ak$ and $b \ell$ must equal $c \cdot \lcm(a,b)$ for some nonnegative integer $c$.
Since $\lcm(a,b) = \frac{ab}{\gcd(a,b)}$, we see that 
\begin{equation} \label{equation: kl}
k = c \frac{b}{\gcd(a,b)}, \hspace{.5in} \ell = c \frac{a}{\gcd(a,b)}.
\end{equation}
On the other hand, if $a<0$, then both $ak$ and $b \ell$ must equal $-c \cdot \lcm(a,b)$ for some 
nonnegative integer $c$.  Since $\lcm(a,b) = - \frac{ab}{\gcd(a,b)}$, we see that
$k$ and $\ell$ are again given by \eqref{equation: kl}.  This implies that
$\bC[x,y,y^{-1}]^{S} = \bC[z]$, where $z= x^{b\gcd(a,b)^{-1}}y^{-a\gcd(a,b)^{-1}}$
Since $z$ is an $H$-weight vector
of weight $-\gamma$,  we see that as $H$-spaces,
$$
\bC[\bC_{\alpha}\times\bC_{\beta}^\times]^S = \Spec \C[z] \cong \C_{\gamma}.
$$
The last statement of the lemma follows from the definition of
$\alpha(w)$.
\end{proof}

Suppose $V$ is a $2$-dimensional
representation of $H$ with with weights $\mu_1$ and $\mu_2$.
In this case, we say $H$ acts on $\bP^1 = \bP(V)$ with weight $\alpha = \mu_1 - \mu_2$.  
With this definition, an action on $\bP^1$ of weight $\alpha$ is
also an action of weight $-\alpha$.

\begin{proposition} \label{prop:invariantcurves}
    The irreducible closed $T$-invariant curves in $X$ are the curves $C = S \backslash (\overline{U_{\alpha}w P\times^P \bC_\lambda})$ for
     \(\alpha \in w \Phi(\fu^-)\).  
   These 
   are the closures of the $1$-dimensional
    $T$-orbits on $X$.  Each such curve is 
    $H$-equivariantly isomorphic to $\bP^1$, where $H$ acts on $\bP^1$
    with weight $\alpha(w)$.  The $T$-fixed points
    on $C$ are $p_w$ and $p_{r_{\alpha} w}$.
\end{proposition}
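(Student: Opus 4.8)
The plan is to reduce everything to the explicit open chart $U_w \cong {}^wU^- \times \bC^\times_{w\lambda}$ provided by Lemma \ref{lemma: uw}, and to analyze the structure of a $T$-invariant curve through the fixed point $p_w$. First I would argue that any irreducible closed $T$-invariant curve $C$ in $X$ has its preimage $\pi^{-1}(C) \subset Z$ an irreducible $2$-dimensional $H$-invariant subvariety. By Proposition \ref{prop:fixedpoints}, the $T$-fixed points of $X$ are the $p_w$, so a $1$-dimensional $T$-orbit must limit to two such fixed points; pulling back to $Z$, the closure $\overline{\pi^{-1}(C)}$ must contain two of the one-dimensional $H$-orbits $wP\times^P\bC_\lambda^\times$. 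The natural candidate is $M = \overline{U_\alpha wP \times^P \bC_\lambda}$ for a root subgroup $U_\alpha$ with $\alpha \in w\Phi(\fu^-)$; I would verify that $M$ is a $2$-dimensional $H$-invariant subvariety of $Z$ and that $C = S\backslash M$.

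Next I would identify $M$ (or an open piece of it) inside the chart. Since $\alpha \in w\Phi(\fu^-) = \Phi^-$ (up to the $w$-twist), the root subgroup $U_\alpha$ sits in ${}^wU^-$, and under the isomorphism of Lemma \ref{lemma: uw} the piece $U_\alpha wP \times^P \bC_\lambda^\times$ corresponds to $U_\alpha \times \bC^\times_{w\lambda} \cong \bC_\alpha \times \bC^\times_{w\lambda}$ as an $H$-space. Applying Proposition \ref{proposition: weighted} with $\beta = w\lambda$ then gives
\begin{equation}
S\backslash(\bC_\alpha \times \bC^\times_{w\lambda}) \cong \bC_{\alpha(w)}
\end{equation}
as an $H$-space (hence a $T$-space), so this open piece of $C$ is an affine line on which $T$ acts with weight $\alpha(w)$. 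To get the full $\bP^1$ I would compactify: the two $T$-fixed points on $C$ are the images of the two one-dimensional $H$-orbits in $\overline{M}$. These are $wP\times^P\bC_\lambda^\times$ and $r_\alpha wP \times^P\bC_\lambda^\times$ (the $U_\alpha$-sweep of the fixed line degenerates, at the other end, to the $r_\alpha w$ line, exactly as for the non-weighted Schubert curve $\overline{U_\alpha wP}$ in $G/P$), giving fixed points $p_w$ and $p_{r_\alpha w}$. That $C \cong \bP^1$ $H$-equivariantly, with weight $\alpha(w)$, follows by realizing $\overline{M}$ as the image of a $\bP^1 = \bP(V)$ for a two-dimensional $H$-representation $V$ whose weight difference computes to $\alpha(w)$; consistency at the other fixed point is guaranteed by Lemma \ref{lemma: weightedrefl}, which gives $\alpha(r_\alpha w) = \alpha(w)$ (equivalently, the $(-\alpha)$-weighted root at $r_\alpha w$ equals $-\alpha(w)$), matching the two coordinate charts.

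Finally I would prove there are no other invariant curves: any $1$-dimensional $T$-orbit lies in some chart $S\backslash U_w$, whose preimage is $U_w \cong {}^wU^- \times \bC^\times_{w\lambda}$, and a standard argument (as in Proposition \ref{prop:fixedpoints}) shows that a one-parameter $H$-orbit in ${}^wU^- \times \bC^\times_{w\lambda}$ whose closure meets the fixed line must lie in a single root direction $U_\alpha$ times $\bC^\times_{w\lambda}$, forcing $\alpha \in w\Phi(\fu^-)$. The main obstacle I anticipate is the compactification step: carefully checking that the closure $\overline{M}$ in $Z$ meets exactly the two $H$-orbits indexed by $w$ and $r_\alpha w$, and that passing to the quotient by $S$ does not introduce singularities or identify the two fixed points. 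This requires transferring the classical $\bP^1$ structure of the non-weighted Schubert curve in $Y = G/P$ (via the map $\pi_0$ and Lemma \ref{lemma: fixedpointcorr}) through the finite quotient, and confirming via Proposition \ref{proposition: weighted} and Lemma \ref{lemma: weightedrefl} that the two local weight computations agree up to sign, so that the glued object is genuinely a smooth rational curve with the stated $T$-weight.
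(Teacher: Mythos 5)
Your proposal is correct and follows essentially the same route as the paper's proof: pull the curve back to a $2$-dimensional $H$-invariant subvariety of $Z$, identify its intersection with a chart $U_w$ as $U_\alpha wP\times^P\bC_\lambda^\times$ by the argument of Proposition \ref{prop:fixedpoints}, transfer the fixed-point and closure information from the non-weighted Schubert curve $\overline{U_\alpha wP}$ in $Y$ via $\pi_0$ and Lemma \ref{lemma: fixedpointcorr}, and glue the two affine pieces computed by Proposition \ref{proposition: weighted}, with Lemma \ref{lemma: weightedrefl} reconciling the weights at the two charts. The only cosmetic difference is that the paper cites Carrell--Peterson for the two $T_0$-fixed points of the non-weighted curve and glues the two quotient charts directly rather than exhibiting $C$ as $\bP(V)$.
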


\begin{proof}
Let $C$ be an irreducible closed $T$-invariant curve in $X$.
Let $M = \pi^{-1}(C)$ and $C_0 = S_0 \backslash M$.
    Then $M$ is an irreducible $2$-dimensional $H$-stable $2$-dimensional subvariety of $Z$.  For some $w \in W^P$, the intersection
    $U_w \cap M$ is a nonempty open subset of $M$, and hence
    a closed $2$-dimensional $H$-invariant subvariety of $U_w$.  
    Arguing as in the proof of Proposition \ref{prop:fixedpoints} shows
    that $U_w \cap M = U_{\alpha} w P \times^P \bC_{\lambda}^\times$
    for some $\alpha \in w \Phi(\fu^-)$
    Hence
    $\pi_0(U_w \cap M) = U_{\alpha} wP$, so
    $C_0 = \overline{U_{\alpha} wP}$.  Therefore, by a result of
    Carrell and Peterson (see \cite{Carrell94}), the $T_0$-fixed points in $C_0$ are
    $wP$ and $r_{\alpha} wP$.  By Lemma \ref{lemma: fixedpointcorr}, 
    the $T$-fixed points in $C$ are $p_w$ and $p_{r_{\alpha} w}$.  
    Since $M = \pi_0^{-1}(C_0)$, it is covered by the open sets
    $U_{\alpha} w P \times^P \bC_{\lambda}^\times$ and
    $U_{\beta} r_{\alpha} w P \times^P \bC_{\lambda}^\times$, where $\beta = - \alpha$.  These
    sets are $H$-equivariantly isomorphic to 
    $U_{\alpha} \times \bC_{w\lambda}^\times$ and $U_{\beta} \times \bC_{w\lambda}^\times$,
    respectively.  Proposition \ref{proposition: weighted} implies that the quotients
    of these sets by $S$ are isomorphic to $\C_{\alpha(w)}$ and
    $\C_{\beta(r_{\alpha} w)} \cong \C_{-\alpha(w)}$, respectively,
    where the last isomorphism follows from Lemma 
    \ref{lemma: weightedrefl}.  These open sets glue along their intersection
    to yield a $\bP^1$ on which $H$ acts with weight $\alpha(w)$.
\end{proof} 

\subsection{More about weighted roots}
Let $\beta_1, \ldots, \beta_n$ denote the set of negative simple
roots.   This section contains some results about the $\overline{\beta_i}(w)$, which are positive
scalar multiples of the weighted roots $\beta_i(w)$ (see \eqref{e.weighted2}).

The following lemma is a version of a well-known fact; for the convenience of the
reader we include a proof.  As usual, the weight $\lambda$ is assumed dominant.

\begin{lemma} \label{l: w-lambda}
Let $w \geq v$ be elements of $W$.  Then $w \lambda - v\lambda = \sum e_i \beta_i$, where
all $e_i \geq 0$.
\end{lemma}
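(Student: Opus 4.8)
The plan is to reduce to covering relations and then treat a single reflection by direct computation. Since the Bruhat order on $W$ is graded by length and any two comparable elements are joined by a saturated chain, I would first fix a chain $v = x_0 \lessdot x_1 \lessdot \cdots \lessdot x_k = w$. Then $w\lambda - v\lambda = \sum_{j=1}^{k}(x_j\lambda - x_{j-1}\lambda)$, and since a sum of nonnegative combinations of the $\beta_i$ is again such a combination, it suffices to establish the claim for each covering relation $x_{j-1} \lessdot x_j$.

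For a single covering relation $x \lessdot y$, I would use the description recalled in Section~\ref{ss: definitions}: we have $y = r_{\eta} x$ for some positive root $\eta = x(\gg)$, where $\gg$ is a positive root. Then
\begin{equation*}
y\lambda = r_{\eta}(x\lambda) = x\lambda - (x\lambda \cdot \eta^\vee)\,\eta,
\end{equation*}
so $y\lambda - x\lambda = -(x\lambda\cdot\eta^\vee)\,\eta$. Since $\eta^\vee = x(\gg^\vee)$, the coefficient is $x\lambda\cdot\eta^\vee = \lambda\cdot \gg^\vee \geq 0$, using that $\lambda$ is dominant and $\gg$ is positive. Hence $y\lambda - x\lambda = -c\,\eta$ with $c = \lambda\cdot\gg^\vee \geq 0$ and $\eta$ a positive root.

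It then remains to expand in simple roots. Writing $\eta = \sum_i m_i \alpha_i$ with all $m_i \geq 0$ (as $\eta$ is positive) and $\beta_i = -\alpha_i$, I obtain $y\lambda - x\lambda = \sum_i (c\,m_i)\,\beta_i$ with all coefficients nonnegative. Summing over the chain yields $w\lambda - v\lambda = \sum_i e_i\,\beta_i$ with every $e_i \geq 0$, as claimed. There is no real obstacle here; the only points that require care are the reduction to covering relations, which uses only that the Bruhat order is graded, and the bookkeeping of signs, so that dominance of $\lambda$ makes $\lambda\cdot\gg^\vee$ nonnegative and the resulting nonpositive multiple of the positive root $\eta$ becomes a nonnegative combination of the negative simple roots.
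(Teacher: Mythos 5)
Your proof is correct and is essentially the same as the paper's: both reduce to a single reflection step along a chain in the Bruhat order and then compute $y\lambda - x\lambda = -(\lambda\cdot\gg^\vee)\,x\gg$, a nonpositive multiple of a positive root, using dominance of $\lambda$. The only cosmetic difference is that you phrase the reflection as left multiplication by $r_{x(\gg)}$ while the paper writes $w = v r_{\ga}$ and applies $v$ after reflecting $\lambda$; the computations coincide.
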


\begin{proof}
It suffices to prove the lemma in the case $w = v r_{\ga} > v$, where $\ga$ is a positive root.  
Write $w \lambda - v{\lambda} = \sum e_i \beta_i$.
We have
$$
w \lambda = v r_{\ga} \lambda = v ( \lambda - (\lambda \cdot \ga^{\vee}) \ga) = v \lambda -  (\lambda \cdot \ga^{\vee}) v \ga.
$$
Therefore 
$w \lambda - v \lambda = -  (\lambda \cdot \ga^{\vee}) v \ga = \sum e_i \beta_i$.  
Since $w > v$, we have $v \ga > 0$, and since $\lambda$ is dominant, $ (\lambda \cdot \ga^{\vee}) \geq 0$.
Hence each $e_i \geq 0$.
\end{proof}

The following lemma provides a way to express a weighted root at $v$ in terms of weighted roots at $w$.

\begin{lemma} \label{lemma: positive}
  Let $w, v \in W^{P}$, and suppose that $w \lambda - v \lambda =   \sum_i e_i \beta_i$, where the $\beta_i$ are the negative
simple roots.  Then
 \begin{equation} \label{e:positive2}
 \overline{w \lambda}(v) = \frac{a_w}{a_v} \sum_i e_i \overline{\beta}_i (w),
  \end{equation}
and for $\beta \in \H^*$, 
 \begin{equation} \label{e:positive3}
 \overline{\beta}(v) =  \overline{\beta}(w) + \frac{a_{\beta}}{a_v} \sum_i e_i \overline{\beta}_i (w).
  \end{equation}
Hence, if $\beta$ is a negative root and $w \geq v$, then $\overline{\beta}(v)$ is a nonnegative
  linear combination of $\overline{\beta}_i (w)$.
\end{lemma}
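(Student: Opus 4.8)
The plan is to reduce the two displayed identities to the linearity of $\mu \mapsto \overline{\mu}(w)$ recorded in Lemma \ref{lemma: linear}, together with the trivial identity $\overline{w\lambda}(w) = 0$. The single computation driving everything is
\begin{equation*}
\sum_i e_i \overline{\beta}_i(w) = \overline{w\lambda - v\lambda}(w) = \overline{w\lambda}(w) - \overline{v\lambda}(w) = - \overline{v\lambda}(w) = \frac{a_v}{a_w}\, w\lambda - v\lambda ,
\end{equation*}
where the first equality is linearity applied to $w\lambda - v\lambda = \sum_i e_i \beta_i$, the vanishing $\overline{w\lambda}(w) = 0$ is immediate from \eqref{e.weighted1} (as $a_{w\lambda} = a_w$), and the last step uses the definition of $\overline{v\lambda}(w)$ with $a_{v\lambda} = a_v$.

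Granting this, I would deduce \eqref{e:positive2} by scaling: multiplying the right-hand side above by $a_w/a_v$ gives $w\lambda - \frac{a_w}{a_v} v\lambda$, which is exactly $\overline{w\lambda}(v)$ by \eqref{e.weighted1}. Formula \eqref{e:positive3} comes out the same way: subtracting the two definitions gives $\overline{\beta}(v) - \overline{\beta}(w) = \frac{a_\beta}{a_w} w\lambda - \frac{a_\beta}{a_v} v\lambda$, and multiplying the displayed identity by $a_\beta/a_v$ produces precisely this difference. Thus both formulas follow from one linearity computation, with no case analysis.

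For the concluding assertion I would assemble three sign inputs. Because $w \geq v$ and $\lambda$ is dominant, Lemma \ref{l: w-lambda} forces every $e_i \geq 0$ in $w\lambda - v\lambda = \sum_i e_i \beta_i$. Because $\beta$ is a negative root, it is a nonnegative integer combination $\beta = \sum_i c_i \beta_i$ of the negative simple roots, so linearity yields $\overline{\beta}(w) = \sum_i c_i \overline{\beta}_i(w)$ with all $c_i \geq 0$. Finally, the scalar $a_\beta/a_v$ appearing in \eqref{e:positive3} is nonnegative, since $a_v > 0$ by the standing assumption \eqref{equation: a_w}, while $a_\beta \geq 0$ because $\chi$ is antidominant. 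Substituting into \eqref{e:positive3} then writes $\overline{\beta}(v) = \sum_i \bigl( c_i + \tfrac{a_\beta}{a_v} e_i \bigr)\, \overline{\beta}_i(w)$ with every coefficient nonnegative, which is the claim.

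I do not anticipate a genuine obstacle: the argument is bookkeeping around the linearity of $\overline{\,\cdot\,}(w)$ plus three elementary sign conditions. The only point demanding care is tracking which weighted root is evaluated at which Weyl group element, and making explicit that the nonnegativity conclusion depends squarely on the standing hypotheses that $\lambda$ is dominant and $\chi$ is antidominant --- these are exactly what pin down the signs of the $e_i$ and of $a_\beta$.
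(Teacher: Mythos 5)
Your proposal is correct and follows essentially the same route as the paper: the paper's intermediate identity $a_v\,\overline{w\lambda}(v) = -a_w\,\overline{v\lambda}(w)$ is exactly the "straightforward computation" you unfold explicitly via linearity and $\overline{w\lambda}(w)=0$, and both the derivation of \eqref{e:positive3} and the three sign inputs for the final nonnegativity claim match the paper's argument. No gaps.
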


\begin{proof}
A straightforward computation shows that
 \begin{equation} \label{e:positive1}
  a_v \, \overline{w \lambda}(v) = - a_w \, \overline{v \lambda}(w).  
 \end{equation}
For \eqref{e:positive2}, by linearity of the map $\mu \mapsto \overline{\mu}(w)$,
we have
$$
 \overline{w \lambda}(w) - \overline{v \lambda}(w) = \sum_i e_i \overline{\beta}_i (w).
$$
Equation \eqref{e:positive2} follows from this equation, \eqref{e:positive1}, and the equality $\overline{w \lambda}(w) = 0$. We have
\begin{align*}
\overline{\beta}(v) &= \beta - \frac{a_{\beta}}{a_v} v \lambda =  \overline{\beta}(w) + \frac{a_{\beta}}{a_w} w \lambda - \frac{a_{\beta}}{a_v} v \lambda \\
& = \overline{\beta}(w) + \frac{a_{\beta}}{a_w} \overline{w \lambda} (v) = \overline{\beta}(w) + \frac{a_{\beta}}{a_w} \frac{a_w}{a_v} \sum_i e_i \overline{\beta}_i (w) \\
& =  \overline{\beta}(w) + \frac{a_{\beta}}{a_v} \sum_i e_i \overline{\beta}_i (w),
\end{align*}
proving \eqref{e:positive3}.  Finally, if $\beta$ is a negative root then $\beta = \sum f_i \beta_i$ where $f_i \geq 0$; also,
$a_v >0$ by hypothesis, and
$a_{\beta} \geq 0$ as $\chi$ is antidominant. If
$w \geq v$ then $e_i \geq 0$.  Thus, in the expression
\begin{equation} \label{e:positive4}
\overline{\beta}(v) = \sum_i (f_i + \frac{a_{\beta}}{a_v} e_i) \overline{\beta}_i (w),
  \end{equation}
  all coefficients on the right hand side are non-negative.
\end{proof}

\subsection{GKM description of cohomology}\label{subsection: GKM}
Let $j: X^T = \{ p_w \} \hookrightarrow X $ be the inclusion,
and $j^*: H^*_T(X) \to H^*_T(X^T) = \oplus_{w \in W^P} H^*_T(p_w)$ the pullback.
We identify $H^*_T(p_w)$ with $H^*_T$.  The next proposition gives the GKM description
of $H^*_T(X)$, generalizing the descriptions given in \cite{AbeMatsumura2015} for the case of weighted Grassmannians and in \cite{AzamNazirQureshi2020}  for type \(A\) weighted flag varieties.

\begin{proposition} \label{prop:GKM}
    The pullback $j^*: H^*_T(X) \to H^*_T(X^T) = \oplus_{w \in W^P} H^*_T(p_w)$
    is injective, and the image of $j^*$ consists of tuples $(f_w)_{w \in W^P}$
    such that for each $\alpha$, $f_w - f_{r_\alpha w}$ is divisible by
    the weighted root $\alpha(w)$.
\end{proposition}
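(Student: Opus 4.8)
The plan is to transport the statement to the ordinary flag variety $Y=G/P$ via the quotient $Z$, using the isomorphisms $\pi^\ast$ and $\pi_0^\ast$ of Corollary \ref{corollary: pullback}, for which the classical GKM description is available. The starting point is to identify the fixed-point restriction maps. Each $T$-fixed point $p_w$ is the image of the minimal $H$-orbit $\cO_w=wP\times^P\bC_\lambda^\times\isom\bC_{w\lambda}^\times\isom H/\ker(w\lambda)$, and likewise $\pi_0(\cO_w)=wP$. Restricting to $\cO_w$ gives $\rho_w\colon H^\ast_H(Z)\to H^\ast_H(\cO_w)=H^\ast_{\ker(w\lambda)}=S(\H^\ast/\F w\lambda)$. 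Functoriality of equivariant pullback under $\pi$ and $\pi_0$ yields, for each $w$, commuting squares identifying $j^\ast_{p_w}$ (after $\pi^\ast$) and $j^\ast_{wP}$ (after $\pi_0^\ast$) with $\rho_w$; on the fixed-point fibers these are the isomorphisms $\phi_w\colon\T^\ast\xrightarrow{\sim}\H^\ast/\F w\lambda$ (induced by $\T^\ast=\chi^\perp\hookrightarrow\H^\ast\twoheadrightarrow\H^\ast/\F w\lambda$, an isomorphism because $w\lambda\cdot\chi=\gcd(\chi)a_w\neq0$) and $\psi_w\colon\T_0^\ast\xrightarrow{\sim}\H^\ast/\F w\lambda$. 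The key computation is that the resulting identification $\Xi_w:=\phi_w^{-1}\psi_w\colon\T_0^\ast\to\T^\ast$ is exactly $\nu\mapsto\overline{\nu}(w)=\nu-\tfrac{a_\nu}{a_w}w\lambda$ of \eqref{e.weighted1}: indeed $\overline{\nu}(w)$ is the unique representative of $\nu\bmod\F w\lambda$ lying in $\chi^\perp=\T^\ast$.

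Injectivity of $j^\ast$ then follows because, under this identification, $j^\ast$ corresponds to the injective classical restriction $j_0^\ast$ for $Y$; alternatively it follows from the localization theorem, since $H^\ast_T(X)$ is a free $H^\ast_T$-module (Proposition \ref{p:free-coh}) and $X^T$ is finite (Proposition \ref{prop:fixedpoints}), so $H^\ast_T(X)$ is torsion-free.

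For the image, I would write a tuple $(f_w)_{w\in W^P}\in\bigoplus_w H^\ast_T(p_w)$ as $(f_w)=(\Xi_w g_w)$ for a unique $(g_w)\in\bigoplus_w H^\ast_{T_0}(wP)$, so that $(f_w)\in\image j^\ast$ if and only if $(g_w)\in\image j_0^\ast$. By classical GKM for $Y$, the latter holds exactly when $g_w-g_{r_\alpha w}$ is divisible by the root $\alpha$ for each invariant curve (Proposition \ref{prop:invariantcurves}, whose curves correspond under $\pi_0$ to those of $Y$). It remains to match this with divisibility of $f_w-f_{r_\alpha w}$ by $\alpha(w)$, using three facts. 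First, $\Xi_w(\alpha)=\overline{\alpha}(w)$ is a positive scalar multiple of $\alpha(w)$ by \eqref{e.weighted2}, and by Lemma \ref{lemma: weightedrefl} so is $\Xi_{r_\alpha w}(\alpha)=\overline{\alpha}(r_\alpha w)$, so divisibility by $\Xi_w(\alpha)$, by $\Xi_{r_\alpha w}(\alpha)$, and by $\alpha(w)$ all coincide. Second, a direct computation with $w'=r_\alpha w$, $w'\lambda=w\lambda-c\alpha$ and $a_{w'}=a_w-c\,a_\alpha$ (where $c=w\lambda\cdot\alpha^\vee$) gives
\begin{equation*}
\Xi_w(\nu)-\Xi_{w'}(\nu)=a_\nu\Bigl(\tfrac{w'\lambda}{a_{w'}}-\tfrac{w\lambda}{a_w}\Bigr)=\tfrac{-c\,a_\nu\gcd(a_w,a_\alpha)}{a_w a_{w'}}\,\alpha(w),
\end{equation*}
so $\Xi_w\equiv\Xi_{w'}\pmod{\alpha(w)}$ as graded algebra maps. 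Third, writing $f_w-f_{w'}=(\Xi_w g_w-\Xi_{w'}g_w)+\Xi_{w'}(g_w-g_{w'})$, the first term is divisible by $\alpha(w)$ by the congruence, while the second is divisible by $\alpha(w)$ if and only if $g_w-g_{w'}$ is divisible by $\alpha$ (apply the algebra isomorphism $\Xi_{w'}$ and use $\Xi_{w'}(\alpha)\propto\alpha(w)$). This gives the equivalence, hence the GKM description.

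The main obstacle is the compatibility of the first paragraph: one must check carefully that fixed-point restriction commutes with $\pi^\ast$ and $\pi_0^\ast$ and that the fiberwise identifications are precisely $\phi_w,\psi_w$, so that $\Xi_w$ is the map $\nu\mapsto\overline{\nu}(w)$; and then control the mismatch between $\Xi_w$ and $\Xi_{w'}$ at the two ends of an edge. The congruence $\Xi_w\equiv\Xi_{w'}\pmod{\alpha(w)}$ is exactly what reconciles the classical and weighted divisibility conditions; the rest is bookkeeping.
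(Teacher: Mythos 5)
Your proof is correct, but it takes a genuinely different route from the paper's. The paper's proof is essentially a one-liner: $X$ has an orbifold affine paving, hence vanishing odd cohomology, hence is equivariantly formal; combined with the identification of the $T$-invariant curves and their weights $\alpha(w)$ in Proposition \ref{prop:invariantcurves}, the statement follows directly from \cite[Theorem 1.2.2]{GKM1998} applied to $X$ itself. You instead reduce to the classical GKM description of $H^*_{T_0}(Y)$ by transporting along $\pi^*$ and $\pi_0^*$ and comparing the fixed-point restrictions via the maps $\Xi_w = \varphi_w^{-1}\circ\psi_w$; this is precisely the strategy the paper attributes (Remark \ref{r:curve-GKM}) to \cite{AbeMatsumura2015} and \cite{AzamNazirQureshi2020} in the Grassmannian and type $A$ cases, and your identification $\Xi_w(\nu)=\overline{\nu}(w)$ is the content of the paper's Proposition \ref{p:fixed point isom}. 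The two ingredients you supply beyond that — the congruence $\Xi_w\equiv\Xi_{r_\alpha w}\pmod{\alpha(w)}$ (your displayed computation is correct, using $a_w w'\lambda - a_{w'}w\lambda = -c\,\gcd(a_w,a_\alpha)\,\alpha(w)$) and the fact that $\Xi_{r_\alpha w}(\alpha)$ is a positive multiple of $\alpha(w)$ via Lemma \ref{lemma: weightedrefl} — are exactly what is needed to convert the classical divisibility condition into the weighted one, and they hold without issue (including when $a_\alpha=0$). What each approach buys: the paper's argument is shorter and makes the geometric origin of the weighted roots transparent, but it requires invoking the GKM theorem for the singular orbifold $X$; yours applies the GKM machinery only to the smooth homogeneous space $Y=G/P$ and handles $X$ purely algebraically, at the cost of the bookkeeping with the $\Xi_w$ at the two ends of each edge. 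Both are valid; yours could even be viewed as an independent verification that the edge labels produced by \cite[Theorem 1.2.2]{GKM1998} must be the $\alpha(w)$ up to scalar.
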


\begin{proof}
    Since $X$ has vanishing odd cohomology, it is equivariantly formal
    (see \cite{GKM1998}).  In light of the description of 
    $T$-invariant curves and their weights
    given in Proposition \ref{prop:invariantcurves}, the proposition
    is an immediate consequence of \cite[Theorem 1.2.2]{GKM1998}.
\end{proof}

\begin{remark} \label{r:curve-GKM}
In contrast to the proof given here,
the proofs of the GKM description
in \cite{AbeMatsumura2015} and \cite{AzamNazirQureshi2020}
(for the weighted Grassmannian and type $A$)
do not proceed by identifying the weights of $T$-invariant curves
in $X$.  Instead, they use the GKM description of $H^*_{T_0}(Y)$,
and the relation between the pullback
$j^*: H^*_T(X) \to H^*_T(X^T)$
and the non-weighted pullback $i^*: H^*_{T_0}(Y) \to H^*_{T_0}(Y^{T_0})$.
(We discuss the relationship between these pullbacks in the
 next section.)   Although \cite{AbeMatsumura2015} and \cite{AzamNazirQureshi2020} do not explicitly
 relate their GKM descriptions to invariant curves,
 the GKM descriptions of the cohomology, together with the fact that there are finitely many $1$-dimensional $T$-orbits,
 are enough (by \cite[Theorem 1.2.2]{GKM1998}) to describe the weights of the $T$-invariant curves up to scaling, 
   
  \end{remark}

\subsection{Comparison of restrictions to fixed points}\label{subsection: comparison of restrictions}
In this section we relate the weighted and non-weighted restrictions to fixed points.  In particular,
we prove Proposition \ref{p:fixed point isom}, which expresses the weighted restriction in terms of the non-weighted restriction.
The key diagram \eqref{equation: fixed points} was first considered in \cite[(4.6)]{AbeMatsumura2015}; see also \cite{AzamNazirQureshi2020} for \(G=GL_n\times\bC^\times\).

The sets of $T$-fixed points on $X$ and of $T_0$-fixed points of \(Y\)
can each be identified with $W^{P}$.
The maps
\begin{equation}
\begin{tikzcd}
{}
 &X\arrow[dl, bend right=20] 
 &W^P\arrow[l,swap,"j"]\\
pt
 &
 &\\
{}
 &Z\arrow[dl, bend right=20]\arrow[ul, bend left=20]\arrow[uu,"\pi"]\arrow[dd,swap,"\pi_0"]
 &\displaystyle\bigcup_{w\in W^P}\bC_{w\lambda}^\times\arrow[l,swap,"k"]\arrow[dd]\arrow[uu]\\
pt
 &
 &\\
{}
 &Y\arrow[ul, bend left=20]
 &W^P\arrow[l,swap,"i"]
\end{tikzcd}
\end{equation}
give the homomorphisms
\begin{equation}\label{equation: fixed points}
\begin{tikzcd}
{}
 &H^\ast_T(X)\arrow[r, "j^\ast"]\arrow[dd,swap,"\pi^\ast"]
 &H_T^\ast(W^P)\arrow[dd,swap, "\varphi"]\\
H_T^\ast\arrow[ur, bend left=20]\arrow[dr, bend right=20]
 &
 &\\
{}
 &H_H^\ast(Z)\arrow[r, "k^\ast"]
 &\displaystyle\bigoplus_{w\in W^P}H_H^\ast(\bC_{w\lambda}^\times)\\
H_{T_0}^\ast\arrow[ur, bend left=20]\arrow[dr, bend right=20]
 &
 &\\
{}
 &H_{T_0}^\ast(Y)\arrow[r, "i^\ast"]\arrow[uu,"\pi_0^\ast"]
 &H_{T_0}^\ast(W^P)\arrow[uu, "\psi"]
\end{tikzcd}
\end{equation}

We can write $\varphi = \oplus \varphi_w$
and $\psi = \oplus \psi_w$, where the sums are over $w \in W^P$. 
Let $H_{w \lambda}$ denote the kernel of the character $w \lambda$ of $H$, with
identity component $H_{w \lambda}^0$.  The inclusion $H_{w \lambda}^0 \hookrightarrow
H_{w \lambda}$ induces an isomorphism $H^*_{H_{w\lambda}} \to H^*_{H_{w\lambda}^0}$
which we use to identify these rings.
Since \(\bC_{w\lambda}^\times=H/H_{w\lambda}\) as \(H\)-spaces, 
$H_H^\ast(\bC_{w\lambda}^\times) \isom H^*_{H_{w\lambda}}$.
Write 
$$\varphi_w\,\colon H_T^\ast  \isom H_T^\ast(p_w) \to H_H^\ast(\bC_{w\lambda}^\times) \isom H^*_{H_{w\lambda}}$$
for the pullback induced via \eqref{equation: pimap} by the projection
from $\bC_{w\lambda}^\times$ to a point.  The map $\varphi_w$ is equal to the composition
\begin{equation} \label{e.compositioncoh}
  H_T^\ast \to H^*_H \to H^*_{H_{w\lambda}}  
\end{equation}
induced by the following maps of tori:
\begin{equation} \label{e.compositiontori}
H_{w\lambda}^0 \hookrightarrow H\to T.
\end{equation}
These maps of tori induce maps of symmetric algebras:
\begin{equation} \label{e.compositioncoh2}
S(\T^\ast) \to S(\H^\ast) \to S(\H_{w \lambda}^\ast),
\end{equation}
which coincide with \eqref{e.compositioncoh} under the identification of
the torus-equivariant cohomology of a point with the symmetric
algebra on the dual of the Lie algebra.  Similarly, replacing $T$ with $T_0$,
we see that $\psi_w$ can be identified as the composition
$H_{T_0}^\ast \to H^*_H \to H^*_{H_{w\lambda}}$ which coincides with the
corresponding composition of maps of symmetric algebras:
\begin{equation} \label{e.compositioncoh3}
S(\T_0^\ast) \to S(\H^\ast) \to S(\H_{w \lambda}^\ast),    
\end{equation} 

We can view $S(\T^*)$ and $S(\T_0^*)$ as the subsets of $S(\H^*)$ consisting
of polynomials which vanish on $\mathbb{S}$ and $\mathbb{S}_0$, respectively.  Under this
identification, we can describe the isomorphism \( \varphi_w^{-1}\circ \psi_w(\mu_0): S(\T_0^*) \to S(\T^*) \) 
as follows.

\begin{proposition} \label{p:fixed point isom}
Let \(w\in W^P\) and $\mu_0 \in \T^*_0$, and let
\(\mu = \varphi_w^{-1}\circ \psi_w(\mu_0) \).  Then $\mu = \overline{\mu_0}(w)$.
\end{proposition}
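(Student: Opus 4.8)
The plan is to unwind the identifications of $\varphi_w$ and $\psi_w$ recorded in the paragraph preceding the proposition, after which the claim reduces to the single observation that $w\lambda$ vanishes on $\H_{w\lambda}=\ker(w\lambda)$. Recall from \eqref{e.compositioncoh2} and \eqref{e.compositioncoh3} that, after identifying the equivariant cohomology of a point with the symmetric algebra on the dual Lie algebra and viewing $\T^\ast,\T_0^\ast\subset\H^\ast$, the map $\varphi_w$ is the composite of the inclusion $\T^\ast\hookrightarrow\H^\ast$ with the restriction $\H^\ast\to\H_{w\lambda}^\ast$ dual to $\H_{w\lambda}\hookrightarrow\H$, and likewise $\psi_w$ is the composite of $\T_0^\ast\hookrightarrow\H^\ast$ with the same restriction. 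In other words, both $\varphi_w$ and $\psi_w$ are simply ``restrict a linear functional on $\H$ to the hyperplane $\H_{w\lambda}$'', applied to elements of $\T^\ast$ and $\T_0^\ast$ respectively.

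First I would record that $\varphi_w$ is invertible, so that $\varphi_w^{-1}$ is legitimate. The composite of tori $H_{w\lambda}^0\hookrightarrow H\to T$ in \eqref{e.compositiontori} is an isogeny: its kernel $S\cap H_{w\lambda}^0$ is finite because $w\lambda\cdot\chi=\gcd(\chi)\,a_w\neq0$, and it is surjective because $\chi\notin\H_{w\lambda}$ forces $\F\chi+\H_{w\lambda}=\H$. Over a field of characteristic zero an isogeny of tori induces an isomorphism on cohomology, so $\varphi_w$ is an isomorphism. Hence it suffices to produce an element of $\T^\ast$ that $\varphi_w$ sends to $\psi_w(\mu_0)$, and to verify that this element is $\overline{\mu_0}(w)$.

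Next I would check that $\overline{\mu_0}(w)$ actually lies in the domain $\T^\ast$ of $\varphi_w$. Pairing the defining formula $\overline{\mu_0}(w)=\mu_0-\tfrac{a_{\mu_0}}{a_w}w\lambda$ with $\chi$ and using $\mu_0\cdot\chi=\gcd(\chi)\,a_{\mu_0}$ together with $w\lambda\cdot\chi=\gcd(\chi)\,a_w$ gives $\overline{\mu_0}(w)\cdot\chi=0$, so $\overline{\mu_0}(w)\in\T^\ast$; this is precisely the parenthetical remark following \eqref{e.weighted1}.

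Finally I would compute the difference $\varphi_w\big(\overline{\mu_0}(w)\big)-\psi_w(\mu_0)$. By the identifications above this is the restriction to $\H_{w\lambda}$ of $\overline{\mu_0}(w)-\mu_0=-\tfrac{a_{\mu_0}}{a_w}w\lambda$, and since $\H_{w\lambda}=\ker(w\lambda)$ the functional $w\lambda$ restricts to zero there, so the difference vanishes. Thus $\varphi_w\big(\overline{\mu_0}(w)\big)=\psi_w(\mu_0)$, and applying $\varphi_w^{-1}$ yields $\mu=\overline{\mu_0}(w)$. I do not expect a genuine obstacle: once both pullbacks are recognized as restriction-to-$\H_{w\lambda}$ maps the statement is immediate, and the only points requiring care are confirming that $\overline{\mu_0}(w)$ genuinely lands in $\T^\ast$ and that $\varphi_w$ is invertible.
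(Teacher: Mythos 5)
Your proposal is correct and follows essentially the same route as the paper: both arguments rest on the facts that $\varphi_w$ and $\psi_w$ are restriction to $\H_{w\lambda}=\ker(w\lambda)$, so $\mu-\mu_0$ is a multiple of $w\lambda$, and that the multiple is pinned down by $\mu\cdot\chi=0$. The only difference is cosmetic---you verify that the candidate $\overline{\mu_0}(w)$ works (and explicitly justify the invertibility of $\varphi_w$, which the paper leaves implicit), whereas the paper solves for the coefficient $c$ directly.
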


\begin{proof}
Since $\mu$ and $\mu_0$ agree when restricted to $\H_{w \lambda}$,
we have $\mu = \mu_0 + c \cdot w \lambda$ for some constant $c$.
Since $\mu$ vanishes on $\mathbb{S}$, we have 
$$
0 = \mu \cdot \chi = \mu_0 \cdot \chi + c (w \lambda \cdot \chi )
= a_{\mu_0} + c a_w.
$$
Hence $c = - a_{\mu_0}/a_w$.
\end{proof}

\begin{remark} \label{r:Schubert-fixed}
The element $i_x^*(\gd_{Y_w}^{T_0})$ can be calculated, using a formula due to Anderson-Jantzen-Soergel and Billey (see \cite{AndersenJantzenSoergel1994}, 
\cite{Billey99}), or a different formula due to Kumar \cite{Kumar1996}).  These formulas, together with
Proposition \ref{p:fixed point isom}, mean that we can calculate $j_x^*{\gd_{X_w}^T}$.  Indeed,
the commutativity of the diagram \eqref{equation: fixed points}, together with the definition of the weighted Schubert classes,
implies that $j_x^*{\gd_{X_w}^T} =   q_w \, \varphi_x^{-1}\circ \psi_x (i_x^*(\gd_{Y_w}^{T_0}))$.
\end{remark}

\section{Weighted Schubert calculus}\label{section: equivariant cohomology}
The basic problem of weighted Schubert calculus is to expand a product
in $H^*_T(X)$ as an $H^*_T$-linear combination of elements of the $H^*_T$-basis $\delta^T_{X_w}$.
This section contains our main results about weighted Schubert calculus, including a weighted
Chevalley formula and our main positivity theorem (Theorem~\ref{theorem: main theorem}).  We also
describe a Borel presentation of the equivariant cohomology, which is used in the proof of Proposition \ref{prop:weightedChevalley}.

\subsection{Structure constants and relation to non-weighted Schubert calculus} \label{ss.structure}
The relation between the weighted and non-weighted Schubert calculus can be described as follows.
Recall that $\pi_0^* \gd^{T_0}_{Y_w} = \gd^H_{Z_w}$ and $\pi^* \gd^T_{X_w} = q_w \gd^H_{Z_w}$
(Corollary \ref{corollary: pullback}).  The ring $H^*_H(Z)$ can be considered
as a module over either of the rings $H^*_{T_0}$ or $H^*_T$ (as well as $H^*_H$).  
On the other hand, $H^*_T(X)$ and $H^*_{T_0}(Y)$ acquire $H^*_H$-module structures
via the isomorphisms $\pi^*$ and $\pi_0^*$.  

Expanding products in $H^*_H(Z)$ in terms
of the basis $\gd^H_{Z_w}$,  with coefficients in $H^*_{T_0}$, is the problem of non-weighted
Schubert calculus.  Expanding products in $H^*_H(Z)$ in terms
of the basis $q_w \gd^H_{Z_w}$, with
coefficients in $H^*_T$, is the problem of weighted Schubert calculus.  The significant difference is that
the coefficients are in different rings: $H^*_{T_0}$ in the non-weighted case; $H^*_T$ in the weighted case.

There are various ways of presenting formulas for the structure constants.
Depending on which basis one uses, factors of $q_w$ (which can be computed by \eqref{e.qwdef})
may or may not occur in the formulas, but
these are easily dealt with.  Precisely, 
if we write
$$
 \gd^T_{X_u}  \gd^T_{X_v} = \sum_w c_{uv}^w  \gd^T_{X_w}
$$
and
$$
 \gd^H_{Z_u}  \gd^H_{Z_v} = \sum_w d_{uv}^w  \gd^H_{Z_w},
$$
where $c_{uv}^w$ and $d_{uv}^w$ are in $H^*_T$, we see that
\begin{equation} \label{e.structurechange}
c_{uv}^w = \frac{q_u q_v}{q_w} d_{uv}^w.
\end{equation}
Thus, we can easily obtain structure constants in one basis from structure constants in the other basis.
In particular, we can obtain formulas in the basis $\{ \gd^H_{Z_w} \}$ from formulas in 
the basis $\{ \gd^T_{X_w} \}$ by setting all the $q_u$ equal to $1$.
For convenience, we will often use the basis $\{ \gd^H_{Z_w} \}$.  
Similarly, we have introduced elements $\overline{\beta}(w)$ \eqref{e.weighted1}, which are related to the weighted roots $\beta(w)$ by a positive
scalar factor (see
\eqref{e.weighted2}).  This equation can be used to translate formulas from the $\overline{\beta}(w)$ to the $\beta(w)$, and conversely.  
Although the weighted roots are geometrically natural, we often use the $\overline{\beta}(w)$ for simplicity.

Finally, the formula of \eqref{e:positive3} can be used to translate formulas given in terms of $\overline{\beta}(v)$ into $\overline{\beta}(w)$
for $v, w \in W$.  In particular, taking $w = w_0$, we can obtain formulas in the parameters used in \cite{AbeMatsumura2015} for the weighted Grassmannian.

\subsection{Borel presentation} \label{subsection: Borel}
Since $H^*_T(X) \isom H^*_H(Z)$, the next proposition gives a Borel presentation of the
$T$-equivariant cohomology of a weighted flag variety.  Although the proof is simple, the result does not previously seem to have appeared.
Recall that $\H_{\lambda} = \ker \lambda \subset \H$; the dual space is $\H_{\lambda}^\ast$.

\begin{proposition} \label{proposition: Borel}
We have
\begin{equation}\label{equation: weighted Borel presentation}
H_H^\ast(Z)\isom S(\H^\ast)\underset{S(\H^\ast)^W}{\otimes}S(\H_{\lambda}^\ast)^{W_P}.
\end{equation}
\end{proposition}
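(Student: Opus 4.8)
The plan is to compute $H^\ast_H(Z)$ by viewing $Z \to Y = G/P$ as a $\C^\times$-bundle and applying the Gysin sequence, reducing the statement to the known Borel presentation of $H^\ast_H(G/P)$ together with a routine invariant-theory computation. Write $R = S(\H^\ast)$, so that $H^\ast_H = R$ and the standard (non-weighted) equivariant Borel presentation reads $H^\ast_H(G/P) \isom R \otimes_{R^W} R^{W_P}$, where $R^{W_P}$ enters through the characteristic (Chern class) map. Since $Z = G\times^P\bC_\lambda^\times$ is precisely the complement of the zero section of the $H$-equivariant line bundle $\cL = G\times^P\bC_\lambda$ over $G/P$, the Gysin sequence of this bundle relates $H^\ast_H(Z)$ to $H^\ast_H(G/P)$.

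First I would identify the relevant Euler class. Because $W_P$ fixes $\lambda$ (its coroots are orthogonal to $\lambda$), the element $\lambda$ lies in $R^{W_P}$, and under the presentation the equivariant first Chern class $e = c_1^H(\cL)$ is the image of $1\otimes\lambda$; this can be checked on fixed points, where $c_1^H(\cL)$ restricts at $wP$ to the weight $w\lambda$ of the fiber $\bC_{w\lambda}$, matching the restriction of $1\otimes\lambda$. Next I would show that cup product with $e$ is injective on $H^\ast_H(G/P)$: since $G/P$ is equivariantly formal, restriction to the $H$-fixed points embeds $H^\ast_H(G/P)$ into $\bigoplus_{w\in W^P} R$, and there $\cup e$ becomes coordinatewise multiplication by the nonzero linear forms $w\lambda$, which are non-zero-divisors in the domain $R$. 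The equivariant Gysin sequence then collapses into short exact sequences, yielding $H^\ast_H(Z) \isom H^\ast_H(G/P)/(e) = \bigl(R\otimes_{R^W}R^{W_P}\bigr)/(1\otimes\lambda)$.

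It remains to identify this quotient with the claimed tensor product. Applying the right-exact functor $R\otimes_{R^W}(-)$ to the presentation $R^{W_P}\xrightarrow{\cdot\lambda}R^{W_P}\to R^{W_P}/\lambda R^{W_P}\to 0$ turns multiplication by $\lambda$ into multiplication by $1\otimes\lambda$, so $\bigl(R\otimes_{R^W}R^{W_P}\bigr)/(1\otimes\lambda)\isom R\otimes_{R^W}\bigl(R^{W_P}/\lambda R^{W_P}\bigr)$. Finally, since $\H_\lambda^\ast = \H^\ast/\F\lambda$ gives $S(\H_\lambda^\ast) = R/\lambda R$, I would verify $R^{W_P}/\lambda R^{W_P}\isom (R/\lambda R)^{W_P} = S(\H_\lambda^\ast)^{W_P}$: in characteristic zero the Reynolds operator makes $(-)^{W_P}$ exact, and because $\lambda$ is $W_P$-invariant it can be pulled out of the averaging operator, which yields $\lambda R\cap R^{W_P} = \lambda R^{W_P}$. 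Combining these isomorphisms proves the proposition.

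The step I expect to be the main obstacle is the Gysin reduction $H^\ast_H(Z)\isom H^\ast_H(G/P)/(e)$: one must pin down the Euler class as $1\otimes\lambda$ and, more importantly, verify injectivity of $\cup e$ so that the long exact sequence splits into short exact sequences. Once that is in hand, the identification of the quotient is a routine characteristic-zero computation.
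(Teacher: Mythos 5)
Your argument is correct, but it takes a genuinely different route from the paper. The paper's proof is essentially a one-line citation: it identifies $Z$ with $G/Q$ for $Q = L_\lambda U$ and invokes the general presentation $H^*_H(G/Q) \isom H^*_H \otimes_{H^*_G} H^*_Q$ from Anderson--Fulton, then computes $H^*_Q \isom H^*_{L_\lambda} \isom S(\H_\lambda^\ast)^{W_P}$ since $H_\lambda^0$ is a maximal torus of $L_\lambda$ with Weyl group $W_P$. You instead realize $Z$ as the complement of the zero section of $\cL = G\times^P\C_\lambda$ over $G/P$ and run the equivariant Gysin sequence, reducing to the standard Borel presentation of $H^*_H(G/P)$ plus a Reynolds-operator computation. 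Each step of yours checks out: the Euler class restricts to $w\lambda$ at the fixed point $wP$ (matching the paper's convention in its non-weighted Chevalley formula), and since $\lambda \neq 0$ all the $w\lambda$ are nonzero, so injectivity of $\cup\, e$ follows from injectivity of localization; the identification $\left(R\otimes_{R^W}R^{W_P}\right)/(1\otimes\lambda) \isom R\otimes_{R^W}(R/\lambda R)^{W_P}$ is exactly the characteristic-zero argument you describe, using that $\lambda$ is a $W_P$-invariant non-zero-divisor. What the paper's route buys is brevity and uniformity (it works directly for the homogeneous space $G/Q$ without choosing a line bundle); what yours buys is a more self-contained, elementary argument that avoids the Borel presentation for non-parabolic isotropy groups, and as a byproduct it exhibits $\pi_0^*\colon H^*_H(G/P)\to H^*_H(Z)$ explicitly as the surjection with kernel the ideal generated by $c_1^H(\cL)$ --- which is consistent with the paper's later observation that $c_1^H(G\times^Q\C_{\tilde\lambda})=0$ because $\tilde\lambda=0$ in $\H_\lambda^\ast$.
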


\begin{proof}
 Recall that $Q = L_{\lambda}U \subset P$, where $L_{\lambda}$ is the kernel of the character of
 $\lambda$ of $L$.  Since $Z \isom G/Q$, applying \cite[Ch.~15, Prop.~6.5]{AndersonFulton2024} yields
$$
     H^*_H(Z) \isom H^*_H(G/Q) \isom H^*_H \otimes_{H^*_G} H^*_Q \isom S(\H^\ast)\underset{S(\H^\ast)^W}{\otimes}S(\H_{\lambda}^\ast)^{W_P},
$$
as desired.
\end{proof}

In terms of the Borel presentation, the left action of $r \in H^*_H = S(\H^*)$ on $H^*_H(Z)$ is given
by multiplication by $r \otimes 1$.  The right action of $S(\H^\ast_{\lambda})^{W_P}$ 
on $H^*_H(Z)$ can be
described as follows.  A representation $V$ of $H_{\lambda}$ may be viewed as an $H_{\lambda}$-equivariant
vector bundle over a point.  The $H_{\lambda}$-equivariant Chern class $c_i^{H_{\lambda}}(V)$ is an
element of $S(\H^\ast_{\lambda})$.  If $V$ is the restriction to $H_{\lambda}$ of a representation
of $L_{\lambda}$, then $c_i^H(V)$ is in $S(\H_{\lambda}^\ast)^{W_P}$.  The
element $1 \otimes c_i^H(V) \in H^*_H(Z)$ is the $i$-th equivariant Chern class of the
$H$-equivariant line bundle $G \times^Q V$, where as usual the $L_{\lambda}$-module structure
on $V$ is extended to a $Q = L_{\lambda}U$-module structure by defining $U$ to act trivially.

Given $\mu \in \H^*$, let $\tilde{\mu}$ denote the image of $\mu$ under the map
$\H^* \to \H_{\lambda}^*$.  Suppose $\mu \in \Lambda$.  Then $\tilde{\mu}$ is $W_P$-invariant
if and only if $\mu$ is.  In this case, we obtain $1$-dimensional representations 
$\bC_{\tilde{\mu}}$ of $L_{\lambda}$ and $\bC_{\mu}$ of $L$.
As usual, we extend these representations to $Q$ and $P$, respectively, by defining $U$ to act trivially.
   The line bundle $G \times^Q \bC_{\tilde{\mu}}$ on
    $Z = G/Q$ is the pullback via $\pi_0^*$ of the line bundle
    $G \times^P \bC_{\mu}$.  In terms of the Borel presentation,
    $c_1^H(G \times^Q \bC_{\tilde{\mu}})= 1 \otimes \tilde{\mu}$.
    Note that
    $\tilde{\lambda} = 0$, reflecting the fact that the pullback of $G \times^P \bC_{\lambda}$
    is $H$-equivariantly trivial on $Z$.

\subsection{Module structure and a Chevalley formula} \label{section: weighted chevalley formula}
As is apparent from the Borel presentation, $H^*_T(X) \cong H^*_H(Z)$ has the structure of a $S(\H^*) \otimes S(\H_{\gl}^*)$-module.  In this section we describe the module structure in terms of the weighted Schubert basis
(Proposition \ref{proposition: weighted chevalley formula I}).  As a consequence, we obtain a Chevalley formula for the multiplication of a weighted Schubert class in terms of a weighted Schubert divisor (Proposition \ref{prop:weightedChevalley}).   For weighted flag varieties of cominuscule type, the formula can be simplified; see
Corollary \ref{c:cominuscule}, which in the case of the weighted Grassmannian recovers the formula of \cite[Prop.~5.2]{AbeMatsumura2015}
The proofs of these propositions are deduced from results for non-weighted flag varieties, as is Lemma \ref{lemma: lambdamultiply}, which
also plays a key role
in the proof of our positivity theorem.

Via the pullback isomorphisms $\pi^*: H^*_T(X) \to H^*_H(Z)$ and $\pi_0^*: H^*_{T_0}(Y) \to H^*_H(Z)$, the ring
$H^*_H(Z)$ acquires the structure of a module for $H^*_T$ and for $H^*_{T_0}$.  
Lemma \ref{lemma: lambdamultiply} illustrates
that although the Schubert classes
form a basis of $H^*_H(Z)$ as a module over $H^*_T$ or over $H^*_{T_0}$, they
are not linearly independent over $H^*_H$.

\begin{lemma} \label{lemma: lambdamultiply}
   If \(v\in W^P\), then
    \begin{equation}\label{equation: another unnamed and referenced}
(v\lambda) \,\delta_{Z_v}^H=\sum_{\substack{w= v r_{\gamma}   \\ w {\lessdot}_P v}}(\lambda\cdot\gamma^\vee)\,\delta_{Z_w}^H,
\end{equation}
where the sum is over positive roots $\gg$ such that $v r_{\gg} \lessdot_P v$.
\end{lemma}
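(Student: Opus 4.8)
The plan is to deduce this identity from the (parabolic) equivariant Chevalley formula on the non-weighted flag variety $Y=G/P$, using the single geometric input that the line bundle defining $Z$ becomes equivariantly trivial upon pullback to $Z$. Write $\cL_\lambda=G\times^P\C_\lambda$ for the $H$-equivariant line bundle on $Y$; since $\lambda$ is a $P$-character, $Z=G\times^P\C_\lambda^\times$ is exactly the complement of the zero section in $\cL_\lambda$, so $\pi_0\colon Z\to Y$ is the associated $\C^\times$-bundle. The pullback $\pi_0^\ast\cL_\lambda$ carries a tautological nowhere-vanishing, $H$-equivariant section, so it is $H$-equivariantly trivial and $c_1^H(\pi_0^\ast\cL_\lambda)=0$ in $H^\ast_H(Z)$. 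This is the geometric incarnation of the relation $\tilde\lambda=0$ recorded after Proposition \ref{proposition: Borel}.

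First I would record the equivariant Chevalley formula in $H^\ast_H(Y)$, where $H$ acts on $Y$ through $H\to T_0$: for $v\in W^P$,
\begin{equation*}
c_1^H(\cL_\lambda)\cdot\delta_{Y_v}^H=(v\lambda)\,\delta_{Y_v}^H-\sum_{\substack{w=v r_\gamma\,{\lessdot}_P\,v\\ \gamma>0}}(\lambda\cdot\gamma^\vee)\,\delta_{Y_w}^H,
\end{equation*}
where $\delta_{Y_w}^H$ denotes the $H$-equivariant Poincar\'e dual of the Schubert variety $Y_w$ and $(v\lambda)\in\H^\ast=H^2_H$ acts through the left $H^\ast_H$-module structure. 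The diagonal coefficient is the restriction of $c_1^H(\cL_\lambda)$ to the fixed point indexed by $v$, namely $v\lambda$, while the off-diagonal coefficients are the scalars $\lambda\cdot\gamma^\vee$; the minus sign is the usual one, reflecting that the equivariant Euler class of the tangent space enters the localized Chevalley computation. This is the non-weighted parabolic Chevalley formula (see \cite{AndersonFulton2024}); I only use that it holds $H$-equivariantly, which is immediate since $S_0$ acts trivially on $Y$ and its contribution to $c_1^H(\cL_\lambda)|_v$ is the same constant $\lambda|_{S_0}$ at every fixed point, hence cancels in the differences that govern the off-diagonal terms.

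Next I would apply the $H$-equivariant pullback $\pi_{0}^{\ast,H}\colon H^\ast_H(Y)\to H^\ast_H(Z)$, which is $H^\ast_H$-linear. Because $\pi_0$ is a smooth morphism with $\pi_0^{-1}(Y_w)=Z_w$, pullback sends a Poincar\'e dual to the Poincar\'e dual of the preimage, so $\pi_{0}^{\ast,H}\delta_{Y_w}^H=\delta_{Z_w}^H$; this matches the definition $\delta_{Z_w}^H=\pi_0^\ast\delta_{Y_w}^{T_0}$ via the factorization $\pi_0^\ast=\pi_{0}^{\ast,H}\circ p_0^\ast$ and $p_0^\ast\delta_{Y_w}^{T_0}=\delta_{Y_w}^H$ from Remark \ref{r: changegroups}. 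Applying $\pi_{0}^{\ast,H}$ to the displayed formula, the left-hand side becomes $c_1^H(\pi_0^\ast\cL_\lambda)\cdot\delta_{Z_v}^H=0$, while $H^\ast_H$-linearity turns the right-hand side into $(v\lambda)\,\delta_{Z_v}^H-\sum(\lambda\cdot\gamma^\vee)\,\delta_{Z_w}^H$. Transposing gives the claimed identity.

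The only real obstacle is bookkeeping: one must pin down the sign convention in the cited Chevalley formula so that the off-diagonal terms carry the opposite sign to the diagonal $(v\lambda)$ term (this is what yields the stated $+(\lambda\cdot\gamma^\vee)$ after descent), and one must work $H$-equivariantly rather than $T_0$-equivariantly, since $\cL_\lambda$ is genuinely $H$-equivariant but not $T_0$-equivariant---$\lambda$ is nontrivial on the central torus $S_0$---so the diagonal coefficient $v\lambda$ lives in $\H^\ast$ and not in $\T_0^\ast$. As a cross-check, and an alternative route avoiding line bundles, one could instead restrict both sides to each fixed point $p_x$, invoking the GKM description of Proposition \ref{prop:GKM} together with Proposition \ref{p:fixed point isom} to reduce to the non-weighted restriction identities; but the line-bundle argument above is the cleaner one.
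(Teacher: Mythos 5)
Your proposal is correct and is essentially the paper's own argument: the paper likewise applies the $H$-equivariant non-weighted Chevalley formula for $c_1^H(G\times^P\bC_\mu)$ on $Y$, sets $\mu=\lambda$, and pulls back along $\pi_0$, using that $c_1^H$ of the pulled-back bundle vanishes (equivalently, $\tilde\lambda=0$ on $Q$) to kill the left-hand side and transpose. Your geometric justification via the tautological nowhere-vanishing section, and your care about the sign convention and about working $H$-equivariantly rather than $T_0$-equivariantly, match the paper's reasoning exactly.
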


\begin{proof}
For $\mu \in \H^*$, we have, using $H$-equivariant classes on $Y$ (cf.~Remark \ref{r: changegroups}):
\begin{equation}\label{equation: non-weighted Chevalley formula}
c_1^H(G\times^P\bC_\mu)\delta_{Y_v}^H=v\mu \, \delta_{Y_v}^H-\sum_{\substack{w= v r_{\gamma}   \\ w {\lessdot}_P v}}(\mu\cdot\gamma^\vee)\delta_{Y_w}^H,
\end{equation}
where again the sum is over positive $\gg$.  See, for example, \cite{Brion1997} or \cite{AndersonFulton2024}.  (In accord with our convention that $\fb$ contains the positive root spaces, the sign in front of the sum is negative: the line bundle $G\times^P\bC_\mu$ is positive if $\mu$ is antidominant.)  Setting $\mu = \lambda$ and
pulling back along \(\pi_0\,\colon Z\to Y\) gives \eqref{equation: another unnamed and referenced}, since \(c_1(G\times^Q\bC_{\tilde\lambda})=0\), where \(\tilde\lambda\) is the restriction of \(\lambda\) to \(Q\).
\end{proof}

The ring $H^*_T(X)$ is a free $H^*_T$-module with a basis given by
the $\delta_{X_w}^T$.  For $\mu \in \H^* $, the next proposition gives expansions for
$\mu \cdot \delta_{X_v}^T$ and $(1 \otimes \mu) \cdot \delta_{X_v}^T$ in this basis,
with coefficients in $H^*_T = S(\T^*)$.  Recall
that for $\eta \in \H^*$ and $w \in W$, the element
$\overline{\eta}(w) = \eta - \frac{a_{\eta}}{a_w} w \lambda$
defined in  \eqref{e.weighted1} lies in the subspace $\T^*$ of $\H^*$.

\begin{proposition} \label{proposition: weighted chevalley formula I}
For every \(\mu\in \H^\ast\) and for every \(v\in W^P\), we have
\begin{equation} \label{equation: weighted chevalley formula I}
\mu\,\delta_{X_v}^T  = \overline{\mu}(v) \delta_{X_v}^T+
\dfrac{a_\mu}{a_v} \sum_{\substack{w= v r_{\gamma}   \\ w {\lessdot}_P v}} \dfrac{q_v}{q_w}(\lambda\cdot \gamma^\vee)\,\delta_{X_w}^T ,
\end{equation}
and
\begin{equation}\label{equation: weighted chevalley formula Ia}
(1\otimes\tilde\mu)\delta_{X_v}^T =\overline{(v \mu)}(v) \delta_{X_v}^T+\sum_{\substack{w= v r_{\gamma}   \\ w {\lessdot}_P v}} \frac{q_v}{q_w} \left((\dfrac{a_{v\mu}}{a_v}\lambda - \mu )\cdot \gamma^\vee\right)\delta_{X_w}^T.
\end{equation}
In both formulas, the sum is over positive roots $\gg$ such that $v r_{\gamma}  {\lessdot}_P v$.
\end{proposition}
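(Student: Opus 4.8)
The plan is to transport everything through the pullback isomorphism $\pi^\ast\colon H^\ast_T(X)\to H^\ast_H(Z)$, which is a ring map and $H^\ast_T$-linear, and which by Corollary~\ref{corollary: pullback} sends $\delta^T_{X_w}$ to $q_w\,\delta^H_{Z_w}$. Since $\pi^\ast$ is an isomorphism, the $S(\H^\ast)\tensor S(\H_\lambda^\ast)$-module structure on $H^\ast_T(X)$ is identified with the one on $H^\ast_H(Z)$, so I would expand $q_v\,\mu\,\delta^H_{Z_v}$ and $q_v(1\tensor\tilde\mu)\delta^H_{Z_v}$ in the basis $\{\delta^H_{Z_w}\}$, then rewrite $q_v\delta^H_{Z_w}=\tfrac{q_v}{q_w}\pi^\ast(\delta^T_{X_w})$ and cancel $\pi^\ast$ to read off the answer. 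The cancellation uses $H^\ast_T$-linearity, so I must check along the way that every coefficient produced actually lies in $S(\T^\ast)$.

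The one preliminary I would record is the pullback of the non-weighted Chevalley formula \eqref{equation: non-weighted Chevalley formula}. Applying $\pi_0^\ast$, using $\pi_0^\ast\delta^H_{Y_w}=\delta^H_{Z_w}$ from \eqref{equation: triv}, the $H^\ast_H$-linearity of $\pi_0^\ast$ on the left term $v\mu$, and the identity $\pi_0^\ast c_1^H(G\times^P\bC_\mu)=c_1^H(G\times^Q\bC_{\tilde\mu})=1\tensor\tilde\mu$ from the Borel subsection, gives
\begin{equation*}
(1\tensor\tilde\mu)\,\delta^H_{Z_v}=v\mu\,\delta^H_{Z_v}-\sum_{\substack{w=vr_\gamma\\ w {\lessdot}_P v}}(\mu\cdot\gamma^\vee)\,\delta^H_{Z_w}.\tag{$\ast$}
\end{equation*}
(Setting $\mu=\lambda$, where $\tilde\lambda=0$, recovers Lemma~\ref{lemma: lambdamultiply}, which I will use below.)

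For \eqref{equation: weighted chevalley formula I} I would use the decomposition $\mu=\overline{\mu}(v)+\tfrac{a_\mu}{a_v}\,v\lambda$ coming from the definition \eqref{e.weighted1}, so that left multiplication gives $\mu\,\delta^H_{Z_v}=\overline{\mu}(v)\delta^H_{Z_v}+\tfrac{a_\mu}{a_v}(v\lambda)\delta^H_{Z_v}$. Substituting Lemma~\ref{lemma: lambdamultiply} into the second summand expresses $\mu\,\delta^H_{Z_v}$ as $\overline{\mu}(v)\delta^H_{Z_v}+\tfrac{a_\mu}{a_v}\sum_w(\lambda\cdot\gamma^\vee)\delta^H_{Z_w}$. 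Here $\overline{\mu}(v)\in\T^\ast$ and the remaining coefficients are scalars, so multiplying by $q_v$ and replacing $q_v\delta^H_{Z_w}$ by $\tfrac{q_v}{q_w}\pi^\ast\delta^T_{X_w}$ yields the claimed formula after cancelling $\pi^\ast$. For \eqref{equation: weighted chevalley formula Ia} I would begin from $(\ast)$ and expand the left term $v\mu\,\delta^H_{Z_v}$ by the same decomposition applied to $v\mu$ in place of $\mu$, namely $v\mu\,\delta^H_{Z_v}=\overline{(v\mu)}(v)\delta^H_{Z_v}+\tfrac{a_{v\mu}}{a_v}\sum_w(\lambda\cdot\gamma^\vee)\delta^H_{Z_w}$. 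Combining the two sums over $w$ and using $\tfrac{a_{v\mu}}{a_v}(\lambda\cdot\gamma^\vee)-(\mu\cdot\gamma^\vee)=\bigl(\tfrac{a_{v\mu}}{a_v}\lambda-\mu\bigr)\cdot\gamma^\vee$ produces the stated coefficients, and the base change by $q_v/q_w$ finishes as before.

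The computations are routine, and I expect the only genuine bookkeeping hazard to be the $q$-factors together with confirming that each coefficient lands in $S(\T^\ast)$, so that the $H^\ast_T$-linear isomorphism $\pi^\ast$ can legitimately be cancelled. This is ensured because $\overline{\mu}(v)$ and $\overline{(v\mu)}(v)$ lie in $\T^\ast$ by construction \eqref{e.weighted1}, while all off-diagonal coefficients are pure scalars; hence every coefficient is indeed an element of $H^\ast_T$.
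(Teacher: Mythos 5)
Your proposal is correct and follows essentially the same route as the paper: both arguments rest on the decomposition $\mu=\overline{\mu}(v)+\tfrac{a_\mu}{a_v}v\lambda$ (and its analogue for $v\mu$), the substitution of Lemma~\ref{lemma: lambdamultiply} for the $v\lambda$-term, the pulled-back non-weighted Chevalley formula for \eqref{equation: weighted chevalley formula Ia}, and the relation $\pi^\ast\delta_{X_w}^T=q_w\delta_{Z_w}^H$ to convert between bases. The only cosmetic difference is that you work entirely in $H^\ast_H(Z)$ and cancel $\pi^\ast$ at the end, whereas the paper writes the first computation directly in the $\delta^T_{X_w}$ basis; the content is identical.
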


\begin{proof}
By Lemma \ref{lemma: lambdamultiply}
and the relation $\pi^\ast\delta_{X_w}^T = q_w \delta_{Z_w}^H$, we have
\begin{equation}\label{equation: key relation}
\begin{split}
\mu\,\delta_{X_v}^T&=(\mu-\dfrac{a_\mu}{a_v}v\lambda)\delta_{X_v}^T+\dfrac{a_\mu}{a_v}v\lambda\,\delta_{X_v}^T\\
&= \overline{\mu}(v) \delta_{X_v}^T+
\dfrac{a_\mu}{a_v} \sum_{\substack{w=vr_\gamma \\  w {\lessdot}_P v }}\dfrac{q_v}{q_w}(\lambda\cdot\gamma^\vee)\,\delta_{X_w}^T,
\end{split}
\end{equation}
proving \eqref{equation: weighted chevalley formula I}.  

We now prove 
\eqref{equation: weighted chevalley formula Ia}.  By the non-weighted Chevalley formula \eqref{equation: non-weighted Chevalley formula}, we have
\begin{equation}
\begin{split}
(1 \otimes \tilde{\mu}) \gd_{Z_v}^H &= v \mu \gd_{Z_v}^H - \sum_{\substack{w= v r_{\gamma}   \\ w {\lessdot}_P v}}(\mu\cdot\gamma^\vee)\delta_{Z_w}^H \\
& = \left(v\mu-\dfrac{a_{v\mu}}{a_v}v\lambda\right) \gd_{Z_v}^H + \dfrac{a_{v\mu}}{a_v}v\lambda \, \gd_{Z_v}^H - \sum_{\substack{w= v r_{\gamma}   \\ w {\lessdot}_P v}}(\mu\cdot\gamma^\vee)\delta_{Z_w}^H.
\end{split}
\end{equation}
Applying Lemma \ref{lemma: lambdamultiply} to the middle term and simplifying, we obtain
$$
(1 \otimes \tilde{\mu}) \gd_{Z_v}^H = \overline{(v \mu)}(v)  \gd_{Z_v}^H + \sum_{\substack{w= v r_{\gamma} }}\left((\dfrac{a_{v\mu}}{a_v}\lambda - \mu )\cdot \gamma^\vee\right) \delta_{Z_w}^H.
$$
Equation \eqref{equation: weighted chevalley formula Ia} follows from this formula by again using $\pi^\ast\delta_{X_w}^T = q_w \delta_{Z_w}^H$ (and similarly with $v$ in place of $w$).
\end{proof}

Note that if $\mu = \lambda$, then both sides of \eqref{equation: weighted chevalley formula Ia} are zero.

The next proposition gives a formula for the product of a weighted
Schubert divisor with a weighted Schubert class.  In this formula and elsewhere, we can take
$\omega_{\ga}$ to be any element of $\H^*$ satisfying 
\begin{equation} \label{e:omega} 
\omega_{\ga} \cdot \beta^{\vee} = \delta_{\ga, \gb}
\end{equation}
for positive simple roots $\ga, \gb$.  This condition does not uniquely determine $\omega_{\ga}$ since
$G$ is not semisimple.  For example, if $G = \C^* \times G_1$, 
then $\omega_{\ga}$ could be any element of the form $x_0 + \omega_{1,\alpha}$, where
$\omega_{1, \alpha}$ is defined in terms of $G_1$, and $x_0$ is a character of the
first factor $\C^*$.  (This occurs in the examples in
Sections \ref{ss:Grassmannian} and \ref{ss:Lagrangian}.)  However,
if $\omega_{\ga}'$ were another
weight satisfying \eqref{e:omega}, then $\omega_{\ga} - \omega_{\ga}'$ is $W$-invariant, so
$w_0 \omega_{\ga} - v \omega_{\ga} = w_0 \omega'_{\ga} - v \omega'_{\ga}$.  Thus, the formula 
of Proposition \ref{prop:weightedChevalley} is unchanged.

\begin{proposition} \label{prop:weightedChevalley}
Let \(\alpha\) be a simple root with $r_{\alpha} \not\in W_P$, and let \(u_\alpha=w_0r_\alpha\in W^P\).
For every \(v\in W^P\), we have
\begin{equation}\label{equation: weighted chevalley formula II}
\delta_{X_{u_\alpha}}^T\,\delta_{X_v}^T=
q_{u_\alpha} \overline{(w_0\omega_\alpha-v\omega_\alpha)}(v) \delta_{X_v}^T+
\sum_{\substack{w= v r_{\gamma}   \\ w {\lessdot}_P v}} \dfrac{q_{u_\alpha}q_v}{q_w}\left((\dfrac{a_{w_0\omega_\alpha-v\omega_\alpha}}{a_v} \lambda
+ \omega_\alpha)\cdot \gamma^\vee\right)\,\delta_{X_w}^T,
\end{equation}
where the sum is over positive roots $\gg$ such that $v r_{\gamma}  {\lessdot}_P v$.
\end{proposition}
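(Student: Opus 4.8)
The plan is to reduce the statement to Proposition \ref{proposition: weighted chevalley formula I} by first expressing the Schubert divisor class $\delta_{X_{u_\alpha}}^T$ in terms of the two module actions of $\H^\ast$ and $\H_\lambda^\ast$. The starting point is the identity, in $H^\ast_H(Z)$,
\begin{equation*}
\delta_{Z_{u_\alpha}}^H = (w_0\omega_\alpha)\cdot 1 - (1\otimes\widetilde{\omega_\alpha}), \tag{$\dagger$}
\end{equation*}
where $1 = \delta_{Z_{w_0}}^H$ is the unit. To prove $(\dagger)$ I would apply the non-weighted Chevalley formula \eqref{equation: non-weighted Chevalley formula}, pulled back to $Z$ along $\pi_0$ as in Lemma \ref{lemma: lambdamultiply}, with $\mu = \omega_\alpha$ and $v = w_0$. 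The essential combinatorial input is that the covering relations $w\lessdot_P w_0$ are indexed precisely by the simple roots $\gamma$ with $r_\gamma\notin W_P$: indeed $\ell(w_0 r_\gamma) = \ell(w_0)-\ell(r_\gamma)$ forces $\gamma$ simple, and then $w = w_0 r_\gamma = u_\gamma$. The normalization $\omega_\alpha\cdot\gamma^\vee = \delta_{\alpha\gamma}$ of \eqref{e:omega} then collapses the Chevalley sum to the single term $\delta_{Z_{u_\alpha}}^H$. Combined with the identification $c_1^H(G\times^P\bC_{\omega_\alpha}) = 1\otimes\widetilde{\omega_\alpha}$ coming from the Borel presentation (Proposition \ref{proposition: Borel} and the subsequent discussion), this rearranges to $(\dagger)$.

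Next I would transfer $(\dagger)$ to the weighted classes. Since $\pi^\ast$ is a ring isomorphism with $\pi^\ast\delta_{X_w}^T = q_w\delta_{Z_w}^H$ (Corollary \ref{corollary: pullback}), and the module actions of $\H^\ast$ and $\H_\lambda^\ast$ commute with ring multiplication, multiplying $(\dagger)$ by $\delta_{Z_v}^H$ and rescaling yields
\[
\delta_{X_{u_\alpha}}^T\,\delta_{X_v}^T = q_{u_\alpha}\bigl[(w_0\omega_\alpha)\,\delta_{X_v}^T - (1\otimes\widetilde{\omega_\alpha})\,\delta_{X_v}^T\bigr].
\]
The two terms on the right are now exactly the left-hand sides of the two formulas in Proposition \ref{proposition: weighted chevalley formula I}: I would apply \eqref{equation: weighted chevalley formula I} with $\mu = w_0\omega_\alpha$ to the first term and \eqref{equation: weighted chevalley formula Ia} with $\mu = \omega_\alpha$ to the second.

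Subtracting the two expansions, the coefficient of $\delta_{X_v}^T$ becomes $\overline{(w_0\omega_\alpha)}(v) - \overline{(v\omega_\alpha)}(v) = \overline{(w_0\omega_\alpha - v\omega_\alpha)}(v)$ by linearity of $\mu\mapsto\overline\mu(v)$ (Lemma \ref{lemma: linear}), while the coefficient of each $\delta_{X_w}^T$ in the sum becomes $\frac{q_v}{q_w}\bigl(\frac{a_{w_0\omega_\alpha}-a_{v\omega_\alpha}}{a_v}\lambda + \omega_\alpha\bigr)\cdot\gamma^\vee$; using linearity of $\mu\mapsto a_\mu$ to write $a_{w_0\omega_\alpha}-a_{v\omega_\alpha} = a_{w_0\omega_\alpha - v\omega_\alpha}$ and multiplying through by $q_{u_\alpha}$ gives exactly \eqref{equation: weighted chevalley formula II}.

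The only real work is in establishing $(\dagger)$, i.e.\ in identifying the Schubert divisor class; once that is in hand the remainder is a bookkeeping computation assembled entirely from results already proved. The one point requiring a word of care is the non-uniqueness of $\omega_\alpha$: as noted before the statement of the proposition, the difference of two valid choices is $W$-invariant, hence lies in $S(\H^\ast)^W$ and passes freely across the tensor product in \eqref{equation: weighted Borel presentation}, so $(\dagger)$ — and therefore the final formula — is independent of the choice.
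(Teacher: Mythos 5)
Your proposal is correct and follows essentially the same route as the paper: both rest on the identity $\delta_{Z_{u_\alpha}}^H=(w_0\omega_\alpha\otimes 1)-(1\otimes\tilde\omega_\alpha)$, multiply by $\delta_{Z_v}^H$, and expand the two terms via the two formulas of Proposition \ref{proposition: weighted chevalley formula I} before collecting coefficients by linearity of $\mu\mapsto\overline{\mu}(v)$ and $\mu\mapsto a_\mu$. The only (harmless) difference is that you re-derive the divisor-class identity from the non-weighted Chevalley formula at $v=w_0$, whereas the paper simply cites it from \cite{AndersonFulton2024}; your derivation, including the identification of the covers $w\lessdot_P w_0$ with the simple roots $\gamma$ satisfying $r_\gamma\notin W_P$, is correct.
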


\begin{proof}
Since $\delta_{Z_{u_\alpha}}^H = \pi_{0,H}^* \delta_{Y_{u_\alpha}}^{H}$, applying $\pi_{0,H}^*$ to the formula
of \cite[\S16 Lemma 2.6]{AndersonFulton2024} yields
$\delta_{Z_{u_\alpha}}^H=(w_0\omega_\alpha\otimes 1)-(1\otimes\tilde\omega_\alpha)$,
where \(\tilde\omega_\alpha\) is the restriction to \(\H_{\lambda}^\ast\) of $\omega_{\alpha}$.  Note that the right hand side 
of this formula does not depend on the choice of $\omega_{\alpha}$ satisfying \eqref{e:omega}.
Therefore, since $\pi^*\delta_{X_{u_\alpha}}^T = q_{u_\alpha} \delta_{Z_{u_\alpha}}^H$, we have 
$$
\delta_{X_{u_\alpha}}^T=  q_{u_\ga } ( (w_0\omega_\alpha\otimes 1)-(1\otimes\tilde\omega_\alpha) ).
$$
Hence
\begin{equation}
\begin{split}
\delta_{X_{u_\alpha}}^T\,\delta_{X_v}^T &= q_{u_\ga } ( (w_0\omega_\alpha\otimes 1)-(1\otimes\tilde\omega_\alpha) ) \delta_{X_v}^T\\
& = q_{u_\ga } \overline{w_0 \omega_\alpha}(v) \delta_{X_v}^T + \dfrac{a_{w_0\omega_\alpha}}{a_v} 
\sum_{\substack{w= v r_{\gamma}   \\ w {\lessdot}_P v}} \frac{q_{u_\ga } q_v}{q_w} (\lambda \cdot \gamma^{\vee}) \gd_{X_w}^T \\
& \hspace{.2in} - q_{u_\ga } \overline{v \omega_\alpha}(v)  \delta_{X_v}^T
- \sum_{\substack{w= v r_{\gamma}   \\ w {\lessdot}_P v}} \frac{q_{u_\ga } q_v}{q_w} ( \dfrac{a_{v \omega_\alpha}}{a_v} \lambda - \omega_\alpha) \cdot \gamma^{\vee} ) \gd_{X_w}^T.
\end{split}
\end{equation}
This implies \eqref{equation: weighted chevalley formula II}. 
\end{proof}

\begin{remark} \label{r:Chev-pos}
In the special case $u = u_{\alpha}$, Proposition \ref{prop:weightedChevalley}
implies directly that
the $c_{uv}^w$ are nonnegative at $v$.  Equation \eqref{equation: weighted chevalley formula II} 
implies that the structure constants $c_{u_{\alpha}v}^w$ can be nonzero only for 
$w = v r_{\gamma}  {\lessdot}_P v$ (in which case the structure constant is a scalar), or for $w = v$. 
The nonnegativity of the scalar $c_{u_{\alpha}v}^{v r_{\gamma}}$ can be deduced as follows.  We have
$w_0\omega_\alpha-v\omega_\alpha = \sum b_i \beta_i$, where the $\beta_i$ are the negative
simple roots and $b_i \geq 0$.  Hence, since $\chi$ is antidominant,
$a_{w_0\omega_\alpha-v\omega_\alpha} \geq 0$.  Also, $a_v >0$ by hypothesis, and
since $\gamma^{\vee}$ is a positive coroot, both $\lambda \cdot \gamma^{\vee}$ and $\omega_{\ga} \cdot \gamma^{\vee}$ are
non-negative.  These observations and \eqref{equation: weighted chevalley formula II} imply that $c_{u_{\alpha}v}^{v r_{\gamma}} \geq 0$.
The $c_{u_{\alpha} v}^v$ are nonnegative at $v$, since $\overline{(w_0\omega_\alpha-v\omega_\alpha)}(v) = \sum b_i \overline{\beta_i}(v)$.
\end{remark}

Sections \ref{ss:Grassmannian} (on the weighted Grassmannian, considered by \cite{AbeMatsumura2015}) 
and \ref{ss:Lagrangian} consider examples of weighted flag
varieties for which $\lambda = \omega_{\alpha}$ (that is, $\lambda$ satisfies the condition \eqref{e:omega}).
In this case,
multiplication by a weighted Schubert divisor can be put in the following form.   

\begin{corollary} \label{c:cominuscule}
Keep the hypotheses of Proposition \ref{prop:weightedChevalley}, and assume $\lambda = \omega_{\ga}$.
Then
$$
\delta_{X_{u_\alpha}}^T\,\delta_{X_v}^T  =
q_{u_{\ga}} \overline{w_0 \omega_{\alpha}}(v) \delta_{X_v}^T+
\sum_{\substack{w= v r_{\gamma}   \\ w {\lessdot}_P v}} \dfrac{q_{u_\alpha}q_v}{q_w} \dfrac{a_{w_0}}{a_v} (\omega_\alpha \cdot \gamma^\vee )\,\delta_{X_w}^T .
$$
\end{corollary}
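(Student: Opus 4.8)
The plan is to obtain Corollary \ref{c:cominuscule} by substituting the hypothesis $\lambda = \omega_\alpha$ directly into formula \eqref{equation: weighted chevalley formula II} of Proposition \ref{prop:weightedChevalley} and simplifying the two kinds of coefficients that appear. The only inputs needed beyond Proposition \ref{prop:weightedChevalley} itself are the linearity of the maps $\mu \mapsto a_\mu$ and $\mu \mapsto \overline{\mu}(v)$ established in Lemma \ref{lemma: linear}, together with the elementary identifications $a_{v\lambda} = a_v$ and $a_{w_0\lambda} = a_{w_0}$ that hold by the very definition $a_w := a_{w\lambda}$.

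First I would simplify the coefficient of $\delta_{X_v}^T$. Setting $\lambda = \omega_\alpha$ gives $v\lambda = v\omega_\alpha$, hence $a_{v\omega_\alpha} = a_{v\lambda} = a_v$. From the definition $\overline{\mu}(v) = \mu - \tfrac{a_\mu}{a_v}\, v\lambda$ in \eqref{e.weighted1} I then compute $\overline{v\omega_\alpha}(v) = v\omega_\alpha - \tfrac{a_v}{a_v}\, v\omega_\alpha = 0$. By linearity of $\mu \mapsto \overline{\mu}(v)$ this yields $\overline{(w_0\omega_\alpha - v\omega_\alpha)}(v) = \overline{w_0\omega_\alpha}(v)$, which is exactly the coefficient of $\delta_{X_v}^T$ appearing in the corollary.

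Next I would simplify the scalar multiplying $\delta_{X_w}^T$ in the sum. By linearity of $\mu \mapsto a_\mu$ and the identifications $a_{w_0\omega_\alpha} = a_{w_0}$ and $a_{v\omega_\alpha} = a_v$, one has $a_{w_0\omega_\alpha - v\omega_\alpha} = a_{w_0} - a_v$. Substituting $\lambda = \omega_\alpha$ into the bracketed expression of \eqref{equation: weighted chevalley formula II} then gives
$$
\left( \frac{a_{w_0} - a_v}{a_v}\,\omega_\alpha + \omega_\alpha \right)\cdot \gamma^\vee = \frac{a_{w_0}}{a_v}\,(\omega_\alpha \cdot \gamma^\vee),
$$
since the two $\omega_\alpha$-contributions combine into the single multiple $\tfrac{a_{w_0}}{a_v}\,\omega_\alpha$. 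Assembling the two simplified coefficients reproduces the stated formula.

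The computation is entirely routine, and I do not expect any genuine obstacle beyond bookkeeping. The one point to record carefully is the identity $a_{v\omega_\alpha} = a_v$ (and likewise $a_{w_0\omega_\alpha} = a_{w_0}$), which is the precise reflection of the hypothesis $\lambda = \omega_\alpha$: it is what forces the $\overline{v\omega_\alpha}(v)$-term to vanish and what lets the separate $\lambda$- and $\omega_\alpha$-contributions in the sum collapse into a single multiple of $\omega_\alpha$.
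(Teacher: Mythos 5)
Your proposal is correct and follows essentially the same route as the paper: the paper's own proof makes exactly the two observations you do, namely $\overline{(w_0\omega_\alpha-v\omega_\alpha)}(v)=\overline{w_0\omega_\alpha}(v)$ (since $\overline{v\lambda}(v)=0$) and $\tfrac{a_{w_0\omega_\alpha-v\omega_\alpha}}{a_v}=\tfrac{a_{w_0}}{a_v}-1$, and then substitutes into the Chevalley formula. Your bookkeeping of $a_{v\omega_\alpha}=a_v$ and the collapse of the two $\omega_\alpha$-terms into $\tfrac{a_{w_0}}{a_v}\omega_\alpha$ matches the paper exactly.
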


\begin{proof}
Our hypothesis on $\lambda$ implies that
$$
\overline{(w_0\omega_\alpha-v\omega_\alpha)}(v) = \overline{(w_0 \lambda - v \lambda)}(v) = \overline{w_0 \lambda}(v) = \overline{w_0\omega_\alpha}(v).
$$
Also, $\lambda \cdot \gamma^{\vee} = \omega_{\alpha} \cdot \gamma^{\vee}$, and
$$
\dfrac{a_{w_0 \omega_{\ga} - v \omega_{\ga}}}{a_v} = \dfrac{a_{w_0 \lambda - v \lambda}}{a_v} = \dfrac{a_{w_0} - a_v}{a_v} =  \dfrac{a_{w_0}}{a_v} - 1.
$$
The corollary follows from these observations
and Proposition \ref{proposition: weighted chevalley formula I}.  
\end{proof}

In the case of the weighted Grassmannian, this corollary appears as \cite[Prop.~5.2]{AbeMatsumura2015} (although some translation is required
to convert the formula from that paper into the formula of the above corollary).  Note that the Chevalley formula given for type $A$ in
\cite[Theorem 5.1]{AzamNazirQureshi2020} is not correct (it does not specialize to the formula of \cite{AbeMatsumura2015}).

\begin{remark} \label{r:cominusculew0}
In \cite{AbeMatsumura2015}, the parameters used are $\overline{\beta}(w_0)$ for negative simple roots $\beta$.  In terms of these
parameters, the formula of Corollary \ref{c:cominuscule} can be rewritten as follows.  
Define nonnegative integers
$e_i$ by the equation $w_0 \lambda - v \lambda = \sum_i e_i \beta_i$.
For simplicity write $\overline{\beta}_i = \overline{\beta}_i(w_0)$.  We have $\lambda = \omega_{\alpha}$, and Lemma \ref{lemma: positive} implies that 
$\overline{w_0 \lambda}(v) = \frac{a_{w_0}}{a_v} \sum e_i \overline{\beta}_i$.
Hence
$$
\delta_{X_{u_\alpha}}^T\,\delta_{X_v}^T  = q_{u_{\ga}} \frac{a_{w_0}}{a_v} \Big( (\sum_i e_i \overline{\beta}_i) \delta_{X_v}^T+ \sum_{\substack{w= v r_{\gamma}   \\ w {\lessdot}_P v}} \dfrac{q_v}{q_w} (\omega_\alpha \cdot \gamma^\vee )\,\delta_{X_w}^T \Big).
$$
\end{remark}

\subsection{Positivity}
In this section we prove our general positivity theorem.
We first record a lemma about the (non-weighted) equivariant structure
constants.   The first statement follows from the type of support argument in \cite[Section 19.1]{AndersonFulton2024}), and it
seems likely that the second statement is known as well, but for lack of reference, 
we provide a proof.

\begin{lemma} \label{l:nonweighted}
Suppose $\delta_{Y_u}^{T_0}\delta_{Y_v}^{T_0}=\sum_{w\in W^P}b_{uv}^w\delta_{Y_w}^{T_0}$, with
$b_{uv}^w \in H^*_{T_0}$.  If $b_{uv}^w \neq 0$, then $w \leq u$ and $w \leq v$.  Conversely,
if $w$ is a maximal element among the elements in $W^P$ that are less than or equal to both $u$ and $v$,
then $b_{uv}^w \neq 0$.
\end{lemma}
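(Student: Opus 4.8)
The plan is to prove the converse (nonvanishing) statement, since the first statement is the triangularity/support argument already cited from \cite[Section 19.1]{AndersonFulton2024}. The whole argument runs through restriction to the $T_0$-fixed points, $i_x^\ast\colon H^\ast_{T_0}(Y)\to H^\ast_{T_0}(xP)=H^\ast_{T_0}$. I would use three standard facts, in the convention $Y_w=\overline{BwP}$ (so that $xP\in Y_w$ iff $x\le w$): first, $i_x^\ast(\delta_{Y_w}^{T_0})\neq 0$ implies $x\le w$, which is exactly the support statement already invoked for the first part of the lemma; second, $i_x^\ast(\delta_{Y_w}^{T_0})\neq 0$ whenever $x\le w$, the nonvanishing of the restriction at a fixed point lying in the Schubert variety; and third, in particular $i_w^\ast(\delta_{Y_w}^{T_0})\neq 0$. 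The second and third facts I would justify by the Billey/Anderson--Jantzen--Soergel restriction formula recalled in Remark \ref{r:Schubert-fixed}.

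Next I would fix $w$ maximal among $\{x\in W^P: x\le u \text{ and } x\le v\}$ and apply the ring homomorphism $i_w^\ast$ to the given expansion, obtaining
\[
i_w^\ast(\delta_{Y_u}^{T_0})\, i_w^\ast(\delta_{Y_v}^{T_0})=\sum_{x\in W^P} b_{uv}^x\, i_w^\ast(\delta_{Y_x}^{T_0}).
\]
On the right, $i_w^\ast(\delta_{Y_x}^{T_0})=0$ unless $w\le x$, so only $x\ge w$ contribute. For such $x$ with $b_{uv}^x\neq 0$, the first (support) part of the lemma gives $x\le u$ and $x\le v$; together with $x\ge w$ this places $x$ in the set $\{y:y\le u,\ y\le v\}$ while dominating $w$, so maximality of $w$ forces $x=w$. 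Thus the right-hand side collapses to the single diagonal term $b_{uv}^w\, i_w^\ast(\delta_{Y_w}^{T_0})$.

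Finally I would observe that the left-hand side is nonzero: since $w\le u$ and $w\le v$, the nonvanishing fact gives $i_w^\ast(\delta_{Y_u}^{T_0})\neq 0$ and $i_w^\ast(\delta_{Y_v}^{T_0})\neq 0$, and because $H^\ast_{T_0}=S(\T_0^\ast)$ is a polynomial ring, hence an integral domain, their product is nonzero. Therefore $b_{uv}^w\, i_w^\ast(\delta_{Y_w}^{T_0})\neq 0$, and since $i_w^\ast(\delta_{Y_w}^{T_0})\neq 0$ I conclude $b_{uv}^w\neq 0$, as desired.

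The main point requiring care is the nonvanishing input, especially $i_w^\ast(\delta_{Y_u}^{T_0})\neq 0$ for $w\le u$: this is where I would appeal to the Billey/AJS formula (Remark \ref{r:Schubert-fixed}), which presents the restriction as a sum over subwords of monomials in the roots that is nonzero precisely when $w\le u$. I would also verify that the Bruhat-order conventions used here ($Y_w=\overline{BwP}$, with $xP\in Y_w\iff x\le w$) agree with those of the cited support argument, so that the two directions of triangularity are compatible; once that convention is pinned down, the collapse to the diagonal term and the integral-domain conclusion are routine.
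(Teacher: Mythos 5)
Your proposal is correct and follows essentially the same route as the paper: restrict the expansion at the fixed point $i_w^\ast$, use the Anderson--Jantzen--Soergel/Billey nonvanishing of $i_x^\ast(\delta_{Y_y}^{T_0})$ precisely when $x\le y$, and invoke maximality of $w$ together with the support statement to collapse the sum to the single term $b_{uv}^w\, i_w^\ast(\delta_{Y_w}^{T_0})$. The only cosmetic difference is that you make explicit the integral-domain step showing the left-hand side is nonzero, which the paper leaves implicit.
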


\begin{proof}
Let $x \in W^P$.  Then $i_x^* \gd^{T_0}_{Y_w}$ is nonzero if and only if $x \leq w$.  Indeed, if $x \not\leq w$, then $x P_0 \not\in Y_w$
so $i_x^* \gd^{T_0}_{Y_w} = 0$; conversely, if $x \leq w$, then $x P_0 \in Y_w$ and there is an explicit formula for the pullback 
$i_x^* \gd^{T_0}_{Y_w}$ showing that it is nonzero. (The formula is due to Anderson-Jantzen-Soergel \cite{AndersenJantzenSoergel1994} and Billey
\cite{Billey99}; for a discussion of conventions see \cite[Appendix B]{GrahamKreiman2015}.)  Hence, $i_x^* (\delta_{Y_u}^{T_0}\delta_{Y_v}^{T_0})$
is nonzero if and only if $x \leq u$ and $x \leq v$.  

Suppose that $b_{uv}^w \neq 0$.  We wish to show that $w \leq u$ and $w \leq v$.  We may assume that
$w \in W^P$ is a maximal element among the $y \in W^P$ with $b_{uv}^y \neq 0$.  In this case,
$$
i_w^* ( \delta_{Y_u}^{T_0}\delta_{Y_v}^{T_0} ) = i_w^* (\sum_{y\in W^P}b_{uv}^y\delta_{Y_w}^{T_0}) = b_{uv}^w i_w^* \delta_{Y_w}^{T_0}.
$$
Since the right hand side is nonzero, so is the left hand side, so $w \leq u$ and $w \leq v$.  

Conversely,   
if $w$ is maximal among the elements in $W^P$ that are less than or equal to both $u$ and $v$, 
then 
$$
i_w^* (\sum_{y\in W^P}b_{uv}^y\delta_{Y_w}^{T_0}) = b_{uv}^w i_w^* \delta_{Y_w}^{T_0}  = i_w^* (\delta_{Y_u}^{T_0}\delta_{Y_v}^{T_0}) \neq 0,
$$
where the first equality follows from the preceding paragraph.
We conclude that $b_{uv}^w \neq 0$.
\end{proof}

\begin{proof}[Proof of Theorem~\ref{theorem: main theorem}]
By the discussion in Section \ref{ss.structure}, it suffices to consider structure constants in terms of
the elements $\delta_{Z_u}^{H}$ and to work with the $\overline{\beta}_i(x)$ since these are positive
multiples of the weighted roots $\beta_i(x)$.  Therefore, it suffices to show that
\begin{equation} \label{e:Z-constants}
\delta_{Z_u}^{H}\delta_{Z_v}^{H}=\sum_{w\in W^P}d_{uv}^w\delta_{Z_w}^{H},
\end{equation}
such that each \(d_{uv}^w\) is a nonnegative linear combination of  $\overline{\nu}_1(x_1) \cdots \overline{\nu}_k(x_k)$,
where the $\nu_i$ are distinct negative roots, and $x_i \in \cS(u,v;w)$.  

By positivity for the non-weighted structure constants, we can write
\begin{equation}
\delta_{Z_u}^{H}\delta_{Z_v}^{H}=\sum_{w\in W^P} \sum_{I}b(u,v,w;I)\beta_I \delta_{Z_w}^{H},
\end{equation}
where \(I\) is a set of negative roots (not necessarily simple), \(\beta_I\) is the product of the roots in $I$ (in particular,
square-free), and \(b(u,v,w;I)\) is a nonnegative integer.  See \cite{Graham2001} and
\cite[Cor.~19.4.7]{AndersonFulton2024}.
By Lemma \ref{l:nonweighted}, the only nonzero terms in the sum occur for $w $ such that $w \leq u$ and $w \leq v$.

To prove the theorem, it suffices to prove the following assertion: For all $w \in W^P$, we have 
$\beta_I \delta_{Z_w}^{H} = \sum_{y \leq_P w} f_{y,I} \delta_{Z_y}^{H}$, where
each $f_{y,I}$ is a nonnegative linear combination of monomials
$\overline{\nu}_1(x_1)  \cdots \overline{\nu}_k(x_k)$, where the $\nu_1, \ldots, \nu_k$ are
distinct elements of $I$, and $x_i \in \cS(u,v;y)$.  We prove the assertion by induction on $| I |$.
If $| I | = 1$, then $\beta_I = \beta$ for some negative root $\beta$.  Since $\beta = \overline{\beta}(w) + \frac{a_{\beta}}{a_w} w \lambda$,
applying Lemma \ref{lemma: lambdamultiply} yields
$$
\beta \delta_{Z_w}^{H} = \overline{\beta}(w) \delta_{Z_w}^{H} + \frac{a_{\beta}}{a_w} \sum_{\substack{y= w r_{\gamma}   \\ y {\lessdot}_P w}}(\lambda\cdot\gamma^\vee)\,\delta_{Z_y}^H.
$$
which is of the desired form.  Suppose now that $I = I' \sqcup \{ \beta \}$ (disjoint union), and suppose that the assertion holds for $I'$.  
It suffices to show that
\begin{equation} \label{e:f-equation}
f_{y,I} = \overline{\beta}(y) f_{y, I'} + \sum_{\substack{z = y r_{\gamma}   \\ z {\gtrdot}_P y}}  \frac{a_{\gb}}{a_z}  (\lambda \cdot \gamma^{\vee}) f_{z, I'}.
\end{equation}
The reason is that since $\beta \not\in I'$, each term in the product $\overline{\beta}(y) f_{y, I'}$ is of the desired form, and
for $z$ in the sum, $\cS(u,v;z) \subset \cS(u,v;y)$.
We have
\begin{align*}
\gb_I \delta_{Z_w}^{H} & = \sum_y f_{y, I'}  \beta \, \delta_{Z_y}^H = \sum_y f_{y, I'} \Big( \overline{\beta}(y) \delta_{Z_y}^H + \frac{a_{\gb}}{a_y} 
\sum_{\substack{z= y r_{\gamma}   \\ z {\lessdot}_P y}}  (\lambda \cdot \gamma^{\vee}) \delta_{Z_z}^H \Big) \\
& = \sum_y f_{y, I'} \overline{\beta}(y) \delta_{Z_y}^H + \sum_y  \frac{a_{\gb}}{a_y} \sum_{\substack{z= y r_{\gamma}   \\ z {\lessdot}_P y}}  (\lambda \cdot \gamma^{\vee})  f_{y, I'}  \delta_{Z_z}^H.
\end{align*}
If we reindex the second sum by switching the roles of $y$ and $z$, we find
\begin{align*}
\gb_I \delta_{Z_w}^{H} & = \sum_y \overline{\beta}(y) f_{y, I'} \delta_{Z_y}^H + \sum_z  \frac{a_{\gb}}{a_z} \sum_{\substack{y= z r_{\gamma}   \\ y {\lessdot}_P z}}  (\lambda \cdot \gamma^{\vee})  f_{z, I'}  \delta_{Z_y}^H \\
&= \sum_y \Big( \overline{\beta}(y) f_{y, I'} + \sum_{\substack{z= y r_{\gamma}   \\ z {\gtrdot}_P y}}  \frac{a_{\gb}}{a_z}  (\lambda \cdot \gamma^{\vee})  f_{z, I'} \Big) \delta_{Z_y}^H.
\end{align*}
This proves \eqref{e:f-equation}; Theorem \ref{theorem: main theorem} follows.
\end{proof}

\begin{remark} \label{r:non-squarefree}
Example \ref{e:nonexpress} shows that for a fixed $x$, it may not be possible to obtain a square-free expansion of the $d_{uv}^w$ in terms of square-free monomials
in negative roots at $x$.  For example, this is the case for the coefficient of $\gd_{Z_2}^H$ in $(\gd_{Z_2}^H)^2$, when expanded at $v_3$.  The corresponding
non-weighted
expansion is square-free because $\gb_2+ \gb_3$ is a root.  However, the weighted expansion at $v_3$ is not square-free:
$ \frac{a_4}{a_2}  \overline{\gb}_2(v_3) +  \overline{\gb}_3(v_3) $ is not a multiple of a weighted root, so the expansion of the coefficient in terms of monomials
needs to be $ \frac{a_3 a_4}{a_1 a_2}  \overline{\gb}_2(v_3)^2+   \frac{a_4}{a_2}  \overline{\gb}_2(v_3)  \overline{\gb}_3(v_3) $.
\end{remark}

\begin{remark} \label{r:weighted-occur}
Example \ref{e:nonexpress} shows that $d_{uv}^w$ need not be nonnegative at $w$.
 The reason can be seen in the last displayed line of the proof: $f_{y,I}'$ contributes to the coefficient of $\delta_{Z_z}^H$, but $z >y$, so the weighted negative
 roots at $z$ are not
generally nonnegative linear combinations of weighted roots at $y$.   
\end{remark}

\begin{remark} \label{r:antidominant}
As noted in the introduction, Abe and Matsumura \cite[Remark 5.8]{AbeMatsumura2015} observed that $\chi$ is antidominant in the positivity statement can be removed by
using a different Schubert basis.  Indeed, if $\chi$ is fixed, one
can realize the variety $Z$ with respect to a choice of Borel subgroup $B' = w B w^{-1}$ (for $w \in W$)
with respect to which 
$\chi$ is antidominant.  The groups $P$ and $Q$ would be replaced by $P' = w P w^{-1}$ and $Q' = w Q w^{-1}$
and the character $\lambda$ by $\lambda' = w \lambda$; if $\lambda$ is dominant with respect to $B$, then 
$\lambda'$ is dominant with respect to $B'$.  Then $Z = G/Q \isom G/Q'$, and the positivity theorem applies with respect to the
realization of $Z$ as $G/Q' = G \times^{P'} \bC_{\lambda'}$.
\end{remark}

As noted in the introduction, the method of proof of Theorem \ref{theorem: main theorem} yields the following positivity result
about classes of subvarieties in weighted flag varieties.  In this result, in contrast to Theorem \ref{theorem: main theorem}, 
the positivity is in terms of the positive weighted roots, and we need to assume that $\chi$ is dominant.  The assumption that the $a_w$ are positive remains in force.

\begin{theorem}\label{t:expansion}
Suppose $M$ is a $T$-invariant subvariety of the weighted flag variety $X$.  Then
$$
\gd^T_M = \sum c_w \gd^T_{X_w},
$$
where each $c_w$ is a nonnegative linear combination of products of the form
$\mu_1(x_1) \cdots \mu_k(x_k)$.  Here the $\mu_i$ are distinct positive
roots, and $x_i \in W^P$.
\end{theorem}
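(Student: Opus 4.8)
The plan is to mimic the proof of Theorem~\ref{theorem: main theorem}, transporting everything to $H^*_H(Z)$ and then playing the roles of negative roots and antidominant $\chi$ with positive roots and dominant $\chi$ instead. First I would lift $M$ to the $H$-invariant subvariety $\tilde M = \pi^{-1}(M)\subset Z$, which is irreducible of dimension $\dim M + 1$ since the fibers of $\pi$ are the one-dimensional $S$-orbits, and to the $T_0$-invariant subvariety $M_0 = S_0\backslash\tilde M = \pi_0(\tilde M)\subset Y$. By Proposition~\ref{prop:Poincare}(2), $\pi^*(\gd^T_M) = \tfrac{e_{\tilde M}}{e_Z}\gd^H_{\tilde M}$, while $\pi_0^*(\gd^{T_0}_{M_0}) = \gd^H_{\tilde M}$ because $S_0$ acts with constant stabilizers (Corollary~\ref{corollary: pullback}). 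Since $\pi^*$ is a ring isomorphism sending $\gd^T_{X_w}$ to $q_w\gd^H_{Z_w}$ with $q_w > 0$, and since the weighted roots $\mu(x)$ are positive scalar multiples of the $\overline\mu(x)$, it suffices to expand $\gd^H_{\tilde M} = \sum_w d_w\gd^H_{Z_w}$ and show that each $d_w$ is a nonnegative combination of products $\overline\mu_1(x_1)\cdots\overline\mu_k(x_k)$ of distinct positive weighted roots with $x_i\in W^P$; all scalars relating $c_w$ to $d_w$ are positive.

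Next I would invoke the non-weighted positivity for subvarieties in its square-free form (\cite[Theorem 3.2]{Graham2001}, \cite[Theorem 19.3.1]{AndersonFulton2024}) to write $\gd^{T_0}_{M_0} = \sum_w\sum_I b(w,I)\,\gb_I\,\gd^{T_0}_{Y_w}$, where each $I$ is a set of distinct positive roots, $\gb_I$ is the square-free product of the roots in $I$, and $b(w,I)$ is a nonnegative integer. Applying $\pi_0^*$, which carries $\gd^{T_0}_{Y_w}$ to $\gd^H_{Z_w}$ and acts on coefficients through the inclusion $S(\T_0^*)\hookrightarrow S(\H^*)$ (so positive roots remain positive roots), yields $\gd^H_{\tilde M} = \sum_w\sum_I b(w,I)\,\gb_I\,\gd^H_{Z_w}$. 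It therefore suffices to show, for each $w\in W^P$ and each square-free $\gb_I$, that $\gb_I\,\gd^H_{Z_w}$ expands as a nonnegative combination of products of distinct positive weighted roots times Schubert classes $\gd^H_{Z_y}$.

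Finally I would prove this assertion by induction on $|I|$, running exactly the computation from the proof of Theorem~\ref{theorem: main theorem} but with positive roots. For the base case $I=\{\gb\}$ I would write $\gb = \overline\gb(w) + \tfrac{a_\gb}{a_w}w\lambda$ and apply Lemma~\ref{lemma: lambdamultiply}, giving
\begin{equation*}
\gb\,\gd^H_{Z_w} = \overline\gb(w)\,\gd^H_{Z_w} + \frac{a_\gb}{a_w}\sum_{\substack{y = w r_{\gg}\\ y\,{\lessdot}_P\,w}}(\lambda\cdot\gg^\vee)\,\gd^H_{Z_y};
\end{equation*}
here $\tfrac{a_\gb}{a_w}\geq 0$ because $\chi$ is dominant (so $a_\gb\geq 0$ for positive $\gb$) and $a_w>0$, while $\lambda\cdot\gg^\vee\geq 0$ because $\lambda$ is dominant, so all coefficients are of the required form. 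For the inductive step, writing $I = I'\sqcup\{\gb\}$ and using the same reindexing (switching the roles of $y$ and $z$) as in Theorem~\ref{theorem: main theorem} produces $f_{y,I} = \overline\gb(y) f_{y,I'} + \sum_{z = y r_\gg,\ z\,{\gtrdot}_P\,y}\tfrac{a_\gb}{a_z}(\lambda\cdot\gg^\vee) f_{z,I'}$, where $\gb\notin I'$ guarantees that $\overline\gb(y)$ is distinct from the weighted roots already appearing in $f_{y,I'}$. Since no Bruhat-interval bookkeeping is needed here — the indices $x_i$ need only lie in $W^P$ — this is in fact simpler than the main theorem. The one point requiring care, and the main obstacle, is securing the square-free form of the non-weighted subvariety positivity (distinct positive roots in each $\gb_I$); this is precisely what forces the ``distinct positive roots'' in the conclusion, and I would extract it from the support/geometric arguments underlying the cited non-weighted results rather than from the product-of-Schubert-classes statement used for Theorem~\ref{theorem: main theorem}.
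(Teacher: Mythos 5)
Your proposal is correct and follows essentially the same route as the paper: reduce to $H^*_H(Z)$ via the pullback isomorphisms, invoke the square-free form of the non-weighted subvariety positivity, and then run the same induction on $|I|$ as in Theorem~\ref{theorem: main theorem} with positive roots, using dominance of $\chi$ (so $a_\gb\geq 0$) and $a_w>0$. The only cosmetic difference is that you route the non-weighted expansion through $M_0\subset Y$ and pull back by $\pi_0^*$, whereas the paper applies it directly to $\gd^H_{\tilde M}$ in $H^*_H(Z)$; these are equivalent, and you correctly flag the square-freeness of the cited non-weighted result as the point needing care, exactly as the paper does.
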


\begin{proof}
Let $\tilde{M} = \pi^{-1}(M)$.  As in the proof of Theorem~\ref{theorem: main theorem}, it suffices to show that
$$
\gd_{\tilde{M}}^H = \sum d_w \gd^H_{Z_w},
$$
where each $d_w$  is a  nonnegative linear combination of monomials of the form
as $\overline{\mu}_1(x_1) \cdots \overline{\mu}_k(x_k)$, where the $\mu_1, \ldots, \mu_k$ are
distinct positive roots and $x_i \in W^P$.

 By \cite[Theorem 3.2]{Graham2001} or \cite[Theorem 19.3.1]{AndersonFulton2024},
we can write
$$
\gd_{\tilde{M}}^H = \sum_{I} a(w;I) \alpha_I \delta_{Z_w}^{H},
$$
where $I$ is a set of positive roots, $\ga_I$ is the product of the elements of $I$, and $a(w,I)$ is a nonnegative integer.
(Square-freeness is not stated in either reference, but it follows from the proof of \cite[Theorem 3.2]{Graham2001}, 
cf.~the discussion preceding \cite[Cor.~19.4.7]{AndersonFulton2024}.)
It suffices to prove that for all $w \in W^P$, we have 
$\ga_I \delta_{Z_w}^{H} = \sum_{y \leq_P w} f_{y,I} \delta_{Z_y}^{H}$, where
each $f_{y,I}$ is a  nonnegative linear combination of monomials of the form
$\overline{\mu}_1(x_1)  \cdots \overline{\mu}_k(x_k)$, where the $\mu_1, \ldots, \mu_k$ are
distinct elements of $I$, and $x_i \in W^P$.
This is proved in the same way as the analogous assertion in the proof of Theorem~\ref{theorem: main theorem},
using the equation
$$
\alpha \delta_{Z_w}^{H} = \overline{\alpha}(w) \delta_{Z_w}^{H} + \frac{a_{\alpha}}{a_w} \sum_{\substack{y= w r_{\gamma}   \\ y {\lessdot}_P w}}(\lambda\cdot\gamma^\vee)\,\delta_{Z_y}^H.
$$
Since $\chi$ is dominant, each $a_{\ga} \geq 0$, and by hypothesis, each $a_w >0$.  The remainder of the proof is similar to the argument
given in Theorem~\ref{theorem: main theorem}; we omit further details.
\end{proof}

\section{Examples}\label{section: examples}

\subsection{Weighted projective space} \label{ss.WPS}
We describe from our perspective weighted projective space (cf.\ \cite{Kawasaki1973}, \cite[Example 5.11]{AbeMatsumura2015}), realized as a quotient
of the group $GL_{m+1}$.  Let \(G=GL_{m+1}\) and let \(H\) denote the Cartan subgroup of diagonal matrices.  Let $\{ y_0, \ldots, y_m \}$ be the standard
basis of $\fh$, and $\{ x_0, \ldots, x_m \}$ the dual basis of $\fh^*$.  
The weight lattice $\Lambda$ is isomorphic to $\Z^{m+1}$; we sometimes denote the element $\sum c_i x_i$ by $(c_0, \ldots, c_m)$.
The set of dominant weights is
$$
\Lambda^+=\set{\lambda = (c_0, \ldots, c_m) \in \Lambda \mid c_0 \geq\cdots\geq c_m} \}.
$$

Let \(\lambda= x_0 = (1,0,\ldots,0)\), and let \(V=\bC^{m+1}\) denote the standard representation of $G$, which has highest weight $\lambda$.
Observe that $\lambda = \omega_{\alpha}$, where $\alpha$ is the simple root $x_0-x_1$.
Identify the highest weight space of $V$ with $\C_{\gl}$, and let $P$ be the subgroup of $G$ preserving this space.
There is an isomorphism
\begin{equation}\label{equation: mu WPS}
\mu\,\colon Z = G \times^P \C^\times_{\gl} \to V^\times,\quad [g,v]\mapsto gv,
\end{equation}
where \(V^\times=V\smallsetminus\set{0}\).
In what follows, we identify \(Z\) with \(V^\times\) via \eqref{equation: mu WPS}.

We view the Weyl group $W \cong S_{m+1}$ as the set of permutations of the set $\set{0,\ldots,m} $.  Then $W_P$ is identified with the subgroup
$\{ 0 \} \times S_m$.  Define $v_k \in W$ to be the element given in $1$-line notation as \(v_k=(k, m, m-1, \ldots, k+1, ,k-1,\ldots, 0)\).  The
set of maximal coset representatives is
$W^P = \{ v_0, v_1, \ldots, v_m \}$, which we can identify with the set $\set{0,\ldots,m} $ via $k \mapsto v_k$.  The longest element of
$W$ is $w_0 = v_m$.
Let $\chi=(a_0,\ldots,a_m) $ be an antidominant cocharacter, so \(a_0\leq a_1\leq\cdots\leq a_m\).
We have $a_{v_k} = a_k$, and the condition  \eqref{equation: a_w} implies that \(a_k>0\) for \(0\leq k\leq m\).
We denote the corresponding weighted flag variety $X = S\backslash Z$ (which in this case is weighted projective space)
by $\bP(a_0,\ldots,a_m)$.  Write $Z_k = Z_{v_k}$ and $X_k = X_{v_k}$.

The subspace $\H_{\lambda}$ of $\H$ is defined by the equation $x_0 = 0$.  The 
restriction map \( S(\H^\ast)^W \to S(\H_{\lambda}^\ast)^{W_\lambda}\) is given by $e_i(x_0, \ldots, x_m) \mapsto e_i(x_1, \ldots, x_m)$.
This map is surjective, so the following lemma applies.

\begin{lemma}\label{lemma: surjection}
Suppose the restriction map \(S(\H^\ast)^W\to S(\H_{\lambda}^\ast)^{W_\lambda}\) is surjective, with kernel $I$, so
\begin{equation}\label{equation: surjection}
S(\H_{\lambda}^\ast)^{W_\lambda} \isom S(\H^\ast)^W/I.
\end{equation}
Then
\begin{equation}\label{equation: surjective case}
H_T^\ast(X)\isom S(\H^\ast)/IS(\H^\ast).
\end{equation}
\end{lemma}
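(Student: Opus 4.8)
The plan is to chain together three facts and reduce to a standard base-change identity for tensor products. By Proposition~\ref{prop:Poincare}(1) we have $H^*_T(X) \isom H^*_H(Z)$, and by the Borel presentation of Proposition~\ref{proposition: Borel},
\[
H^*_T(X) \isom H^*_H(Z) \isom S(\H^*) \underset{S(\H^*)^W}{\otimes} S(\H_\lambda^*)^{W_P}.
\]
Here the $S(\H^*)^W$-module (indeed algebra) structure on $S(\H_\lambda^*)^{W_P}$ entering the tensor product is the one induced by the restriction map $S(\H^*)^W \to S(\H_\lambda^*)^{W_P}$ coming from the inclusion $\H_\lambda \hookrightarrow \H$. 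Under the standing hypotheses $W_P = W_\lambda$, so this is exactly the restriction map appearing in the statement of the lemma.

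Next I would feed in the surjectivity hypothesis. The isomorphism \eqref{equation: surjection} identifies $S(\H_\lambda^*)^{W_\lambda}$ with $S(\H^*)^W / I$ as $S(\H^*)^W$-algebras, where $I$ is the kernel of that restriction map. Substituting this into the tensor product and then applying the elementary identity $M \otimes_R (R/J) \isom M/JM$ --- with $R = S(\H^*)^W$, with $M = S(\H^*)$ viewed as an $R$-module via the inclusion $S(\H^*)^W \hookrightarrow S(\H^*)$, and with $J = I$ --- yields
\[
S(\H^*) \underset{S(\H^*)^W}{\otimes} \bigl(S(\H^*)^W / I\bigr) \isom S(\H^*) / I\,S(\H^*),
\]
which is the desired presentation \eqref{equation: surjective case}.

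The one point demanding care, and the step I expect to be the only real (if mild) obstacle, is verifying that the $S(\H^*)^W$-module structures agree at each stage: that the structure map of the Borel tensor product is genuinely the restriction map whose kernel is $I$, so that the ideal it generates inside $S(\H^*)$ is precisely $I\,S(\H^*)$ rather than some other ideal. This is immediate once one unwinds the identifications $H^*_G \isom S(\H^*)^W$ and $H^*_Q \isom S(\H_\lambda^*)^{W_P}$ together with the map $H^*_G \to H^*_Q$ used in Proposition~\ref{proposition: Borel}, but it is the place where a wrong restriction map could otherwise slip in unnoticed.
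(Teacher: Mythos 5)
Your proof is correct and is essentially the paper's argument: the paper's proof consists of the single sentence that the lemma follows from the Borel presentation of $H_H^\ast(Z)\isom H_T^\ast(X)$ (Proposition \ref{proposition: Borel}), and your chain $H^*_T(X)\isom H^*_H(Z)\isom S(\H^*)\otimes_{S(\H^*)^W}S(\H_\lambda^*)^{W_P}$ followed by the base-change identity $M\otimes_R(R/I)\isom M/IM$ is exactly the computation being left implicit there. Your closing remark correctly identifies the only point needing care, namely that the structure map of the tensor product is the restriction map whose kernel is $I$ (with $W_P=W_\lambda$ in the situations where the lemma is applied).
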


\begin{proof}
This follows from the Borel presentation of $H_H^\ast(Z) \cong H_T^\ast(X)$ (Proposition \ref{proposition: Borel}
\end{proof}

The lemma implies that for weighted projective space, we have
\begin{equation}
H_T^\ast(\bP(a_0,\ldots,a_m))\isom\F[x_0,\ldots,x_m]/ \langle x_0\cdots x_m \rangle.
\end{equation}

For each $k \in \{0, 1, \ldots m-1 \}$, we have
\begin{equation} \label{e.zk}
\delta_{Z_k}^H=x_{k+1} x_{k+2} \cdots x_m,
\end{equation}
and \(\delta_{Z_m}^H=1\).
To see this, observe that if $V_k$ is the subspace of $V$ defined by
$V_k = \{ (c_0, \ldots, c_k, 0, \ldots, 0) \mid c_i \in \C \} \subseteq V$, then $Z_k = V_k^{\times}$.  This implies that $\delta_{Z_k}^H$ is
the $H$-equivariant top Chern class of the representation $V/V_k$, which equals $x_{k+1}\cdots x_m$.

By Corollary \ref{corollary: pullback}, the weighted Schubert classes are given by the formula
$\pi^\ast \delta_{X_k}^T = q_{v_k} \delta_{Z_k}^H$, where $q_{v_k}$ is defined as in  \eqref{e.qwdef}.  We now calculate $q_{v_k}$.
Observe that 
$$
\Phi_{v_k}^P = \Phi^+ \cap w \Phi(\fu^-) = \{ x_0 - x_k, x_1 - x_k, \ldots, x_{k-1} - x_k \}.
$$
Therefore
$$
\gcd(a_{v_k},a_{\Phi_{v_k}^P}) = \gcd(a_k, a_0 - a_k, a_1- a_k, \ldots, a_{k-1} - a_k) = \gcd(a_0, \ldots, a_k).
$$
Since $w_0 = v_m$, we have $\gcd(w_0, a_{\Phi_{w_0}^P})  = \gcd(a_0,\ldots,a_m) = \gcd(\chi)$.  Hence
\begin{equation} \label{e.q-WPS}
q_{v_k} =  \frac{\gcd(a_{v_k},a_{\Phi_{v_k}^P})}{\gcd(w_0, a_{\Phi_{w_0}^P}) } =  \dfrac{\gcd(a_0,\ldots,a_k)}{\gcd(\chi)}.
\end{equation}
Therefore, the weighted Schubert classes are given by
\begin{equation}\label{equation: weighted schubert classes in WPS}
\delta_{X_k}^T=\dfrac{\gcd(a_0,\ldots,a_k)}{\gcd(\chi)}x_{k+1}\cdots x_m.
\end{equation}

We now calculate $\delta_{Z_{u_\alpha}}^H\,\delta_{Z_{v_k}}^H$ using Corollary \ref{c:cominuscule}.
We have $\lambda = \omega_{\alpha}$ where
$\alpha = x_0 - x_1$, so $u_{\alpha} = v_{m-1}$.    Also, $w_0 \omega_{\ga} - v_k \omega_{\ga} = \gamma_k$,
where $\gamma_k$ is the negative root $x_m - x_k$.  
The only root $\gg$ occuring on the right hand side of the formulas in Corollary \ref{c:cominuscule}
is $\gg = x_0 - x_{m-k+1}$,
and $v_k r_{\gg} = v_{k-1}$.  We have $\omega_{\alpha} \cdot \gamma^\vee = 1$, and $\frac{a_{w_0}}{a_{v_k}} = \frac{a_m}{a_k}$.  Therefore, 
Corollary \ref{c:cominuscule} and Remark \ref{r:cominusculew0} yield
\begin{align}
 \delta_{Z_{m-1}}^H \delta_{Z_k}^H & = \overline{\gamma}_k(v_k) \delta_{X_k}^T +   \frac{a_m}{a_k} \delta_{Z_{k-1}}^H  \label{e:Chev-WPS}\\
 & = \frac{a_m}{a_k} \big( \overline{\gamma}_k(w_0) \delta_{X_k}^T +   \delta_{Z_{k-1}}^H \big).
\end{align}

We remark that although our formula for $ \delta_{Z_{m-1}}^H \delta_{Z_k}^H $ was derived using Corollary \ref{c:cominuscule},
the product can be calculated directly, using
the formula $\delta_{Z_k}^H=x_{k+1} x_{k+2} \cdots x_m$.  Indeed,
\begin{align*}
 \delta_{Z_{m-1}}^H \delta_{Z_k}^H  & = x_m \cdot x_{k+1} \cdots x_m = (x_m -  \frac{a_m}{a_k} x_k +  \frac{a_m}{a_k} x_k) x_{k+1} \cdots x_m \\
 & = (x_m -  \frac{a_m}{a_k} x_k) x_{k+1} \cdots x_m +  \frac{a_m}{a_k} x_k x_{k+1} \cdots x_m \\
 & = \overline{\gamma}_k(v_k) \delta_{Z_k}^H +   \frac{a_m}{a_k} \delta_{Z_{k-1}}^H ,
\end{align*}
recovering our first formula for the product.  The second formula follows from this, since calculating from the definitions
shows that $\overline{\gamma}_k (v_k) =  \frac{a_m}{a_k} \overline{\gamma}_k(w_0)$.

\begin{remark}
Weighted projective space can be realized as a weighted flag variety for other groups $G$ besides $GL(m+1)$, for example,
for $G$ equal to $\C^* \times SL(m+1)$ or $\C^* \times GL(m+1)$.  Computations involving different realizations, while related, are not exactly
the same, since the tori involved are different.  For example, in this section we took $G = GL_{m+1}$; in this case, $T_0$ is a maximal torus of $PGL_{m+1}$.  This torus is a quotient
of  the torus $T_0$ for
$\C^* \times SL_{m+1}$, which
is a maximal torus of $SL_{m+1}$.  Therefore, some care is needed when comparing computations using different realizations. 
Note that weighted projective space, realized for the group $\C^* \times GL(m)$,
 is the special case $d=1$ of the weighted Grassmannians considered in the next section.
 \end{remark}

\begin{example} \label{e:nonexpress}
Corollary \ref{corollary: w0} implies that $c_{uv}^w$ can be expressed as monomials in negative weighted roots at $x$ with nonnegative
coefficients, provided that $x \geq u$ and $x \geq v$.  The example of weighted $X = {\mathbb P}^4$ shows that in general, for arbitrary antidominant $\chi$, it is not possible to retain this positivity while
using negative weighted roots at $x \leq w$.  
Below, we calculate $(\delta^H_{Z_2})^2$.  We comment briefly on the calculation.  The weighted structure constants at any $x$ in $W^P$ can
be calculated from the nonweighted structure constants, using the method of the proof of Theorem \ref{theorem: main theorem}.  Using this method, we start
with coefficients in $H^*_{T_0}$ and convert to coefficients in $H^*_T$, which adds a layer of complication. To avoid this, we use the 
weighted Chevalley formula \eqref{e:Chev-WPS} to deduce a formula for $(\delta^H_{Z_2})^2$ with coefficients
in $H^*_T$.  We obtain:
\begin{equation} \label{e:delta2square2}
(\delta^H_{Z_2})^2 = \frac{a_3}{a_4} (\overline{\gamma}_2(v_2) - \overline{\gamma}_3(v_3)) \overline{\gamma}_2(v_2)  \, \delta^H_{Z_2}
+ \frac{a_3}{a_2} ( \overline{\gamma}_1(v_1) + \overline{\gamma}_2(v_2) - \overline{\gamma}_3(v_3) )\delta^H_{Z_1} + \frac{a_3 a_4}{a_1 a_2} \delta^H_{Z_0}.
\end{equation}
In the tables below, we record expressions for the
coefficients of $\delta^H_{Z_2}$ and $\delta^H_{Z_1}$ in terms of the negative simple roots at each $x \in W^P$.  These expressions were
obtained from \eqref{e:delta2square2} by using \eqref{e:positive4}, which for any $v, w \in W$, allows us to 
express a weighted root at $v$ in terms of weighted roots at any $w$.  
There is no table for the coefficient of $\delta^H_{Z_0}$ since this coefficient is a constant (in fact, it is the coefficient in the non-equivariant product).

\begingroup

\renewcommand{\arraystretch}{1.5}
\begin{center}
\begin{tabular}{c|c}
$W^P$ &   Coefficient of $ \delta^H_{Z_2}$ in $(\delta^H_{Z_2})^2 $ \\
\hline
unweighted & $ \beta_2(\beta_2 + \beta_3)$ \\
$v_4$ & $ \frac{a_3}{a_2} \Big( \frac{a_4}{a_2} \overline{\gb}_2(v_4) +   (\frac{a_4}{a_2} - \frac{a_4}{a_3})\overline{\gb}_3(v_4)  \Big) \Big(\overline{\gb}_2(v_4) + \overline{\gb}_3(v_4) \Big)  $  \\
$v_3$ & $ \frac{a_3}{a_1} \overline{\gb}_2(v_3) \Big(  \frac{a_4}{a_2}  \overline{\gb}_2(v_3) +  \overline{\gb}_3(v_3) \Big) $ \\
$v_2$ & $\overline{\gb}_2(v_2)  \Big( \overline{\gb}_2(v_2) + \overline{\gb}_3(v_2)  \Big) $  \\
$v_1$ & $   \Big( (1 - \frac{a_3}{a_2})  \overline{\gb}_1(v_1) + \overline{\gb}_2(v_1) \Big)   \Big( (1 - \frac{a_4}{a_2})  \overline{\gb}_1(v_1) + \overline{\gb}_2(v_1) + \overline{\gb}_3(v_1)  \Big)  $ 
 \\
 $v_0$ & $  \Big( (1 - \frac{a_3}{a_2})  (  \overline{\gb}_0(v_0)  +  \overline{\gb}_1(v_0) )  + \overline{\gb}_2(v_0)  \Big)   \Big((1 - \frac{a_4}{a_2}) (  \overline{\gb}_0(v_0)  +  \overline{\gb}_1(v_0) ) + \overline{\gb}_2(v_0) + \overline{\gb}_3(v_0)  \Big)    $
\end{tabular}
\end{center}

\medskip

\renewcommand{\arraystretch}{1.5}
\begin{center}
\begin{tabular}{c|c}
$W^P$ &  Coefficient of $ \delta^H_{Z_1}$ in $(\delta^H_{Z_2})^2 $ \\
\hline
unweighted & $ \beta_1+ 2 \beta_2 + \beta_3$ \\
$v_4$ & $ \frac{a_3}{a_2} \Big( \frac{a_4}{a_1} \overline{\gb}_1(v_4) + (\frac{a_4}{a_1} + \frac{a_4}{a_2}) \overline{\gb}_2(v_4) + (\frac{a_4}{a_1} + \frac{a_4}{a_2} -  \frac{a_4}{a_3} ) \overline{\gb}_3(v_4) \Big) $ \\

$v_3$ & $ \frac{a_3}{a_2} \Big( \frac{a_4}{a_1} \overline{\gb}_1(v_3) + (\frac{a_4}{a_1} + \frac{a_4}{a_2}) \overline{\gb}_2(v_3) + \overline{\gb}_3(v_3) \Big) $ \\

$v_2$ & $  \frac{a_3}{a_2} \Big( \frac{a_4}{a_1} \overline{\gb}_1(v_2) + (\frac{a_4}{a_3} + 1) \overline{\gb}_2(v_2) + \overline{\gb}_3(v_2) \Big) $ \\

$v_1$  & $ \frac{a_3}{a_2} \Big(  (\frac{a_4}{a_3} - \frac{a_4}{a_2} +1)  \overline{\gb}_1(v_1) + (\frac{a_4}{a_3} +1)\overline{\gb}_2(v_1) + \overline{\gb}_3(v_1) \Big) $ \\

$v_0$  &  $ \frac{a_3}{a_2} \Big( (1 - \frac{a_4}{a_1} - \frac{a_4}{a_2} + \frac{a_4}{a_3})  \overline{\gb}_0(v_0) + ( 1 -  \frac{a_4}{a_2} + \frac{a_4}{a_3} ) \overline{\gb}_1(v_0)
+ (1 +  \frac{a_4}{a_3} ) \overline{\gb}_2 (v_0) +  \frac{a_4}{a_3}  \overline{\gb}_3(v_0) \Big) $ \\
\end{tabular}
\end{center}

We observe that for $x \geq v_2$, the coefficients of $ \delta^H_{Z_2}$ and $ \delta^H_{Z_1}$ are nonnegative at $x$.  The coefficient of
$ \delta^H_{Z_2}$ is nonnegative at $v_1$ only if $\chi$ (which is still assumed antidominant) is highly singular---in fact, the entries of $\chi$ must satisfy $a_2 = a_3 = a_4$.  
In this case, $a_{\beta_2} = a_{\beta_3} = 0$, so $\overline{\gb}_i(v_1) = \gb_i$ for $i = 2,3$, and we see that the weighted coefficient is the same as the non-weighted
coefficient.  In contrast, the coefficient of $ \delta^H_{Z_1}$ is nonnegative at $v_1$ provided that $\frac{a_4}{a_3} - \frac{a_4}{a_2} +1 \geq 0$.  This is equivalent to
the condition $\frac{1}{a_2} - \frac{1}{a_3} \leq \frac{1}{a_4}$. 
 We see that there are antidominant regular $\chi$ such that the coefficient of $ \delta^H_{Z_1}$ is nonnegative at $v_1$.

\end{example}

\endgroup

\subsection{Weighted Grassmannians} \label{ss:Grassmannian}
In this section, we discuss weighted Grassmannians from our point of view, and
provide a dictionary to translate between our notation (GL) and that in Abe-Matsumura \cite{AbeMatsumura2015} (AM).
In particular, we interpret the positivity statement in \cite{AbeMatsumura2015} precisely in terms of our parameters.

Let $\ge_1, \ldots, \ge_{m}$ denote the standard basis of $\C^m$.
Set \(G=\bC^\times\times GL_m\), so \(n=m-1\) and \(H=\bC^{\times(m+1)}\).  
Let $x_0, \ldots, x_m$ denote the standard basis of the character group $\Lambda \cong \Z^{m+1}$ of $H$;
as in the previous section, we sometimes denote the element $\sum c_i x_i$ by $(c_0, \ldots, c_m)$.
The Weyl group $W \cong S_m$ acts on $\Lambda$; if $w =(w_1 \ldots w_m)$, then
$w x_0 = x_0$, and $w x_i = x_{w_i}$ for $i = 1, \ldots, m$.

Let $\lambda=(1,\ldots,1,0,\ldots,0) = x_0 + x_1 + \cdots + x_k$.  
Let $P = \C^* \times P_0$, where $P_0$ is the block upper triangular subgroup of $GL_m$
with diagonal blocks of sizes $k$ and $m-k$.  We have \(W_P = W_\lambda=S_k\times S_{m-k}\), and $W^P$ is the set of permutations
$w = (w_1, \ldots, w_m)$ such that $w_1 > w_2 > \cdots > w_k$ and $w_{k+1} > w_{k+2} > \cdots > w_m$.
Let $\bracenom{m}{k}$ denote the collection of $k$-element subsets of $\{ 1, \ldots, m \}$.
We identify $W^P$ with $\bracenom{m}{k}$ by the map $I: w \mapsto \{ w_1, \ldots, w_k \}$.
The covering relations in $W^P$ are as follows.  Let $\gg = x_p - x_q$ where
$p \in \{ 1, \ldots, k \}$ and $q \in \{k+1, \ldots, m \}$.  Then $w = v r_{\gg} \lessdot_P v$ if and only 
if $v_p = v_{q} + 1$.

The group $G$ acts transitively on the
Grassmannian $Gr(k,m)$ of $k$-planes in $\C^m$, and $P$ is the stabilizer of the plane spanned by the first
$k$ coordinate vectors $\ge_1, \ldots, \ge_k$, so $G/P \cong Gr(k,m)$.  
Under this isomorphism, the point $wP$ corresponds to the plane
spanned by $\ge_{w_1}, \ldots, \ge_{w_k}$.  For later use, we remark
that if $P' = w_0 P w_0^{-1}$, then $P'$ is the block lower triangular subgroup of $GL_m$ with
blocks of sizes $m-k$ and $k$.  Since $P'$ is the stabilizer of the plane spanned by
the last $k$ coordinate vectors, we can identify $Gr(k,m)$ with $G/P'$.  The map
$G/P \to G/P'$, $gP \mapsto g w_0 P'$, is an isomorphism compatible with the
identifications of $G/P$ and $G/P'$ with $Gr(k,m)$.

Let \(\chi=(a_0,\ldots,a_m)\), where for every \(w\in W^P\), we have
\begin{equation}
a_w=a_0+a_{w(1)}+\cdots +a_{w(k)}>0,
\end{equation}
so \eqref{equation: a_w} is satisfied.
Then \(\chi\) is antidominant if and only if
\begin{equation}\label{equation: chi is antidom}
a_1\leq a_2\leq \cdots\leq a_m.
\end{equation}
The weighted Grassmannian
is $w Gr(k,m) = S \backslash Z$.  If we take $\chi_0 = (1, 0, \ldots, 0)$, then $S_0 \backslash Z \cong G/P \cong G_0 /P_0 = Gr(k,m)$.

In this setting, the Chevalley formula takes a simple form.  The only Schubert
divisor corresponds to $u_{\ga}$, with 
$\ga = \ga_k = x_k - x_{k+1}$.  We can take 
$\omega_{\ga} = \lambda = x_0 + x_1 + \cdots + x_k$.
Then for any $v \in W$, we have $v \lambda =  x_0 + x_{v_1} + \cdots + x_{v_k}$.
Similarly, $a_v = a_0 +  a_{v_1} + \cdots + a_{v_k}$.  Taking $\gg = x_p - x_q$
where $p \leq k$ and $q \geq k + 1$, we have $\lambda \cdot \gamma^{\vee} = \omega_{\ga} \cdot \gamma^{\vee} = 1$.
Corollary \ref{c:cominuscule} applies, so
\begin{equation} \label{e.ChevalleyGr}
\gd^H_{Z_{u_{\ga}}} \gd^H_{Z_v} = \overline{w_0 \omega_{\alpha}}(v) \gd^H_{Z_v}  + \frac{a_{w_0}}{a_v} \sum_{w \lessdot_P v}  \gd^H_{Z_w}.
\end{equation}
We remark that since there is only one Schubert divisor, Lemma \ref{lemma: lambdamultiply} implies that
$\gd^H_{Z_{u_{\ga}}} = (w_0 \lambda)  \gd^H_{Z_{w_0}} = w_0 \lambda$.
 
We now explain the relation between our notation (GL) and the notation of \cite{AbeMatsumura2015} (AM).  Some of the notation of \cite{AbeMatsumura2015} 
is drawn from \cite{KnutsonTao2003}, where more explanation can be found.   Abe and Matsumura parametrize Schubert
varieties in $Gr(k,m)$ by $\gl = \{ \gl_1, \ldots, \gl_k \} \in \bracenom{m}{k}$.  (Their use of the symbol $\gl$ bears no relation to ours; also, they
work with $Gr(k,n)$, but we have changed their $n$ to $m$ to be consistent with this section.)  
Corresponding to $\lambda \in \bracenom{m}{k}$ there is a plane, which we denote here by $F_{\gl}$, spanned by the coordinate vectors
$\ge_{\gl_1}, \ldots, \ge_{\gl_k}$.  The $B$-orbit $B \cdot F_{\gl} = \Omega_{\gl}^0 \subset Gr(k,m)$ is a Schubert cell whose closure is the
Schubert variety $\Omega_{\gl}$.  In our notation, the point $wP$ in $G/P \cong Gr(k,m)$ corresponds to the plane spanned
by $\ge_{w_1}, \ldots, \ge_{w_k}$.  Therefore, our $Y_w$ coincides with $\Omega_{I(w)}$.
Note that although we view $Y_w = \overline{B \cdot w P} \subset G/P$, their terminology is consistent with the identification
 $\Omega_{I(w)} = \overline{B \cdot I(w) w_0 P'}  \subset G/P'$, where $P' = w_0 P w_0^{-1}$ as above.  Thus, 
they denote by $\mbox{id}$ the set $\{ n-k+1, \ldots, n \} = I(w_0)$,
and by $w_0$ the set $\{1, \ldots, k \} = I(k,\ldots,1,m,\ldots,k+1)$.   

We identify the larger torus $H$ with $\C^* \times T_0$, while their torus is (in effect) $T_0 \times \C^*$.
We have used $\chi = (a_0, \ldots, a_n)$ to denote the cocharacter defining the torus $S$, whereas they are taking the quotient by
a torus $wD$ corresponding to a cocharacter $(w_1, \ldots, w_n, a)$ (their $w$ does not denote an element of $W$).  In our notation, their
cocharacter corresponds to $a_0 = a$ and $a_i = w_i$ for $i = 1, \ldots, n$.

This correspondence can be continued.  In Table 1, we record a list of notations from this paper, along with the
corresponding notation from \cite{AbeMatsumura2015}.  In this table, $\lambda = I(w)$.  In this table, the $d_{uv}^w$ are as in
\eqref{e:Z-constants}, i.e., the structure constants with regard to the basis $\delta^H_{Z_w}$.
Observe that the weighted root $\mathrm{w}u_\alpha$ of \cite{AbeMatsumura2015} corresponds to our
$\overline{\alpha}(w_0)$, so our positivity statement recovers theirs.  Similarly, the Pieri-Chevalley formula \eqref{e.ChevalleyGr}
can be seen to recover their Proposition 5.2.

\begin{table}[ht] 
  \centering
  \caption{A dictionary to \cite{AbeMatsumura2015}}
  \begin{tabular}{|r|l|}
    \hline
    GL & AM \\
    \hline
\(H\) & \(K\) \\
\(\chi=(a_0,\ldots,a_m)\)       & \(\rho=(a,w_1,\ldots,w_m)\)\\   
\(S\)      & \(\mathrm{wD}\)\\   
\(T\) & \(T_w\)\\
\(Z\)  & \(\mathrm{aPl}(d,m)^\times\)\\ 
\(X\) & \(\mathrm{wGr}(d,m)\)\\
\(w\in W^P\)   & \(\lambda\in \plucker{m}{d}\)\\   
\(w_0\in W^P\)      & \(\mathrm{id}\in\plucker{m}{d}\)\\   
\(a_w\)     & \(w_\lambda\) \\  
\(Y_{w_0}=G/P\)      & \(\Omega_{\mathrm{id}}=G/P'\)\\   
\(Y_w=\overline{BwP}/P\)   & \(\Omega_\lambda=\overline{B \cdot I(w) w_0 P'}/P'\)\\   
\(P/P\)  & \(\Omega_{(1,\ldots,1,0,\ldots,0)}\)\\   
\(Z_w\)      & \(\mathrm{a}\Omega_\lambda\)\\   
\(X_w\)     & \(\mathrm{w}\Omega_\lambda\)\\    
\(\delta_{Z_w}^H\)      & \(\tilde S_\lambda=\mathrm{a}\tilde S_\lambda=\mathrm{w}\tilde S_\lambda\)\\   
\(\delta_{Z_{u_{\ga}}}^H  = w_0 \lambda \)   & \(\mathrm{a}\tilde S_{\mathrm{div}}\)\\ 
\(d_{uv}^w\)   & \(\mathrm{w}\tilde c_{\lambda\mu}^\nu\)\\   
\(S(\H^\ast)\)      & \(\bQ[z,y_1,\ldots,y_m]\)\\   
\(S(\T^\ast)\)      & \(\bQ[y_1^w,\ldots,y_m^w]\)\\   
\(\alpha=x_i-x_j\)      & \(u_\alpha=y_i-y_j\)\\   
\(\overline\alpha(w_0)\)     & \(\mathrm{w}u_\alpha\)\\   
\(\overline\alpha_i(w_0)\) & \(\mathrm{w}u_i\) \\   
    \hline
  \end{tabular}
\end{table}
We conclude this section by looking more closely at the case where
 \(m=4\) and \(k=2\).
Identifying $W^P$ with $\bracenom{4}{2}$, we can depict \((W^P,\leq)\) as
\begin{equation}
\begin{tikzcd}
&\set{3,4}\arrow[swap,d, dash,"s_2"]&\\
&\set{2,4}\arrow[swap,dl, dash,"s_1"]\arrow[swap,dr, dash,"s_3"]&\\
\set{1,4}\arrow[swap,dr, dash,"s_3"]&&\set{2,3}\arrow[swap,dl, dash,"s_1"]\\
&\set{1,3}\arrow[swap,d, dash,"s_2"]&\\
&\set{1,2}.&
\end{tikzcd}
\end{equation}
The maximal element is \(w_0=\set{3,4}\).
As above, let \(\set{x_0,\ldots,x_4}\) denote the standard basis of \(\H^\ast\), and let \(y_i=x_i\vert\ker\lambda \in \H_{\lambda}^\ast\).
We have \(\lambda=x_0+x_1+x_2\), and
$S(\H_{\gl}^\ast) = \F[y_0, \ldots, y_4]/\langle y_0+y_1+y_2 \rangle$.  Thus, \(y_0=-(y_1+y_2)\), and
$S(\H_{\gl}^\ast)^{W_P} \isom \F[y_1+y_2, y_3+y_4]$.  We have $S(\H^\ast) = \F[x_0, \ldots, x_4]$ and
$S(\H^\ast)^W = \F[x_0, e_1, \ldots, e_4]$, where $e_i$ is the $i$-th elementary symmetric function in $x_1, \ldots, x_4$.
The map $\H_{\gl} /W^{\gl} \to \H/W$ is not injective (for example, the points \(W_\lambda(0,0,0,-1,1)\) and \(W_\lambda(0,-1,1,0,0)\) are distinct in \(W_\lambda\backslash\H_{\lambda}\), but map to the same
element in $\H/W$).
Hence $S(\H^\ast)^W \to S(\H_{\gl}^\ast)^{W_P}$ is not surjective, so
Lemma~\eqref{lemma: surjection} does not apply.

The table below gives formulas for the classes $\gd^H_{Z_w}$.  For simplicity, we write simply $Z_{ \{a,b \} }$ for the class $\gd^H_{Z_{ \{ a,b \} }}$.

\begin{center}
\begin{tabular}{c|c}
Class in \(H_H^\ast(Z)\) & Formula in \(S(\H^\ast)\otimes_{S(\H^\ast)^W}S(\H_{\lambda}^\ast)^{W_\lambda}\)\\
\hline
$Z_{\set{3,4}}$&$1$\\
$Z_{\set{2,4}}$&$w_{\set{3,4}}\lambda=x_0+x_3+x_4$\\
$Z_{\set{1,4}}$&$(x_0+x_2)(x_0+x_3+x_4)+x_3x_4-y_1y_2$\\
$Z_{\set{2,3}}$&$(x_4-y_1)(x_4-y_2)$\\
$Z_{\set{1,3}}$&$(x_0+x_2+x_3)(x_4-y_1)(x_4-y_2)$\\
$Z_{\set{1,2}}$&$(x_3-y_1)(x_4-y_1)(x_3-y_2)(x_4-y_2)$\\
\end{tabular}
\end{center}

\subsection{A Weighted Lagrangian} \label{ss:Lagrangian}
Let \(G=\bC^\times\times\Sp(4)\), where $Sp(4)$ is the subgroup of $GL(2)$ preserving the symplectic form
$\omega$ on $\C^4$ corresponding to the matrix $\begin{bmatrix} 0 & I \\ -I & 0 \end{bmatrix}$; here $I$ is the $2 \times 2$ identity matrix.
Let \(H = \C^* \times T_0\), where $T_0$ is the set of diagonal matrices in $Sp(4)$. In the notation of Section \ref{ss: definitions}, we have \(m=n=2\).
Let $x_0, x_1, x_2$ denote the standard basis of $\H^*$, and write $(c_0, c_1, c_2)$ for $\sum c_i x_i$.
The root system $\Phi$ is of type $C_2$ and is described, for example, in \cite{Humphreys1978}.
We take as simple roots \(\alpha_1=(0,1,-1)\) and \(\alpha_2=(0,0,2)\), with corresponding coroots \(\alpha_1^\vee=(0,1,-1)\) and \(\alpha_2^\vee=(0,0,1)\).
The set of dominant characters is \(\Lambda^+=\set{\lambda=(c_0,c_1,c_2)\in\Lambda\mid c_1\geq c_2 \geq 0}\).
The Weyl group $W$ is identified with the set of signed permutations on $2$ elements.
Let $P = \C^* \times P_0$, where $P_0 \subset Sp(4)$ is the stabilizer of the Lagrangian $2$-plane in $\C^4$ spanned by the first $2$ coordinate
vectors.  Then $Y = G/P = LG(2,4)$, the Grassmannian of Lagrangian $2$-planes in $\C^4$.  We have $W_P = \{1, r_{\ga_1} \} \cong S_2$.
Following the notation of \cite{BoeGraham2003} for
signed permutations, we write $\bar{a}$ for $-a$.  We have $W^P = \{w_0, w_1, w_2, w_3 \}$, where $w_0 = (\bar{1}, \bar{2})$, $w_1 = (\bar{1},2)$, $w_2 = (\bar{2},1)$,
$w_3 = (2,1)$.  If we write $Y_k = Y_{w_k}$, then the Schubert variety $Y_k$ has codimension $k$ in $Y$.
Let \(\lambda=(1,1,1) = \omega_{\alpha_2} \in\Lambda^+\).  If $w \in W$, then $w x_0 = x_0$, and $w x_i = x_{w(i)}$ for $i = 1, 2$.  Let $\chi = (a_0, a_1, a_2)$; then 
$a_w = a_0 + a_{w(1)} + a_{w(2)}$; we assume $\chi$ is antidominant.  In these formulas, our convention is that $x_{\bar{k}} = - x_k$ and $a_{\bar{k}} = - a_k$.

Let $y_i$ denote the restriction of $x_i$ to $\H_{\gl}$, so $y_0+y_1+y_2 = 0$, and $S(\H_\lambda^\ast) = \F[y_0, y_1, y_2]/ \langle y_0+y_1+y_2 \rangle
\cong \F[y_1,y_2]$.  Under this identification, the restriction homomorphism
$S(\H^\ast) \to S(\H_\lambda^\ast)$ satisfies $x_0 \mapsto -(y_1+y_2)$, $x_i \mapsto y_i$ for $i \geq 1$.
The induced homomorphism \(S(\H^\ast)^W\to S(\H_\lambda^\ast)^{W_\lambda}\) is the corresponding map \(\F[x_0,x_1^2+x_2^2,x_1^2x_2^2]\to\F[y_1+y_2,y_1y_2]\).
This map is surjective, and one can show that the kernel is generated by $\prod w_i \gl$.  
It follows that
\begin{equation}\label{equation: weighted lagrangian}
H_H^\ast(Z)=\dfrac{\F[x_0,x_1,x_2]}{ \langle w_0\lambda \cdot w_1\lambda \cdot w_2\lambda \cdot w_3\lambda \rangle}.
\end{equation}
The homomorphism \(k^\ast\,\colon H_H^\ast(Z)\to\bigoplus_{w\in W^P}S(\H_{w\lambda}^\ast)\) from \eqref{equation: fixed points} is the natural map from \eqref{equation: weighted lagrangian} to
\begin{equation}
\dfrac{\F[x_0,x_1,x_2]}{ \langle w_0\lambda \rangle }\oplus\dfrac{\F[x_0,x_1,x_2]}{ \langle w_1\lambda \rangle }\oplus\dfrac{\F[x_0,x_1,x_2]}{ \langle w_2\lambda \rangle }\oplus\dfrac{\F[x_0,x_1,x_2]}{ \langle w_3\lambda \rangle }.
\end{equation}

We claim that 
\begin{equation} \label{e.Z3}
\delta^H_{Z_3}=\frac{1}{2}w_2\lambda \ w_1\lambda\ w_0 \lambda.  
\end{equation}
Since $k^*$ is injective, it suffices to show that the left hand side and the right
hand side have the same image under $k^*_v$ for all $v \in W^P$.  If $v \neq w_3$, then $k^*_v$ takes both sides of \eqref{e.Z3} to $0$, so it suffices to show that
$k^*_{w_3} \delta^H_{Z_3} = \frac{1}{2}w_2\lambda \ w_1\lambda\ w_0 \lambda$ in $\F[x_0,x_1,x_2] /  \langle w_3\lambda \rangle = \F[x_0,x_1,x_2] /  \langle \lambda \rangle $.
Observe that $\psi_{w_3}: H^*_{T_0} \to H^*_{\H_{w_3 \gl}}$ takes $i_{w_3}^* \gd^{T_0}_{Y_3}$ to $k_{w_3}^* \gd^{H}_{Z_3}$.
Since $Y_3$ is the $B$-fixed point in $Y = G/P$, $i_{w_3}^* \gd^{T_0}_{Y_3}$ is the product of the weights of $T_{eP} Y \cong \fu^-$, so
$$
i_{w_3}^* \gd^{T_0}_{Y_3} = - \ga_2 \cdot (\ga_1 + \ga_2) (2 \ga_1 + \ga_2) = -4 (x_1 + x_2) x_1 x_2 \in H^*_{T_0} = \F[x_1, x_2].
$$
The map $\psi_{w_3}: H^*_{T_0} \to H^*_{\H_{w_3 \gl}}$ takes this element to its image in $\F[x_0,x_1,x_2] /  \langle \lambda \rangle $.  
Hence, $k_{w_3}^* \gd^{H}_{Z_3} = -4 (x_1 + x_2) x_1 x_2 \in \F[x_0,x_1,x_2] /  \langle \lambda \rangle $.
In $\F[x_0,x_1,x_2] /  \langle \lambda \rangle $, we have the relation $x_0 = -(x_1+x_2)$, so in this ring,
$ w_2 \gl = x_0 + x_1 - x_2 = -2x_2$, $w_1 \gl = x_0 - x_1 + x_2 = - 2 x_1$, and $w_0 \gl = x_0 - x_1 - x_2 = -2 (x_1 + x_2)$.
The claim follows from these calculations.

Formulas for the remaining Schubert classes can be obtained using the formula \(\partial_{\alpha} \delta_{Z_w}=\delta_{Z_{r_{\alpha} w}}\),
where $\alpha$ is a simple root such that \(w<r_{\alpha} w\).
Here $\partial_{\alpha}$ denotes the divided difference operator acting on $H^*_H$, defined by the formula
$ \partial_{\alpha} f=\frac{r_{\ga} f-f}{\alpha}$ for $f \in H^*_H$.  The resulting formulas are recorded in the table below.

\begin{center}
\begin{tabular}{c|c}
Class in \(H_H^\ast(Z)\)&Formula in \(\F[x_0,x_1,x_2]/(\prod w_k\lambda)\)\\
\hline
\(\delta_{Z_0}\)&\(1\)\\
\(\delta_{Z_1}\)&\(w_0\lambda\)\\
\(\delta_{Z_2}\)&\(\frac{1}{2}w_1\lambda\ w_0\lambda\)\\
\(\delta_{Z_3}\)&\(\frac{1}{2}w_2\lambda\ w_1\lambda\ w_0\lambda\)\\
\end{tabular}
\end{center}

Since $\lambda = x_0 + \omega_{\alpha_2}$, we can compute products with with the divisor class $\gd^H_{Z_1} = \gd^H_{Z_{u_{\ga_2}}}$ in terms
of negative simple roots at $w_0$ using
the version of the Chevalley formula given in Remark \ref{r:cominusculew0}.   We briefly
describe the inputs to the calculation.  We have 
$$
a_{w_0} = a_0 - a_1 - a_2, \hspace{.2in} a_{w_1} = a_0 - a_1 + a_2 , \hspace{.2in} a_{w_2} = a_0 + a_1 - a_2 , \hspace{.2in} a_{w_3} = a_0 + a_1 + a_2 .
$$
Let $c_i = \frac{a_{w_0}}{a_{w_i}}$.  Because $w \mapsto a_w$ is an increasing positive function on $W^P$, we have $0< c_0 \leq c_1 \leq c_2 \leq c_3$.
The nonweighted case corresponds to $a_0 = 1$, $a_1 = a_2 = 0$, and then each $c_i = 1$.

Write $\beta_i = - \alpha_i$
for the negative simple roots, and $\overline{\beta}_i = \beta_i(w_0)$.  In computing $\gd^H_{Z_1} \gd^H_{Z_i}$ for $i = 0, 1, 2$, there is only one root $\gamma = \gamma_i$ which occurs in the Chevalley formula; this root
satisfies $w_i r_{\gamma_i} = w_{i+1}$.  We have
$\gamma_0 = 2 x_2 = \ga_2$; $\gamma_1 = x_1+x_2 = \ga_1 + \ga_2$; and $\gamma_2 = 2 x_1 = 2 \ga_1 + \ga_2$.
The corresponding coroots are $\gamma_0^{\vee} = x_2 = \alpha_2^{\vee}$, $\gamma_1^{\vee} = x_1 + x_2 = \alpha_1^{\vee} + 2 \alpha_2^{\vee}$,
and $\gamma_2^{\vee} = x_1 = \alpha_1^{\vee} + \alpha_2^{\vee}$.  Hence $\omega_{\ga_2} \cdot \gamma_0^{\vee} = \omega_{\ga_2} \cdot \gamma_2^{\vee} = 1$
and $\omega_{\ga_2} \cdot \gamma_1^{\vee} = 2$.
Using these inputs, the Chevalley
formula yields the following table of products (we omit the product with $\gd^H_{Z_0}$, which is the identity element).
\begin{align*}
\gd^H_{Z_1} \gd^H_{Z_1} & =  c_1 \Big( \overline{\gb}_2 \gd^H_{Z_1} +  2 \gd^H_{Z_2} \Big) \\
\gd^H_{Z_1} \gd^H_{Z_2} & =  c_2 \Big( (2 \overline{\gb}_1 + \overline{\gb}_2) \gd^H_{Z_2} +  
 \gd^H_{Z_3} \Big) \\
\gd^H_{Z_1} \gd^H_{Z_3} & = 2 c_3 ( \overline{\gb}_1 + \overline{\gb}_2)  \gd^H_{Z_3}.
\end{align*}
Using this table, one can compute the other products $(\gd^H_{Z_2})^2$, $\gd^H_{Z_2} \gd^H_{Z_3}$, and $(\gd^H_{Z_3})^2$.
For example,
$$
(\gd^H_{Z_2})^2 = \frac{c_2}{2} (2 \overline{\gb}_1 + \overline{\gb}_2) [ \frac{2 c_2}{c_1} \gb_1 + (\frac{c_2}{c_1} - 1) \overline{\gb}_2 ] \gd^H_{Z_2}
+ \frac{c_2}{2 c_1} [2(c_2 + c_3) \overline{\gb_1} + (c_2 + 2 c_3 - c_1) \overline{\gb}_2 ]  \gd^H_{Z_3}.
$$
The non-weighted products are recovered by taking each $c_i = 1$ and $\overline{\gb}_i = \gb_i$.

The formulas for the products $\gd^T_{X_i} \gd^T_{X_j}$ can be obtained from the corresponding $H$-equivariant product formulas by inserting appropriate
factors involving $q_{w_i}$.  We will not write down the general formulas, but as an example, we provide
the following table recording the $q_{w_i}$ for a few $\chi$:
the entry $(a_0,a_1, a_2)$ is the cocharacter $\chi$, and below are the corresponding $q_{w_i}$.
\begin{center}
\begin{tabular}{c|ccc}
$w\in W^P$ & $(8,-1,-1)$ &$(3,-1,0)$ & $(11,-4,3)$ \\
\hline
$w_0$ & 1 & 1 & 1 \\
$w_1$ & 1 & 1 & 1 \\
$w_2$ & 1 & 2  & 2 \\
$w_3$ & 3 & 2 &  4 \\
\end{tabular}
\end{center}

\begin{bibdiv}
\begin{biblist}

\bib{AbeMatsumura2015}{article}{
      author={Abe, Hiraku},
      author={Matsumura, Tomoo},
       title={Equivariant cohomology of weighted {G}rassmannians and weighted
  {S}chubert classes},
        date={2015},
        ISSN={1073-7928},
     journal={Int. Math. Res. Not. IMRN},
      number={9},
       pages={2499\ndash 2524},
         url={https://doi.org/10.1093/imrn/rnu003},
}

\bib{AndersenJantzenSoergel1994}{article}{
      author={Andersen, H.~H.},
      author={Jantzen, J.~C.},
      author={Soergel, W.},
       title={Representations of quantum groups at a {$p$}th root of unity and
  of semisimple groups in characteristic {$p$}: independence of {$p$}},
        date={1994},
        ISSN={0303-1179},
     journal={Ast\'erisque},
      number={220},
       pages={321},
}

\bib{AndersonFulton2024}{book}{
      author={Anderson, David},
      author={Fulton, William},
       title={Equivariant cohomology in algebraic geometry},
      series={Cambridge Studies in Advanced Mathematics},
   publisher={Cambridge University Press, Cambridge},
        date={2024},
      volume={210},
        ISBN={978-1-00-934998-7},
}

\bib{BartoloMartinMoralesOrtigasGalindo2014}{article}{
      author={Artal~Bartolo, Enrique},
      author={Mart\'{\i}n-Morales, Jorge},
      author={Ortigas-Galindo, Jorge},
       title={Cartier and {W}eil divisors on varieties with quotient
  singularities},
        date={2014},
        ISSN={0129-167X,1793-6519},
     journal={Internat. J. Math.},
      volume={25},
      number={11},
       pages={1450100, 20},
         url={https://doi.org/10.1142/S0129167X14501006},
}

\bib{AtiyahMacdonald1969}{book}{
      author={Atiyah, M.~F.},
      author={Macdonald, I.~G.},
       title={Introduction to commutative algebra},
   publisher={Addison-Wesley Publishing Co., Reading, Mass.-London-Don Mills,
  Ont.},
        date={1969},
}

\bib{AzamNazirQureshi2020}{article}{
      author={Azam, Haniya},
      author={Nazir, Shaheen},
      author={Qureshi, Muhammad~Imran},
       title={The equivariant cohomology of weighted flag orbifolds},
        date={2020},
        ISSN={0025-5874},
     journal={Math. Z.},
      volume={294},
      number={3-4},
       pages={881\ndash 900},
         url={https://doi.org/10.1007/s00209-019-02285-x},
}

\bib{Billey99}{article}{
      author={Billey, Sara~C.},
       title={Kostant polynomials and the cohomology ring for {$G/B$}},
        date={1999},
        ISSN={0012-7094},
     journal={Duke Math. J.},
      volume={96},
      number={1},
       pages={205\ndash 224},
}

\bib{BoeGraham2003}{article}{
      author={Boe, Brian~D.},
      author={Graham, William},
       title={A lookup conjecture for rational smoothness},
        date={2003},
        ISSN={0002-9327},
     journal={Amer. J. Math.},
      volume={125},
      number={2},
       pages={317\ndash 356},
  url={http://muse.jhu.edu/journals/american_journal_of_mathematics/v125/125.2boe.pdf},
}

\bib{BorhoMacPherson1983}{incollection}{
      author={Borho, Walter},
      author={MacPherson, Robert},
       title={Partial resolutions of nilpotent varieties},
        date={1983},
   booktitle={Analysis and topology on singular spaces, {II}, {III} ({L}uminy,
  1981)},
      series={Ast\'{e}risque},
      volume={101},
   publisher={Soc. Math. France, Paris},
       pages={23\ndash 74},
}

\bib{Brion1997}{article}{
      author={Brion, M.},
       title={Equivariant {C}how groups for torus actions},
        date={1997},
        ISSN={1083-4362},
     journal={Transform. Groups},
      volume={2},
      number={3},
       pages={225\ndash 267},
         url={https://doi.org/10.1007/BF01234659},
}

\bib{Carrell94}{incollection}{
      author={Carrell, James~B.},
       title={The {B}ruhat graph of a {C}oxeter group, a conjecture of
  {D}eodhar, and rational smoothness of {S}chubert varieties},
        date={1994},
   booktitle={Algebraic groups and their generalizations: classical methods
  ({U}niversity {P}ark, {PA}, 1991)},
      series={Proc. Sympos. Pure Math.},
      volume={56},
   publisher={Amer. Math. Soc., Providence, RI},
       pages={53\ndash 61},
}

\bib{Carrell1995}{incollection}{
      author={Carrell, James~B.},
       title={On the smooth points of a {S}chubert variety},
        date={1995},
   booktitle={Representations of groups ({B}anff, {AB}, 1994)},
      series={CMS Conf. Proc.},
      volume={16},
   publisher={Amer. Math. Soc., Providence, RI},
       pages={15\ndash 33},
}

\bib{CortiReid2002}{incollection}{
      author={Corti, Alessio},
      author={Reid, Miles},
       title={Weighted {G}rassmannians},
        date={2002},
   booktitle={Algebraic geometry},
   publisher={de Gruyter, Berlin},
       pages={141\ndash 163},
}

\bib{CoxLittleSchenck2011}{book}{
      author={Cox, David~A.},
      author={Little, John~B.},
      author={Schenck, Henry~K.},
       title={Toric varieties},
      series={Graduate Studies in Mathematics},
   publisher={American Mathematical Society, Providence, RI},
        date={2011},
      volume={124},
        ISBN={978-0-8218-4819-7},
         url={https://doi.org/10.1090/gsm/124},
}

\bib{Deodhar1977}{article}{
      author={Deodhar, Vinay~V.},
       title={Some characterizations of {B}ruhat ordering on a {C}oxeter group
  and determination of the relative {M}\"{o}bius function},
        date={1977},
        ISSN={0020-9910},
     journal={Invent. Math.},
      volume={39},
      number={2},
       pages={187\ndash 198},
         url={https://doi.org/10.1007/BF01390109},
}

\bib{DuistermaatHeckman1982}{article}{
      author={Duistermaat, J.~J.},
      author={Heckman, G.~J.},
       title={On the variation in the cohomology of the symplectic form of the
  reduced phase space},
        date={1982},
        ISSN={0020-9910},
     journal={Invent. Math.},
      volume={69},
      number={2},
       pages={259\ndash 268},
         url={https://doi.org/10.1007/BF01399506},
}

\bib{EdidinGraham1998}{article}{
      author={Edidin, Dan},
      author={Graham, William},
       title={Equivariant intersection theory},
        date={1998},
        ISSN={0020-9910},
     journal={Invent. Math.},
      volume={131},
      number={3},
       pages={595\ndash 634},
         url={https://doi.org/10.1007/s002220050214},
}

\bib{Fulton1997}{book}{
      author={Fulton, William},
       title={Young tableaux},
      series={London Mathematical Society Student Texts},
   publisher={Cambridge University Press, Cambridge},
        date={1997},
      volume={35},
        ISBN={0-521-56144-2; 0-521-56724-6},
        note={With applications to representation theory and geometry},
}

\bib{Fulton1998}{book}{
      author={Fulton, William},
       title={Intersection theory},
     edition={Second},
      series={Ergebnisse der Mathematik und ihrer Grenzgebiete. 3. Folge. A
  Series of Modern Surveys in Mathematics [Results in Mathematics and Related
  Areas. 3rd Series. A Series of Modern Surveys in Mathematics]},
   publisher={Springer-Verlag, Berlin},
        date={1998},
      volume={2},
        ISBN={3-540-62046-X; 0-387-98549-2},
         url={https://doi.org/10.1007/978-1-4612-1700-8},
}

\bib{GKM1998}{article}{
      author={Goresky, Mark},
      author={Kottwitz, Robert},
      author={MacPherson, Robert},
       title={Equivariant cohomology, {K}oszul duality, and the localization
  theorem},
        date={1998},
        ISSN={0020-9910},
     journal={Invent. Math.},
      volume={131},
      number={1},
       pages={25\ndash 83},
         url={https://doi.org/10.1007/s002220050197},
}

\bib{GoreskyMacPherson1983}{article}{
      author={Goresky, Mark},
      author={MacPherson, Robert},
       title={Intersection homology. {II}},
        date={1983},
        ISSN={0020-9910},
     journal={Invent. Math.},
      volume={72},
      number={1},
       pages={77\ndash 129},
         url={https://doi-org.argo.library.okstate.edu/10.1007/BF01389130},
}

\bib{Graham2001}{article}{
      author={Graham, William},
       title={Positivity in equivariant {S}chubert calculus},
        date={2001},
        ISSN={0012-7094},
     journal={Duke Math. J.},
      volume={109},
      number={3},
       pages={599\ndash 614},
         url={https://doi.org/10.1215/S0012-7094-01-10935-6},
}

\bib{GrahamKreiman2015}{article}{
      author={Graham, William},
      author={Kreiman, Victor},
       title={Excited {Y}oung diagrams, equivariant {$K$}-theory, and
  {S}chubert varieties},
        date={2015},
        ISSN={0002-9947},
     journal={Trans. Amer. Math. Soc.},
      volume={367},
      number={9},
       pages={6597\ndash 6645},
         url={https://doi.org/10.1090/S0002-9947-2015-06288-6},
}

\bib{Humphreys1978}{book}{
      author={Humphreys, James~E.},
       title={Introduction to {L}ie algebras and representation theory},
      series={Graduate Texts in Mathematics},
   publisher={Springer-Verlag, New York-Berlin},
        date={1978},
      volume={9},
        ISBN={0-387-90053-5},
        note={Second printing, revised},
}

\bib{Jantzen2003}{book}{
      author={Jantzen, Jens~Carsten},
       title={Representations of algebraic groups},
     edition={Second},
      series={Mathematical Surveys and Monographs},
   publisher={American Mathematical Society, Providence, RI},
        date={2003},
      volume={107},
        ISBN={0-8218-3527-0},
}

\bib{Jantzen2004}{incollection}{
      author={Jantzen, Jens~Carsten},
       title={Nilpotent orbits in representation theory},
        date={2004},
   booktitle={Lie theory},
      series={Progr. Math.},
      volume={228},
   publisher={Birkh\"{a}user Boston, Boston, MA},
       pages={1\ndash 211},
}

\bib{Kawasaki1973}{article}{
      author={Kawasaki, Tetsuro},
       title={Cohomology of twisted projective spaces and lens complexes},
        date={1973},
        ISSN={0025-5831},
     journal={Math. Ann.},
      volume={206},
       pages={243\ndash 248},
         url={https://doi.org/10.1007/BF01429212},
}

\bib{KeelMori1997}{article}{
      author={Keel, Se\'{a}n},
      author={Mori, Shigefumi},
       title={Quotients by groupoids},
        date={1997},
        ISSN={0003-486X},
     journal={Ann. of Math. (2)},
      volume={145},
      number={1},
       pages={193\ndash 213},
         url={https://doi.org/10.2307/2951828},
}

\bib{KnutsonTao2003}{article}{
      author={Knutson, Allen},
      author={Tao, Terence},
       title={Puzzles and (equivariant) cohomology of {G}rassmannians},
        date={2003},
        ISSN={0012-7094},
     journal={Duke Math. J.},
      volume={119},
      number={2},
       pages={221\ndash 260},
         url={https://doi.org/10.1215/S0012-7094-03-11922-5},
}

\bib{Kollar1997}{article}{
      author={Koll\'{a}r, J\'{a}nos},
       title={Quotient spaces modulo algebraic groups},
        date={1997},
        ISSN={0003-486X},
     journal={Ann. of Math. (2)},
      volume={145},
      number={1},
       pages={33\ndash 79},
         url={https://doi.org/10.2307/2951823},
}

\bib{KollarMori1998}{book}{
      author={Koll\'ar, J\'anos},
      author={Mori, Shigefumi},
       title={Birational geometry of algebraic varieties},
      series={Cambridge Tracts in Mathematics},
   publisher={Cambridge University Press, Cambridge},
        date={1998},
      volume={134},
        ISBN={0-521-63277-3},
  url={https://doi-org.argo.library.okstate.edu/10.1017/CBO9780511662560},
        note={With the collaboration of C. H. Clemens and A. Corti, Translated
  from the 1998 Japanese original},
}

\bib{Kumar1996}{article}{
      author={Kumar, Shrawan},
       title={The nil {H}ecke ring and singularity of {S}chubert varieties},
        date={1996},
        ISSN={0020-9910},
     journal={Invent. Math.},
      volume={123},
      number={3},
       pages={471\ndash 506},
         url={https://doi.org/10.1007/s002220050038},
}

\bib{Mukai2003}{book}{
      author={Mukai, Shigeru},
       title={An introduction to invariants and moduli},
     edition={Japanese},
      series={Cambridge Studies in Advanced Mathematics},
   publisher={Cambridge University Press, Cambridge},
        date={2003},
      volume={81},
        ISBN={0-521-80906-1},
}

\bib{FogartyKirwanMumford1994}{book}{
      author={Mumford, D.},
      author={Fogarty, J.},
      author={Kirwan, F.},
       title={Geometric invariant theory},
     edition={Third},
      series={Ergebnisse der Mathematik und ihrer Grenzgebiete (2) [Results in
  Mathematics and Related Areas (2)]},
   publisher={Springer-Verlag, Berlin},
        date={1994},
      volume={34},
        ISBN={3-540-56963-4},
         url={https://doi.org/10.1007/978-3-642-57916-5},
}

\end{biblist}
\end{bibdiv}
\end{document}